\numberwithin{equation}{section}
\newcommand{\R}{{\mathbb R}}
\newcommand{\N}{{\mathbb N}}
\newcommand{\Alk}{A_{\lambda,\kappa}}
\def\endproof{\hfill $\Box$\par\vskip3mm}
\newcommand{\eps}{\varepsilon}
\newcommand{\cotan}{\textnormal{cotan}\,}
\newcommand{\modd}{\,\textnormal{mod}\,}
\renewcommand{\ge }{\geqslant}
\renewcommand{\geq }{\geqslant}
\renewcommand{\le }{\leqslant}
\renewcommand{\leq }{\leqslant}
\def\neweq#1{\begin{equation}\label{#1}}
\def\endeq{\end{equation}}
\def\eq#1{(\ref{#1})}
\newtheorem{theorem}{Theorem}
\newtheorem{proposition}[theorem]{Proposition}
\newtheorem{lemma}[theorem]{Lemma}
\newtheorem{corollary}[theorem]{Corollary}
\newtheorem{definition}[theorem]{Definition}
\title{Linear and nonlinear equations
for beams and degenerate plates \\ with multiple intermediate piers}
\author{Maurizio GARRIONE - Filippo GAZZOLA}
\date{\small Dipartimento di Matematica, Politecnico di Milano \vspace{0.15cm}\\
Piazza Leonardo da Vinci, 32 - 20133 Milano, Italy \vspace{0.1cm}\\
e-mail addresses: maurizio.garrione@polimi.it, filippo.gazzola@polimi.it }
\begin{document}

\maketitle

%\begin{center}
%{\tiny $^\sharp$ Dipartimento di Matematica - Politecnico di Milano - Piazza Leonardo da Vinci 32 - 20133 Milano, Italy\\
%        $^\dagger$ Dipartimento di Matematica - Politecnico di Milano - Piazza Leonardo da Vinci 32 - 20133 Milano, Italy}
%\end{center}

\begin{abstract}
A full theory for hinged beams and degenerate plates with multiple intermediate piers is developed. The analysis starts with the variational setting and the study of the linear stationary problem in one dimension. Well-posedness results are provided and the possible loss of regularity, due to the presence of the piers, is underlined. A complete spectral theorem is then proved, explicitly determining the eigenvalues on varying of the position of the piers and exhibiting the fundamental modes of oscillation. The obtained eigenfunctions are used to tackle the study of the nonlinear evolution problem in presence of different nonlinearities, focusing on the appearance of \emph{linear} and (a suitable notion of) \emph{nonlinear instability}, with a twofold goal: finding the most reliable nonlinearity describing the oscillations of real structures and determining the position of the piers that maximizes the stability of the structure. The last part of the paper is devoted to the study of degenerate plate models, where the structure is composed by a central beam moving vertically and by a continuum of cross-sections free to rotate around the beam and allowed to deflect from the horizontal equilibrium position. The \emph{torsional instability} of the structure is investigated, taking into account the impact of different nonlinear terms aiming at modeling the action of cables and hangers in a suspension bridge. Again, the optimal position of the piers in terms of stability is discussed. The stability analysis is carried out both by means of analytical tools, such as Floquet theory, and numerical experiments. Several open problems and possible future developments are presented.
\end{abstract}

{\small
\textbf{Keywords:} beams, degenerate plates, intermediate piers, optimal position, multi-point conditions, nonlinear initial-boundary value problems, Floquet theory, stability, torsional instability.
\smallskip
\par
\textbf{AMS 2010 Subject Classification:} 34B10, 34L15, 35B35, 35G31, 74B20, 74K10, 97M50.
\vfill\eject
}

\dosecttoc
\renewcommand{\stifont}{\normalsize}
\renewcommand{\stcSSfont}{\small}
\renewcommand{\stctitle}{}
\tableofcontents

\vfill\eject

\section{Introduction}

Many bridges suffered unexpected oscillations both during construction and after inauguration, sometimes also leading to collapses, see
e.g.\ \cite{akesson,imhof}. Thanks to the videos available on the web \cite{tacoma}, most people have seen the spectacular collapse of the
Tacoma Narrows Bridge (TNB), occurred in 1940: the torsional oscillations were considered the main cause of this dramatic event \cite{ammvkwoo,scott}.
But torsional oscillations leading to failures also appeared in several other bridges. Let us just mention the collapses of the Brighton Chain
Pier (1836), of the Menai Straits Bridge (1839), of the Wheeling Suspension Bridge (1854), of the Matukituki Suspension Footbridge (1977).
We refer to \cite[Chapter I]{bookgaz} for more details and more historical events. Wide oscillations were also seen during the construction of the TNB and during the erection of the Storebaelt East Bridge (the second largest suspension bridge in the world) in 1998: in this occasion, it was necessary to fix some additional anchorages on the bottom of the sea \cite[p.32]{mdemiranda}.
Let us emphasize that, although being generated by a different phenomenon (synchronization of the pedestrians instead of vortex shedding),
also footbridges are prone to display (lateral) oscillations, see again \cite[Chapter I]{bookgaz}.
Here we just recall that these oscillations were seen also the very same day of the opening of the London Millennium Bridge in 2000: the bridge
had to be closed in order to prevent a possible tragedy \cite{strogatz}. This quick survey shows that new solutions are necessary to solve old problems
and that unexpected oscillations still appear nowadays in bridges. These accidents raised some fundamental questions of deep mathematical interest,
such as finding possible ways to improve the stability of suspension bridges.
Needless to say, these phenomena are fairly complicated and to tackle them theoretically requires a wide variety of different skills.\par
Since we constantly refer to the main components of a suspension bridge throughout the paper, let us explain in detail their roles, see
Figure \ref{suspension}.
\begin{figure}[ht]
\begin{center}
\includegraphics[height=50mm, width=155mm]{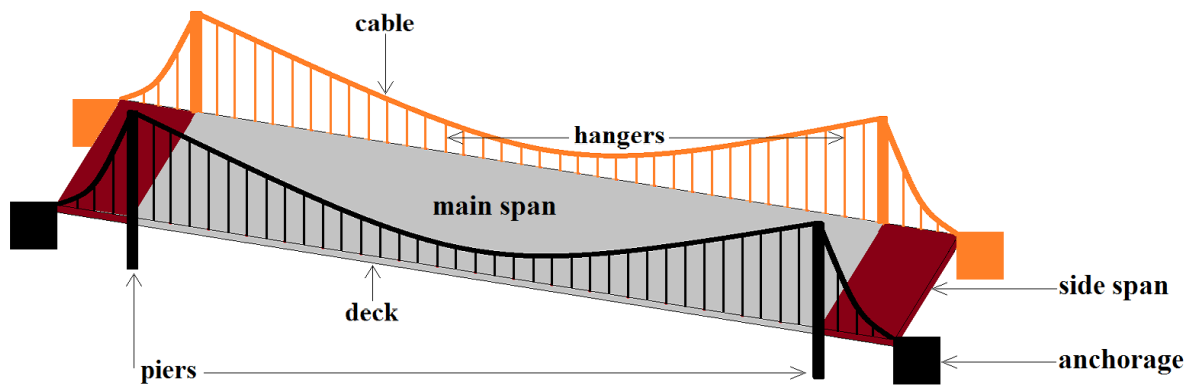}
\caption{Sketch of a suspension bridge.}\label{suspension}
\end{center}
\end{figure}
Four piers (or towers) sustain two parallel cables which, in turn, sustain the hangers. At their lower endpoints, the hangers are linked to the
deck and sustain it from above. A suspension bridge is usually erected starting from the anchorages and the piers. Then the sustaining cables
are installed between the two couples of piers. Once the cables are in position, they furnish a stable working base from which the deck can be
raised from floating barges. The hangers are hooked to the cables and the deck is hooked to the hangers; this deforms the cable and stretches
the hangers, which start their restoring action on the deck. We refer to \cite[Section 15.23]{podolny} for further details. The part of
the deck between the piers is called the main span while the parts outside the piers are called side spans. The relative lengths of the three
spans vary from bridge to bridge, see again \cite{podolny}, and one of our purposes is precisely to establish if there is an optimal position of
the piers in order to maximize stability in a suitable sense. The complex composition of a suspension bridge yields two major difficulties: first
it is aerodynamically quite vulnerable, second it appears very challenging to describe its behavior through simple and reliable mathematical models.\par
In particular, due to the large number of nonlinear interactions between its components, a fully reliable model appears out of reach.
A typical example of this conflict can be found in the paper by Abdel-Ghaffar \cite{abdel}, who makes use of variational principles to
obtain the combined equations of a suspension bridge motion in a fairly general nonlinear form. The effects of coupled vertical-torsional
oscillations, as well as cross-distortional deformations of the stiffening structure, are described by separating them into four different kinds of
displacements: the vertical displacement $v$, the torsional angle $\theta$, the cross section distortional angle $\psi$ and the warping displacement
$u$, which can be expressed in terms of $\theta$ and $\psi$. After making such a huge effort, Abdel-Ghaffar simplifies the problem by neglecting
the cross section deformation, the shear deformation and the rotatory inertia; he obtains a coupled nonlinear vertical-torsional system of two
equations in the two unknowns $v$ and $\theta$. These equations are finally linearized by neglecting terms which are considered small
and the coupling effect disappears, see \cite[(34)-(35)]{abdel}. This procedure is quite common in the
engineering literature: first an attempt to describe the full bridge model, then simplifications in order to deal with tractable equations.
In this paper we follow the opposite procedure: first we highlight the relevant phenomena in simple models, then we dress the models in order
to make them as close as possible to a suspension bridge. We choose to model the deck of the bridge as a degenerate plate consisting of a beam
with a continuum of cross sections free to rotate around the beam and we introduce a variety of possible nonlinearities. Due to the elastic behavior of several
components in a bridge (in particular, the sustaining cables), one should also expect {\em nonlocal nonlinearities}. We study in detail the behavior of
several nonlinearities in order to determine the ones giving responses in line with the phenomena visible in actual bridges.\par
A suspension bridge requires a fairly involved modeling and functional setting and we need first to analyze a hinged beam divided in three
spans separated by two internal piers. We start from the very beginning with the analysis of linear stationary beam
equations, going through the spectral analysis, studying the instabilities that arise in nonlinear evolution beam equations and, finally,
ending with a possible simplified suspension bridge model that consists of a degenerate plate with intermediate piers. All these steps require
a sound variational setting based on classical principles of functional analysis and basic theory of linear differential equations. The stationary beam equation with piers fits in the framework of the so-called {\em multi-point} problems for ODE's, that were introduced
in the pioneering work by Wilder \cite{wilder}, see also \cite{Loc73} for some developments in the subsequent decades. As far as we are aware,
no evolution problem has been studied in this setting and we insert within this framework also the degenerate plate for which the multi-points
become ``multi-cross-sections''.\par
The variational formulation of the linear stationary beam equation requires the introduction of new functional spaces. In Theorem \ref{VI}, we
show that they are subspaces of codimension two of the Sobolev space $H^2\cap H^1_0$ and we fully characterize them. The two missing
dimensions prevent a nice regularity theory for weak solutions: in Theorem \ref{regular} we show why, in general, one cannot expect to have more
than $C^2$-smoothness, the reason being that the piers yield impulses on the beam, to be added to most forcing terms. In Corollary \ref{coroll}
we provide a simple way to recognize the cases where the weak solution is also a classical solution.\par
Our linear analysis then tackles the spectral properties of the fourth order differential operator. Also for the related eigenvalue problem we
cannot expect regularity of the eigenfunctions and standard forms of Sturm-Liouville-type results fail. Therefore, we proceed ``by hand''.
A full description
of the spectrum is extremely complicated, see Theorem \ref{autovalorias}; since most suspension bridges have equal side spans, we restrict our
attention to the symmetric case with equal side spans, for which simpler and elegant results can be proved. By taking advantage of symmetries,
in Theorem \ref{symmetriceigen} we determine explicitly all the eigenvalues and the associated eigenfunctions. As expected, they strongly depend on the position of the piers, as described in Theorems \ref{Michelle} and \ref{constant}, which also characterize the placement and the number of zeros of
the eigenfunctions.
As noticed in the Federal Report \cite{ammvkwoo}, this is extremely important for the stability of bridges, see also the reproduction in Figure
\ref{zeroTNB} where a qualitative inventory of the oscillation modes seen at the TNB is drawn. The pattern of ``zeros moving on the spans'' is
governed by several rules which can be described through beautiful pictures, see Figures \ref{bello} and \ref{figurateo}.\par
The linear analysis, in particular the behavior of the eigenvalues as the position of the piers varies, enables us to tackle the stability issue
for some nonlinear evolution beam equations, for which we aim at determining the ``best position'' of the piers within the beam in order to maximize its
stability. Motivated by both physical and mathematical arguments, we perform our analysis on {\em finite-dimensional} approximations of the
phase space. We focus our attention on two different kinds of stability, the classical {\em linear stability} whose analysis is based on some
Hill equation (see Definition \ref{defstabb}) and a more ``practical'' stability that we introduced in \cite{GarGaz} and that appears suitable
for applications and nonlinear problems (see Definition \ref{unstable}). In the course, we will show an existing connection between these two
instabilities, the former being a clue for the latter.
We restrict the instability analysis to a particular class of solutions of the nonlinear beam equation, the ones possessing a \emph{prevailing mode}, i.e., for which the initial energy of the system is almost completely concentrated on a unique Fourier component, see Definition \ref{prevalente}. These solutions well describe
the behavior of actual bridges, see \cite[p.20]{ammvkwoo} and Section \ref{length}. One is then
interested in studying whether, for positive time, the energy remains mostly on the very same component or if it transfers towards other components.
In other words, our stability analysis considers beams which initially oscillate close to a prevailing mode and wonders whether this remains true
in some interval of time. If not, we say that the prevailing mode is unstable and one is then interested in finding the {\em least energy} (or
least amplitude of oscillation) which makes it unstable. The energy threshold for the stability of the beam is then the least of these energies among
all the possible prevailing modes. This means that, below this energy, the beam is stable and we expect the lower modes to have more chances to achieve this bound. Clearly, the energy threshold depends on the relative length of the side spans and we are finally interested in finding
the placement of the piers that maximizes it. As a by-product of our analysis, in Section \ref{sezioneconfronti} we identify the nonlinearities that make
the beam behave more similarly to real bridges.\par
Finally, we complete the ``model dressing'' and we focus our attention on the degenerate plate with a central beam and with cross sections free to rotate
around it. By exploiting the results obtained for the simple beam we make a precise choice of the nonlinearities to be inserted into the model.
We analyze the impact of the two nonlinear restoring forces acting on the deck of a bridge, those exerted by the cables and the hangers (Sections \ref{rigidhangers} and \ref{slack}).
A precise description of the stability is much more involved when the model is fully dressed but we are still able to retrieve most of the phenomena
visible for simple beams. The main targets are to study the impact of the hangers and of their elasticity on the stability thresholds, and to determine once
again the optimal position of the piers that now do not constrain simple points but full segments (the cross sections between the piers).
Some parts of our stability analysis are out of reach with purely theoretical tools and we use numerics. Also engineers
are nowadays aware that many expensive experiments in wind tunnels may be replaced by numerics, see e.g.\ the monograph \cite{jurado}, in particular
its preface. And from \cite[p.13]{jurado} we also pick the motivation of their work: {\em we are not trying to substitute a designer with these optimization
techniques, which would be impossible because of the complexity of real problems, but rather intending to help a designer not to fall into false steps that
can be very probable for a design with great complexity}. A further goal of this paper is precisely to introduce all the tools and nonlinearities
needed for a systematic study of the stability in multi-point problems, in particular those modeling suspension bridges.\par
Summarizing, we provide a full variational characterization of PDEs describing the stability of structures with internal piers, rigorously defining the proper functional setting. As for the considered nonlinearities, we give strong hints about their impact on the stability analysis. We believe that
all the necessary instruments to reach a complete understanding of the stability of suspension bridges are here introduced; our qualitative analysis should be seen as the starting point for a precise quantitative study of more complete models, taking into account the action of aerodynamic forces. We conclude the paper (Section \ref{finalsect}) with a summary of the main results and their consequences, with some comments and some open problems, and
with possible future perspectives and developments.

\section{The physical models}\label{physmod}

We first describe the model for a multiply hinged beam divided in three adjacent spans (segments): the main (middle) span and two side spans separated by piers.
Without loss of generality, we normalize the total length to $2\pi$ and represent the beam as in Figure \ref{figbeam}.
\newline
\begin{figure}[!ht]
\begin{center}
%\makebox[\linewidth][c]{
\begin{tikzpicture}[xscale=0.9,yscale=0.9]
\draw (0,0) -- (8,0);
\draw (0, -0.2) -- (0, 0.2);
\draw (2, -0.2) -- (2, 0.2);
\draw (7, -0.2) -- (7, 0.2);
\draw (8, -0.2) -- (8, 0.2);
\node at (-0.2, -0.8){$-\pi$};
\node at (2, -0.8){$-b\pi$};
\node at (6.9, -0.8){$a\pi$};
\node at (8.1, -0.8){$\pi$};
\node at (4.4, 1.2){$\textnormal{main span}$};
\end{tikzpicture}
%}
\qquad\qquad\qquad\includegraphics[height=25mm, width=45mm]{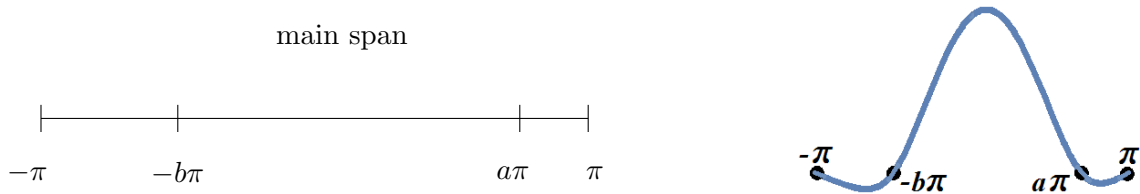}
\caption{A beam with two piers and $0 < a, b<1$.}\label{figbeam}
\end{center}
\end{figure}
\newline
The parameters $0< a,b<1$ determine the relative measure of the side spans with respect to the main span. The beam is hinged at the extremal points $\pm\pi$ and,
in correspondence of the positions of the piers, at the points $-b\pi$ and $a\pi$. If $\gamma>0$ denotes the elastic restoring parameter, the linear beam equation reads
\begin{equation}\label{beamlineare}
u_{tt} + u_{xxxx} + \gamma u = 0 \qquad x \in I=(-\pi, \pi), \quad t > 0,
\end{equation}
where $u$ represents the vertical displacement;
we will specify in Section \ref{nonlinevol} in which sense \eqref{beamlineare} is satisfied. We complement \eq{beamlineare} with the boundary and internal conditions
\begin{equation}\label{bc}
u(-\pi,t)=u(\pi,t)=u(-b\pi,t)=u(a\pi,t)=0 \qquad t \geq 0,
\end{equation}
where the last two conditions state that no displacement occurs in correspondence of the piers $\{-b\pi,a\pi\}$ and give rise to a multi-point problem for
\eq{beamlineare}. In the dynamics of the beam modeled by
\eq{beamlineare}-\eqref{bc}, the two internal conditions in the piers create an interaction between the three spans, otherwise the three
segments would behave independently of each other. Due to the extreme complexity of the asymmetric case ($b\neq a$), at some point we restrict
our attention to the case of symmetric beams ($b=a$), also motivated by the fact that most suspension bridges have equal side spans with
\neweq{physicalrange}
\frac12 \le a\le\frac{2}{3},
\endeq
see \cite[Table 15.10]{podolny}.\par
Equation \eq{beamlineare} may be solved by separation of variables but it is too far away from the final target of the present paper, that is,
modeling a bridge with a quantitative analysis of its instability with respect to the position of the piers. Two main ingredients are missing
in \eq{beamlineare}: some nonlinearity which is intrinsic in complicated elastic structures such as bridges and the possibility of displaying
torsional oscillations. We will reach a reliable bridge model in three steps: first we analyze the behavior of beams, then we study the response of
different nonlinearities in order to detect the most reliable, finally we turn to the analysis of a nonlinear degenerate plate.\par
As for the nonlinear term to be introduced in \eqref{beamlineare}, a cubic nonlinearity naturally arises when large deflections of a beam or a plate are involved: in this case, the stretching effects suggest to
use variants of the von K\'arm\'an theory \cite{karman}, see also \cite{karmanbook,ciarletvk} for a modern point of view and \cite{gazwang} for the
adaptation of this theory to plates modeling bridges. In fact, when dealing with bridges, the nonlinearity should as well take into account the
behavior of the sustaining cables and, for this reason, also the engineering literature deals with cubic nonlinearities, see e.g.\
\cite{sepe1,bartoli,plautdavis}.\par
Cubic nonlinearities within the equation arise when quartic energy terms are inserted into the model. We add to the bending energy the three different nonlocal energies
$$
\mathcal{E}_1(u)= \frac12 \int_Iu_{xx}^2+\frac14\left(\int_Iu_{xx}^2\right)^2,\quad
\mathcal{E}_2(u)= \frac12 \int_Iu_{xx}^2+\frac14\left(\int_Iu_x^2\right)^2,\quad \mathcal{E}_3(u)=\frac12
\int_Iu_{xx}^2+\frac14\left(\int_Iu^2\right)^2,
$$
and the local energy
$$
\mathcal{E}_4(u)=\frac12 \int_Iu_{xx}^2+\frac14\int_Iu^4.
$$
By inserting the kinetics, one finds that the total energy is $\mathcal{E}(u)=\frac12\int_Iu_t^2+\mathcal{E}_i(u)$ ($i=1,2,3,4$), so
that the resulting evolution equations read, respectively:
$$
\begin{array}{ll}
u_{tt}+\big(1+\|u_{xx}\|^2_{L^2}\big) u_{xxxx}=0,\quad & u_{tt}+u_{xxxx}-\|u_x\|_{L^2}^2 u_{xx}=0, \\
u_{tt}+u_{xxxx}+\|u\|_{L^2}^2 u=0,\quad & u_{tt}+u_{xxxx}+u^3=0,
\end{array}
$$
for $x \in I$ and $t >0$. These equations are here written in strong form but, as we shall see, there is not enough regularity for this formulation: we instead stick to their weak form. This will be the object of Section \ref{evolutionbeam}. In all the energies $\mathcal{E}_i$
($i=1,2,3,4$), the first (quadratic) term represents the bending energy while the quartic term has different physical meanings that we now discuss. \par
There are several reasons for considering $\mathcal{E}_1$. First, it may be derived from a variational principle of a stiffened beam with
bending energy behaving superquadratically and nonlocally: this means that if the beam is bent somewhere, then this increases the resistance to further bending in all the other
points. Moreover, as explained by Cazenave-Weissler \cite[p.110]{cazwei}, we choose
this energy in the hope that studying the behavior of the solution of the equation could provide an indication of what to expect from
more complicated energies.\par
In 1950, Woinowsky-Krieger \cite{woinowsky} modified the classical beam models by Bernoulli and
Euler assuming a nonlinear dependence of the axial strain on the deformation gradient, by taking into account the stretching of the beam due to its
elongation. Independently, Burgreen \cite{burg} obtained the very same nonlinear beam equation. This leads to the energy $\mathcal{E}_2$ from which the corresponding
Euler-Lagrange equation can also be derived from a variational principle, now describing a stiffened beam with stretching energy behaving
superquadratically and nonlocally: if the beam is stretched somewhere, then this increases the resistance to
further stretching in all the other points (we refer to \cite{ggh} for a damped and forced version). In spite of this analogy, we will see in Section \ref{nonlinevol} that the stretching energy {\em goes through the
piers}, thereby behaving differently from the bending energy. More similar to $\mathcal{E}_1$ is the energy $\mathcal{E}_3$ which
describes a stiffened beam where the displacement behaves superquadratically and nonlocally: if the beam is displaced from its equilibrium
position in some point, then this increases the resistance to further displacements in all the other points. The corresponding equation
is the fourth order version of the wave equation considered in \cite{cazwei,cazw}.\par
The energy $\mathcal{E}_4$ is more standard and represents a stiffened beam where the displacement behaves superquadratically but locally.
The beam increases pointwise its resistance to displacements from the equilibrium position: in every point the resistance to displacement
depends only on the displacement itself in the same point. The corresponding equation will be studied in Section \ref{nonlinevol}. After a careful study of their behavior, we select the most reliable of these four nonlinearities: by ``reliable'' we mean here ``well-describing the phenomena visible
in real bridges''.\par
The video \cite{barbara} shows that there is {\em transmission of displacements} between spans. But
if one digits ``tacoma narrows bridge collapse images'' on Google, one finds pictures of the wide oscillations prior to the TNB collapse and
one sees that, while a torsional motion is visible on the main span, the side spans do not display torsional displacements, see the
qualitative reproduction in Figure \ref{tac1}.
\begin{figure}[!h]
\centering
\includegraphics[scale=0.45]{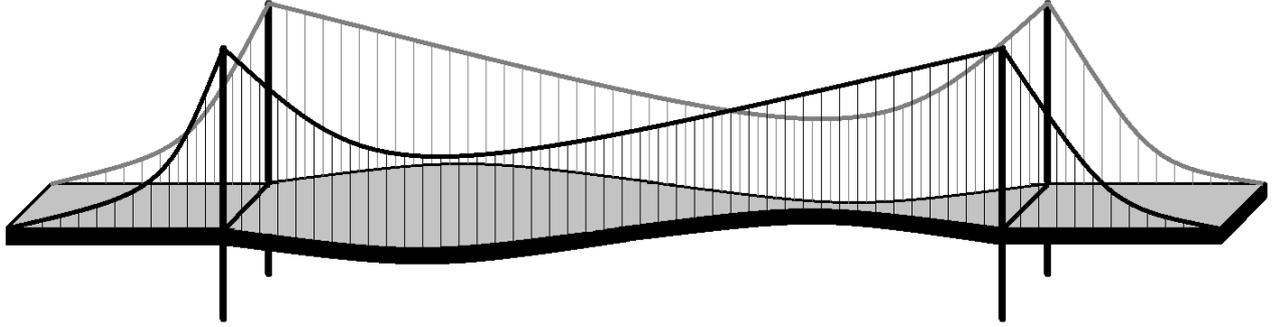}
\caption{Qualitative behavior of the oscillations at the TNB the day of the collapse.}
\label{tac1}
\end{figure}
From a mathematical point of view, this means that
$$
\mbox{\bf the matching between the displacements on the three spans is in general not smooth.}
$$
This fact is confirmed by a careful look at the video \cite{tacoma} which clearly shows that, during the oscillations, the connection between the main span and the
side spans {\em is not} $C^1$. In fact, Figure \ref{tac1}, the pictures on the web, and the video also show that
\begin{center}
{\bf within the deck, only the displacement of the midline is smooth during the oscillations}.
\end{center}

This is why among several possible ways of modeling the deck of a bridge with piers, we choose to see it as a degenerate plate, composed by a beam representing the midline of the plate and by
cross sections that are free to rotate around the beam, see Figure \ref{ffishbone}.
\begin{figure}[!ht]
\makebox[\linewidth][c]{
\begin{tikzpicture}[xscale=0.6,yscale=0.6]
\draw (0,0) -- (2, 0);

\draw[very thick](3.5, 0.3) -- (3.5, -2.3);
\draw[very thick](10.5, 0.3) -- (10.5, -2.3);
\draw[very thick](-0.1, -2.3) -- (-0.1, 0.3);
\draw  (3.5, -2) -- (0, -2);
\draw (0, -2) -- (0, 0);

\draw[fill=gray!30!white]  (0,0) -- (3.4,0) -- (3.4, -2) -- (0, -2) -- cycle;
\draw[fill=gray!30!white]  (3.5,0) -- (10.5,0) -- (10.5, -2) -- (3.5, -2) -- cycle;
\draw[fill=gray!30!white]  (10.6,0) -- (14,0) -- (14, -2) -- (10.6, -2) -- cycle;
\draw[very thick, color=white] (0, -1) -- (14, -1);

\draw[thin, color=black] (0.5, 0)--(0.5, -2);
\draw[thin, color=black] (1, 0)--(1, -2);
\draw[thin, color=black] (1.5, 0)--(1.5, -2);
\draw[thin, color=black] (2, 0)--(2, -2);
\draw[thin, color=black] (2.5, 0)--(2.5, -2);
\draw[thin, color=black] (3, 0)--(3, -2);
\draw[thin, color=black] (3.5, 0)--(3.5, -2);
\draw[thin, color=black] (4, 0)--(4, -2);
\draw[thin, color=black] (4.5, 0)--(4.5, -2);
\draw[thin, color=black] (5, 0)--(5, -2);
\draw[thin, color=black] (5.5, 0)--(5.5, -2);
\draw[thin, color=black] (6, 0)--(6, -2);
\draw[thin, color=black] (6.5, 0)--(6.5, -2);
\draw[thin, color=black] (7, 0)--(7, -2);
\draw[thin, color=black] (7.5, 0)--(7.5, -2);
\draw[thin, color=black] (8, 0)--(8, -2);
\draw[thin, color=black] (8.5, 0)--(8.5, -2);
\draw[thin, color=black] (9, 0)--(9, -2);
\draw[thin, color=black] (9.5, 0)--(9.5, -2);
\draw[thin, color=black] (10, 0)--(10, -2);
\draw[thin, color=black] (10.5, 0)--(10.5, -2);
\draw[thin, color=black] (11, 0)--(11, -2);
\draw[thin, color=black] (11.5, 0)--(11.5, -2);
\draw[thin, color=black] (12, 0)--(12, -2);
\draw[thin, color=black] (12.5, 0)--(12.5, -2);
\draw[thin, color=black] (13, 0)--(13, -2);
\draw[thin, color=black] (13.5, 0)--(13.5, -2);
\draw[thin, color=black] (14, 0)--(14, -2);

\draw[fill=black]  (3.4,0.3) -- (3.5,0.3) -- (3.5, -2.3) -- (3.4, -2.3) -- cycle;
\draw[fill=black]  (10.49,0.3) -- (10.6,0.3) -- (10.6, -2.3) -- (10.49, -2.3) -- cycle;
\draw[fill=black]  (-0.1,0.3) -- (-0.1,-2.3) -- (0, -2.3) -- (0, 0.3) -- cycle;
\draw[fill=black]  (14,0.3) -- (14.1,0.3) -- (14.1, -2.3) -- (14, -2.3) -- cycle;
\draw[fill=black]  (14.1,0.3) -- (14.3,0.3) -- (14.3, 0) -- (14.1, 0) -- cycle;
\draw[fill=black]  (14.1,-2) -- (14.3,-2) -- (14.3, -2.3) -- (14.1, -2.3) -- cycle;

\draw[fill=black]  (-0.1,0.3) -- (-0.3,0.3) -- (-0.3, 0) -- (-0.1, 0) -- cycle;
\draw[fill=black]  (-0.1,-2.3) -- (-0.3,-2.3) -- (-0.3, -2) -- (-0.1, -2) -- cycle;

\draw[very thin] (0, 0) -- (-0.8, 0);
\draw[very thin] (0, -2) -- (-0.8, -2);
\node at (-1.6, 0.1){$\ell$};
\node at (-1.75, -2.1){$-\ell$};
\node at (-0.15, -2.9){$-\pi$};
\node at (3.4, -2.9){$-a\pi$};
\node at (10.5, -2.9){$a\pi$};
\node at (14.15, -2.9){$\pi$};

\filldraw [black] (3.45, 0.2) circle (5pt);
\filldraw [black] (3.45, -2.2) circle (5pt);
\filldraw [black] (10.555, 0.2) circle (5pt);
\filldraw [black] (10.555, -2.2) circle (5pt);

\end{tikzpicture}}
\caption{A fish-bone model for a bridge with piers.}\label{ffishbone}
\end{figure}
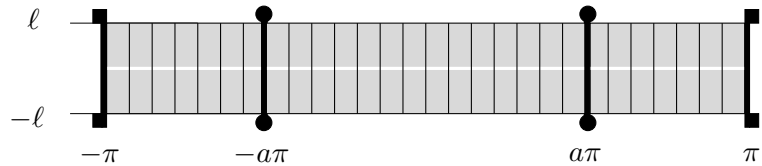
This guarantees both a smooth midline displacement as in \cite{barbara}
and nonsmooth connections between spans. Not only this model appears particularly appropriate to describe the behavior of a real bridge, but it also enables us to
exploit the analysis performed for the stability in beams. We limit ourselves to consider the case of symmetric side spans, namely $a=b$; setting
$$
I=(-\pi, \pi), \quad I_-=(-\pi, -a\pi), \quad I_0=(-a\pi, a\pi), \quad I_+=(a\pi, \pi),
$$
the plate is identified with the planar rectangle
$$
\Omega=I\times(-\ell,\ell)\subset\R^2,
$$
while the three spans are identified with
$$
\Omega_0:=I_0\times(-\ell,\ell)\quad\mbox{(main span)}, \qquad \Omega_-=I_-\times(-\ell,\ell), \quad \Omega_+=I_+\times(-\ell,\ell)\quad\mbox{(side spans).}
$$
The deck in actual bridges has a rectangular shape with two long edges of the order of $1$km and two shorter edges of the order of $14$m.
Therefore, if we assume that the deck $\Omega$ has width $2\ell$ and length $2\pi$, after scaling this means that $\ell\approx\pi/75$.
The white midline in Figure \ref{ffishbone} divides the roadway into two lanes and its vertical displacement is denoted by $u=u(x,t)$, for $x\in I$ and $t>0$. The
equilibrium position of the midline is $u=0$, with
\begin{equation}\label{spostagi첫}
u>0 \textrm{ corresponding to a } \bf{downwards} \textrm{ displacement}.
\end{equation}
Each cross-section is free to rotate
around the midline and its angle of rotation is denoted by $\alpha=\alpha(x,t)$. This model was called a {\em fish-bone} in \cite{bergaz}.\par
The vertical displacements of the two endpoints of the cross sections (in position $x$ and at time $t$) are given by
\neweq{sinalpha}
u(x,t)+\ell\sin\alpha(x,t)\qquad\mbox{and}\qquad u(x,t)-\ell\sin\alpha(x,t).
\endeq
Since we are not interested in describing accurately the behavior of the plate under large torsional angles, for small $\alpha$ the
following approximations are legitimate:
\neweq{trigo}
\cos\alpha\cong1\quad\text{and}\quad\sin\alpha\cong\alpha.
\endeq
If we set $\theta=\ell\alpha$, this cancels the dependence on the width $\ell$, see \cite{bergaz} for the details.
In view of \eq{trigo}, the displacements \eq{sinalpha} now read $u(x,t)\pm\theta(x,t)$.
In a suspension bridge, these displacements generate forces due to the restoring action of the cables+hangers system at the endpoints of the
cross sections; a crucial step is to find a reasonable form for these forces. We denote them by $f(u\pm\theta)$ and the corresponding potentials by
$F(u\pm\theta)$, so that $F'=f$.\par
We derive the Euler-Lagrange equations for this structure using variational methods, as a consequence of an energy balance. Denoting by $M>0$ the mass density of the beam, the kinetic energy of the beam is given by
the well-known law:
$$\frac{M}{2}\int_I u_t^2.$$
Since the plate is assumed to be homogeneous, the mass density of each cross section is also equal to $M>0$. On the other hand, the kinetic energy of a rotating object is
$\frac12 J\alpha_t^2$, where $J$ is the moment of inertia and $\alpha_t$ is the angular velocity. The moment of inertia of a rod of
length $2\ell$ about the perpendicular axis through its center is given by $\frac{M\ell^2}{3}$. Hence, the kinetic energy of the rod having
half-length $\ell$, rotating around its center with angular velocity $\alpha_t$, is given by
$$\frac{M}{6}\ell^2\int_I\alpha_t^2=\frac{M}{6}\int_I\theta_t^2.$$

Moreover, there exists a constant $\mu>0$, depending on the shear modulus and on the moment of inertia of the pure torsion, such that the total potential energy of the
cross sections is given by
$$\frac{\mu\ell^2}{2}\int_I\alpha_x^2=\frac{\mu}{2}\int_I\theta_x^2.$$
The bending energy of the beam depends on its curvature: if $EI>0$ is the flexural rigidity of the beam, it is given by
$$\frac{EI}{2}\int_I u_{xx}^2.$$

Finally, we derive the most delicate energy terms which create the {\em coupling} between the longitudinal displacement $u$ and the
torsional angle $\theta$. Both the contributions of the cables and the hangers have to be taken into account.\par
$\bullet$ {\bf Restoring force due to the cables.} As pointed out in the engineering literature (see, e.g., \cite{bartoli,luco}),
in suspension bridges the most relevant source of nonlinearity, and hence the main contribution to instability, comes from the
sustaining cables. The displacements of the endpoints $u\pm\theta$ of the cross sections of the deck stretch the cables which increase nonlinearly their tension
and have a nonlocal restoring effect distributed throughout themselves and, in turn, on the deck. The tension at rest of the cables is proportional to the
span$/$sag ratio. If a single beam is sustained by a cable having tension at rest $\gamma>0$ and if the beam has a displacement $w$, then the term
$\gamma\|w\|^2_{L^2}$ is a good measure for the geometric nonlinearity of the beam due to its displacement: as noticed in \cite{haraux}, the corresponding
equation is a Hamiltonian system that combines certain properties of the linear beam equation and of the central force motion. The potential $G=G(w)$ reads
\neweq{potential}
G(w)=\frac{\gamma}{4}\left(\int_Iw^2\right)^2.
\endeq
We use here this principle to study the behavior of the edges $I\times\{\pm\ell\}$ of the cross sections of the plate $\Omega=I\times(-\ell,\ell)$. Therefore,
the potential \eq{potential} is applied at both the endpoints of the cross sections. This means that $w$ should be replaced by both $u\pm\theta$, giving rise
to the potentials
$$
G(u+\theta)=\frac{\gamma}{4}\left(\int_I(u+\theta)^2\right)^2, \qquad G(u-\theta)=\frac{\gamma}{4}\left(\int_I(u-\theta)^2\right)^2.
$$

$\bullet$ {\bf Restoring force due to the hangers.} The hangers have the delicate role to connect the cables and the deck, see Figure \ref{suspension}. In some cases,
they may be considered inextensible, see \cite{luco}, but their main feature is that they can lose tension
if the deck and the cables are too close to each other, a phenomenon called {\em slackening}, that was observed during the TNB collapse,
see \cite[V-12]{ammvkwoo}.\par
Several different forms for the restoring force due to the hangers have been suggested in literature, see \cite{benforgaz} for a quick survey.
At this stage, we simply denote the restoring force by $f$, postponing its explicit form to Section \ref{slack}. The potentials $F=F(u \pm \theta)$ depend
only on the {\em local} unknowns $u,\theta$ and not on their global behavior.\par
Thanks to all the above terms, we finally obtain the total energy associated with the fish-bone model:
\begin{eqnarray*}
E(u, \theta) & = & \frac{M}{2} \int_I u_t^2+ \frac{M}{6} \int_I \theta_t^2 +\frac{\mu}{2} \int_I \theta_x^2 + \frac{EI}{2} \int_I u_{xx}^2
+\frac{\gamma}{4}\left(\int_I(u+\theta)^2\right)^2+\frac{\gamma}{4}\left(\int_I(u-\theta)^2\right)^2\, \\ & & +  \int_{I} F(u+\theta)+\int_I F(u-\theta).
\end{eqnarray*}
This energy then leads to the following {\em nonlocal} system, for $x\in I$ and $t>0$:
\begin{equation}\label{system0}
\left\{
\begin{array}
[c]{l}%
Mu_{tt}+EIu_{xxxx}+2\gamma \Big( \int_I(u^2+\theta^2)\Big)u+4\gamma \Big(\int_I u\theta\Big)\theta+f(u+\theta)+f(u-\theta)=0\\
\frac{M}{3}\theta_{tt}-\mu\theta_{xx}+4\gamma \Big(\int_I u\theta \Big) u+2\gamma\Big(\int_I(u^2+\theta^2)\Big)\theta+f(u+\theta)-f(u-\theta)=0.
\end{array}
\right.
\end{equation}
If $\gamma=0$ and $f$ is linear, then system \eq{system0} decouples into two linear equations. The definition of {\em weak solution} of \eq{system0}
will be given in Section \ref{weakformTI}.\par
In a slightly different setting, involving mixed space-time fourth order derivatives, a linear version of (\ref{system0}) with coupling terms
was suggested by Pittel-Yakubovich \cite{pittel}, see also \cite[p.458, Chapter VI]{yakubovich}. A nonlinear $f$ was considered in (\ref{system0}) by
Holubov\'{a}-Matas \cite{holubova}, who were able to prove well-posedness for a forced-damped version of (\ref{system0}). Also in
\cite{bergaz} the well-posedness of an initial-boundary value problem for (\ref{system0}) is proved for a wide class of nonlinear forces $f$.
The fish-bone model described by (\ref{system0}), with {\em nonlinear} $f$, is able to display a possible transition between vertical and
torsional oscillations within the main span: the former are described by $u$ whereas the latter are described by $\theta$.\par
To \eq{system0} we associate the boundary-internal-initial conditions
\begin{equation}\label{boundary}
u(-\pi,t)=u(\pi,t)=\theta(-\pi,t)=\theta(\pi,t)=0\qquad t \geq 0,
\end{equation}
\begin{equation}\label{jc1}
u(-a\pi,t)=u(a\pi,t)=\theta(-a\pi,t)=\theta(a\pi,t)=0\qquad t \geq 0,
\end{equation}
\begin{equation}\label{initial2}
u(x,0)=u_0(x),\quad u_t(x,0)=u_1(x),\quad\theta(x,0)=\theta_0(x),\quad\theta_t(x,0)=\theta_1(x) \qquad x \in I.
\end{equation}

\section{Functional setting: linear stationary beam equations}\label{functional}

In this section we consider a hinged beam of length $2\pi$, represented by the segment $I=(-\pi,\pi)$, which has two intermediate piers in correspondence of the points $-b\pi$ and $a\pi$, $0 < a, b < 1$. We set
$$
I_-=(-\pi, -b\pi),\qquad I_0=(-b\pi, a\pi),\qquad I_+=(a\pi,\pi),
$$
so that $\overline{I}=\overline{I}_- \cup \overline{I}_0 \cup \overline{I}_+$.
The aim is to provide a functional framework in which problem \eqref{beamlineare}-\eqref{bc} can be settled and to determine the vibrating modes of this kind of beam.

\subsection{Weak stationary solutions and regularity}\label{3.1}

In order to study both the functional setting and the regularity of the solutions,
fairly instructive appears the analysis of the stationary forced version of \eq{beamlineare}.
To this end, we introduce the space
\begin{equation}\label{V}
V(I):=\{u\in H^2\cap H^1_0(I);\, u(-b\pi)=u(a\pi)=0\}\, ;
\end{equation}
notice that the boundary and internal conditions
\neweq{bcstat}
u(-\pi)=u(\pi)=u(-b\pi)=u(a\pi)=0\,
\endeq
make sense since $V(I)$ embeds into $C^0(\overline{I})$.
We characterize $V(I)$ through the following statement; in Section \ref{pfVI} we provide its proof which, though elementary, we were not able to find in literature.

\begin{theorem}\label{VI}
The space $V(I)$ defined in \eqref{V} is a subspace of $H^2\cap H^1_0(I)$ having codimension $2$, whose orthogonal complement is given by
$$
V(I)^\perp=\{v \in C^2(\overline{I}) \mid v(\pm\pi)=v''(\pm\pi)=0, \, v''\textrm{ is piecewise affine on }\overline{I}_-,\overline{I}_0\textrm{ and }\overline{I}_+\}.
$$
Therefore, $V(I)^\perp \cap C^3(I)=\{0\}$.
A basis $\{v_1, v_2\}$ of $V(I)^\perp$ is given by
\begin{equation}\label{v1}
v_1(x)=\left\{
\begin{array}{ll}
-(b+1)\big(x^3+3\pi x^2 + \pi^2 (b^2 + 2 b) x + \pi^3 (b^2 + 2 b - 2)\big)\quad  & x \in [-\pi,-b\pi] \\
(1 - b)\big(x^3 -3\pi x^2 + \pi^2 (b^2 - 2 b)x - \pi^3 (b^2 - 2 b - 2)\big)\quad & x \in [-b\pi,\pi]
\end{array}
\right.
\end{equation}
\begin{equation}\label{v2}
v_2(x)=\left\{
\begin{array}{ll}
(a-1)\big(x^3+3\pi x^2 + \pi^2 (a^2 - 2 a)x + \pi^3 (a^2 - 2 a - 2)\big)\quad & x \in [-\pi,a\pi] \\
(1 + a)\big(x^3-3\pi x^2 + \pi^2 (a^2 + 2 a) x - \pi^3 (a^2 + 2 a - 2)\big)\quad & x \in [a\pi,\pi].
\end{array}
\right.
\end{equation}
\end{theorem}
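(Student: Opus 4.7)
The plan is to work in the Hilbert space $H := H^2 \cap H^1_0(I)$ endowed with the inner product $(u,v) = \int_I u'' v''$, which is equivalent to the usual one by Poincar\'e's inequality on $H^1_0$. The Sobolev embedding $H^2(I) \hookrightarrow C^0(\overline{I})$ guarantees that the point evaluations $L_1(u) = u(-b\pi)$ and $L_2(u) = u(a\pi)$ are continuous linear functionals on $H$. Consequently, $V(I) = \ker L_1 \cap \ker L_2$ is a closed subspace of codimension at most two, and once equality is established we automatically get $V(I)^\perp = \mathrm{span}\{w_1, w_2\}$, where $w_i$ denotes the Riesz representer of $L_i$.

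The next step is to determine $w_1$ and $w_2$ explicitly via a Green's-function argument. For each $x_0 \in \{-b\pi, a\pi\}$, I would look for $w\in H$ (a piecewise cubic suffices) solving, in the sense of distributions,
\begin{equation*}
w(\pm\pi) = w''(\pm\pi) = 0, \qquad w'''' = \delta_{x_0},
\end{equation*}
so that a twofold integration by parts (the boundary terms vanish, thanks to $u(\pm\pi)=0$ and $w''(\pm\pi)=0$) yields $(w,u) = u(x_0)$ for every $u\in H$. Such a $w$ is a cubic on each of the two subintervals $(-\pi,x_0)$ and $(x_0,\pi)$, glued through continuity of $w,w',w''$ and a unit jump of $w'''$ at $x_0$. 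Solving this four-point problem and verifying that the formulas \eqref{v1} and \eqref{v2} vanish with their second derivatives at $\pm\pi$, are $C^2$ at the internal node (left/right limits of $v_i$, $v_i'$, $v_i''$ coincide), and exhibit a jump of $12$ in the third derivative, shows that $v_1 = 12\, w_1$ and $v_2 = 12\, w_2$ up to this normalization.

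Since $v_1'''$ is discontinuous only at $-b\pi$ and $v_2'''$ only at $a\pi$, the two functions are linearly independent, so $\{v_1, v_2\}$ is a basis of $V(I)^\perp$ and the codimension is exactly $2$. For the intrinsic characterization, any $v = c_1 v_1 + c_2 v_2$ is $C^2(\overline{I})$, piecewise cubic on $\overline{I}_-$, $\overline{I}_0$, $\overline{I}_+$, and vanishes together with its second derivative at $\pm\pi$; in particular $v''$ is piecewise affine on the three closed subintervals. Conversely, a dimension count on the twelve cubic coefficients subject to the six $C^2$ matching conditions at $-b\pi, a\pi$ and the four boundary conditions at $\pm\pi$ leaves precisely a two-dimensional family, which must therefore coincide with $\mathrm{span}\{v_1,v_2\}$.

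The final assertion $V(I)^\perp \cap C^3(I) = \{0\}$ is then immediate: a generic element $c_1 v_1 + c_2 v_2$ has $v'''$-jumps equal to $12c_1$ at $-b\pi$ and $12c_2$ at $a\pi$, so $C^3$-regularity forces $c_1 = c_2 = 0$. I expect the main obstacle not to be conceptual but bookkeeping: the piecewise polynomial verification of \eqref{v1}--\eqref{v2} — the identities $v_i(\pm\pi) = v_i''(\pm\pi) = 0$ together with continuity of $v_i, v_i', v_i''$ at the break point — requires patient polynomial algebra, while all the functional-analytic content is already carried by the Riesz identification above.
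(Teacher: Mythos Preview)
Your argument is correct and complete; it differs from the paper's in framing rather than substance, but the difference is worth noting.

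The paper first shows codimension $2$ by expanding $u\in H$ in the sine basis $\sin\tfrac{n(x+\pi)}{2}$ and reading the two internal constraints $u(-b\pi)=u(a\pi)=0$ as two linear relations on the Fourier coefficients. It then identifies $V(I)^\perp$ by a direct integration-by-parts check: if $h$ is any continuous piecewise-affine function on $\overline{I}_-,\overline{I}_0,\overline{I}_+$ with $h(\pm\pi)=0$, then $\int_I h\,u''=0$ for every $u\in V(I)$; since such $h$'s form a two-parameter family, integrating twice and imposing $v(\pm\pi)=0$ produces $v_1,v_2$.

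Your route via Riesz representers is the same computation seen from the other side: the representer $w$ of the evaluation functional at $x_0$ solves $w''''=\delta_{x_0}$ with hinged boundary data, so $w''$ is exactly the paper's tent function $h$ peaked at $x_0$. What your approach buys is an explanation for \emph{why} the orthogonal complement consists of piecewise cubics with third-derivative jumps at the piers (they are Green's functions for point evaluation), whereas the paper simply exhibits the right space and verifies orthogonality. Conversely, the paper's Fourier argument for codimension $2$ is slightly more self-contained than invoking the Sobolev embedding, though both are elementary. Either way the algebraic verification of \eqref{v1}--\eqref{v2} is the same bookkeeping.
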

Notice that $V(I)^\perp$ is made by functions that are more regular than $H^2(I)$; they are $C^2(\overline{I})$, but they fail to be $C^3$ (except for the zero function) since each pier
produces a discontinuity in the third derivative. This effect is well seen by looking into expressions \eqref{v1} and \eqref{v2}. Notice that the basis $\{v_1(x), v_2(x)\}$ is not orthogonal, but it has the advantage of highlighting separately the two singularities at the piers. We will complement Theorem \ref{VI} by determining an explicit orthogonal
basis of $V(I)$ in Theorem \ref{Michelle}: all the elements of this basis will be of class $C^2(\overline{I})$, some of them may even be analytic. In Figure \ref{plots}
we represent the graphs of $v_1$ and $v_2$, as well as their second derivatives, in the case $b=3/4$ and $a=1/4$.
The plots for other values of $a$ and $b$, possibly different, are qualitatively similar.

\begin{figure}[ht]
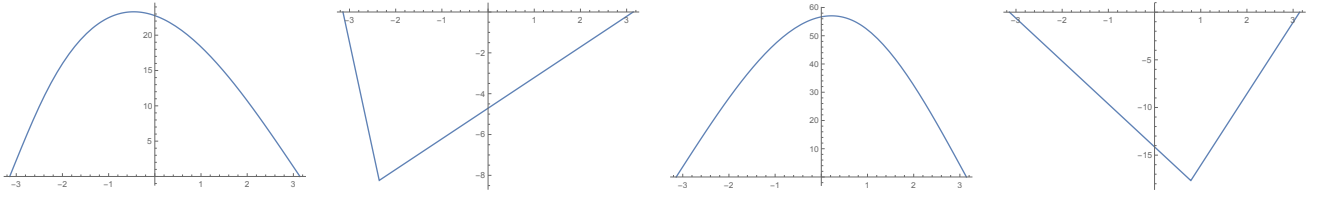

\begin{center}
\includegraphics[height=25mm, width=40mm]{V1.pdf}\quad\includegraphics[height=25mm, width=40mm]{V1seconda.pdf}\quad
\includegraphics[height=25mm, width=40mm]{V2.pdf}\quad\includegraphics[height=25mm, width=40mm]{V2seconda.pdf}
\caption{From left to right: plots of the functions $v_1$, $v_1''$, $v_2$, $v_2''$ in the case $b=3/4$ and $a=1/4$.}\label{plots}
\end{center}
\end{figure}

We incidentally observe that the orthogonal complement of $H^2_0(I)$ in $H^2 \cap H^1_0(I)$ is spanned by the two functions $\pi^2-x^2$, $x(\pi^2-x^2)$, whose second derivatives are the limits in $L^2(I)$, for $a, b \to 1$, of the second derivatives of two suitable functions in $V(I)^\perp$.
On the other hand, when $a,b\to0$ the space $V(I)$ ``converges'' to the limit space
\neweq{Vstar}
V_*(I):=\{u\in H^2\cap H^1_0(I);\, u(0)=u'(0)=0\},
\endeq
whose orthogonal complement $V_*(I)^\perp$ is spanned by functions $w_1$ and $w_2$ having second derivatives
$$
w_1''(x)=\left\{\begin{array}{ll}
x+\pi & \mbox{if }-\pi\le x\le0\\
\pi-x & \mbox{if }\ 0\le x\le\pi,
\end{array}\right.\qquad\qquad
w_2''(x)=\left\{\begin{array}{ll}
x+\pi & \mbox{if }-\pi\le x<0\\
x-\pi & \mbox{if }\ 0< x\le\pi.
\end{array}\right.
$$
Note that $w_1''$ is continuous, while $w_2''$ is not.
These functions are obtained as limits in $L^2(I)$ of $v_1''+v_2''$ and $v_1''-v_2''$, as defined in \eq{v1} and \eq{v2}, when $a,b\to0$.
It is worth emphasizing that, contrary to $V(I)^\perp$, the space $V_*(I)^\perp$ contains functions which {\em are not} $C^2$.\par
We now pass to study the forced version of \eqref{beamlineare}.
First, we recall that if there are no piers, the equation reads as
\neweq{statforte}
u''''(x)+\gamma u(x)=f(x)\qquad x\in I
\endeq
and the natural functional space where solutions of \eq{statforte} have to be sought
is $H^2\cap H^1_0(I)$, endowed with the scalar product $(u, v) \mapsto \int_I u'' v''$. The notion of weak solution is derived from a variational principle: the total energy $E(u)$ of the beam in position $u$ is the
sum of the bending energy, the restoring energy, and the forcing energy. \par
If the beam does have intermediate piers, the energy is defined on the functional space $V(I)$, that is,
\begin{equation}\label{energiaVI}
E(u)=\frac12 \int_I\Big((u'')^2+\gamma u^2\Big)-\langle f,u\rangle_V\qquad\forall u\in V(I),
\end{equation}
where $\langle\cdot,\cdot\rangle_V$ denotes the duality pairing between $V(I)$ and $V'(I)$, its dual space.
If $f\in L^1(I)$, then the crochet may be replaced by the integral $\int_I fu$.
By computing
the Fr\'echet derivative of $E$ in $V(I)$, we obtain the following definition.
\begin{definition}\label{defweakstat}
Let $f\in V'(I)$. We say that $u\in V(I)$ is a weak solution of \eqref{statforte}-\eqref{bcstat} if
\neweq{weakstat}
\int_I u''v''+\gamma\int_Iuv=\langle f,v\rangle_V\qquad\forall v\in V(I).
\endeq
\end{definition}

In the sequel, we denote by $\delta_{-b\pi},\delta_{a\pi}\in V'(I)$ the Dirac delta distributions at the points $-b\pi$ and $a\pi$.
In the next statement, which will be proved in Section \ref{pfregular}, we discuss the regularity of weak solutions.

\begin{theorem}\label{regular}
Let $\gamma \geq 0$. For all $f\in V'(I)$ there exists a unique weak solution $u\in V(I)$ of \eqref{statforte}-\eqref{bcstat}, according to Definition \ref{defweakstat}.
Moreover, if $f\in C^0(\overline{I})$, then:\par
$(i)$ the solution satisfies $u\in C^4(\overline{I}_-) \cap C^4(\overline{I}_0) \cap C^4(\overline{I}_+) \cap C^2(\overline{I})$ and $u''(\pm\pi)=0$;\par
$(ii)$ there exist $\alpha_f, \beta_f\in\R$ (depending on $f$, $\gamma$, $a$, $b$) such that $u''''+\gamma u=f+\alpha_f\delta_{a\pi}+\beta_f\delta_{-b\pi}$
in distributional sense;\par
$(iii)$ there exists a subspace $X(I)\subset C^0(\overline{I})$ of codimension $2$ such that $u\in C^4(\overline{I})$ if and only if $f\in X(I)$;\par
$(iv)$ we have that $u\in C^4(\overline{I})$ if and only if $u\in C^3(\overline{I})$.
\end{theorem}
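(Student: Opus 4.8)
The plan is to treat the four assertions in order: existence and uniqueness come from Lax--Milgram, and the three regularity statements from a piecewise integration by parts in the weak formulation \eqref{weakstat}.

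\emph{Existence and uniqueness.} First I would note that $V(I)$ is a Hilbert space, being the kernel in $H^2\cap H^1_0(I)$ of the two continuous functionals $v\mapsto v(-b\pi)$ and $v\mapsto v(a\pi)$ (continuous because $H^2(I)\hookrightarrow C^0(\overline I)$). On $H^2\cap H^1_0(I)$ the seminorm $\|u''\|_{L^2}$ is a norm equivalent to the full $H^2$-norm (Poincar\'e, using $u(-\pi)=0$ and $\int_I u'=0$). Hence for $\gamma\ge0$ the bilinear form $a(u,v)=\int_I u''v''+\gamma\int_I uv$ is continuous and coercive on $V(I)$, and Lax--Milgram gives a unique $u\in V(I)$ solving \eqref{weakstat} for every $f\in V'(I)$, depending continuously on $f$.

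\emph{Proof of (i)--(ii).} Now assume $f\in C^0(\overline I)$, so that $\langle f,v\rangle_V=\int_I fv$. Testing \eqref{weakstat} against $v\in C_c^\infty(J)$ for each $J\in\{I_-,I_0,I_+\}$ yields $u''''+\gamma u=f$ in $\mathcal D'(J)$; since $u\in H^2(J)\hookrightarrow C^1(\overline J)$, bootstrapping in $u''''=f-\gamma u$ gives $u\in C^4(\overline J)$ on each of the three closed subintervals. Next I would insert a general $v\in V(I)$ into \eqref{weakstat} and integrate $\int_I u''v''$ by parts twice on each subinterval, moving both derivatives onto $u$: the terms carrying a factor $v$ vanish because $v(\pm\pi)=v(-b\pi)=v(a\pi)=0$, the interior contribution $\int_I u''''v+\gamma\int_I uv=\int_I fv$ cancels the right-hand side, and what survives is
\[
u''(\pi^-)v'(\pi)-u''(-\pi^+)v'(-\pi)+\bigl(u''(-b\pi^-)-u''(-b\pi^+)\bigr)v'(-b\pi)+\bigl(u''(a\pi^-)-u''(a\pi^+)\bigr)v'(a\pi)=0
\]
for all $v\in V(I)$. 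The crucial observation is that $v\mapsto\bigl(v'(-\pi),v'(\pi),v'(-b\pi),v'(a\pi)\bigr)$ maps $V(I)$ onto $\R^4$: near each of the four points one produces a smooth function vanishing there with prescribed first derivative and supported in a tiny neighborhood, so the two value constraints cutting out $V(I)$ are unaffected. Therefore every coefficient above vanishes, i.e.\ $u''(\pm\pi)=0$ and $u''$ is continuous across both piers; combined with the piecewise $C^4$-regularity this gives $u\in C^2(\overline I)$ and proves (i). Since then $u,u',u''$ are continuous on $\overline I$ and only $u'''$ can jump at the piers, the distributional identity $u''''+\gamma u=f+\bigl(u'''(a\pi^+)-u'''(a\pi^-)\bigr)\delta_{a\pi}+\bigl(u'''(-b\pi^+)-u'''(-b\pi^-)\bigr)\delta_{-b\pi}$ holds, which is (ii) with $\alpha_f,\beta_f$ equal to those two jumps.

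\emph{Proof of (iv) and (iii).} For (iv), the implication $C^4\Rightarrow C^3$ is trivial; conversely, if $u\in C^3(\overline I)$, the identity $u''''=f-\gamma u$ valid on $I_0$ and on $I_+$ forces the two one-sided fourth derivatives at $a\pi$ to coincide (both equal $f(a\pi)-\gamma u(a\pi)$, continuity of $f$ being used here), so $u''''$ is continuous across $a\pi$, and likewise across $-b\pi$, whence $u\in C^4(\overline I)$. For (iii), let $S:C^0(\overline I)\to C^4(\overline I)$ be the (continuous, since $\gamma\ge0$) solution operator of the pier-free hinged problem $w''''+\gamma w=f$, $w(\pm\pi)=w''(\pm\pi)=0$. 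Using (ii) and uniqueness one checks that the weak solution $u$ lies in $C^4(\overline I)$ if and only if $S(f)(-b\pi)=S(f)(a\pi)=0$ (in which case $S(f)\in V(I)$ solves \eqref{weakstat}, so $u=S(f)$). Hence $X(I)=\ker\Lambda$ with $\Lambda(f):=\bigl(S(f)(-b\pi),S(f)(a\pi)\bigr)\in\R^2$, a continuous linear map. Writing $S(f)(x_0)=\int_I G(x_0,y)f(y)\,dy$ with $G$ the Green's function of the hinged operator, the two kernels $y\mapsto G(-b\pi,y)$ and $y\mapsto G(a\pi,y)$ cannot be proportional, since applying $\partial_y^4+\gamma$ produces $\delta_{-b\pi}$ and $\delta_{a\pi}$, which are independent as $-b\pi\ne a\pi$. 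Thus $\Lambda$ is surjective and $X(I)$ has codimension $2$.

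\emph{Main difficulty.} The delicate part is extracting, in (i), the natural boundary conditions $u''(\pm\pi)=0$ and the $C^2$-matching across the piers from \eqref{weakstat}: one must integrate by parts on each subinterval, verify that no unwanted term survives, and then invoke the surjectivity of the derivative-evaluation map on $V(I)$, which is in essence a restatement of the codimension-$2$ characterization in Theorem \ref{VI}. The remaining assertions are then routine applications of Lax--Milgram, ODE bootstrapping, and the short Green's-function argument.
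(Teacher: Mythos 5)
Your proof is correct and, in several places, more explicit than the paper's. The overall architecture (Lax--Milgram for well-posedness, interior bootstrap giving piecewise $C^4$, integration by parts to extract the natural conditions) is the same, but the details differ in ways worth noting. For item $(i)$, where the paper cites \cite{HolNec10,Loc73} to obtain $C^2$-regularity and then deduces $u''(\pm\pi)=0$, you make the argument self-contained: after showing the map $v\mapsto\bigl(v'(-\pi),v'(\pi),v'(-b\pi),v'(a\pi)\bigr)$ is surjective from $V(I)$ onto $\R^4$, one integration by parts delivers both the $C^2$-matching at the piers and the no-bending conditions at $\pm\pi$ simultaneously. For item $(ii)$ the paper builds the weak solution constructively as $U_f$ (the four-point classical solution of \eqref{wrong3}) plus explicit polynomial (resp.\ kernel-of-$w''''+4\nu^4 w$) corrections on each span, defining $\alpha_f,\beta_f$ as the resulting jumps in $u'''$; this construction is then reused in the examples and in Corollary \ref{coroll}. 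You instead read the distributional identity directly off the regularity established in $(i)$, identifying $\alpha_f,\beta_f$ as the jumps $u'''(a\pi^\pm)$, $u'''(-b\pi^\pm)$ --- shorter, but without producing an explicit representation. For $(iii)$ you add something the paper leaves implicit: the paper only says $X(I)$ is cut out by the two linear constraints $\alpha_f=\beta_f=0$ without verifying they are independent, whereas your Green's-function argument (applying $\partial_y^4+\gamma$ to $G(-b\pi,\cdot)$ and $G(a\pi,\cdot)$ to produce independent deltas) establishes that the codimension is exactly $2$. For $(iv)$ the paper reduces to $(iii)$ via the jump formula; your argument that the one-sided fourth derivatives at each pier must coincide with $f(\cdot)-\gamma u(\cdot)$ is a clean, equivalent route. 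In short: same skeleton, but your version replaces citation by direct computation in $(i)$, trades explicit construction for a jump argument in $(ii)$, and fills a small gap in $(iii)$.
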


From Item $(i)$ we see that a weak solution of \eq{weakstat} makes the beam globally hinged. This means that
its least energy configuration is $C^2$ at the piers and displays no bending at the endpoints. This may lead to nonsmooth
solutions of \eq{weakstat}: indeed, Item $(ii)$ says that

\begin{center}
{\bf if the two-piers beam is subject to a continuous force $f$,}\\
{\bf then each pier yields an additional load equal to some impulse depending on $f$.}
\end{center}
The procedure that we introduce in the proof of Item $(ii)$ explains why, in most cases, the regularity of the solution cannot be improved to
$C^4(\overline{I})$. In particular, it entails the following statement.

\begin{corollary}\label{coroll}
Let $\gamma\ge0$ and $f\in C^0(\overline{I})$. Then the weak solution of \eqref{statforte}-\eqref{bcstat} belongs to $C^4(\overline{I})$ if and only if
it coincides with the unique classical solution $U_f$ of the problem
\neweq{wrong3}
U_f''''(x)+\gamma U_f(x)=f(x)\qquad x \in I,\qquad U_f(-\pi)=U_f(-b\pi)=U_f(a\pi)=U_f(\pi)=0.
\endeq
\end{corollary}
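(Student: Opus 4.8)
The statement is essentially a corollary of Theorem~\ref{regular}$(i)$--$(ii)$, so the plan is to extract both implications from what that theorem already provides, spending no effort on new estimates; the one point that genuinely needs attention is the well-posedness of the auxiliary problem \eqref{wrong3}. Concretely, I would first show that regularity of the weak solution forces the two impulses at the piers to vanish, which identifies it with $U_f$; then I would check the converse, which is immediate from ODE bootstrapping; and finally I would settle the unique solvability of \eqref{wrong3}, which is the real content hidden in the word ``unique''.

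For the forward direction, assume the weak solution $u$ belongs to $C^4(\overline{I})$. Then $u''''$ is continuous on $\overline{I}$, so $u''''+\gamma u$ is a continuous function and, as a distribution, has no atomic part. Comparing this with the identity $u''''+\gamma u=f+\alpha_f\delta_{a\pi}+\beta_f\delta_{-b\pi}$ provided in the sense of distributions by Theorem~\ref{regular}$(ii)$, and matching the singular parts on the two sides (the Dirac masses $\delta_{a\pi},\delta_{-b\pi}$ being independent since $a\pi\neq -b\pi$), I get $\alpha_f=\beta_f=0$, hence $u''''+\gamma u=f$ pointwise on $I$. Since $u\in V(I)$, the four conditions $u(-\pi)=u(-b\pi)=u(a\pi)=u(\pi)=0$ of \eqref{bcstat} hold as well, so $u$ is a classical solution of \eqref{wrong3}; by uniqueness for \eqref{wrong3} this gives $u=U_f$. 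The converse is immediate: if $u=U_f$, then since $f\in C^0(\overline{I})$ one has $U_f''''=f-\gamma U_f\in C^0(\overline{I})$, and four integrations upgrade $U_f$ to $C^4(\overline{I})$, whence $u\in C^4(\overline{I})$.

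The step I expect to be the main obstacle is the unique solvability of \eqref{wrong3}, which is what makes ``$u=U_f$'' meaningful. The general solution of $U''''+\gamma U=f$ is a four-parameter family, on which the four nodal conditions impose a linear system, so I would reduce both existence and uniqueness to the nonvanishing of the $4\times4$ determinant obtained by evaluating a fundamental system of $w''''+\gamma w=0$ at the nodes $-\pi,-b\pi,a\pi,\pi$. For $\gamma=0$ this is a Vandermonde-type fact (a cubic vanishing at four distinct points is zero); for $\gamma>0$ one must examine that determinant in the parameter range under consideration. The reason this is delicate is the contrast with \eqref{weakstat}: the bilinear form $\int_I\Big((u'')^2+\gamma u^2\Big)$ is coercive on $V(I)$ for every $\gamma\ge0$, so the weak problem is unconditionally well posed, whereas \eqref{wrong3} carries no natural boundary condition at $\pm\pi$, so its unique solvability must be verified by hand --- the ``two missing dimensions'' of $V(I)$ (Theorems~\ref{VI} and \ref{regular}$(iii)$) resurfacing at the pointwise level.
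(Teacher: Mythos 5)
Your proof is correct and is essentially the paper's intended argument: the paper derives the corollary directly from the construction $u=U_f+P$ in the proof of Theorem~\ref{regular}$(ii)$, where $P$ is the piecewise correction, and your distributional-matching step (``the atomic parts must vanish'') is just a cleaner phrasing of the fact that $P$ has no jump in the third derivative at the piers, whence $P$ is a global kernel element vanishing at the four nodes. The one remark worth spelling out is that the Lax--Milgram uniqueness of the \emph{weak} problem cannot be invoked to settle the uniqueness of the classical four-point problem \eqref{wrong3}: a nontrivial $w$ in the kernel of \eqref{wrong3} would be a smooth solution of $w''''+\gamma w=0$ vanishing at the four nodes, but it need not satisfy the natural conditions $w''(\pm\pi)=0$ that any weak solution must satisfy (Theorem~\ref{regular}$(i)$), so the variational argument does not kill it --- this is precisely the ``two missing dimensions resurfacing'' that you allude to, and it is the right thing to isolate. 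For $\gamma=0$ your Vandermonde observation closes the gap; for $\gamma>0$ the paper likewise asserts unique solvability (the ``suitable linear combination'' remark in the proof of Theorem~\ref{regular}$(ii)$) without verifying the corresponding $4\times4$ determinant, so your treatment matches the paper's level of detail on that point.
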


\subsection{Examples and further remarks}

We illustrate here the procedure mentioned in the previous section, applied to two simple and instructive examples helping to better understand the statement of Theorem \ref{regular}.\par
$\bullet$ Take $\gamma=0$ and consider \eq{statforte} with constant load, that is,
\neweq{stationary}
u''''(x)=24\qquad x \in I_- \cup I_0 \cup I_+ ,
\endeq
whose solutions are fourth order polynomials.
If \eq{stationary} is complemented with the boundary and internal conditions \eq{bcstat}, then the function $U_f$ defined by \eq{wrong3}
is given by
$$U_f(x)=(x+b\pi)(x-a\pi)(x^2-\pi^2)$$
which \emph{does not satisfy} \eq{weakstat}, regardless of the values of $a,b < 1$. To see this, it suffices to notice that
$$U_f''(-\pi)=2\pi^2(5-3b+3a-ab)>0,\quad U_f''(\pi)=2\pi^2(5+3b-3a-ab)>0\qquad\forall a,b<1,$$
violating the no-bending boundary behavior in Item $(i)$ of Theorem \ref{regular}. In Figure \ref{falsetrue} we plot the graphs of the functions
$U_f$ and of the weak solution $\overline{u}$ of \eq{stationary} when $b=0.5$ and $a=0.7$: their difference is quite evident.
\begin{figure}[ht]
\begin{center}
\includegraphics[height=40mm, width=60mm]{Uf.pdf}
\caption{Plots of $U_f$ (thin line) and of the weak solution $\overline{u}$ of \eq{stationary} (thick line) for $b=0.5$, $a=0.7$.}\label{falsetrue}
\end{center}
\end{figure}

The function $\overline{u}$, as characterized by Definition \ref{defweakstat}, may be obtained through the procedure explained
in the proof of Theorem \ref{regular}. This solution minimizes the energy defined in \eqref{energiaVI}: let us define
\begin{equation}\label{defacca}
H(a,b):=E(U_f)-E(\overline{u})\qquad\forall(a,b)\in(0,1)^2.
\end{equation}
Then $H(a,b)>0$ in the square $(0,1)^2$ and the graph of the map $(a,b)\mapsto H(a,b)$ is plotted in the left picture in Figure \ref{H}.
\begin{figure}[ht]
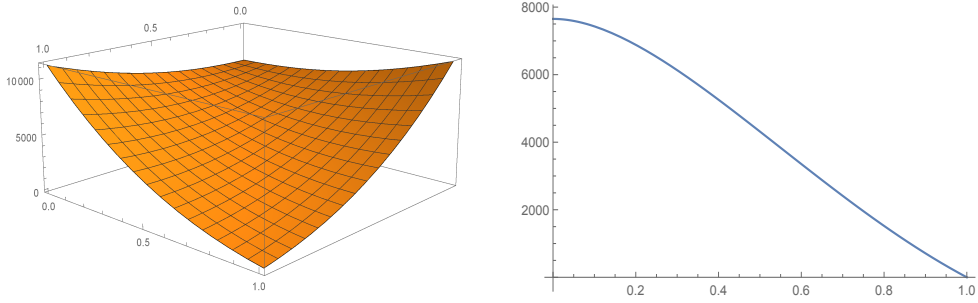

\begin{center}
\includegraphics[height=40mm, width=60mm]{soluzione1H.pdf}\qquad\includegraphics[height=40mm, width=60mm]{soluzione1HH.pdf}
\caption{Plots of $(a,b)\mapsto H(a,b)$ on $(0,1)^2$ (left) and of $a\mapsto H(a,a)$ on $(0,1)$ (right).}\label{H}
\end{center}
\end{figure}
It turns out that $0=H(1,1)<H(a,b)<H(1,0)=H(0,1)$ for all $(a,b)\in(0,1)^2$. Moreover, the graph is obviously symmetric with respect to the line
$a=b$: in the right picture of Figure \ref{H} we plot the graph of the map $a\mapsto H(a,a)=\frac{(5-a^2)^2(1+2a-3a^2)\pi^5}{2a+1}$ for $a\in(0,1)$,
which corresponds to beams with symmetric side spans. Since $\overline{u}\neq U_f$ for all $a,b$, it never occurs that $(\alpha_f, \beta_f)=(0,0)$
and therefore the constant load $f(x)=24$ does not belong to the subspace $X(I)$ introduced in Item $(iii)$ of Theorem \ref{regular}. Clearly, the coefficients $\alpha_f$ and $\beta_f$
depend on $a$ and $b$;
in Figure \ref{delta24} we plot the graphs of the
map $(a,b)\mapsto\beta_f(a,b)$ for $(a,b)\in(0,1)^2$ and of the map $a\mapsto\beta_f(a,a)=\frac{3(1+a)(a^2-5)\pi}{(1-a)(2a+1)}$ for $a\in(0,1)$.
\begin{figure}[ht]
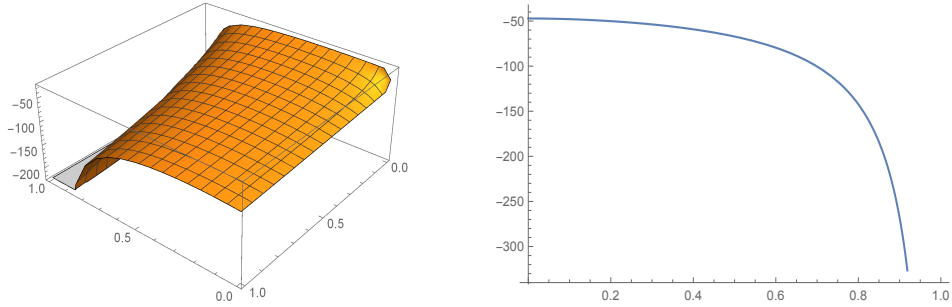

\begin{center}
\includegraphics[height=40mm, width=60mm]{delta24semplice.pdf}\qquad\includegraphics[height=40mm, width=60mm]{delta24doppio.pdf}
\caption{Plots of $(a,b)\mapsto\beta_f(a,b)$ on $(0,1)^2$ (left) and of $a\mapsto\beta_f(a,a)$ on $(0,1)$ (right).}\label{delta24}
\end{center}
\end{figure}
Both these functions diverge to $-\infty$ as $a\to1$, meaning that
\begin{center}{\bf the contribution of the piers, in terms of impulses depending on the force $f$,\\
increases and tends to infinity as the piers approach the endpoints of the beam}.
\end{center}

$\bullet$ Take again $\gamma=0$ and consider now \eq{statforte} with a trigonometric load, that is,
\neweq{stationarytrigo}
u''''(x)=m^4\, \sin(mx)\qquad x \in I_- \cup I_0 \cup I_+ \qquad\mbox{for some integer }m=2,3,4...\ .
\endeq
Then,
$$
U_f(x)=\sin(mx)+\left[\frac{\sin(bm\pi)}{1-b^2}+\frac{\sin(am\pi)}{1-a^2}\right]\frac{x(x^2-\pi^2)}{(a+b)\pi^3}
+\left[\frac{b\sin(am\pi)}{1-a^2}-\frac{a\sin(bm\pi)}{1-b^2}\right]\frac{x^2-\pi^2}{(a+b)\pi^2}.
$$
According to Definition \ref{defweakstat} and Corollary \ref{coroll}, this function is a weak solution of \eq{stationarytrigo} if and only
if $U_f''(-\pi)=U_f''(\pi)=0$. After some computations, one sees that this occurs if and only if $\sin(bm\pi)=\sin(am\pi)=0$, that is, $b=h/m$ and $a=k/m$
for any couple of integers $h,k \in \{1,...,m-1\}$. In this case, the weak solution of \eq{stationarytrigo} is $\overline{u}(x)=U_f(x)=\sin(mx)$, which is of class
$C^4(\overline{I})$; in fact, it is analytic in $I$. This example tells us that, for some continuous forces $f$,
\begin{center}
{\bf the regularity of the solution depends on the relative lengths of the three spans.}
\end{center}

Consider again the weak solution $\overline{u}$ of \eq{stationarytrigo}, as characterized by Definition \ref{defweakstat}, and the map \eqref{defacca}.
For $m=3$, in the left picture of Figure \ref{H2} we plot the graph of the map $(a,b)\mapsto H(a,b)$ in the square $(0,1)^2$, while
in the right picture therein we plot the graph of the map $a\mapsto H(a,a)=\frac{3(3+a)\sin^2(3a\pi)}{(1-a)a^2(1+a)^2\pi^3}$ for $a\in(0,1)$.
The zeros of these functions are quite visible.
\begin{figure}[ht]
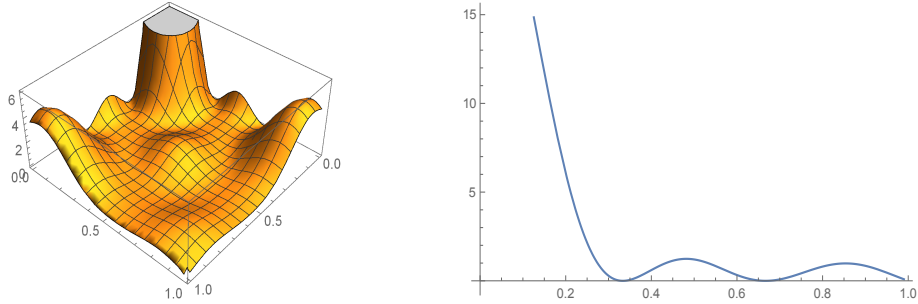

\begin{center}
\includegraphics[height=40mm, width=60mm]{soluzione2H.pdf}\qquad\includegraphics[height=40mm, width=60mm]{soluzione2HH.pdf}
\caption{Plots of $(a,b)\mapsto H(a,b)$ on $(0,1)^2$ (left) and of $a\mapsto H(a,a)$ on $(0,1)$ (right).}\label{H2}
\end{center}
\end{figure}

As noticed above, $\overline{u}=U_f$ if and only if $a, b\in\{\frac13 ,\frac23 \}$: for these four couples, we have $(\alpha_f, \beta_f)=(0,0)$
and the load $f(x)=81\sin(3x)$ belongs to the subspace $X(I)$, see Item $(iii)$ of Theorem \ref{regular}. By emphasizing the dependence $\alpha_f=\alpha_f(a,b)$
and $\beta_f=\beta_f(a,b)$ as above, in Figure \ref{deltasin} we plot the graphs of the map $(a,b)\mapsto\beta_f(a,b)$ for $(a,b)\in(0,1)^2$ and of the
map $a\mapsto\beta_f(a,a)=\frac{3\sin(3a\pi)}{(1-a)^2a^2\pi^3}$ for $a\in(0,1)$.
We observe that $\beta_f(a, a)$ changes its sign in correspondence of $a=1/3, a=2/3$: in these cases, $f(\pm a\pi)=0$. If instead $f > 0$  (resp., $f < 0$) in one pier, then the impulse due to that pier points downwards (resp., upwards), showing that
\begin{center}
{\bf
the piers ``resist'' to the action of the load $f$.
}
\end{center}
\begin{figure}[ht]
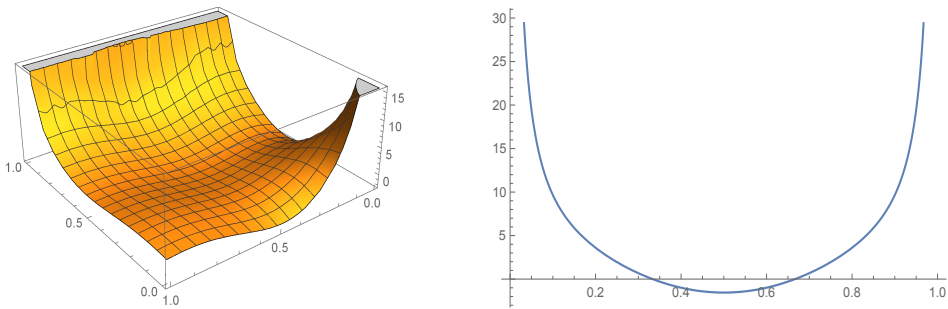

\begin{center}
\includegraphics[height=40mm, width=60mm]{deltasinsemplice.pdf}\qquad\includegraphics[height=40mm, width=60mm]{deltasindoppio.pdf}
\caption{Plots of $(a,b)\mapsto\beta_f(a,b)$ on $(0,1)^2$ (left) and of $a\mapsto\beta_f(a,a)$ on $(0,1)$ (right).}\label{deltasin}
\end{center}
\end{figure}

As we have just seen, for some continuous forces $f$ the solution is not globally $C^4$ regardless of $a$ and $b$, for some other $f$ the solution may be globally
$C^4$ only for particular values of $a$ and $b$. Moreover, it may happen that $\beta_f = 0$ and $\alpha_f \neq 0$ (or vice versa), in which case
$u \in C^4(\overline{I_- \cup I_0}) \cap C^4(\overline{I}_+)$  (resp.\, $u \in C^4(\overline{I}_-) \cap C^4(\overline{I_0 \cup I_+})$).\par
$\bullet$
We conclude this section with a simple remark and a related curious example. It is clear that the piers constraint prevents the positivity preserving property
to be true. This means that if $\gamma=0$ and $f\ge0$, one cannot expect the weak solution of \eq{weakstat} to satisfy $\overline{u}\ge0$ as in beams with no piers.
It is however of physical interest to investigate which conditions on $f$ yield a nonnegative solution $\overline{u}$. As an example, take $a=b=1/2$ and consider the
functions
$$f(x)=\tfrac{177}{16}\, \cos(\tfrac{x}{2})\cos^2(x)-17\, \cos(x)\sin(x)\sin(\tfrac{x}{2})
-11\, \cos(\tfrac{x}{2})\sin^2(x)\quad\mbox{and}\quad \overline{u}(x)=\cos^2(x)\cos(\tfrac{x}{2}),$$
whose plots are reported in Figure \ref{fu}.
\begin{figure}[ht]
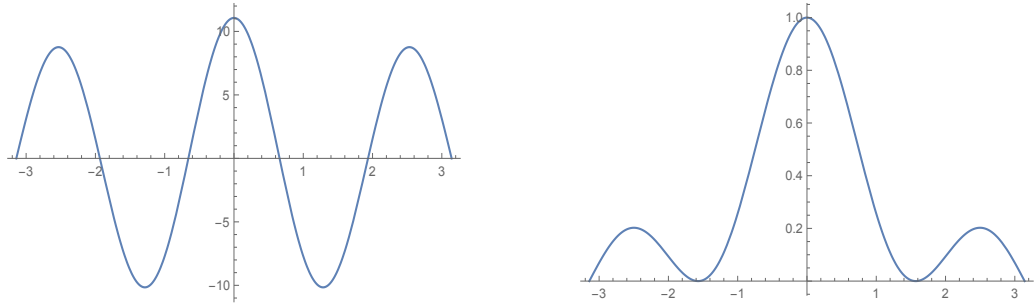

\begin{center}
\includegraphics[height=40mm, width=60mm]{Plot_f.pdf}\qquad\qquad\includegraphics[height=40mm, width=60mm]{Plot_u.pdf}
\caption{Plots of the functions $f$ (left) and $\overline{u}$ (right).}\label{fu}
\end{center}
\end{figure}

One can check that $\overline{u}''''(x)=f(x)$ for all $x\in(-\pi,\pi)$ and that $\overline{u}(\pm\pi)=\overline{u}''(\pm\pi)=\overline{u}(\pm\tfrac\pi2 )=0$: hence, by Corollary \ref{coroll},
$\overline{u}$ is a classical and positive solution of \eq{weakstat}. Clearly, one can change the signs of both $f$ and $\overline{u}$ and stick to the convention \eqref{spostagi첫}. The left picture in Figure \ref{fu} tells us that
\begin{center}
{\bf
the upwards displacement $\overline{u}$ of the beam
with piers is obtained with a sign-changing load $f$ pushing downwards close to the piers and upwards far away from the piers.
}
\end{center}

\subsection{Vibrating modes of a beam with two piers}\label{finitod}

In this section, we determine the modes of vibration for the beam $I=(-\pi, \pi)$ with two intermediate piers. We find the set of the eigenvalues $\mu$
and the corresponding eigenfunctions $e \in V(I)$ solving the problem
\begin{equation}\label{autov0}
\int_I e'' v'' =\mu\int_I e v \qquad \forall v\in V(I).
\end{equation}
Through a regularity argument similar to that in the proof of Theorem \ref{regular}-(i), one can show that any eigenfunction belongs to $C^2(\overline{I})$ and is of class $C^\infty$ on each span $I_-, I_0, I_+$.
Moreover, it necessarily satisfies the no-bending boundary conditions
$$
e''(-\pi) = e''(\pi) = 0
$$
at the endpoints of the beam. If $e \in V(I)$ is an eigenfunction, by taking $v=e$ in \eqref{autov0} one obtains that
\emph{all the eigenvalues are strictly positive}. We thus set $\mu=\lambda^4$ and seek $e\in V(I)$ such that
\begin{equation}\label{autovsym}
\int_I e'' v'' = \lambda^4 \int_I e v \qquad \forall v \in V(I).
\end{equation}

The following result holds; we will provide its proof in Section \ref{pfautovalorias}.

\begin{theorem}\label{autovalorias}
The eigenvalues $\mu$ of problem \eqref{autov0} are given by $\mu=\lambda^4$, for any $\lambda>0$ solving the following equation:
$$
\begin{array}{l}
\sin(2\lambda\pi)\sinh\big[\lambda(1-b)\pi\big]\sinh\big[\lambda(1-a)\pi\big]\sinh^2\big[\lambda(a+b)\pi\big]\\
-\sin\big[\lambda(1-a)\pi\big]\sin\big[\lambda(1+a)\pi\big]\sinh\big[\lambda(1-b)\pi\big]\sinh\big[\lambda(1+b)\pi\big]\sinh\big[\lambda(a+b)\pi\big]\\
+\sin\big[\lambda(1-b)\pi\big]\sin\big[\lambda(1-a)\pi\big]\sin\big[\lambda(a+b)\pi\big]\sinh\big[\lambda(1+b)\pi\big]\sinh\big[\lambda(1+a)\pi\big]\\
-\sin\big[\lambda(1-b)\pi\big]\sin\big[\lambda(1+b)\pi\big]\sinh\big[\lambda(1-a)\pi\big]\sinh\big[\lambda(1+a)\pi\big]\sinh\big[\lambda(a+b)\pi\big]\\
+2\sin\big[\lambda(1-b)\pi\big]\sin\big[\lambda(1-a)\pi\big]\sinh\big[\lambda(1-b)\pi\big]\sinh\big[\lambda(1-a)\pi\big]\sinh\big[\lambda(a+b)\pi\big]\\
-\sin\big[\lambda(1-b)\pi\big]\sin\big[\lambda(1-a)\pi\big]\sin\big[\lambda(a+b)\pi\big]\sinh\big[\lambda(1-b)\pi\big]\sinh\big[\lambda(1-a)\pi\big]=0.
\end{array}
$$
\end{theorem}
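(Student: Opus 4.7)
The plan is to use the regularity already established: any eigenfunction $e\in V(I)$ is $C^2(\overline{I})$ and smooth on each of the three spans, hence on $I_-$, $I_0$, $I_+$ it satisfies the ODE $e''''=\lambda^4 e$. Writing $\mu=\lambda^4>0$, the general solution on each subinterval is a linear combination of $\sin(\lambda x)$, $\cos(\lambda x)$, $\sinh(\lambda x)$, $\cosh(\lambda x)$, which yields $4\cdot 3=12$ unknown constants. To cut down the bookkeeping, I would pre-incorporate the endpoint conditions $e(\pm\pi)=e''(\pm\pi)=0$ by taking the ansatz
\[
e(x)=\begin{cases}
A\,\sin\bigl[\lambda(x+\pi)\bigr]+B\,\sinh\bigl[\lambda(x+\pi)\bigr],& x\in I_-,\\[2pt]
C_1\sin(\lambda x)+C_2\cos(\lambda x)+C_3\sinh(\lambda x)+C_4\cosh(\lambda x),& x\in I_0,\\[2pt]
D\,\sin\bigl[\lambda(\pi-x)\bigr]+E\,\sinh\bigl[\lambda(\pi-x)\bigr],& x\in I_+,
\end{cases}
\]
which automatically satisfies the hinged boundary conditions and leaves $8$ unknowns $A,B,C_1,\dots,C_4,D,E$.

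Next I would impose the matching conditions at the two piers. At $x=-b\pi$ one requires $e(-b\pi^-)=0$, $e(-b\pi^+)=0$, and (by the $C^2$-regularity statement and the absence of impulses in the eigenvalue equation, which is a special case of Theorem \ref{regular}) continuity of $e'$ and of $e''$ across the pier; analogously four conditions at $x=a\pi$. This produces a homogeneous linear system $M(\lambda)\,\mathbf{v}=0$ with $\mathbf{v}=(A,B,C_1,\ldots,C_4,D,E)^\top$, where $M(\lambda)$ is an $8\times 8$ matrix whose entries are trigonometric and hyperbolic functions of $\lambda\pi$, $a\lambda\pi$, $b\lambda\pi$. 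Existence of a nontrivial eigenfunction is equivalent to $\det M(\lambda)=0$.

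The bulk of the work is the algebraic reduction of this $8\times 8$ determinant to the six-term identity displayed in the statement. The strategy I would follow is to block-decompose by the two piers: first eliminate $(A,B)$ from the left pier conditions, expressing them as linear combinations of $C_1,\dots,C_4$ evaluated at $x=-b\pi$, and symmetrically eliminate $(D,E)$ at $x=a\pi$ using the right pier conditions. This reduces the problem to a $4\times 4$ determinant in $(C_1,C_2,C_3,C_4)$ whose coefficients factorize naturally as products of $\sin[\lambda(1\pm b)\pi]$, $\sinh[\lambda(1\pm b)\pi]$, $\sin[\lambda(1\pm a)\pi]$ and $\sinh[\lambda(1\pm a)\pi]$. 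Expanding the $4\times 4$ determinant and using the product-to-sum identities together with the identity $\sin\xi\sinh\eta+\sin\eta\sinh\xi=\cdots$ to regroup the $\sin(\lambda x)\sinh(\lambda x)$ cross-terms into factors of $\sin[\lambda(a+b)\pi]$ and $\sinh[\lambda(a+b)\pi]$ should match term-by-term the six summands in the statement.

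The main obstacle is clearly this last bookkeeping step: the raw expansion of $\det M(\lambda)$ produces many terms, and regrouping them into the displayed symmetric form requires systematic use of the addition formulas for $\sin$, $\sinh$, $\cos$, $\cosh$ at the arguments $\lambda(1\pm a)\pi$, $\lambda(1\pm b)\pi$, $\lambda(a+b)\pi$, $2\lambda\pi$. I would carry out this simplification by introducing auxiliary variables $s_\pm^{a}=\sin[\lambda(1\pm a)\pi]$, $\sigma_\pm^{a}=\sinh[\lambda(1\pm a)\pi]$ (and analogous for $b$) to make the bookkeeping transparent, and treat the coefficient of each of the six resulting monomials separately. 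A sanity check is available: in the symmetric case $a=b$, the transcendental equation must factor into two simpler equations (one for symmetric, one for antisymmetric eigenfunctions), and this factorization, combined with the known spectrum of the pier-free hinged beam recovered as $a,b\to 1$, provides strong verification of the final formula.
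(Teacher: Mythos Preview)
Your proposal is correct and follows essentially the same approach as the paper: write the general solution of $e''''=\lambda^4 e$ on each span, pre-impose the hinged endpoint conditions, enforce vanishing and $C^2$-matching at the piers, and set the resulting determinant to zero. The only bookkeeping difference is that the paper also pre-imposes the vanishing conditions $e_b(-b\pi)=e_a(a\pi)=e_0(-b\pi)=e_0(a\pi)=0$ before forming the linear system, so it lands directly on a $4\times4$ system in four unknowns $K_b,K_a,K_1,K_2$ rather than first writing an $8\times8$ system and then eliminating down to $4\times4$.
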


We omit writing the explicit form of the corresponding eigenfunctions. We give a complete result only in the
case of symmetric beams, since most suspension bridges have symmetric side spans ($b=a$) satisfying \eq{physicalrange}.
So, assume that $0<b=a< 1$ and let
$$I_-=(-\pi, -a\pi),\qquad I_0=(-a\pi, a\pi),\qquad I_+=(a\pi,\pi).$$
The symmetry of the position of the piers enables us to seek separately even and odd solutions of \eqref{autovsym}.
We obtain the following statement, whose proof will be given in Section \ref{pfsymmetriceigen}.

\begin{theorem}\label{symmetriceigen}
Let $b=a$. The set of all the eigenvalues $\mu=\lambda^4$ of \eqref{autov0} is completely determined by the values of $\lambda > 0$ such that \begin{equation}\label{auto1}
\sin(\lambda \pi)\sinh(\lambda a\pi)\sinh[\lambda(1-a)\pi]=\sinh(\lambda \pi)\sin(\lambda a\pi)\sin[\lambda(1-a)\pi]
\end{equation}
\begin{equation}\label{auto2}
\cos(\lambda \pi)\cosh(\lambda a \pi)\sinh[\lambda(1-a)\pi]=\cosh(\lambda \pi)\cos(\lambda a \pi)\sin[\lambda(1-a)\pi].
\end{equation}
In case \eqref{auto1}, the corresponding eigenfunctions are \textbf{odd} and given by:
\begin{itemize}
\item[-]
$\mathbf{O}_\lambda(x)=\sin(\lambda x)$
if $\lambda \in \mathbb{N}$ (implying both $\lambda a \in \mathbb{N}$ and $\lambda(1-a) \in \mathbb{N}$);
\item[-] the odd extension of
{\small
$$
\mathscr{O}_\lambda(x)=
\left\{
\begin{array}{ll}
\!\!\displaystyle \frac{\sinh[\lambda(1-a)\pi]}{\sinh (\lambda a\pi)} \,  ( \sinh(\lambda a \pi) \sin (\lambda x)-\sin(\lambda a\pi) \sinh (\lambda x)) & \mbox{if } x \in [0, a\pi] \vspace{0.25cm} \\
\!\!\displaystyle \frac{\sin(\lambda a\pi)}{\sin[\lambda(1-a)\pi]} \, (\sin[\lambda(1-a)\pi]\sinh[\lambda(x-\pi)]-\sinh[\lambda(1-a)\pi] \sin[\lambda(x-\pi)]) & \mbox{if } x \in [a\pi, \pi]
\end{array}
\right.
$$}\!
if $\lambda \notin \mathbb{N}$ (implying both $\lambda a \notin \mathbb{N}$ and $\lambda(1-a) \notin \mathbb{N}$).
\end{itemize}
In case \eqref{auto2}, the corresponding eigenfunctions are \textbf{even} and given by
\begin{itemize}
\item[-]
$\mathbf{E}_\lambda(x)=\cos(\lambda x)$
if $\lambda - 1/2 \in \mathbb{N}$ (implying both $\lambda a -1/2 \in \mathbb{N}$ and $\lambda (1-a) \in \mathbb{N}$);
\item[-] the even extension of
{\small
$$
\mathscr{E}_\lambda(x)=
\left\{
\begin{array}{ll}
\!\!\displaystyle \frac{\sinh[\lambda(1-a)\pi]}{\cosh (\lambda a \pi)} \, (\cosh(\lambda a \pi) \cos (\lambda x)  - \cos (\lambda a \pi) \cosh (\lambda x) ) & \mbox{if } x \in [0, a\pi] \vspace{0.25cm}\\
\!\!\displaystyle \frac{\cos(\lambda a \pi)}{\sin[\lambda(1-a)\pi]} \, (\sinh[\lambda(1-a)\pi] \sin[\lambda(\pi-x)]-\sin[\lambda(1-a)\pi]\sinh[\lambda(\pi-x)]) & \mbox{if } x \in [a\pi, \pi]
\end{array}
\right.
$$}\!
if $\lambda - 1/2 \notin \mathbb{N}$ (implying both $\lambda a - 1/2 \notin \mathbb{N}$ and $\lambda (1-a) \notin \mathbb{N}$).
\end{itemize}
\end{theorem}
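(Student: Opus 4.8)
The plan is to convert the variational eigenvalue problem \eqref{autovsym} into a transmission problem for the ODE $e''''=\lambda^4 e$ on the three spans, and then to exploit the reflection symmetry $x\mapsto -x$ to reduce the determination of the spectrum to two $4\times 4$ homogeneous linear systems whose solvability conditions are exactly \eqref{auto1} and \eqref{auto2}. First I would pin down the structure of an eigenfunction: testing \eqref{autovsym} with functions supported in a single span gives $e''''=\lambda^4 e$ on each of $I_-,I_0,I_+$; as recalled after \eqref{autovsym}, $e\in C^2(\overline I)$, $e$ is $C^\infty$ on each span, and $e''(\pm\pi)=0$. Integrating \eqref{autovsym} by parts span by span and using that every $v\in V(I)$ vanishes at $\pm\pi$ and $\pm a\pi$, one checks conversely that any $e\in V(I)$ which is smooth on each span, solves $e''''=\lambda^4 e$ there, is $C^2$ across the piers and satisfies $e''(\pm\pi)=0$ is a weak solution. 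Hence on each span $e$ is a linear combination of $\sin(\lambda x),\cos(\lambda x),\sinh(\lambda x),\cosh(\lambda x)$, and $\lambda$ is an eigenvalue precisely when these twelve coefficients can be chosen, not all zero, so as to match $e=0$ at $\pm\pi$ and $\pm a\pi$, $e,e',e''$ continuous at $\pm a\pi$, and $e''=0$ at $\pm\pi$. (That $\mu=\lambda^4>0$ follows by taking $v=e$ in \eqref{autovsym}.)

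Since the interval, the piers and the weak formulation are invariant under $x\mapsto -x$, I would look separately for odd and even eigenfunctions. For an odd one, $e(0)=e''(0)=0$, so I write $e(x)=A\sin(\lambda x)+B\sinh(\lambda x)$ on $[0,a\pi]$ and $e(x)=C\sin[\lambda(x-\pi)]+D\sinh[\lambda(x-\pi)]$ on $[a\pi,\pi]$; these choices make the conditions at $0$ and at $\pi$ automatic. The four conditions at the pier $a\pi$ --- $e(a\pi^-)=0$, $e(a\pi^+)=0$, $e'$ continuous, $e''$ continuous --- form a $4\times 4$ homogeneous system in $(A,B,C,D)$, and a nonzero odd eigenfunction exists iff its determinant vanishes; expanding this determinant and simplifying via the addition formulas for $\sin(\lambda\pi)=\sin[\lambda a\pi+\lambda(1-a)\pi]$ and for $\sinh(\lambda\pi)$ turns the condition into \eqref{auto1}. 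The even case is entirely parallel, now with $e(x)=A\cos(\lambda x)+B\cosh(\lambda x)$ on $[0,a\pi]$ (so that $e'(0)=e'''(0)=0$) and $e(x)=C\sin[\lambda(\pi-x)]+D\sinh[\lambda(\pi-x)]$ on $[a\pi,\pi]$, and it produces \eqref{auto2}.

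It then remains to exhibit the eigenfunctions and to dispose of the degenerate subcases. When none of the relevant $\sin$/$\cos$ factors vanishes, the $4\times 4$ system has a one-dimensional kernel; solving for $(A,B,C,D)$ up to a scalar and normalizing as in the statement yields exactly $\mathscr{O}_\lambda$ in the odd case and $\mathscr{E}_\lambda$ in the even case, extended oddly, resp.\ evenly, to $I$. If a root $\lambda$ of \eqref{auto1} happens to be a positive integer, then $\sin(\lambda\pi)=0$ together with $\sinh(\lambda\pi)\neq 0$ forces, via \eqref{auto1}, $\sin(\lambda a\pi)\sin[\lambda(1-a)\pi]=0$; since $\lambda(1-a)=\lambda-\lambda a$, both $\lambda a$ and $\lambda(1-a)$ are then positive integers, $\sin(\lambda x)$ already vanishes at the piers, and $\mathbf{O}_\lambda(x)=\sin(\lambda x)$ is a globally smooth (indeed analytic) eigenfunction; one checks directly that these exhaust the odd eigenfunctions with integer $\lambda$. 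The half-integer case for \eqref{auto2} is handled identically with $\cos(\lambda x)$. Finally I would verify the converse, namely that every root of \eqref{auto1} or \eqref{auto2} does yield a nonzero element of $V(I)$ solving \eqref{autovsym}, and conclude --- every eigenfunction being even or odd --- that \eqref{auto1}--\eqref{auto2} together describe all the eigenvalues.

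The step I expect to be the real work is the $4\times 4$ determinant computations and the verification that they collapse to the compact products of $\sin$ and $\sinh$ in \eqref{auto1}--\eqref{auto2}, carried out in tandem with the bookkeeping of which denominators in $\mathscr{O}_\lambda$ and $\mathscr{E}_\lambda$ are allowed to vanish --- that is, showing that the degenerate locus is exactly $\lambda\in\mathbb{N}$ (resp.\ $\lambda-1/2\in\mathbb{N}$) and that the exceptional eigenfunctions $\sin(\lambda x)$, $\cos(\lambda x)$ glue consistently onto the generic family. A subtler but lighter point is to make sure the transmission conditions extracted from \eqref{autovsym} are the correct ones, for which one uses that $V(I)$ is rich enough to prescribe $v'$ independently at $-\pi,-a\pi,a\pi,\pi$ (cf.\ Theorem \ref{VI}).
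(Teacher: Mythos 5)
Your proposal is correct and follows essentially the same path as the paper's proof: both decompose into odd and even eigenfunctions by the reflection symmetry, restrict to $[0,\pi]$, solve $e''''=\lambda^4 e$ on each span with the endpoint and origin conditions baked into the basis functions, impose the pier-vanishing and $C^1,C^2$ matching at $a\pi$, and read off \eqref{auto1}--\eqref{auto2} from the vanishing of the resulting determinant, with the integer and half-integer $\lambda$ treated as the degenerate subcases producing $\mathbf{O}_\lambda$ and $\mathbf{E}_\lambda$. The only cosmetic difference is parametrization: the paper inherits the notation of Theorem \ref{autovalorias}, pre-solving the pier-vanishing constraint in the middle span so that (after setting $K_1=0$ or $K_2=0$ by parity) the matching at $a\pi$ becomes a $2\times2$ system in $(K_a,K_2)$ or $(K_a,K_1)$, whereas you keep all four coefficients $(A,B,C,D)$ and a $4\times4$ system; the determinant condition is of course the same.
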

\par
Some comments about Theorem \ref{symmetriceigen} are in order. Let us notice that the eigenvalues $\lambda^4$ associated with the eigenfunctions $\mathbf{O}_\lambda$ and $\mathbf{E}_\lambda$ correspond
to the values of $\lambda$ for which both sides of the equalities \eqref{auto1} and \eqref{auto2}, respectively, vanish. For this reason, in the case of nonsmooth odd and even eigenfunctions it has necessarily to be $\lambda(1-a)\notin\mathbb{N}$, so that $\mathscr{O}_\lambda$ and $\mathscr{E}_\lambda$ are well-defined. Notice that we fix the eigenfunctions to be all positive in $x=0$ if even, and increasing in $x=0$ if odd. Here the convention \eqref{spostagi첫} plays no role.
\par
We also observe that the eigenfunctions $\mathbf{O}_\lambda$ and $\mathbf{E}_\lambda$ are of class $C^\infty$, while $\mathscr{O}_\lambda$ and $\mathscr{E}_\lambda$ are only $C^2$. In view of the conditions on $\lambda$, no $C^\infty$-eigenfunctions exist if $a \notin \mathbb{Q}$ since, in this case, it cannot be $\lambda \in \mathbb{N}$ and $\lambda a \in \mathbb{N}$ (or $\lambda -1/2 \in \mathbb{N}$ and $\lambda a -1/2 \in \mathbb{N}$) at the same time. On the other hand, if $a=p/q$ with $p, q \in \mathbb{N}$ and $g.c.d.(p, q)=1$, odd $C^\infty$-eigenfunctions appear for $\lambda_r = rq$, for every $r \in \mathbb{N}$. As for even $C^\infty$-eigenfunctions, they exist whenever $a=p/q$, with $g.c.d.(p, q)=1$ and both $p$ and $q$ are odd; in this case, the eigenvalues are given by $\lambda_r=r+1/2$, with $r \in \mathbb{N}$ such that $2r+1$ is a multiple of $q$. Thus, there are infinitely many regular eigenfunctions if $a \in \mathbb{Q}$, but it may happen that they are all odd (as in the case $a=1/2$). The $C^\infty$-eigenfunctions $\mathbf{O}_\lambda$ and $\mathbf{E}_\lambda$ satisfy the strong form of the eigenvalue problem
$$
e''''=\lambda^4 e.
$$
As for merely $C^2$-eigenfunctions, by computing explicitly the third derivative we can formally write equation \eqref{autovsym}, as in  Item $(ii)$ of Theorem \ref{regular}. Precisely,
we have
$$
e'''' = \lambda^4 e  + \alpha_\lambda \delta_{a\pi}  + \beta_\lambda \delta_{-a\pi},
$$
for suitable constants $\alpha_\lambda$ and $\beta_\lambda$ to be determined.

Finally, we notice that conditions \eq{auto1} and \eq{auto2} characterize the {\em couples} $(a, \lambda)$ such that $\mu=\lambda^4$ is an eigenvalue of \eq{autov0}. Since the least eigenvalue $\mu_0=\lambda_0^4$ of \eq{autov0} satisfies the lower bound
\begin{equation}\label{stimabasso}
\lambda_0^4=\min_{v\in V(I)}\ \frac{\int_I(v'')^2}{\int_Iv^2}\ge\min_{v\in H^2\cap H^1_0(I)}\ \frac{\int_I(v'')^2}{\int_Iv^2}=\frac{1}{16},
\end{equation}
the curves implicitly defined by \eq{auto1} and \eq{auto2} in the $(a, \lambda)$-plane all lie above the line $\lambda= 1/2$.
In the next section, we characterize in full detail all such curves, which are depicted in Figure \ref{bello}. Notice that the ones representing the
odd eigenvalues (in darker color in Figure \ref{bello}) are symmetric with respect to $a=1/2$, since \eqref{auto1} does not vary when replacing $a$ by $1-a$.
\begin{figure}[ht]
\begin{center}
\includegraphics[scale=0.7]{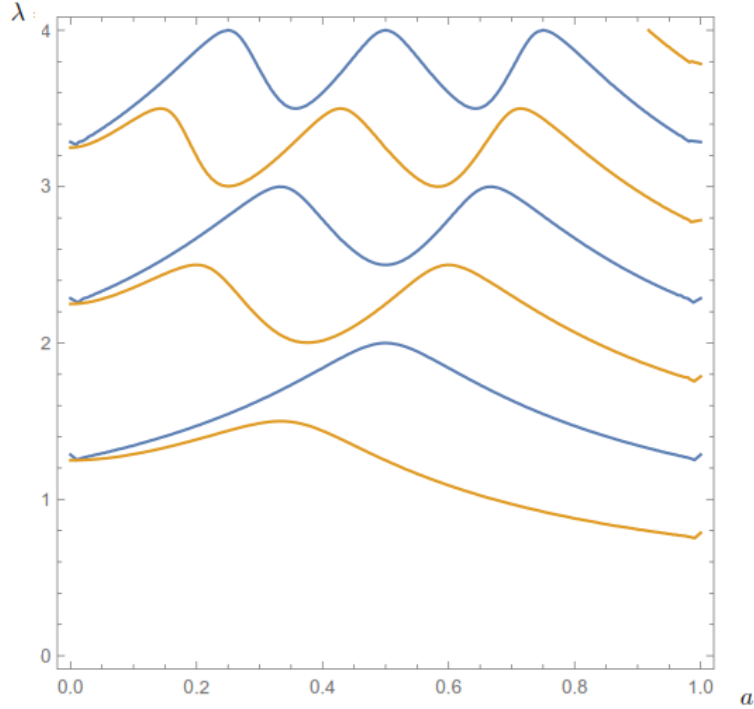}
\caption{The curves implicitly defined by \eq{auto1}-\eq{auto2} in the region $(a, \lambda)\in(0,1)\times(0,6)$.}\label{bello}
\end{center}
\end{figure}

In the case $a=1/2$, Theorem \ref{symmetriceigen} takes the following elegant form.
\begin{corollary}\label{symmetriceigena12}
Let $b=a=1/2$. The eigenvalues $\mu=\lambda^4$ of \eqref{autov0} are completely determined by the values of $\lambda > 0$ such that
$$
\sin(\lambda\pi/2)=0 \quad \textrm{ or } \quad \tan(\lambda\pi/2)=\tanh(\lambda\pi/2) \quad \textrm{ or } \quad \tan(\lambda \pi)=\tanh(\lambda \pi).
$$
In the first case, a corresponding eigenfunction is given by $\mathbf{O}_\lambda(x)=\sin(\lambda x)$, while in the other two cases it is given, respectively, by the odd extension of
$$
\mathscr{O}_\lambda(x)=
\left\{
\begin{array}{ll}
\displaystyle  \frac{\sin (\lambda x)}{\sin(\lambda\pi/2)} -\frac{\sinh (\lambda x)}{\sinh(\lambda\pi/2)} & \mbox{if } x \in [0, \pi/2] \vspace{0.25cm} \\
\displaystyle  \frac{\sinh [\lambda(x-\pi)]}{\sinh(\lambda\pi/2)}-\frac{\sin[\lambda(x-\pi)]}{\sin(\lambda\pi/2)}  & \mbox{if } x \in [\pi/2, \pi],
\end{array}
\right.
$$
and by the even extension of
$$
\mathscr{E}_\lambda(x)=
\left\{
\begin{array}{ll}
\displaystyle \tanh\Big(\frac{\lambda \pi}{2}\Big)\left(\cosh\Big(\frac{\lambda \pi}{2}\Big)\cos(\lambda x) - \cos\Big(\frac{\lambda \pi}{2}\Big)\cosh(\lambda x)\right) & \mbox{if } x \in [0, \pi/2] \vspace{0.25cm}\\
\displaystyle \cotan\Big(\frac{\lambda \pi}{2}\Big) \left(\sinh\Big(\frac{\lambda \pi}{2}\Big)\sin[\lambda(\pi-x)] - \sin\Big(\frac{\lambda \pi}{2}\Big)\sinh[\lambda(\pi-x)]\right) & \mbox{if } x \in [\pi/2, \pi].
\end{array}
\right.
$$.
\end{corollary}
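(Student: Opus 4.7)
The plan is to derive Corollary \ref{symmetriceigena12} as a direct specialization of Theorem \ref{symmetriceigen} at $a=b=1/2$, reducing the eigenvalue equations \eqref{auto1}--\eqref{auto2} to the three simple transcendental equations stated and then simplifying the eigenfunction formulas.

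First, I would plug $a=1/2$ into \eqref{auto1}, which becomes
$$
\sin(\lambda\pi)\sinh^2(\lambda\pi/2)=\sinh(\lambda\pi)\sin^2(\lambda\pi/2).
$$
Using the double-angle identities $\sin(\lambda\pi)=2\sin(\lambda\pi/2)\cos(\lambda\pi/2)$ and $\sinh(\lambda\pi)=2\sinh(\lambda\pi/2)\cosh(\lambda\pi/2)$, one factor of $\sin(\lambda\pi/2)\sinh(\lambda\pi/2)$ can be pulled out. Since $\sinh(\lambda\pi/2)\neq 0$ for $\lambda>0$, the equation splits into $\sin(\lambda\pi/2)=0$ or $\cos(\lambda\pi/2)\sinh(\lambda\pi/2)=\cosh(\lambda\pi/2)\sin(\lambda\pi/2)$, i.e.\ $\tan(\lambda\pi/2)=\tanh(\lambda\pi/2)$ (dividing by the never-vanishing $\cos(\lambda\pi/2)\cosh(\lambda\pi/2)$ is legitimate, as $\cos(\lambda\pi/2)=0$ together with $\sin(\lambda\pi/2)\neq 0$ is already excluded by \eqref{auto1}). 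Analogously, inserting $a=1/2$ in \eqref{auto2} yields
$$
\cos(\lambda\pi)\,\tfrac12\sinh(\lambda\pi)=\cosh(\lambda\pi)\,\tfrac12\sin(\lambda\pi),
$$
which, again dividing by $\cos(\lambda\pi)\cosh(\lambda\pi)$, is exactly $\tan(\lambda\pi)=\tanh(\lambda\pi)$. This recovers the three conditions in the statement.

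For the eigenfunctions, I specialize $\mathbf{O}_\lambda$, $\mathscr{O}_\lambda$, $\mathbf{E}_\lambda$, $\mathscr{E}_\lambda$ at $a=1/2$. In the odd smooth case, $\mathbf{O}_\lambda(x)=\sin(\lambda x)$ keeps its form. For the odd nonsmooth case, on $[0,\pi/2]$ the formula in Theorem \ref{symmetriceigen} gives (up to the harmless factor $\sinh[\lambda(1-a)\pi]/\sinh(\lambda a\pi)=1$ when $a=1/2$)
$$
\sinh(\lambda\pi/2)\sin(\lambda x)-\sin(\lambda\pi/2)\sinh(\lambda x),
$$
which, after dividing by the nonzero constant $\sin(\lambda\pi/2)\sinh(\lambda\pi/2)$ (eigenfunctions are defined up to a scalar), becomes the stated $\sin(\lambda x)/\sin(\lambda\pi/2)-\sinh(\lambda x)/\sinh(\lambda\pi/2)$; on $[\pi/2,\pi]$ the computation is entirely analogous. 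A completely parallel rewriting, using $\sinh[\lambda(1-a)\pi]/\cosh(\lambda a\pi)=\tanh(\lambda\pi/2)$ and $\cos(\lambda a\pi)/\sin[\lambda(1-a)\pi]=\cot(\lambda\pi/2)$, gives the stated form of $\mathscr{E}_\lambda$.

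Finally, I would observe that the smooth even case $\mathbf{E}_\lambda(x)=\cos(\lambda x)$ does not occur for $a=1/2$, since the conditions $\lambda-1/2\in\mathbb{N}$ and $\lambda a-1/2=\lambda/2-1/2\in\mathbb{N}$ are incompatible (the first forces $\lambda$ to be a half-integer, the second to be odd). This is consistent with the remark after Theorem \ref{symmetriceigen} and explains why only three cases appear. I expect no serious obstacle: the proof is essentially a bookkeeping of trigonometric/hyperbolic identities, the only mild care being required to track the nonvanishing prefactors that justify dividing through in the simplifications and to recognize the eigenfunctions up to overall scalar multiples.
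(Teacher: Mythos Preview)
Your proposal is correct and follows exactly the approach the paper intends: the corollary is stated immediately after Theorem~\ref{symmetriceigen} as its ``elegant form'' for $a=1/2$, with no separate proof given, so the implicit argument is precisely the direct substitution and trigonometric/hyperbolic simplification you carry out. Your observation that the smooth even family $\mathbf{E}_\lambda$ is empty for $a=1/2$ is also the content of the paper's remark after Theorem~\ref{symmetriceigen} about regular even eigenfunctions requiring both $p$ and $q$ odd in $a=p/q$.
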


Therefore, for $b=a=1/2$, the eigenvalues are given by
\begin{equation}\label{lambdasimmetrico}
\lambda = 2k \textrm{ or } \lambda \approx 2k + \frac{1}{2} \quad \textrm{(odd)}, \ \qquad \lambda \approx k + \frac{1}{4} \quad \textrm{ (even)}, \ \qquad k=1, 2, \ldots,
\end{equation}
since the function $s \mapsto \tanh(s)$ rapidly converges to $1$.

\subsection{Nodal properties of the vibrating modes}

The main purpose of this section is to classify the eigenfunctions of problem \eqref{autov0}, as given in Theorem \ref{symmetriceigen}, according to their number of zeros (nodal intervals) in $I$.
\par
Preliminarily, it is convenient to consider the eigenvalues of the clamped beam on $I$, that is, the numbers $\mu>0$ for which the problem
\begin{equation}\label{clamped}
\int_{I} e'' v'' = \mu \int_{I} e v  \qquad \forall v \in H^2_0(I)
\end{equation}
admits a nontrivial solution $e$. From Section \ref{3.1} we recall that $H^2_0(I)$ is the limit space of $V(I)$ as $a \to 1$. It is straightforward to verify that $\mu$ is an eigenvalue of \eqref{clamped} if and only if $\mu=\Lambda_n^4$ ($n=0,1,2,\ldots$),
with $\Lambda_n$ defined by
\neweq{clampedeigen}
\left\{\begin{array}{ll}
\tan(\Lambda_{2k} \pi) = -\tanh(\Lambda_{2k} \pi) \qquad & \textrm{with even eigenfunction,}\\
\tan(\Lambda_{2k+1} \pi) = \tanh(\Lambda_{2k+1} \pi) \qquad & \textrm{with odd eigenfunction.}
\end{array}\right.
\end{equation}
The corresponding eigenfunctions $\psi_n$ ($n=0, 1, 2,\ldots$) are all of class $C^\infty(I)$ and are explicitly given by
\neweq{psieigen}
\left\{\begin{array}{ll}
\psi_{2k}(x)=\cosh(\Lambda_{2k} \pi)\cos(\Lambda_{2k} x) - \cos (\Lambda_{2k} \pi)\cosh(\Lambda_{2k} x)\\
\psi_{2k+1}(x)=\sinh (\Lambda_{2k+1} \pi) \sin (\Lambda_{2k+1} x)-\sin(\Lambda_{2k+1} \pi) \sinh (\Lambda_{2k+1} x)
\end{array}\right.\qquad x \in I.
\endeq
Moreover, it will be useful to consider also the limit space $V_*(I)$ (as $a \to 0$) introduced in \eq{Vstar}
and we denote by $\mu=(\Lambda_n^*)^4$ ($n=0, 1, 2, \ldots$) the eigenvalues of
\begin{equation}\label{clamped0}
\int_{I} e'' v'' = \mu \int_{I} e v  \qquad \forall v \in V_*(I).
\end{equation}
Some computations show that each eigenvalue of \eqref{clamped0} is double and that
\begin{equation}\label{coincidenzaaut}
\Lambda_n^*= \Lambda_n \quad \textrm{ if } n \textrm{ is odd, } \qquad \Lambda_n^*= \Lambda_{n+1} \quad \textrm{ if } n \textrm{ is even, }
\end{equation}
namely the eigenvalues of \eqref{clamped0} are exactly the eigenvalues of \eqref{clamped} with odd index $n$, all of them having multiplicity 2. The corresponding eigenfunctions are obtained by
extending by (even and odd) symmetry the restriction to the interval $[0, \pi]$ of $\psi_{2k+1}(\pi-x)$, where $\psi_{2k+1}$ has been defined in \eqref{psieigen}. Notice, in particular, that the so obtained eigenfunctions of \eqref{clamped0} are not all of class $C^2$; in fact, in this case the integration by parts does not imply the matching of the second derivatives in $0$, because the test functions $v$ satisfy $v'(0)=0$, thus canceling the boundary term $u''(0)v'(0)$.
\par
With these preliminaries, we may state one of the main results of this section, which will be proved in Section \ref{pfMichelle}.

\begin{theorem}\label{Michelle}
For any $a \in (0, 1)$, the eigenvalues $\mu=\lambda^4$ of problem \eqref{autovsym} are simple and form a countable set, the corresponding eigenfunctions are of class $C^2$ and form an orthogonal basis of $V(I)$. Moreover, \eqref{auto1} and \eqref{auto2} implicitly define, for $a\in(0, 1)$, a family of
analytic functions $a \mapsto \lambda_n(a)$ which satisfy $\lambda_n(a)\to\Lambda_{n}$ for $a\to1$ and $\lambda_n(a)\to\Lambda_n^*$ for $a\to0$ ($n=0, 1, 2, \ldots$).
\end{theorem}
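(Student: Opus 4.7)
The statement naturally splits into three parts: the abstract spectral theory (countability, simple eigenvalues, orthogonal basis, $C^2$-regularity), the analyticity of the branches $a\mapsto\lambda_n(a)$, and their limiting behavior at $a\to 0,1$.

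For the spectral part, let $T:L^2(I)\to V(I)$ send $f$ to the unique weak solution provided by Theorem \ref{regular} with $\gamma=0$; composing with the compact embedding $V(I)\hookrightarrow L^2(I)$ gives a compact self-adjoint operator on $L^2(I)$ whose positive eigenvalues are the reciprocals of those of \eqref{autov0}. The spectral theorem then yields countability and an $L^2$-orthonormal basis of eigenfunctions, which is also orthogonal for $(u,v)\mapsto\int u''v''$ on $V(I)$ and, via a standard density argument, complete in $V(I)$. The regularity $e_n\in C^2(\overline I)$ follows from Theorem \ref{regular}(i) applied to $f:=\mu_n e_n$, after one bootstrap step from $L^2$ to $C^0$. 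Simplicity is read off Theorem \ref{symmetriceigen}: every root $\lambda$ of \eqref{auto1} (resp.\ \eqref{auto2}) determines an odd (resp.\ even) eigenfunction uniquely up to scalar, and a direct verification shows that \eqref{auto1} and \eqref{auto2} share no common root in $(0,+\infty)$ for any $a\in(0,1)$, so no eigenvalue can be produced by both families.

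For the analyticity, let $F_1(a,\lambda),F_2(a,\lambda)$ denote the differences of the two sides of \eqref{auto1}--\eqref{auto2}: both are entire on $\C^2$. Simplicity forces $\partial_\lambda F_i(a_0,\lambda_0)\neq 0$ at every zero in the real region, for otherwise a first-order perturbation argument would produce a second linearly independent eigenfunction at $\lambda_0$. The real-analytic implicit function theorem then defines local branches $\lambda=\lambda_n(a)$ that extend globally on $(0,1)$, since the zero sets of $F_1,F_2$ are smooth real-analytic curves without singular points in the strip. For the asymptotics as $a\to 1$, set $x=\lambda(1-a)\pi\to 0$ and substitute $\sin x=x-x^3/6+O(x^5)$, $\sinh x=x+x^3/6+O(x^5)$ into \eqref{auto1}--\eqref{auto2}; the $O(x)$ contributions cancel identically, so the leading behavior is of order $x^3$ and, after dividing and passing to the limit, \eqref{auto1} reduces to $\tan(\lambda\pi)=\tanh(\lambda\pi)$ while \eqref{auto2} reduces to $\tan(\lambda\pi)=-\tanh(\lambda\pi)$, which by \eqref{clampedeigen} characterize exactly the odd and even clamped eigenvalues $\Lambda_n$. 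As $a\to 0$ a symmetric expansion in $y=\lambda a\pi\to 0$ reduces both \eqref{auto1}--\eqref{auto2} to the equations implicitly defining $\Lambda_n^*$ through \eqref{coincidenzaaut}.

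The main obstacle I foresee is the analyticity step: proving that $\partial_\lambda F_i$ remains nonzero along each entire branch, so that the local implicit functions glue into globally defined analytic maps $\lambda_n:(0,1)\to(0,\infty)$ with a labeling compatible with both limits. A clean alternative route is to pass, through a smooth $a$-dependent change of variables sending the piers to fixed positions, to an analytic family of self-adjoint operators on a fixed Hilbert space, for which standard perturbation theory (\emph{à la} Kato) guarantees analytic selection of simple eigenvalues. A secondary subtlety is that the doubling \eqref{coincidenzaaut} at $a=0$ forces pairs of distinct branches (one odd, one even) to merge into each doubled limit value: identifying the correct pairing requires pushing the asymptotic expansion to the next order and signing the $O(y^3)$ corrections.
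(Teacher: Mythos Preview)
Your spectral and simplicity arguments are fine and essentially match the paper. The two places where your route diverges from the paper's---and where your sketch has a real gap---are the analyticity of the branches and the endpoint asymptotics.

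For analyticity, the paper does not rely on the heuristic ``simplicity forces $\partial_\lambda F_i\neq 0$'' (which you rightly flag as the main obstacle; a double root of the transcendental relation is not obviously the same thing as a two-dimensional eigenspace), nor on Kato-type perturbation theory. Instead it changes variables to $\Lambda=\lambda a$, $\alpha=1/a$, which fixes the piers at $\pm\pi$ and moves the endpoints, and then computes the partial derivatives of the defining function $I$ (and of $P$) \emph{explicitly on its zero set}. One obtains closed formulas showing $I_\alpha\neq 0$, $I_\Lambda\neq 0$, and moreover $I_\alpha I_\Lambda>0$ everywhere on $\{I=0\}$. The sign condition yields that each branch $\alpha\mapsto\Lambda(\alpha)$ is strictly \emph{decreasing}; this monotonicity is what glues the local implicit functions into global branches and, together with a sign-change argument for $I$ between consecutive hyperbolas $\{\Lambda\alpha=m\}$, gives the one-to-one correspondence with the clamped eigenvalues. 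Your Kato-style alternative would work in principle, but the paper's explicit computation delivers monotonicity and the global labeling for free, a step your sketch leaves open.

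For $a\to 1$, your expansion is incomplete. Expanding only $\sin x$ and $\sinh x$ with $x=\lambda(1-a)\pi$ leaves the leading identity trivial, because the factors $\sinh(\lambda a\pi)$ and $\sin(\lambda a\pi)$ also depend on $x$ through $\lambda a\pi=\lambda\pi-x$; the difference $\sin(\lambda\pi)\sinh(\lambda a\pi)-\sinh(\lambda\pi)\sin(\lambda a\pi)$ is itself $O(x)$. Once you also expand $\sinh(\lambda\pi-x)$ and $\sin(\lambda\pi-x)$, the clamped relation $\tan(\lambda\pi)=\tanh(\lambda\pi)$ appears at the next order (effectively $O(x^2)$ in the original equation, not $O(x^3)$). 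The paper handles this by parametrizing the branch near its endpoint and expanding jointly in both perturbations. For $a\to 0$, the paper uses a shortcut you miss: \eqref{auto1} is invariant under $a\mapsto 1-a$, so the odd-branch limits at $a\to 0$ follow immediately from the $a\to 1$ analysis, while in \eqref{auto2} one can simply let $a\to 0$ directly since that limit is nondegenerate.
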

We observe that, as a straightforward consequence of Theorem \ref{Michelle}, we have
$$
\lim_{a \to 0} \frac{\lambda_{n+1}(a)}{\lambda_n(a)} = \left\{
\begin{array}{ll}
1 & \textnormal{ if } n \textnormal{ is even } \vspace{0.1cm}\\
\displaystyle \frac{\Lambda_{n+2}}{\Lambda_n} & \textnormal{ if } n \textnormal{ is odd. }
\end{array}
\right.
$$
Theorem \ref{Michelle} states that the set of couples $(a,\lambda)$ satisfying either \eq{auto1} or \eq{auto2} is composed by the union of connected branches which are graphs of
regular functions $\lambda=\lambda(a)$; their intersections with any line $a=\bar{a} < 1$ give all the eigenvalues $\mu=\lambda^4$ of problem \eqref{autovsym}
corresponding to the choice $a=\bar{a}$, see again Figure \ref{bello}. It turns out that, even if all the eigenvalues are simple, the spectral gaps can be very small. This means that the corresponding modes of the linear evolution equation \eq{beamlineare} have fairly similar frequencies.
\par
We now turn to the nodal properties of the eigenfunctions. For a given $a \in (0, 1)$, Theorem \ref{Michelle} allows us to sort the eigenvalues in increasing order $\{\lambda_0, \lambda_1, \lambda_2, \ldots\}$ and to label the associated eigenfunctions as $\{e_0, e_1, e_2, \ldots\}$.
 We will always speak about \emph{even} and \emph{odd} eigenfunctions and eigenvalues, referring to such labels.
The placement of the zeros of the eigenfunctions $e_n$, depending on the couple $(a,\lambda)$, is of crucial importance. This was already noticed in the Federal
Report \cite{ammvkwoo}, see the reproduction in Figure \ref{zeroTNB} where an inventory of the modes of oscillation seen at the TNB
is drawn.
\begin{figure}[!h]
\begin{center}
\includegraphics[height=215mm, width=160mm]{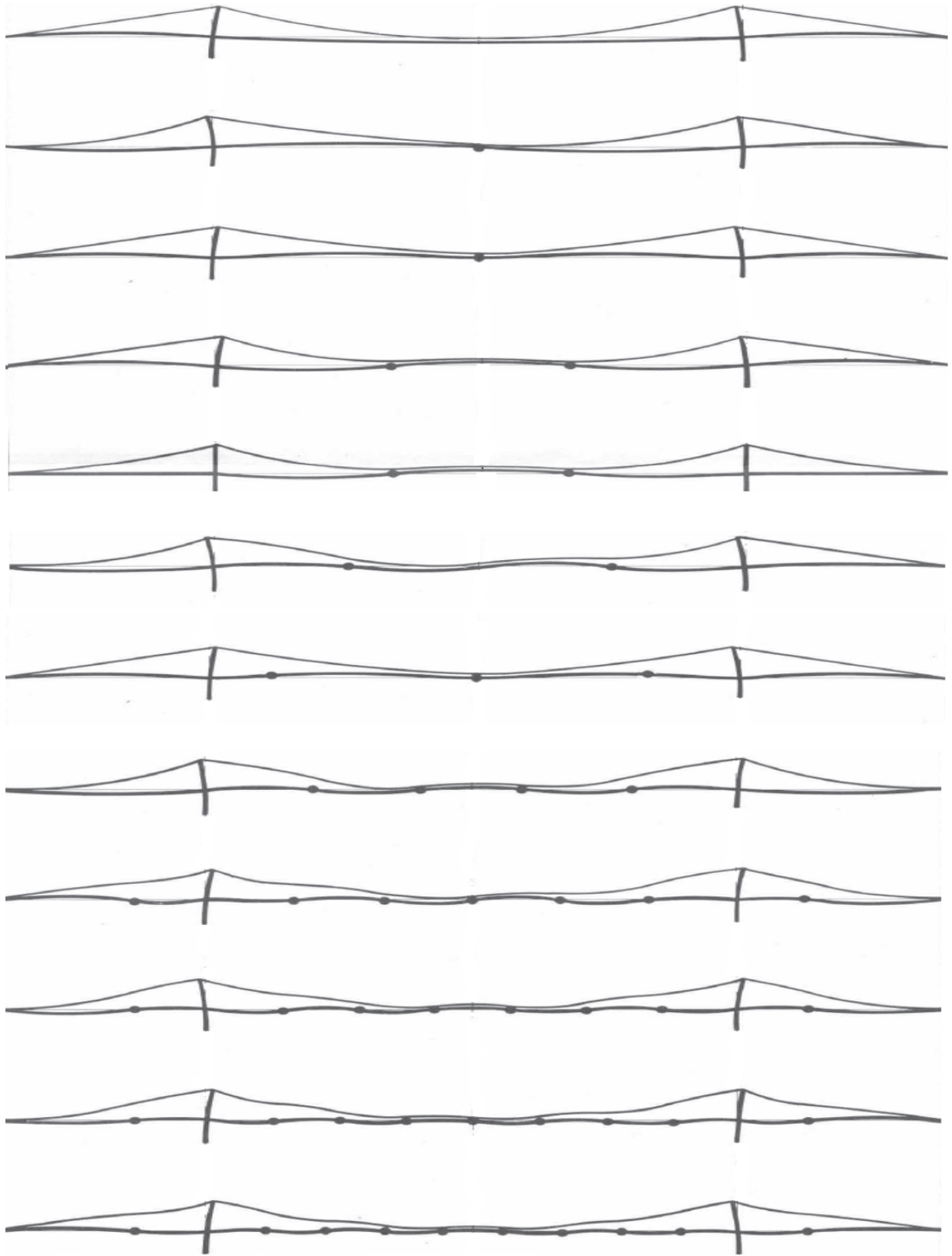}
\caption{Zeros seen at the TNB: hand reproduction of Drawing 4 in \cite{ammvkwoo}.}\label{zeroTNB}
\end{center}
\end{figure}
Since two zeros are always present in the piers, it is necessary to make this precise.
First, it is clear that the $C^\infty$-eigenfunctions $\mathbf{O}_\lambda$ and $\mathbf{E}_\lambda$ cannot have double zeros in any point of $\overline{I}$. On the other hand, using the explicit expression of $\mathscr{O}_\lambda$ (resp., $\mathscr{E}_\lambda$),
it turns out that $x \in I_0$ is a double zero if and only if
\begin{equation}\label{conddoppioa}
\frac{\sin(\lambda x)}{\sinh(\lambda x)} = \frac{\cos(\lambda x)}{\cosh(\lambda x)} = \frac{\sin(\lambda a \pi)}{\sinh(\lambda a \pi)} \quad \left(\textrm{resp.,} -\frac{\sin(\lambda x)}{\sinh(\lambda x)} = \frac{\cos(\lambda x)}{\cosh(\lambda x)} = \frac{\cos(\lambda a \pi)}{\cosh(\lambda a \pi)}\right).
\end{equation}
Since
$$
\left(\frac{\sin(\lambda x)}{\sinh(\lambda x)}\right)' = 0 \quad \textrm{ iff } \quad \frac{\sin(\lambda x)}{\sinh(\lambda x)} = \frac{\cos(\lambda x)}{\cosh(\lambda x)}, \qquad
\left(\frac{\cos(\lambda x)}{\cosh(\lambda x)}\right)' = 0 \quad \textrm{ iff } \quad -\frac{\sin(\lambda x)}{\sinh(\lambda x)} = \frac{\cos(\lambda x)}{\cosh(\lambda x)},
$$
condition \eqref{conddoppioa} may be satisfied only in correspondence of the local minima (or maxima) of the function $\xi(x)=\sin(\lambda x)/\sinh(\lambda x)$ (resp., $\xi(x)=\cos(\lambda x)/\cosh(\lambda x)$). Labeling these stationary points in increasing order as $x_k$, $k=1, 2, \ldots$, one sees that $k \mapsto \vert \xi(x_k) \vert$ is strictly decreasing. Consequently, \eqref{conddoppioa} can be fulfilled only if $x=a\pi$, that is, if the double zero is located at one pier. In a similar way one can reason for the lateral spans, obtaining the following result.
\begin{proposition}\label{nodouble}
Let $e_\lambda$ be an eigenfunction of problem \eqref{autovsym}. Then $e_\lambda$ cannot have double zeros elsewhere than at the piers.
\end{proposition}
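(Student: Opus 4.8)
The plan is to argue separately for the two families of eigenfunctions exhibited in Theorem \ref{symmetriceigen}. The smooth eigenfunctions $\mathbf{O}_\lambda(x)=\sin(\lambda x)$ and $\mathbf{E}_\lambda(x)=\cos(\lambda x)$ have only simple zeros on $\overline{I}$, since $\sin(\lambda\,\cdot)$ and $\cos(\lambda\,\cdot)$ never vanish together with their first derivative; for these there is nothing to prove. Hence I would concentrate on the merely $C^2$ eigenfunctions, namely the odd extension of $\mathscr{O}_\lambda$ and the even extension of $\mathscr{E}_\lambda$. Because oddness (resp.\ evenness) of a $C^1$ function makes its first derivative even (resp.\ odd), a point $x_0$ is a double zero exactly when $-x_0$ is one, so it suffices to locate the double zeros inside $[0,\pi]=[0,a\pi]\cup[a\pi,\pi]$, the two subintervals being joined at the pier $x=a\pi$.

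On the central subinterval the computation carried out right before the statement already settles the matter: for $x\in(0,a\pi)$, being a double zero is equivalent to \eqref{conddoppioa}, and the strict monotonicity of $k\mapsto|\xi(x_k)|$ along the stationary points of $\xi(x)=\sin(\lambda x)/\sinh(\lambda x)$ (resp.\ $\cos(\lambda x)/\cosh(\lambda x)$) forces $x=a\pi$, i.e.\ the double zero to sit at a pier. The endpoint $x=0$ is harmless too: a direct differentiation gives $\mathscr{O}_\lambda'(0)=\lambda\,\frac{\sinh[\lambda(1-a)\pi]}{\sinh(\lambda a\pi)}\bigl(\sinh(\lambda a\pi)-\sin(\lambda a\pi)\bigr)\neq0$ in the odd case (since $\sinh t>\sin t$ for $t>0$), whereas $\mathscr{E}_\lambda(0)=\frac{\sinh[\lambda(1-a)\pi]}{\cosh(\lambda a\pi)}\bigl(\cosh(\lambda a\pi)-\cos(\lambda a\pi)\bigr)>0$ in the even case, so there $x=0$ is not even a zero.

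It then remains to handle the lateral subinterval $(a\pi,\pi]$, for which I would use the substitution $y:=\pi-x$. On $[a\pi,\pi]$ the two functions $\mathscr{O}_\lambda$ and $\mathscr{E}_\lambda$ agree up to a nonzero multiplicative constant, both being proportional to $\sin[\lambda(1-a)\pi]\sinh[\lambda(x-\pi)]-\sinh[\lambda(1-a)\pi]\sin[\lambda(x-\pi)]$; imposing that this expression and its derivative vanish at $x$, and dividing out the constant, one is led to
$$
\frac{\sin(\lambda y)}{\sinh(\lambda y)}=\frac{\cos(\lambda y)}{\cosh(\lambda y)}=\frac{\sin[\lambda(1-a)\pi]}{\sinh[\lambda(1-a)\pi]},\qquad y\in\bigl[0,(1-a)\pi\bigr).
$$
This is exactly a condition of the form \eqref{conddoppioa} with $a\pi$ replaced by $(1-a)\pi$ and $x$ by $y$, so the same monotonicity argument, applied this time on $(0,(1-a)\pi)$, shows it can hold only for $y=(1-a)\pi$, that is $x=a\pi$, once again a pier. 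Finally $x=\pi$ (i.e.\ $y=0$) is ruled out because there the derivative is a nonzero multiple of $\sin[\lambda(1-a)\pi]-\sinh[\lambda(1-a)\pi]\neq0$. Putting the three regimes together with the symmetry reduction gives that a double zero can only occur at $x=\pm a\pi$, which is the assertion.

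The only step that is not entirely mechanical is the monotonicity argument itself --- that $\xi$ does not attain the value $\xi(a\pi)$ (or, on the lateral subinterval, $\xi((1-a)\pi)$) at any of its stationary points interior to the relevant subinterval --- but this is precisely the discussion performed just before the statement, reused verbatim after the substitution $a\mapsto 1-a$. Everything else reduces to differentiating the closed-form eigenfunctions of Theorem \ref{symmetriceigen} and checking that a few explicit expressions do not vanish.
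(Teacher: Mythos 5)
Your argument is correct and follows essentially the same route as the paper's own (terse) proof: treat the smooth eigenfunctions $\mathbf{O}_\lambda$, $\mathbf{E}_\lambda$ trivially, use parity to reduce to $[0,\pi]$, characterize double zeros on $[0,a\pi]$ via the condition \eqref{conddoppioa} together with the strict decay of $\vert\xi\vert$ along its stationary points, and then repeat on the lateral span where, as you note, both $\mathscr{O}_\lambda$ and $\mathscr{E}_\lambda$ reduce (up to a nonzero constant) to the same combination in $y=\pi-x$. You supply the details the paper only alludes to with ``In a similar way one can reason for the lateral spans'' --- in particular the explicit analogue of \eqref{conddoppioa} on $[a\pi,\pi]$ and the quick checks that $x=0$ and $x=\pi$ are not double zeros --- so the proposal is a correct and slightly more complete version of the same proof.
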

If the eigenfunction has a double zero at the piers, its restriction to the central span is clamped, while its restrictions to the side spans are partially hinged and partially clamped.
Corollary \ref{symmetriceigena12} provides an example ($a=1/2$) where the eigenfunctions $\mathscr{O}_\lambda$ of \eqref{autovsym} have this feature, but
no zeros of order greater than $2$ are placed in the piers; in fact, the third derivative is therein defined - with nonzero value - only for the $C^\infty$-eigenfunctions $\mathbf{O}_\lambda$ and $\mathbf{E}_\lambda$. We thus define the number of ``effective'' zeros of an eigenfunction $e_\lambda$ in $I$ by
$$
i(e_\lambda)=
\left\{
\begin{array}{ll}
\#\{x \in I_- \cup I_0 \cup I_ + \mid e_\lambda(x)=0\}  & \textrm{ if } e_\lambda'(a\pi) \neq 0 \\
\#\{x \in I_- \cup I_0 \cup I_ + \mid e_\lambda(x)=0\} + 2 & \textrm{ if } e_\lambda'(a\pi) = 0.
\end{array}
\right.
$$
Hence, if $e_\lambda$ possesses double zeros at the piers, then we count them as two additional simple zeros.\par
The second result of this section gives a complete description of the placement of the zeros of the eigenfunctions; we postpone its lengthy proof to Section \ref{pfconstant}. In order to give the statement, fixed $a \in (0, 1)$ we underline the dependence of the eigenfunctions on $a$, denoting $e_n$ by $e_{\lambda_n(a)}$, $n=0, 1, 2, \ldots$

\begin{theorem}\label{constant}
For $a \in (0, 1)$, it holds that $i(e_n)=n$, for every $n=0, 1, 2, \ldots$. Fixed an integer $n \geq 0$, on decreasing of $a$ the zeros of $e_{\lambda_n(a)}$ move by couples from the central span to the side spans whenever the
curve $\lambda=\lambda_n(a)$ intersects one of the hyperbolas $\{\lambda=\Lambda_k/a\}$, for some integer $k \geq 0$ having the same parity as $n$.
\end{theorem}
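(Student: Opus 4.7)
The plan is a continuation argument in the parameter $a\in(0,1)$: using analyticity of the branch $a\mapsto\lambda_n(a)$ from Theorem \ref{Michelle} together with Proposition \ref{nodouble}, I will show that the counting function $a\mapsto i(e_{\lambda_n(a)})$ is constant, and then compute its value in the limit $a\to 1^-$, where it reduces to the classical nodal count of the clamped beam. First, I fix the parity of the branch: since $\lambda_n(a)\to\Lambda_n$ as $a\to 1^-$, and the clamped eigenfunction $\psi_n$ from \eqref{psieigen} has parity $n\!\!\mod 2$ by \eqref{clampedeigen}, the explicit formulas in Theorem \ref{symmetriceigen} combined with analyticity forbid any switch of parity along the branch, so $e_{\lambda_n(a)}$ has parity $n\!\!\mod 2$ for every $a$.

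I then track the motion of the zeros. Away from the piers, $e_{\lambda_n(a)}(x)$ is analytic in $(x,a)$, so by the implicit function theorem its simple zeros depend smoothly on $a$. By Proposition \ref{nodouble}, no zero in the interior of $I_-$, $I_0$ or $I_+$ can acquire multiplicity; hence the pointwise zero count in $I_-\cup I_0\cup I_+$ can change only when a zero collides with one of the piers, i.e.\ when $e'_{\lambda_n(a)}(a\pi)=0$ (and, by parity, also $e'_{\lambda_n(a)}(-a\pi)=0$). At such a collision, the restriction $e_{\lambda_n(a)}|_{\overline{I}_0}$ solves the clamped beam problem on $I_0=(-a\pi,a\pi)$ with eigenvalue $\lambda_n(a)^4$; rescaling $y=x/a$ converts this to the clamped problem on $(-\pi,\pi)$ with eigenvalue $(a\lambda_n(a))^4$, forcing $a\lambda_n(a)=\Lambda_k$ for some integer $k\geq 0$. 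The restricted eigenfunction inherits the parity of $e_n$, which via \eqref{clampedeigen} forces $k\equiv n\pmod 2$. This exactly identifies the hyperbolas $\lambda=\Lambda_k/a$ of the statement.

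Next I verify that $i$ is preserved across each such collision. Symmetry pairs two simple zeros of $e_{\lambda_n(a)}$ (one near $a\pi$ and its mirror near $-a\pi$), which merge into the piers at the critical value of $a$: the raw zero count in $I_-\cup I_0\cup I_+$ drops by two, while the ``$+2$'' correction in the definition of $i$ compensates exactly. A local transversality analysis at the intersection of $\lambda=\lambda_n(a)$ with $\lambda=\Lambda_k/a$, comparing the strict monotone decrease of the hyperbola in $a$ with the milder slope of the branch visible in Figure \ref{bello}, shows that the two zeros lie in $I_0$ for $a$ slightly above the critical value and in $I_-\cup I_+$ slightly below, which is the direction of migration claimed. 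Consequently $a\mapsto i(e_{\lambda_n(a)})$ is constant on $(0,1)$. To evaluate it, I pass to $a\to 1^-$: the eigenfunction converges to $\psi_n$ on compact subsets of $(-\pi,\pi)\setminus\{\pm a\pi\}$, $\psi_n$ has exactly $n$ simple zeros in $(-\pi,\pi)$ (read off from \eqref{psieigen} by inspection of sign changes), and the side spans $I_\pm$ shrink to points and so cannot host extra zeros. Hence $i(e_n)=n$.

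The main obstacle is the step identifying the collisions with the hyperbolas and checking transversality: making the implicit differentiation of \eqref{auto1}--\eqref{auto2} at a crossing point rigorous, and ruling out tangential contact of the branch with a hyperbola (which would allow both zeros to return to $I_0$ rather than crossing to $I_\pm$). A minor but necessary subtlety is to exclude finite-$a$ accumulations of collisions, which follows from the discreteness of the sequence $\{\Lambda_k/a\}_k$ and analyticity of $\lambda_n$, together with the apriori bound $\lambda_n(a)\geq 1/2$ from \eqref{stimabasso}.
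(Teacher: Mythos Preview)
Your overall strategy is sound and is genuinely different from the paper's: you run a continuation argument in $a$, using Proposition~\ref{nodouble} to confine all changes of the zero count to the piers, then identify those critical parameter values with the hyperbolas via the clamped problem on $I_0$, and finally read off the constant value of $i$ from the limit $a\to 1^-$. The paper instead changes variables to $(\alpha,\Lambda)=(1/a,\lambda a)$, proves an explicit zero-counting lemma (Lemma~\ref{zeriaut}) that gives the number of zeros in each span as a function of certain integer parts and of the signs of $g(\Lambda)=\tfrac{\sin\Lambda\pi}{\sinh\Lambda\pi}-\tfrac{\cos\Lambda\pi}{\cosh\Lambda\pi}$ and $h(\Lambda)$, and then tracks these quantities along the branch via Lemmas~\ref{centrale} and~\ref{cent->lat}. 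Your route is softer and more conceptual; theirs is computational but self-contained.

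The real gap in your proposal is exactly the one you flag: the transversality step. You do not prove that when the branch $\lambda=\lambda_n(a)$ meets a hyperbola $\lambda=\Lambda_k/a$, the pair of zeros actually \emph{crosses} from $I_0$ into $I_\pm$ rather than bouncing back into $I_0$. Your justification (``comparing the strict monotone decrease of the hyperbola with the milder slope of the branch visible in Figure~\ref{bello}'') is a reference to a picture, not an argument; and in fact the curves in Figure~\ref{bello} are not monotone in $a$, so a slope comparison in those variables is not immediate. The paper resolves precisely this point by passing to the $(\alpha,\Lambda)$ variables, where the hyperbolas become the horizontal lines $\Lambda=\Lambda_k$ and the eigenvalue branches are proved to be strictly decreasing (Lemma~\ref{IFT}); they then establish three explicit ``Facts'' (I)--(III) in the proof of Lemma~\ref{cent->lat} that pin down, via sign computations with $g$ and $g[\Lambda(\alpha-1)]$, both that each branch crosses each relevant line exactly once and the direction in which the zero moves. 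Without an analogue of this analysis, your continuation argument establishes that $i(e_n)$ is constant but does not yet prove the second sentence of the theorem (the direction of migration); and even the first sentence is not quite complete, because you need to rule out tangential contact (which would correspond to a zero reaching the pier and returning, leaving $i$ unchanged but also leaving open the possibility of more exotic behavior with multiple simultaneous collisions). If you want to salvage your approach, the cleanest fix is to adopt the change of variables $(\alpha,\Lambda)$ and import the monotonicity from Lemma~\ref{IFT}: the strict decrease of $\alpha\mapsto\Lambda_n(\alpha)$ immediately gives transversal crossing of each horizontal line $\Lambda=\Lambda_k$, and then a one-line sign check on $g(\Lambda)$ across $\Lambda=\Lambda_k$ (it changes sign there by \eqref{clampedeigen}) forces the zero to switch spans rather than return.
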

\begin{center}
\begin{figure}[!ht]
\begin{center}
\begin{tikzpicture}
\node[inner sep=0pt] (whitehead) at (10,0)
    {\includegraphics[width=9cm]{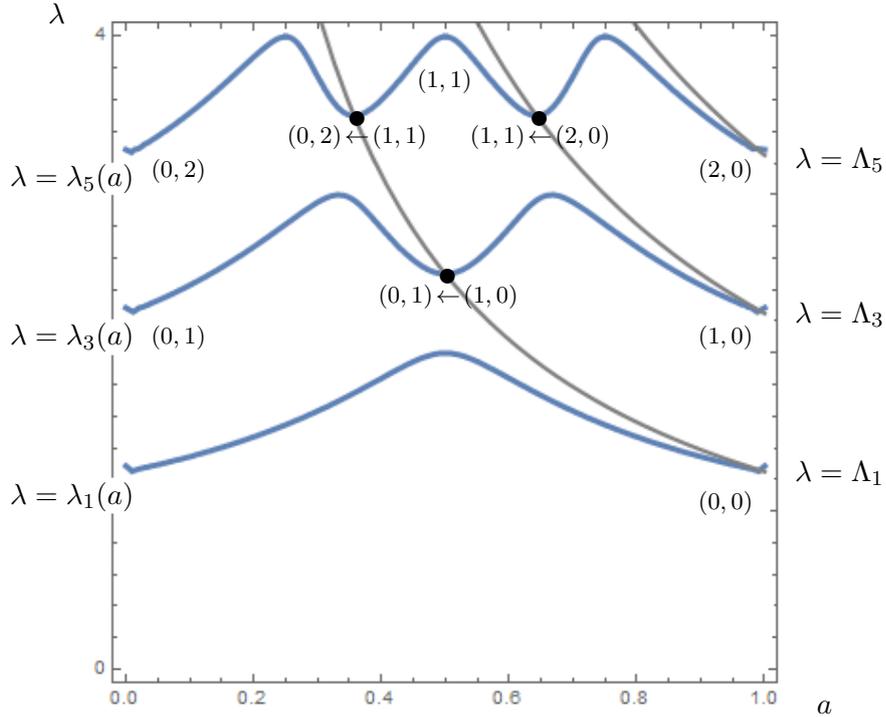}};
\node at (15.1, -4.5){$a$};
\node at (5, 4.7){$\lambda$};

\node at (15.3, 2.75){$\lambda=\Lambda_5$};
\node at (13.8, 2.6){\footnotesize{{$(2,0)$}}};
\node at (11.36, 3.05){\footnotesize{{$(1,1) \!\leftarrow\! (2,0)$}}};
\node at (10.1, 3.8){\footnotesize{{$(1,1)$}}};
\node at (8.95, 3.05){\footnotesize{{$(0,2) \!\leftarrow\! (1,1)$}}};
\node at (6.6, 2.6){\footnotesize{{$(0,2)$}}};
\node at (11.35, 3.28){\footnotesize{{\Large{$\bullet$}}}};
\node at (8.95, 3.28){\footnotesize{{\Large{$\bullet$}}}};
\node at (5.2, 2.5){$\lambda=\lambda_5(a)$};

\node at (15.3, 0.7){$\lambda=\Lambda_3$};
\node at (13.8, 0.4){\footnotesize{{$(1,0)$}}};
\node at (10.15, 0.94){\footnotesize{{$(0,1) \!\leftarrow\! (1,0)$}}};
\node at (6.6, 0.4){\footnotesize{{$(0,1)$}}};
\node at (10.14, 1.2){\footnotesize{{\Large{$\bullet$}}}};
\node at (5.2, 0.3985){$\lambda=\lambda_3(a)$};

\node at (15.3, -1.4){$\lambda=\Lambda_1$};
\node at (13.8, -1.8){\footnotesize{{$(0,0)$}}};
\node at (5.2, -1.71){$\lambda=\lambda_1(a)$};
\end{tikzpicture}
\end{center}
\caption{A visual description of Theorem \ref{constant} for the curves $\lambda=\lambda_{2m+1}(a)$.}\label{figurateo}
\end{figure}
\end{center}

By looking at Figure \ref{figurateo}, we see that the hyperbolas $\{\lambda=\Lambda_k/a\}$ describe a countable set of lines, each of which intersects
the countable set of curves representing the eigenvalues in a countable number of points.
Therefore, double zeros in the piers are possible only for a countable set of values of $a<1$, that is,
\begin{center}
{\bf for almost every $0 < a < 1$ all the eigenfunctions have simple zeros in the piers.}
\end{center}
Theorem \ref{constant} states that a double zero is placed in a pier (and by symmetry also in the other one) each time that the curve $(a, \lambda_n(a))$ crosses the graph of one of the hyperbolas $\{\lambda=\Lambda_{k}/a\}$ (with $k$ having the same parity as $n$). From there on, proceeding in the direction of decreasing $a$, the zeros of $e_n$ move from $I_0$ to $I_+$ (and $I_-$);
 for any odd $n$ and for $a$ sufficiently small such that $\lambda_n(a)$ lies below the hyperbola $\{\lambda=\Lambda_{1}/a\}$, all the zeros of $e_n$ lie in the lateral spans, except for the zero in the origin. For even $n$, the threshold becomes $\lambda_n(a) < \Lambda_0/a$ and no zeros of $e_n$ at all belong to $I_0$ below this threshold. We represent this pattern for some odd eigenfunctions in Figure \ref{figurateo},
where the numbers $(\alpha,\beta)$ in parentheses denote, respectively, the number of zeros of the eigenfunction in $(0,a\pi)$ and in $(a\pi,\pi)$.
The sum $\alpha+\beta$ is constant on each branch.
\par
We conclude this section with a result regarding the asymptotic behavior of the eigenvalues, which will become useful in Section \ref{againa} and will be proved in Section \ref{pfteoasintotica}.
\begin{theorem}\label{teoasintotica}
For every $a \in (0, 1)$, any interval of width $3$ contains at least three values of $\lambda$ for which $\mu=\lambda^4$ is an eigenvalue of
\eqref{autovsym}. As a consequence,
\begin{equation}\label{asintoticaautov}
\lim_{n \to +\infty} \frac{\lambda_{n+1}(a)}{\lambda_n(a)} = 1 \qquad \forall a \in (0, 1).
\end{equation}
\end{theorem}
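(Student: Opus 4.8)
The plan is to bypass the transcendental equations \eqref{auto1}--\eqref{auto2} altogether and instead derive two-sided bounds on the eigenvalues $\lambda_n(a)$ that are \emph{uniform in $a$}, via the Courant--Fischer principle and the fact (Theorem \ref{VI}) that $V(I)$ sits inside $\mathcal H:=H^2\cap H^1_0(I)$ with codimension $2$. Concretely: both $V(I)$ and $\mathcal H$ are Hilbert spaces for $(u,v)\mapsto\int_I u''v''$ and embed compactly into $L^2(I)$, so on each of them the Rayleigh quotient $v\mapsto\int_I(v'')^2/\int_I v^2$ has a discrete spectrum $\mu_0\le\mu_1\le\cdots\to+\infty$ given by the min-max formula over $(n+1)$-dimensional subspaces. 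By Theorem \ref{Michelle} the eigenvalues of \eqref{autovsym} are simple and the eigenfunctions form an orthogonal basis of $V(I)$, hence $\mu_n(V(I))=\lambda_n(a)^4$; while the eigenvalue problem on $\mathcal H$ is the hinged beam $u''''=\mu u$, $u(\pm\pi)=u''(\pm\pi)=0$, whose eigenfunctions are $\sin\!\big(\tfrac j2(x+\pi)\big)$, $j\ge1$, so $\mu_n(\mathcal H)=\big(\tfrac{n+1}{2}\big)^4$. Since $V(I)=\{u\in\mathcal H:u(-a\pi)=u(a\pi)=0\}$ has codimension $2$ in $\mathcal H$, the elementary interlacing inequality applies: the inclusion $V(I)\subset\mathcal H$ gives $\mu_n(\mathcal H)\le\mu_n(V(I))$, while any $(n+3)$-dimensional subspace of $\mathcal H$ meets $V(I)$ in dimension $\ge n+1$, giving $\mu_n(V(I))\le\mu_{n+2}(\mathcal H)$. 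Taking fourth roots,
$$
\frac{n+1}{2}\ \le\ \lambda_n(a)\ \le\ \frac{n+3}{2}\qquad\text{for all }n\ge0\text{ and all }a\in(0,1).
$$

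From this, both assertions follow at once. First, given $\alpha\ge0$ (all eigenvalues being $>\tfrac12$ by \eqref{stimabasso}, there is nothing to check for intervals with negative endpoint), every integer $n$ with $2\alpha-1\le n\le 2\alpha+3$ satisfies $\alpha\le\tfrac{n+1}{2}\le\lambda_n(a)\le\tfrac{n+3}{2}\le\alpha+3$; the interval $[2\alpha-1,2\alpha+3]$ has length $4$ and hence contains at least four nonnegative integers (they include $0,1,2,3$ when $0\le\alpha<\tfrac12$, and are all $\ge0$ when $\alpha\ge\tfrac12$), so $[\alpha,\alpha+3]$ contains at least three (in fact four) values $\lambda_n(a)$ with $\lambda_n(a)^4$ an eigenvalue of \eqref{autovsym}. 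Second, $\lambda_n(a)\ge\tfrac{n+1}{2}\to+\infty$ and $0<\lambda_{n+1}(a)-\lambda_n(a)\le\tfrac{n+4}{2}-\tfrac{n+1}{2}=\tfrac32$, whence $\dfrac{n+2}{n+3}\le\dfrac{\lambda_{n+1}(a)}{\lambda_n(a)}\le\dfrac{n+4}{n+1}\to1$ as $n\to+\infty$, which is \eqref{asintoticaautov}.

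The only non-routine step is the choice of comparison: handling \eqref{auto1}--\eqref{auto2} directly would require controlling the solution set of a quasi-periodic transcendental equation (delicate when $a\notin\mathbb Q$), whereas the codimension-two interlacing against the hinged beam is elementary and automatically uniform in $a$. If one additionally wanted the sharper consecutive-gap bound $\lambda_{n+1}(a)-\lambda_n(a)<1$, one could compare from below with the orthogonal sum of clamped beams on $I_-,I_0,I_+$, but this refinement is not needed here.
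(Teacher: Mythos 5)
Your proof is correct and takes a genuinely different route from the paper's. The paper deduces the claim from the machinery it has already built: Fact (I) in the proof of Lemma \ref{cent->lat} (itself requiring the change of variables $\Lambda=\lambda a$, the straightening map $\Phi$, and monotonicity of the spectral curves) places $\lambda_{2m+1}(a)\in[m+1,m+2]$, and the odd/even alternation from Theorem \ref{constant} then squeezes an even eigenvalue between consecutive odd ones to get three eigenvalues in $[m,m+2]$. You bypass all of that: starting only from the codimension-$2$ fact of Theorem \ref{VI} and the explicit hinged-beam spectrum on $H^2\cap H^1_0(I)$, elementary Courant--Fischer interlacing yields the uniform two-sided bound $\tfrac{n+1}{2}\le\lambda_n(a)\le\tfrac{n+3}{2}$, which is self-contained, independent of the spectral-curve and nodal analyses in Theorems \ref{Michelle} and \ref{constant}, and in fact slightly stronger (at least four eigenvalues in any interval of width $3$). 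Restricted to odd indices $n=2m+1$, your bound reproduces exactly the paper's $\lambda_{2m+1}\in[m+1,m+2]$, so the two approaches encode the same quantitative information obtained along very different paths; the passage to \eqref{asintoticaautov} via the bounded gap is common to both. One small caveat on your closing aside: comparing with the direct sum of \emph{clamped} beams on $I_-,I_0,I_+$ bounds $\lambda_n(a)$ from \emph{above}, not below, since that direct sum is a \emph{subspace} of $V(I)$; a lower bound would come instead from the direct sum of \emph{hinged} beams on the three spans, into which $V(I)$ embeds via restriction with codimension $2$. But you flagged that refinement as optional, so it does not affect the proof.
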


\subsection{Plots of some particular eigenfunctions}

In this section, we focus on some given values of $a$, for which the eigenvalues are determined numerically, and we plot some pictures of eigenfunctions.
In particular, we consider the cases $a=14/25$ (the ratio of the spans of the TNB, according to \cite{ammvkwoo}) and $a=1/2$ portraited, respectively, in Figures \ref{autofunz2} and \ref{autofunz1}. In all the plots, the dot {\footnotesize $\bullet$} represents the position of the piers.
We recall that for $a=1/2$ there are eigenfunctions with double zeros in the piers, as stated in Corollary \ref{symmetriceigena12}.
\begin{figure}[ht!]
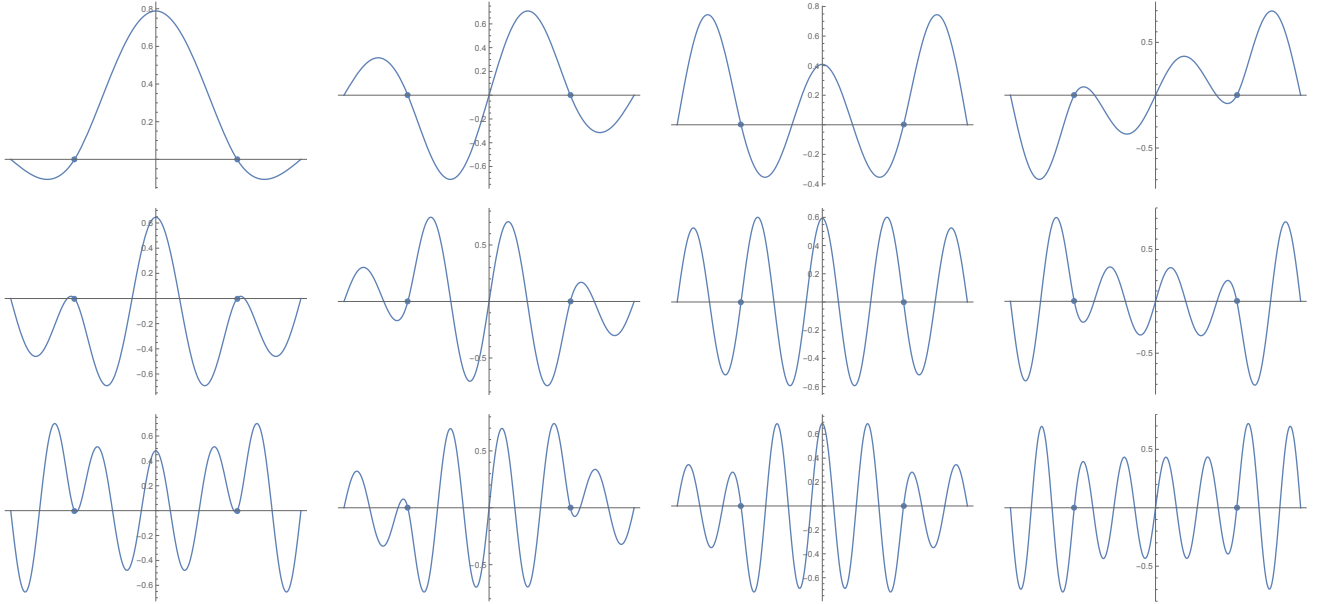

\begin{center}
\includegraphics[height=25mm, width=40mm]{Auto1.pdf}\quad\includegraphics[height=25mm, width=40mm]{Auto2.pdf}\quad
\includegraphics[height=25mm, width=40mm]{Auto3.pdf}\quad\includegraphics[height=25mm, width=40mm]{Auto4.pdf}\vspace{2mm}
\includegraphics[height=25mm, width=40mm]{Auto5.pdf}\quad\includegraphics[height=25mm, width=40mm]{Auto6.pdf}\quad
\includegraphics[height=25mm, width=40mm]{Auto7.pdf}\quad\includegraphics[height=25mm, width=40mm]{Auto8.pdf}\vspace{2mm}
\includegraphics[height=25mm, width=40mm]{Auto9.pdf}\quad\includegraphics[height=25mm, width=40mm]{Auto10.pdf}\quad
\includegraphics[height=25mm, width=40mm]{Auto11.pdf}\quad\includegraphics[height=25mm, width=40mm]{Auto12.pdf}
\caption{The first twelve $L^2$-normalized eigenfunctions of \eqref{autov0} when $a=14/25$.}\label{autofunz2}
\end{center}
\end{figure}

In Table \ref{tabella} we quote the eigenvalues relative to the eigenfunctions plotted in Figures \ref{autofunz2} and \ref{autofunz1}.
\begin{table}[ht!]
{\small
\begin{center}
\begin{tabular}{|c|c|c|c|c|c|c|c|c|c|c|c|c|}
\hline
$a$ & $\mu_0$ & $\mu_1$ & $\mu_2$ & $\mu_3$ & $\mu_4$ & $\mu_5$ & $\mu_6$ & $\mu_7$ & $\mu_8$ & $\mu_9$ & $\mu_{10}$ & $\mu_{11}$ \\
\hline
$14/25$ & 1.74 & 13.8 & 35.5 & 47.3 & 84 & 205 & 409 & 533 & 633 & 1004 & 1684 & 2347 \\
\hline
$1/2$ & 2.44 & 16 & 25.6 & 39 & 112 & 256 & 326 & 410 & 760 & 1296 & 1526 & 1785\\
\hline
\end{tabular}
\caption{The least 12 eigenvalues of \eq{autov0} for $a=14/25$ and $a=1/2$, with an approximation of $1\%$.}\label{tabella}
\end{center}
}
\end{table}

\begin{figure}[ht!]
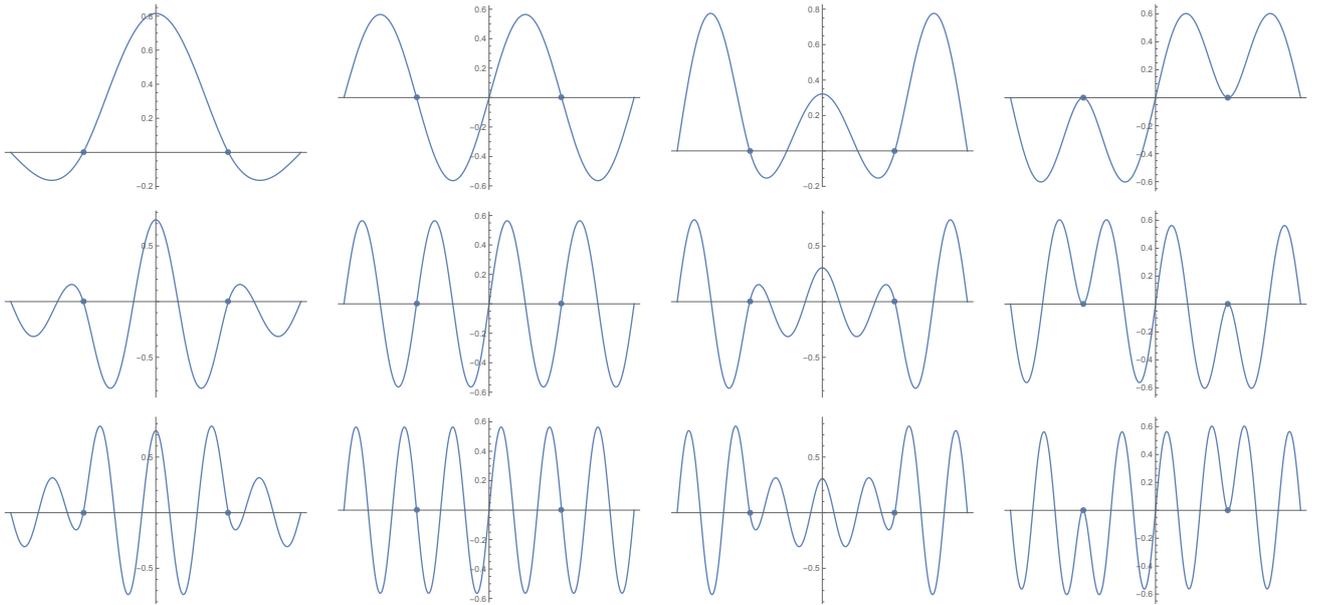

\begin{center}
\includegraphics[height=25mm, width=40mm]{Autof1.pdf}\quad\includegraphics[height=25mm, width=40mm]{Autof2.pdf}\quad
\includegraphics[height=25mm, width=40mm]{Autof3.pdf}\quad\includegraphics[height=25mm, width=40mm]{Autof4.pdf}\vspace{2mm}
\includegraphics[height=25mm, width=40mm]{Autof5.pdf}\quad\includegraphics[height=25mm, width=40mm]{Autof6.pdf}\quad
\includegraphics[height=25mm, width=40mm]{Autof7.pdf}\quad\includegraphics[height=25mm, width=40mm]{Autof8.pdf}\vspace{2mm}
\includegraphics[height=25mm, width=40mm]{Autof9.pdf}\quad\includegraphics[height=25mm, width=40mm]{Autof10.pdf}\quad
\includegraphics[height=25mm, width=40mm]{Autof11.pdf}\quad\includegraphics[height=25mm, width=40mm]{Autof12.pdf}
\caption{The first twelve $L^2$-normalized eigenfunctions of \eqref{autov0} when $a=1/2$.}\label{autofunz1}
\end{center}
\end{figure}

The possibility of having double zeros in the piers naturally leads to wonder whether positive eigenfunctions may exist, namely
$e_\lambda(x)>0$ for every $x \in I_- \cup I_0 \cup I_+$. Of course, this can happen only for an even eigenfunction. Moreover, by Proposition \ref{nodouble} the only possibility is to have double zeros in the piers, since otherwise the eigenfunction would change sign crossing the piers. Due to
the nodal properties of the eigenfunctions stated in Theorem \ref{constant}, this means that the only eigenfunction that can be positive is the third one.
Denoting by $\mu_2(a)=\lambda_2^4(a)$ the third eigenvalue, we are thus led to seek the value of $a$ such that $\lambda_2(a) = \Lambda_0/a$, where $\mu=\Lambda_0^4$ is the least eigenvalue of \eqref{clamped}.
We numerically find that
\begin{center}
{\bf if $a \approx 0.3759$, then $\mu_2 \approx 16.0863$ ($\lambda_2 \approx 2.00269$) and the third eigenfunction is positive; \\
this is the only choice of $(a,\lambda)$ for which an eigenfunction of \eqref{autovsym} is positive.}
\end{center}

In Figure \ref{positiva}, we plot the shape of the third eigenfunction $e_2$ on varying of $0 < a < 1$. It can be seen how the global minima of $e_2$ move when
the side spans are enlarged. In particular, for $a \gtrapprox 0.3759$ (resp.\ $a \lessapprox 0.3759$) the minima are in the central span (resp.\ lateral spans).

\begin{figure}[ht!]
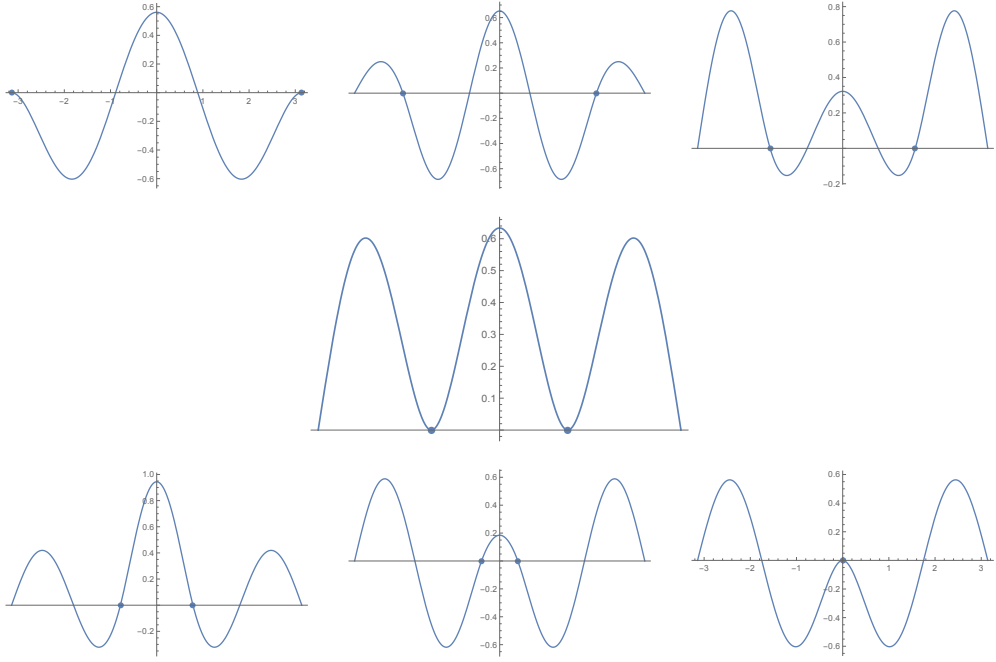

\begin{center}
\includegraphics[height=25mm, width=40mm]{Terza1.pdf}
\quad
\includegraphics[height=25mm, width=40mm]{Terza23.pdf}
\quad
\includegraphics[height=25mm, width=40mm]{Terza12.pdf}
\center{
\includegraphics[height=30mm, width=50mm]{Terza03759.pdf}}
\center{
\includegraphics[height=25mm, width=40mm]{Terza14.pdf}
\quad
\includegraphics[height=25mm, width=40mm]{Terza18.pdf}
\quad
\includegraphics[height=25mm, width=40mm]{Terza0.pdf}}
\caption{The third eigenfunction $e_2$ for $a=1$ (i.e., for the clamped beam), $a=2/3$, $a=1/2$ (first line), $a=0.3759$ (second line), $a=1/4$, $a=1/8$, $a=0$ (third line).}
\label{positiva}
\end{center}
\end{figure}

\vfill
\eject

\subsection{A related second order eigenvalue problem}\label{secondorder}

In Section \ref{suspbridge} we will study a system of two PDEs for a degenerate plate. The first one is
a beam equation, for which the underlying linear theory has been extensively examined in the previous sections, while the other one is a second order PDE
for which we study here the spectral decomposition.\par
Our analysis starts with the observation that the operator $L$ defined on $V(I)$ by $\langle Lu, v\rangle_V=\int_I u'' v''$
{\em is not} the square of the operator $\mathcal{L}$ defined on $W(I)$ by $\langle \mathcal{L}u, v\rangle_W = \int_I u' v'$, where
\begin{equation}\label{W}
W(I):=\{u\in H^1_0(I);\, u(\pm a\pi)=0\}\,
\end{equation}
and $\langle\cdot,\cdot\rangle_W$ denotes the duality pairing between $W'(I)$, the dual space of $W(I)$, and $W(I)$. Notice that, since $W(I) \subset C(I)$, the pointwise constraints still make sense.
\par
We formalize this observation through the following statement.
\begin{proposition}\label{Upsilon}
Let $a \in (0, 1)$ and let $e_\lambda$ be an eigenfunction of \eqref{autovsym} with associated eigenvalue $\mu=\lambda^4$. Then
\begin{equation}\label{defupsilon}
\Upsilon_\lambda^2:=\frac{\int_I(e_\lambda')^2}{\int_I e_\lambda^2}\, \left\{
\begin{array}{ll}
=\lambda^2 & \textrm{ if } e_\lambda \in C^\infty(I) \\
<\lambda^2 & \textrm{ otherwise.}
\end{array}
\right.
\end{equation}
\end{proposition}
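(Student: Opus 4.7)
The strategy is to derive two identities for $e_\lambda$ and then close them via Cauchy--Schwarz. First I would establish
\[
\int_I (e_\lambda')^2 \;=\; -\int_I e_\lambda\, e_\lambda''
\]
by integrating by parts separately on $I_-$, $I_0$ and $I_+$. On each such subinterval $e_\lambda$ is smooth (by Theorem \ref{symmetriceigen}), and all the boundary terms at $\pm\pi$ and at the piers $\pm a\pi$ vanish because $e_\lambda$ is zero there and because $e_\lambda,e_\lambda'\in C(\bar I)$ (in fact, $e_\lambda\in C^2(\bar I)$ by the regularity argument sketched before Theorem \ref{Michelle}). Second, testing the weak eigenvalue identity \eqref{autovsym} with $v=e_\lambda$ gives
\[
\int_I (e_\lambda'')^2 \;=\; \lambda^4 \int_I e_\lambda^2.
\]

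Combining these two identities with the Cauchy--Schwarz inequality yields
\[
\int_I (e_\lambda')^2 \;=\; -\int_I e_\lambda\, e_\lambda'' \;\le\; \|e_\lambda\|_{L^2}\,\|e_\lambda''\|_{L^2} \;=\; \lambda^2 \int_I e_\lambda^2,
\]
which is the desired bound $\Upsilon_\lambda^2 \le \lambda^2$. The sharp case reduces to the Cauchy--Schwarz equality case: $\Upsilon_\lambda^2=\lambda^2$ if and only if $-e_\lambda''=\kappa\, e_\lambda$ for some constant $\kappa$. Since $\int_I(e_\lambda')^2>0$, necessarily $\kappa>0$; plugging $e_\lambda''=-\kappa e_\lambda$ into the fourth-order identity $e_\lambda''''=\lambda^4 e_\lambda$ (which holds classically on each span) forces $\kappa=\lambda^2$. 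Thus equality is equivalent to $e_\lambda''+\lambda^2 e_\lambda\equiv 0$ pointwise on $I\setminus\{\pm a\pi\}$, and by continuity of $e_\lambda''$ on all of $\bar I$.

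To finish, I would show that this last ODE condition is exactly the $C^\infty$ case. On each span $e_\lambda$ lies in the four-dimensional space spanned by $\sin(\lambda x),\cos(\lambda x),\sinh(\lambda x),\cosh(\lambda x)$, and the identity $e_\lambda''=-\lambda^2 e_\lambda$ kills the hyperbolic coefficients, so on each span $e_\lambda$ is of the form $A_j\sin(\lambda x)+B_j\cos(\lambda x)$. Since $e_\lambda\in C^1(\bar I)$ at the piers, uniqueness for the Cauchy problem associated with $y''+\lambda^2 y=0$ forces the coefficients on adjacent spans to coincide, so $e_\lambda(x)=A\sin(\lambda x)+B\cos(\lambda x)$ on the whole of $I$, hence $e_\lambda\in C^\infty(\bar I)$. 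Conversely, if $e_\lambda\in C^\infty(I)$ then Theorem \ref{symmetriceigen} yields $e_\lambda\in\{\mathbf{O}_\lambda,\mathbf{E}_\lambda\}$, for which $e_\lambda''=-\lambda^2 e_\lambda$ holds trivially, giving $\Upsilon_\lambda^2=\lambda^2$ by the identities above.

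The main delicate point I expect is justifying that the equality case really compels a globally sine/cosine profile on $I$: one must combine the pointwise equality $e_\lambda''=-\lambda^2 e_\lambda$ on each open span with the $C^1$ (or $C^2$) matching imposed by $e_\lambda\in V(I)$ at the piers, and invoke uniqueness for a second-order Cauchy problem to rule out any nonsmooth stitching of $\sin/\cos$ pieces with different coefficients; the rest of the argument is a clean chain of integration by parts and Cauchy--Schwarz.
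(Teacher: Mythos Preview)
Your proof is correct and follows essentially the same approach as the paper: integrate by parts to get $\int_I(e_\lambda')^2=-\int_I e_\lambda e_\lambda''$, apply Cauchy--Schwarz (the paper phrases it as H\"older), and use $\int_I(e_\lambda'')^2=\lambda^4\int_I e_\lambda^2$. For the equality case the paper simply states that proportionality of $e_\lambda$ and $e_\lambda''$ is, by Theorem~\ref{symmetriceigen}, equivalent to $e_\lambda\in C^\infty(I)$, whereas you spell out this equivalence explicitly via the second-order ODE and $C^1$-matching at the piers; both arguments are valid and yours is a little more self-contained.
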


The proof of Proposition \ref{Upsilon} is obtained in two steps.
First, the inequality $\Upsilon_\lambda \leq \lambda$ follows from an integration by parts and from the H\"older inequality:
$$
\int_I (e_\lambda')^2\leq\int_I|e_\lambda e_\lambda''|\leq\left(\int_I e_\lambda^2 \right)^{1/2}\left(\int_I(e_\lambda'')^2\right)^{1/2}=
\lambda^2\int_I e_\lambda^2 .
$$
Then, this inequality is an equality if and only if $e_\lambda$ and $e_\lambda''$ are proportional and, according to Theorem \ref{symmetriceigen}, this happens
if and only if $e_\lambda \in C^\infty(I)$.\par
The number $\Upsilon_\lambda$ in \eqref{defupsilon} may be seen as a ``correction term'' due to the presence of the piers and highlights a striking
difference compared with the beam without piers, for which $\Upsilon_\lambda=\lambda$ for all $\lambda$.
Because of $\Upsilon_\lambda$, there is no coincidence between the eigenvalues of \eq{autov0} and the squares of the ones of
\begin{equation}\label{autov2}
\int_I e' w' = \mu \int_I e w \qquad \forall w \in W(I).
\end{equation}
We provide some spectral results also for \eq{autov2}, giving their proof in Section \ref{pfautofz2}.

\begin{theorem}\label{autofz2}
The eigenvalues $\mu=\kappa^2$ of problem \eqref{autov2} are completely determined by the numbers $\kappa>0$ such that
$$
(i)\quad \sin (\kappa a \pi) \sin[\kappa(1-a)\pi] = 0 \quad \textrm{ or } \quad
(ii)\quad \cos(\kappa a \pi) \sin[\kappa(1-a)\pi] = 0,
$$
that is,
\begin{equation}\label{autoo2}
(i)\quad \kappa \in \frac{\mathbb{N}}{a} \cup \frac{\mathbb{N}}{1-a} \quad \textrm{ or } \quad
(ii)\quad \kappa \in \frac{2\mathbb{N}+1}{2a} \cup \frac{\mathbb{N}}{1-a}.
\end{equation}
$\bullet$ If $\kappa \notin \mathbb{N}/(1-a)$, denoting by $\chi_0$ the characteristic function of $I_0$, then:\par\noindent
- in case $(i)$, $\mu=\kappa^2$ is a simple eigenvalue associated with the odd eigenfunction
$\mathbf{D}_\kappa(x)=\chi_0(x)\sin(\kappa x)$;\par\noindent
- in case $(ii)$, $\mu=\kappa^2$ is a simple eigenvalue associated with the even eigenfunction
$\mathbf{P}_\kappa(x)=\chi_0(x)\cos(\kappa x)$.\par\noindent
$\bullet$ If $\kappa \in \mathbb{N}/(1-a)$, then the following situations may occur:\par\noindent
- if $\kappa \notin \mathbb{N}/a$ and $\kappa \notin (2\mathbb{N}+1)/2a$, then $\mu=\kappa^2$ is a double eigenvalue associated with the eigenfunctions
$$
\mathcal{D}_\kappa(x)=
\left\{
\begin{array}{ll}
\!\! \sin[\kappa(x+\pi)] & \mbox{if } x \in \overline{I}_- \vspace{0.05cm}\\
\!\! 0 & \mbox{if } x \in \overline{I}_0 \vspace{0.05cm} \\
\!\! \sin[\kappa(x-\pi)] & \mbox{if } x \in \overline{I}_+,
\end{array}
\right.
\qquad
\mathcal{P}_\kappa(x)=
\left\{
\begin{array}{ll}
\!\! \sin[\kappa(x+\pi)] & \mbox{if } x \in \overline{I}_- \vspace{0.05cm}\\
\!\! 0 & \mbox{if } x \in \overline{I}_0 \vspace{0.05cm} \\
\!\! \sin[\kappa(\pi-x)] & \mbox{if } x \in \overline{I}_+,
\end{array}
\right.
$$
respectively odd and even;\par\noindent
- if $\kappa\in\mathbb{N}/a$, then $\mu=\kappa^2$ is a triple eigenvalue associated with $\mathcal{D}_\kappa$, $\mathcal{P}_\kappa$ and $\mathbf{D}_\kappa$;\par\noindent
- if $\kappa\in(2\mathbb{N}+1)/2a$, then $\mu=\kappa^2$ is a triple eigenvalue associated with $\mathcal{D}_\kappa$, $\mathcal{P}_\kappa$ and $\mathbf{P}_\kappa$.
\end{theorem}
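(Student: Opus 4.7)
The key structural observation is that the space $W(I)$ decomposes as the direct sum
$$
W(I) = H^1_0(I_-) \oplus H^1_0(I_0) \oplus H^1_0(I_+),
$$
where each summand is identified with its extension by zero to all of $I$. Indeed, an $H^1_0(I)$ function that additionally vanishes at $\pm a\pi$ restricts to an $H^1_0$ function on each of the three subintervals, and conversely. Consequently, the bilinear form in \eqref{autov2} splits accordingly, and the eigenvalue problem decouples into three independent Dirichlet problems. Equivalently, testing the weak equation against functions compactly supported in a single span yields $-e''=\kappa^2 e$ (with $\mu=\kappa^2>0$) distributionally on each of $I_-$, $I_0$, $I_+$, and no interface condition at the piers is imposed, since every test function vanishes there.

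I would then compute the three Dirichlet spectra explicitly. The lateral intervals $I_\pm$ have equal length $(1-a)\pi$, so their Dirichlet eigenvalues are $\kappa^2$ with $\kappa=n/(1-a)$, $n\in\mathbb{N}$, and associated eigenfunctions proportional to $\sin[\kappa(x+\pi)]$ on $I_-$ and $\sin[\kappa(\pi-x)]$ on $I_+$. The central interval $I_0$ has length $2a\pi$ and Dirichlet eigenvalues $\kappa^2$ with $\kappa\in\mathbb{N}/(2a)$; a parity decomposition under $x\mapsto -x$ splits them into the odd family $\sin(\kappa x)$ for $\kappa\in\mathbb{N}/a$ and the even family $\cos(\kappa x)$ for $\kappa\in(2\mathbb{N}+1)/(2a)$. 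The alternatives $(i)$ and $(ii)$ in the theorem correspond exactly to collecting, respectively, the odd and the even central-span conditions together with the lateral Dirichlet condition $\sin[\kappa(1-a)\pi]=0$.

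Finally, the eigenfunctions of the full problem on $I$ are arbitrary superpositions of Dirichlet eigenfunctions on the three spans with a common eigenvalue $\kappa^2$, so the multiplicity of $\mu=\kappa^2$ counts the number of subintervals for which $\kappa^2$ is a Dirichlet eigenvalue, taking the central parity into account. If $\kappa\notin\mathbb{N}/(1-a)$, the eigenspace is one-dimensional and supported in $I_0$, giving $\mathbf{D}_\kappa=\chi_0\sin(\kappa x)$ in case $(i)$ and $\mathbf{P}_\kappa=\chi_0\cos(\kappa x)$ in case $(ii)$. If instead $\kappa\in\mathbb{N}/(1-a)$ but $\kappa$ does not activate the central span, the eigenspace is two-dimensional and its natural odd/even basis $\{\mathcal{D}_\kappa,\mathcal{P}_\kappa\}$ is obtained by combining the lateral Dirichlet eigenfunctions antisymmetrically and symmetrically, with zero on $I_0$. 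When $\kappa$ simultaneously belongs to $\mathbb{N}/(1-a)$ and to $\mathbb{N}/a$ (respectively, to $(2\mathbb{N}+1)/(2a)$), the central odd mode $\mathbf{D}_\kappa$ (respectively, the central even mode $\mathbf{P}_\kappa$) must be added, raising the multiplicity to three.

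The only genuinely delicate step is the opening observation that no transmission condition across the piers is imposed by the weak formulation; once the direct-sum decomposition of $W(I)$ is established, the remainder is a direct sum of three classical 1D Dirichlet problems together with a combinatorial bookkeeping of when the three spectra overlap, which reproduces exactly the case distinctions in the statement.
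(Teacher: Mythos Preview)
Your argument is correct and takes a genuinely different, more conceptual route than the paper. The paper mimics its earlier treatment of the fourth-order problem: it invokes interior regularity on each span, exploits the $x\mapsto -x$ symmetry to restrict to $[0,\pi]$, writes $e$ as a combination of $\cos(\kappa x)$ and $\sin(\kappa x)$ on $[0,a\pi]$ and on $[a\pi,\pi]$, and then imposes the three vanishing conditions at $a\pi$ and $\pi$ to obtain a $3\times 3$ linear system whose determinant yields conditions $(i)$ and $(ii)$. Your approach bypasses this computation entirely by observing up front that $W(I)=H^1_0(I_-)\oplus H^1_0(I_0)\oplus H^1_0(I_+)$, so the problem is literally a direct sum of three classical Dirichlet problems. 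This immediately explains why no $C^1$ matching occurs at the piers (a point the paper notes as a contrast with the fourth-order case) and makes the multiplicity count transparent. The paper's approach has the advantage of paralleling the fourth-order development, where such a decomposition is unavailable because $V(I)$ genuinely couples the spans through the $C^1$ matching; your approach is shorter and isolates precisely the structural feature that distinguishes the second-order problem.
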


Notice that \eqref{autoo2}-$(i)$ corresponds to odd eigenfunctions, while \eqref{autoo2}-$(ii)$ to
even ones. The eigenfunctions of \eqref{autov2} are obtained by juxtaposing the eigenfunctions belonging to $H^1_0$ of each span, since there are no smooth junction constraints; for this reason, in general they are not $C^1$. It is also worthwhile noticing that the eigenvalues may be both simple or multiple. Simple eigenvalues are always associated with eigenfunctions being zero on the side spans. Multiple eigenfunctions exist when $\kappa \in \mathbb{N}/(1-a)$ and, in this case, the choice of the associated eigenfunctions is quite arbitrary. In the case of a double eigenvalue, we have chosen to maintain the distinction between odd and even generators, as in Theorem \ref{symmetriceigen}. For triple eigenvalues, we chose to separate the behavior on the central span from the one on the two side spans, at the price of losing regularity. Indeed, in this case $C^\infty$-eigenfunctions exist and coincide with $\mathbf{O}_\lambda$ and $\mathbf{E}_\lambda$ appearing in Theorem \ref{symmetriceigen}, but associated with the eigenvalue $\mu=\lambda^2$: for the eigenvalues $\mu=\lambda^4$ of \eq{autovsym} associated
with $\mathbf{O}_\lambda$ and $\mathbf{E}_\lambda$ one has in fact $\Upsilon_\lambda = \lambda$, see \eqref{defupsilon}. Observe that there are no triple eigenvalues if $a \notin \mathbb{Q}$.
\par This choice of the eigenfunctions is motivated by the possibility of analyzing separately the behavior on the central span, that is the most vulnerable part in bridges.
Other equivalent bases are possible: for instance, one could replace $\mathcal{D}_\kappa$ and $\mathcal{P}_\kappa$ by the functions having only one nontrivial component on $I_-$ and $I_+$, respectively. In Figure \ref{leiperboli} we depict the curves of eigenvalues in the plane $(a, \kappa)$: the bold hyperbolas correspond to $\kappa \in \mathbb{N}/(1-a)$, the dashed lines to $\kappa \in (2\mathbb{N}+1)/2a$ and the dot-dashed ones to $\kappa \in \mathbb{N}/a$.
\par
\begin{figure}[ht]
\begin{center}
\includegraphics[scale=0.75]{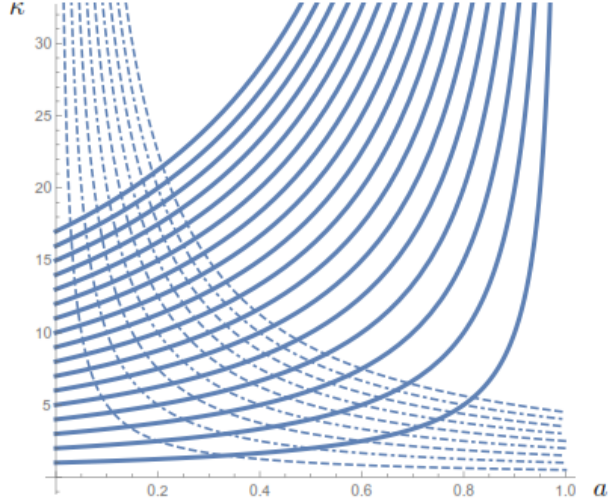}
\end{center}
\caption{A pictorial description of the curves of eigenvalues for \eqref{autov2}, in the $(a, \kappa)$-plane.}\label{leiperboli}
\end{figure}
We close the section providing a formula which identifies simple, double and triple eigenvalues. To this purpose, we introduce the real sequence
$\{\omega_n\}_{n}$ defined by
$$
\omega_n=\frac{n+1}{n+3}, \qquad n \geq -1,
$$
so that $\omega_n \to 1$ for $n \to +\infty$ and the first $\omega_n$'s are equal to $0, \frac{1}{3}, \frac{1}{2}, \frac{3}{5}, \frac{2}{3}$.
We have the following statement, which can be proved by noticing that the simple eigenvalues of \eqref{autov2} are given by the numbers $n/2a$, $n \in \mathbb{N}$.
\begin{proposition}\label{ordine}
The following facts hold:\par\noindent
$\bullet$ if $\omega_{n-1} < a \leq \omega_{n}$, then the first $n$ eigenvalues of \eqref{autov2} are simple $(n \geq 0)$; moreover, if $a=\omega_{n}$, then
the $(n+1)$-th eigenvalue is triple;\par\noindent
$\bullet$ if $\frac{1}{2m+3} \leq a < \frac{1}{2m+1}$, then the first $m$ eigenvalues of \eqref{autov2} are double, with an odd and an even eigenfunction $(m \geq 0)$; moreover, if $a=\frac{1}{2m+3}$, then the $(m+1)$-th eigenvalue is triple.
\end{proposition}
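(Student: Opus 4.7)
The plan is to combine the explicit spectral description given by Theorem~\ref{autofz2} with an elementary interleaving argument for two arithmetic progressions on the positive real half-line.

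First I would rewrite the spectrum in a compact form. Setting $A:=\{k/(2a):k\ge 1\}$ and $B:=\{k/(1-a):k\ge 1\}$, Theorem~\ref{autofz2} reads: the positive $\kappa$'s such that $\mu=\kappa^{2}$ is an eigenvalue of \eqref{autov2} are exactly the points of $A\cup B$, with multiplicity $1$ on $A\setminus B$, multiplicity $2$ on $B\setminus A$ (one odd and one even eigenfunction), and multiplicity $3$ on $A\cap B$. Indeed $\mathbb{N}/a\cup(2\mathbb{N}+1)/(2a)=A$, and this is precisely the set of $\kappa$'s giving simple eigenvalues in Theorem~\ref{autofz2}: this is the hint recalled in the statement.

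Next I would read the ordering of the leading eigenvalues off the comparison between $\min A=1/(2a)$ and $\min B=1/(1-a)$, whose order is governed by $a\gtrless 1/3$. For the first assertion, in the regime $a\ge 1/3$, I count the integers $k\ge 1$ satisfying $k/(2a)<1/(1-a)$, i.e.\ $k<2a/(1-a)$. The map $a\mapsto 2a/(1-a)$ is strictly increasing, and a direct computation gives $2\omega_{n}/(1-\omega_{n})=n+1$. Hence on $\omega_{n-1}<a\le\omega_{n}$ one has $2a/(1-a)\in(n,n+1]$: for $a<\omega_{n}$ the values $1/(2a),\ldots,n/(2a)$ lie strictly below $\min B$ and give the first $n$ eigenvalues, all simple; at $a=\omega_{n}$ the equality $(n+1)/(2a)=1/(1-a)$ places this point in $A\cap B$, so that it is the $(n+1)$-th eigenvalue and is triple.

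The second assertion is obtained symmetrically, now in the regime $a\le 1/3$ where $\min B<\min A$. One counts the $k\ge 1$ with $k/(1-a)<1/(2a)$, i.e.\ $k<(1-a)/(2a)$; the dual identity $(1-a)/(2a)=m+1$ at $a=1/(2m+3)$, together with the strict monotonicity of $a\mapsto(1-a)/(2a)$, yields $(1-a)/(2a)\in(m,m+1]$ on $1/(2m+3)\le a<1/(2m+1)$. Thus the $m$ points $\{k/(1-a):1\le k\le m\}$ precede $\min A$ and give $m$ consecutive double eigenvalues, and at $a=1/(2m+3)$ the coinciding value $(m+1)/(1-a)=1/(2a)$ produces a triple eigenvalue as the $(m+1)$-th in the ordering. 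The argument is mostly bookkeeping; the only point that needs a bit of care is to check at the two endpoints $a=\omega_{n}$ and $a=1/(2m+3)$ that the coincident value really belongs to $A\cap B$ (so that it contributes one triple eigenvalue rather than an accidental simple plus double pair), which is immediate from the two explicit identities above.
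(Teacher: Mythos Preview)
Your argument is correct and follows exactly the approach hinted at in the paper, which only notes that the simple eigenvalues are the numbers $k/(2a)$; you have simply fleshed out the interleaving of the two arithmetic progressions $A=\{k/(2a)\}$ and $B=\{k/(1-a)\}$ and the resulting multiplicity pattern. The only cosmetic point is that the phrase ``in the regime $a\ge 1/3$'' is slightly loose for $n=0$ (where the first claim is vacuous and only the triple assertion at $a=1/3$ matters), but your identity $(n+1)/(2a)=1/(1-a)$ at $a=\omega_n$ already covers that case.
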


In particular, we infer that the first eigenvalue is simple for $a > 1/3$, triple for $a=1/3$, double for $a<1/3$.
Moreover, the second eigenvalue is simple for $a>1/2$ and triple for $a=1/2$.
Overall, Proposition \ref{ordine} states that the multiplicity increases on low eigenvalues when $a$ is small and, since multiplicity
plays against stability,
\begin{center}
{\bf when dealing with nonlinear problems it appears more convenient to consider large $a$.}
\end{center}

Each bold hyperbola in Figure \ref{leiperboli} carries a double eigenvalue, with one even and one odd eigenfunction, while each of the dashed and dot-dashed hyperbolas therein carries a simple
eigenvalue, alternating even and odd eigenfunctions. In Figures \ref{2tac} and \ref{22} we depict the shape of the first ten/twelve eigenfunctions
for $a=1/2=\omega_1$ and $a=14/25 \in (\omega_1, \omega_2)$, and in Table \ref{tabellax} we quote the corresponding eigenvalues: in case of
multiple eigenvalues we plot first $\mathbf{D}_\kappa$ or $\mathbf{P}_\kappa$ (if they exist), then $\mathcal{D}_\kappa$, finally
$\mathcal{P}_\kappa$.

\begin{table}[ht]
\begin{center}
{\small
\begin{tabular}{|c|c|c|c|c|c|c|c|c|c|c|c|}
\hline
$a$ & $\mu_0$ & $\mu_1$ & $\mu_2$ & $\mu_3$ & $\mu_4$ & $\mu_5$ & $\mu_6$ & $\mu_7$ & $\mu_8$ & $\mu_9$ \\
\hline
$14/25$ & 0.797194 & 3.18876 & 5.1653 & 5.1653 & 7.17474 & 12.7551 & 19.9299 & 20.6611 & 20.6611 & 28.6989  \\
\hline
$1/2$ & 1 & 4 & 4 & 4 & 9 & 16 & 16 & 16 & 25 & 36  \\
\hline
\end{tabular}
}
\caption{The least ten eigenvalues of \eq{autov2} for $a=14/25$ and $a=1/2$.}\label{tabellax}
\end{center}
\end{table}

\begin{figure}[ht!]
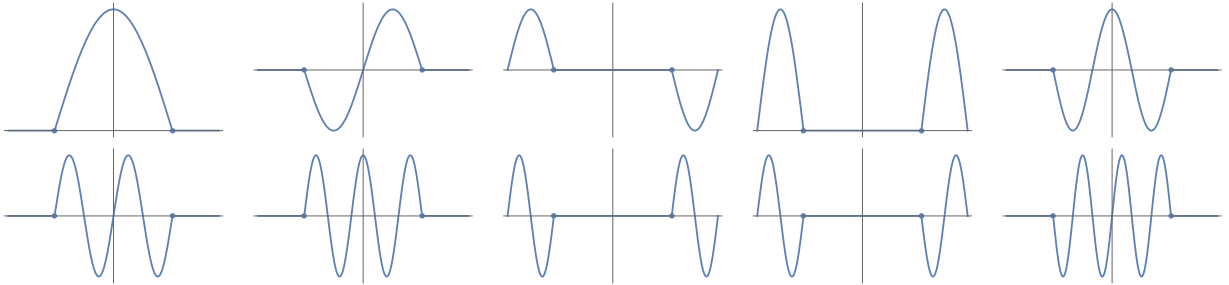

\begin{center}
\includegraphics[height=18mm, width=29mm]{T1.pdf}\quad\includegraphics[height=18mm, width=29mm]{T2.pdf}\quad
\includegraphics[height=18mm, width=29mm]{T3.pdf}\quad\includegraphics[height=18mm, width=29mm]{T4.pdf}\quad
\includegraphics[height=18mm, width=29mm]{T5.pdf}\vspace{0.1cm}\quad\includegraphics[height=18mm, width=29mm]{T6.pdf}\quad
\includegraphics[height=18mm, width=29mm]{T7.pdf}\quad\includegraphics[height=18mm, width=29mm]{T8.pdf}\quad
\includegraphics[height=18mm, width=29mm]{T9.pdf}\quad\includegraphics[height=18mm, width=29mm]{T10.pdf}
\caption{The shape of the first ten eigenfunctions of \eqref{autov2} when $a=14/25$.}\label{2tac}
\end{center}
\end{figure}
\begin{figure}[ht!]
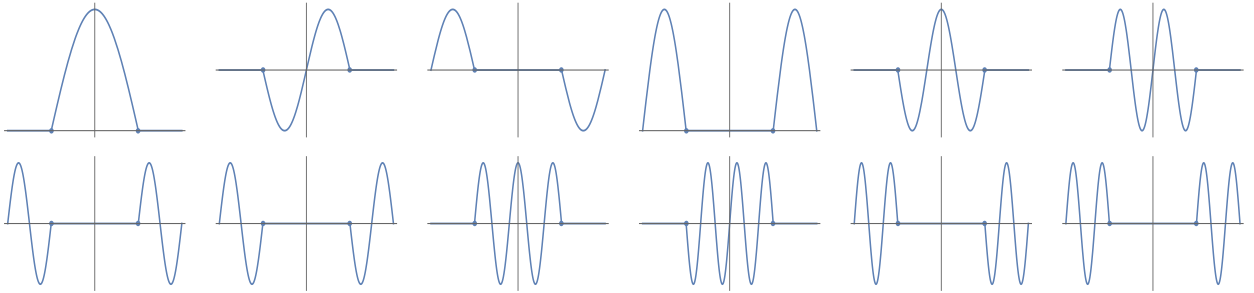

\begin{center}
\includegraphics[height=18mm, width=24mm]{TT1.pdf}\quad\includegraphics[height=18mm, width=24mm]{TT4.pdf}\quad
\includegraphics[height=18mm, width=24mm]{TT2.pdf}\quad\includegraphics[height=18mm, width=24mm]{TT3.pdf}\quad
\includegraphics[height=18mm, width=24mm]{TT5.pdf}\vspace{0.1cm}\quad\includegraphics[height=18mm, width=24mm]{TT8.pdf}\vspace{0.1cm}\quad
\includegraphics[height=18mm, width=24mm]{TT6.pdf}\quad\includegraphics[height=18mm, width=24mm]{TT7.pdf}\quad
\includegraphics[height=18mm, width=24mm]{TT9.pdf}\quad\includegraphics[height=18mm, width=24mm]{TT12.pdf}\quad
\includegraphics[height=18mm, width=24mm]{TT10.pdf}\quad\includegraphics[height=18mm, width=24mm]{TT11.pdf}
\caption{The shape of the first twelve eigenfunctions of \eqref{autov2} when $a=1/2$.}\label{22}
\end{center}
\end{figure}

\section{Nonlinear evolution equations for symmetric beams}\label{evolutionbeam}

We consider here a symmetric beam subject to different nonlinear forces. According to suitable definitions which will be given in the course, we study the stability of solutions with a prevailing mode.
The final purposes are to identify the nonlinearity better describing the behavior of real structures and to locate the position of the piers maximizing the stability.

\subsection{From linear to nonlinear beam equations}

The general nonlinear equations describing our models read
\begin{equation}\label{prototipo}
u_{tt}+u_{xxxx}+\gamma_1\Vert u_{xx} \Vert_{L^2}^2 u_{xxxx}-\gamma_2\Vert u_{x} \Vert_{L^2}^2 u_{xx}+\gamma_3\Vert u\Vert_{L^2}^2 u + f(u) = 0
\end{equation}
and take into account nonlocal factors of bending and stretching type and local restoring forces. Equation \eq{prototipo} is considered together with the initial conditions
\neweq{icproto}
u(x,0)=u_0(x),\quad u_t(x,0)=u_1(x)\qquad x\in I.
\endeq
We focus our attention on the ``coercive'' case where
\neweq{assumptions}
\gamma_1,\gamma_2,\gamma_3\ge0,\qquad f\in{\rm Lip}_{\rm loc}(\mathbb{R}),\quad f(s)s\ge0\qquad \forall s \in\R,
\endeq
although the setting discussed in the forthcoming sections may be extended to more general nonlinearities. Note that \eq{assumptions}
includes the case where $\gamma_1=\gamma_2=\gamma_3=0$ and $f(s)=\gamma s$, namely the linear equation \eq{beamlineare}, whose stationary solutions have been studied in Section \ref{3.1}. One of our purposes is to examine the impact of each nonlinearity on the stability analysis: this will be done in Section \ref{sezioneconfronti}.

Let us make precise what is meant by weak solution of \eqref{prototipo}.

\begin{definition}\label{soluzionedebole}
We say that $u\in C^0(\R_+;V(I))\cap  C^1(\R_+;L^2(I))\cap  C^2(\R_+;V'(I))$ is a weak solution of \eqref{prototipo}-\eqref{icproto} if $u_0\in V(I)$, $u_1\in L^2(I)$ and,
for all $v\in V(I)$ and $t>0$, one has
$$
\langle u_{tt}, v \rangle_{V} +(1+\gamma_1\Vert u_{xx} \Vert_{L^2}^2)\int_I u_{xx} v'' + \gamma_2\Vert u_{x} \Vert_{L^2}^2 \int_I u_x v'
+ \gamma_3\Vert u\Vert_{L^2}^2 \int_I uv +\int_I f(u) v = 0.
$$
\end{definition}

We recall that $u(t) \in V(I)$ implies the conditions
\begin{equation}\label{bcbeamnon}
u(-\pi,t)=u(\pi,t)=u(-a\pi,t)=u(a\pi,t)=0 \qquad t \geq 0.
\end{equation}
Existence and uniqueness of a weak solution are somehow straightforward.

\begin{proposition}\label{galerkin1}
Assume \eqref{assumptions}. There exists a unique weak solution $u$ of \eqref{prototipo}-\eqref{icproto} provided that:\par
either $\gamma_1=0$, $u_0\in V(I)$ and $u_1\in L^2(I)$;\par
or $\gamma_1>0$, $u_0\in V(I)$ and $u_1\in L^2(I)$ have a finite number of nontrivial Fourier coefficients.\par\noindent
Moreover, $u\in C^2(\overline{I}\times\R_+)$ and $u_{xx}(-\pi,t)=u_{xx}(\pi,t)=0$ for all $t>0$.
\end{proposition}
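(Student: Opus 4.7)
The plan is to use a Galerkin approximation based on the orthogonal basis of eigenfunctions $\{e_n\}_{n \ge 0}$ of problem \eqref{autov0}, provided by Theorem \ref{Michelle}. For each $N$ I look for $u_N(x,t) = \sum_{n=0}^{N-1} c_n(t) e_n(x)$ satisfying the weak formulation of Definition \ref{soluzionedebole} against all test functions in $V_N := \mathrm{span}(e_0,\ldots,e_{N-1})$, with initial conditions $c_n(0) = \int_I u_0\, e_n$ and $\dot c_n(0) = \int_I u_1\, e_n$. Using the $L^2$-orthonormality of the $e_n$'s together with the identity $\int_I e_n'' e_m'' = \lambda_n^4 \delta_{nm}$ (obtained by taking $v = e_m$ in the weak eigenvalue equation for $e_n$), the principal part of the system diagonalizes; the nonlocal coefficients become smooth functions of $(c_k)$, the local term $\int_I f(u_N) e_m$ is locally Lipschitz since $f \in {\rm Lip}_{\rm loc}(\R)$, and the coefficient $1 + \gamma_1 \sum_k \lambda_k^4 c_k^2$ in front of the top-order term is bounded below by $1$. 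Standard Cauchy--Lipschitz then yields a local-in-time Galerkin solution.

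The next step is an energy identity. Testing the $m$-th Galerkin equation by $\dot c_m(t)$ and summing over $m$, I would show conservation of
\[
E_N(t) = \tfrac12\|\dot u_N\|_{L^2}^2 + \tfrac12\|u_{N,xx}\|_{L^2}^2 + \tfrac{\gamma_1}{4}\|u_{N,xx}\|_{L^2}^4 + \tfrac{\gamma_2}{4}\|u_{N,x}\|_{L^2}^4 + \tfrac{\gamma_3}{4}\|u_N\|_{L^2}^4 + \int_I F(u_N),
\]
where $F(s) = \int_0^s f(\tau)\,d\tau \ge 0$ thanks to \eqref{assumptions}. This furnishes uniform bounds of $u_N$ in $L^\infty(\R_+; V(I))$ and of $\dot u_N$ in $L^\infty(\R_+; L^2(I))$, hence global existence of each Galerkin approximation.

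Passage to the limit then splits according to $\gamma_1$. When $\gamma_1 = 0$, the a priori bounds together with Aubin--Lions compactness produce a subsequence converging weakly-$*$ in $L^\infty_{\rm loc}(\R_+; V(I))$ and $L^\infty_{\rm loc}(\R_+; L^2(I))$ and strongly in $C^0_{\rm loc}(\R_+; L^2(I))$, which suffices to pass to the limit in the Lipschitz term $\int_I f(u_N) v$ and in the nonlocal factors $\|u_{N,x}\|^2$ and $\|u_N\|^2$ (using weak convergence of the relevant derivatives and pointwise-in-$t$ convergence of the $L^2$ norms). When $\gamma_1 > 0$ and $u_0, u_1$ have only finitely many nonzero Fourier coefficients, I instead fix $N$ large enough to enclose all active modes and keep $u_N$ as a genuine solution of the (now finite) ODE system; combined with the energy identity this yields a global classical solution of the Galerkin system, and no passage to $N \to \infty$ is needed. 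This is precisely where the hypothesis on the initial data enters: the top-order nonlinearity $\gamma_1 \|u_{xx}\|^2 u_{xxxx}$ would otherwise require strong convergence of $u_{N,xx}$ in $L^2$, which is not furnished by the energy estimate.

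Uniqueness in both cases follows from a Gronwall argument applied to $w = u - v$ for two solutions sharing the same initial data: on sublevel sets of the energy, the differences of the nonlinear terms are Lipschitz in the $V(I)$-norm, yielding an inequality of the form $\tfrac{d}{dt}(\|w_t\|_{L^2}^2 + \|w_{xx}\|_{L^2}^2) \le C(\|w_t\|_{L^2}^2 + \|w_{xx}\|_{L^2}^2)$. The regularity $u \in C^2(\overline I \times \R_+)$ with $u_{xx}(\pm\pi,t) = 0$ is then inherited from the Galerkin ansatz: each $e_n$ belongs to $C^2(\overline I)$ and satisfies $e_n''(\pm\pi) = 0$ by Theorems \ref{symmetriceigen}--\ref{Michelle}, the coefficients $c_n(t)$ are smooth by ODE theory, and the boundary condition on $u_{xx}$ is transported term by term. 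The main obstacle, as anticipated, is the top-order coupling when $\gamma_1 > 0$; the finite-mode hypothesis is the device that sidesteps the associated infinite-dimensional compactness issue.
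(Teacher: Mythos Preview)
Your overall strategy matches the paper's: a Galerkin scheme built on the eigenbasis of \eqref{autov0} supplied by Theorem~\ref{Michelle}, uniform bounds from energy conservation, and passage to the limit. The paper's own argument is only a sketch, deferring the analytical details to \cite{GarGaz} and \cite{dickey0} and the regularity to \cite{HolNec10}. Two points in your write-up, however, do not go through as stated.

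First, for $\gamma_1>0$ you propose to fix $N$ large enough to contain the active initial modes and declare $u_N$ itself to be the weak solution, with ``no passage to $N\to\infty$ needed''. This is valid only if $V_N$ is invariant under the flow of \eqref{prototipo}, and by Proposition~\ref{allinvariant} that fails whenever $\gamma_2>0$ or $f$ is genuinely nonlinear: testing against $e_m$ with $m\ge N$ leaves the nonzero residuals $\gamma_2\|u_{N,x}\|_{L^2}^2\int_I u_{N,x}\,e_m'$ (the $e_n'$ are not mutually orthogonal in $L^2$, cf.\ Section~\ref{secondorder}) and $\int_I f(u_N)\,e_m$. Hence $u_N$ satisfies only the projected system, not the weak formulation against all $v\in V(I)$, and a limit $N\to\infty$ is still required. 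The paper's comment that ``the Galerkin procedure does not allow to control the related nonlocal term'' names the same obstruction you identify but defers the actual passage to the limit in $\gamma_1\|u_{N,xx}\|_{L^2}^2\int_I u_{N,xx}v''$ to the cited references.

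Second, the regularity claim ``inherited from the Galerkin ansatz \ldots\ transported term by term'' is not justified in the infinite-dimensional case: the energy bound controls $\sum_n \lambda_n^4 c_n(t)^2$, which does not yield uniform convergence of $\sum_n c_n(t)\,e_n''$, nor the pointwise trace $u_{xx}(\pm\pi,t)=0$. The paper obtains the $C^2$-regularity by a different route, arguing as in \cite[Lemma~2.2]{HolNec10} and in the proof of Theorem~\ref{regular}(i): at each fixed $t$ the weak formulation yields a spanwise fourth-order problem, from which interior $C^4$ on each span, $C^2$ matching at the piers, and the natural boundary condition $u_{xx}(\pm\pi,t)=0$ follow by integration by parts.
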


By Theorem \ref{Michelle}, the proof can be carried on with a Galerkin method, since the assumptions \eq{assumptions}
yield a coercive stationary problem: it can be obtained by combining arguments from \cite[Theorem 1]{GarGaz} (local case) and \cite{dickey0} (nonlocal case).
The case $\gamma_1>0$ needs stronger assumptions on the initial data because the Galerkin procedure does not allow to control the related nonlocal term. 
Due to the widely discussed lack of regularity at the piers, see Section \ref{functional}, one cannot expect the solutions to be strong
or classical. One reaches the $C^2$-regularity by arguing as in \cite[Lemma 2.2]{HolNec10}, see also the proof of Theorem~\ref{regular}.\par
In order to simplify our analysis, we consider the eigenfunctions of problem \eqref{autov0} normalized in $L^2$, sorting
them according to the order of the corresponding (simple) eigenvalues, as in Theorem \ref{constant}:
$$
\{e_n\}_{n \in \mathbb{N}}, \qquad\|e_n\|_{L^2}=1.
$$
The initial data associated with \eq{prototipo} may be expanded in Fourier series with respect to the basis $\{e_n\}_n$:
\begin{equation}\label{leiniziali}
u(x, 0)=\sum_{n \in \mathbb{N}} \alpha_n e_n, \quad u_t(x, 0)=\sum_{n \in \mathbb{N}} \beta_n e_n.
\end{equation}

The study of \eqref{prototipo} requires to modify some features of linear problems
that are lost when dealing with nonlinear problems; let us briefly comment about this point by restricting our attention to a particular example.
If in \eq{prototipo} we take $\gamma_1=\gamma_2=\gamma_3=0$
and $f(u)=\eps u^3$ for some $\eps\ge0$, then Definition \ref{soluzionedebole} states that a weak solution satisfies
\neweq{linnonlin}
\langle u_{tt}, v \rangle_{V} +\int_I u_{xx} v'' +\eps\int_I u^3v = 0\qquad\forall v\in V(I),\quad t>0.
\endeq
We take initial conditions concentrated on a sole mode, namely
\neweq{iconemode}
u(x,0)=\alpha e_n(x),\quad u_t(x,0)=0\qquad x\in I,
\endeq
for some $\alpha>0$ and some $e_n$ eigenfunction of \eq{autovsym} with associated eigenvalue $\mu_n=\lambda_n^4$. In the {\em linear case} $\eps=0$, the solution of
\eq{linnonlin}-\eq{iconemode} is given by $u(x,t)=\alpha\cos(\lambda_n^2 t)e_n(x)$, which is periodic-in-time and proportional to the initially excited
mode $e_n$.\par
But the nonlinear case $\eps>0$ is extremely more complicated and a full understanding of it is still missing.
The milestone paper by Rabinowitz \cite{rabin} started the systematic study of the existence of periodic solutions for the nonlinear wave equation
\begin{equation}\label{nonlinw}
u_{tt}-u_{xx}+f(x,u)=0\qquad x\in(0,\pi), \quad t > 0 , \quad \qquad u(0,t)=u(\pi,t)=0\qquad t \geq 0.
\end{equation}
Prior to \cite{rabin}, only perturbation techniques were available and the nonlinearity was assumed to be small in some sense. Rabinowitz proved that, under
suitable assumptions on the nonlinearity $f$, this equation admits periodic solutions of any rational period. More recent works, see e.g. \cite{berti} and
\cite{gentile}, imply that there exist periodic solutions of \eq{nonlinw} for all $T$ belonging to sets of almost full measure. Periodic solutions were also
found for the related nonlinear beam equation (with no piers)
$$
u_{tt}+u_{xxxx}+f(x,u)=0\quad x\in(0,\pi), \quad t>0 ,\,\qquad u(0,t)=u(\pi,t)=u_{xx}(0,t)=u_{xx}(\pi,t)=0\quad t\geq 0 ,
$$
see \cite{lee,liu1,liu2}. For this equation, periodic solutions of given period appear difficult to be determined, see
\cite{arioli} for some computer assisted results on this topic. Indeed, it is no longer true that a solution with initial conditions \eq{iconemode} is proportional
to the mode $e_n$, because the energy immediately spreads on infinitely many other modes. A further difference between linear and nonlinear equations is that the latter
may display some {\em instability}, meaning by this that small perturbations may lead to huge differences in the outcomes (in a suitable sense).\par
In order to overcome these difficulties, we argue as follows. First, we remark that the two parameters $\eps$ and $\alpha$  - respectively measuring the strength of the nonlinearity in
\eq{linnonlin} and the initial amplitude in \eq{iconemode} - play a similar role. To see this, set $w=\sqrt{\eps}u$ in \eqref{linnonlin}, so that $w$ satisfies
\begin{equation}\label{riscal}
\left\{\begin{array}{l}
\langle w_{tt}, v \rangle_V +\int_I w_{xx} v'' +\int_I w^3v = 0\qquad\forall v\in V(I), \quad t>0\\
w(x,0)=\alpha\sqrt{\eps}\, e_n(x),\quad w_t(x,0)=0\qquad x\in I,
\end{array}\right.
\end{equation}
where the equation is independent of $\eps$ and the initial amplitude is now $\alpha\sqrt{\eps}$. Therefore, if either $\alpha$ or $\eps$ is small, we are close to a linear regime and \eqref{linnonlin}, as well as \eqref{riscal}, can be tackled with perturbative methods and KAM theory (see, e.g., \cite{Kuk, Way}).
But \eqref{riscal} also shows that for increasing $\alpha$ and $\eps$ the equation will behave more and more nonlinearly.\par
The second step is based on the simple remark that if one chooses initial data yielding a periodic solution of \eq{linnonlin},
then the solution does not spread energy since it coincides with the periodic solution. But, as we already mentioned, it is not easy to find explicit periodic solutions. Therefore, we prefer to still focus on the initial conditions \eq{iconemode}, which give rise to a periodic solution for $\varepsilon=0$, and study in detail
how the energy of \eq{linnonlin} initially concentrated on the mode $e_n$ spreads among the other modes. In Section \ref{4.4}, we show that this may
happen in two different ways, which need to be characterized sharply in order to define the stability of the beam.
It turns out that
two main ingredients influence the instability: the initially excited mode $e_n$ and its amplitude $\alpha$. Our purpose is to study the instability depending
on the mode, on its initial amplitude, and on the position of the piers. Then we aim at finding the placement of the piers maximizing the stability in a suitable sense: this will be done in Sections \ref{nonmischia} and \ref{nonlinevol}.

\subsection{Prevailing modes and invariant subspaces}\label{length}

The notion of instability we are interested in, is restricted to a particular class of solutions of \eq{prototipo}. This class is motivated by the behavior of
actual bridges. From the Report on the TNB collapse \cite[p.20]{ammvkwoo} we learn that, in the months prior to the collapse,
\begin{center}
{\bf one principal mode of oscillation prevailed \\ and the modes of oscillation frequently changed.}
\end{center}
This means that, even if the oscillations were governed by (disordered) forcing and damping, the oscillation itself was quite simple to describe,
see also the sketch in Figure \ref{zeroTNB}. We characterize the solutions of \eq{prototipo} possessing a \emph{prevailing mode} as the solutions having initial
data concentrating most of the initial energy on a sole mode.

\begin{definition}\label{prevalente}
Let $0 < \eta < 1$. We say that a weak solution of \eqref{prototipo}-\eqref{icproto}, with initial data \eqref{leiniziali}, has the $j$-th mode $\eta$-\emph{prevailing} if $j$ is the only integer for which:
\begin{equation}\label{prevalentec2}
\sum_{n\neq j}(\alpha_n^2 + \beta_n^2)< \eta^4 (\alpha_j^2+\beta_j^2).
\end{equation}
For this solution, all the other modes $k \neq j$ are called \emph{residual}. We denote by $\mathbb{P}_j$ the set of all the solutions of \eqref{prototipo} with $\eta$-prevailing mode $j$.
\end{definition}
We will comment on the role of the parameter $\eta$ in Section \ref{4.4}. By now, we only anticipate that a reasonable choice of $\eta$ is given by $\eta=0.1$; from a physical point of view, this highlights an initial difference between prevailing and residual modes of two orders of magnitude. In Table \ref{instabsuperL8} in Section \ref{sezesperim} we also consider other values of $\eta$. Obviously, not all the solutions of \eqref{prototipo} have an $\eta$-prevailing mode.
In the sequel, we restrict our attention to initial conditions \eqref{leiniziali} without kinetic component, that is,
\begin{equation}\label{initnocin}
u(x, 0)=\sum_{n \in \mathbb{N}} \alpha_n e_n, \quad u_t(x, 0)=0,
\end{equation}
for which \eqref{prevalentec2} becomes
\begin{equation}\label{prevalentec}
\sum_{n\neq j}\alpha_n^2 < \eta^4 \alpha_j^2.
\end{equation}
When a fluid hits a bluff body, its flow is modified and goes around the body. Behind the body, due to the lower pressure, the flow creates vortices which
appear somehow periodic-in-time and whose frequency depends on the velocity of the fluid. The asymmetry of the vortices generates a forcing lift which starts the
vertical oscillations of the structure. The frequencies of the vortices are divided in ranges: for each range of frequencies, there exists a vibrating mode of
the structure which captures almost all the input of energy from the vortices \cite{bonedegaz}, giving rise to what we call a solution with a prevailing mode.
This is a simplified explanation, further vortices and more complicated phenomena may appear: however, up to some minor details, this explanation is shared by the
whole engineering community and it has been studied with some precision in wind tunnel tests, see e.g.\ \cite{larsen,scott}. Our analysis starts at this point,
that is, when the oscillation is maintained in amplitude by a somehow perfect equilibrium between the input of energy from the wind and the structural dissipation. Whence, as long as the aerodynamics is involved the frequency of the vortices determines the prevailing mode, while in an isolated
system the prevailing mode is determined by the initial conditions.
This justifies the choice of the initial conditions \eqref{initnocin}.
\par
The behavior of the solutions of \eq{prototipo} having a prevailing mode
strongly depends on the parameters therein. Two fairly different cases should be distinguished:
\neweq{versus}
\gamma_2=0\mbox{ and }f(s)=\gamma s\qquad\mbox{versus}\qquad\gamma_2\neq0\mbox{ or $f$ acts nonlinearly}.
\endeq
By $f$ nonlinear, we mean that there exist no neighborhood of $s=0$ and no $\gamma \geq 0$ for which $f(s) = \gamma s$.
In order to emphasize the difference between the two situations in \eqref{versus}, let us introduce the notion of invariant space.

\begin{definition}\label{invarianti}
A subspace $X \subset V(I)$ is called \emph{invariant} for \eqref{prototipo} under conditions \eqref{initnocin} if
$$
u(x, 0) \in X \quad \Longrightarrow \quad u(x,t) \in X\textrm{ for every }t>0.
$$
\end{definition}

Of course, $\{0\}$ and $V(I)$ are trivial invariant subspaces. However, in the first case in \eq{versus}
there are many more invariant subspaces than in the second one.

\begin{proposition}\label{allinvariant}
The following statements hold:\par\noindent
$\bullet$ if $\gamma_2=0$ and $f(s)=\gamma s$ for some $\gamma\ge0$, then any subspace $X\subset V(I)$ is invariant for \eqref{prototipo};\par\noindent
$\bullet$ if $\gamma_2 \neq 0$ or $f$ is nonlinear, then there are no nontrivial finite-dimensional subspaces for \eqref{prototipo};\par\noindent
$\bullet$ if $f$ is odd, then
$X=\langle e_{2n}\rangle_{n \in \mathbb{N}}$ and $X=\langle e_{2n+1}\rangle_{n \in \mathbb{N}}$
are invariant subspaces for \eqref{prototipo}.
\end{proposition}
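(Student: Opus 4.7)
For the first bullet I would work in the $L^2$-orthonormal eigenbasis $\{e_n\}$ supplied by Theorem \ref{Michelle}. Expanding a weak solution as $u(x,t)=\sum_n c_n(t)\,e_n(x)$ and testing Definition \ref{soluzionedebole} against $v=e_k$, the eigenvalue identity $\int_I e_n''\,e_k''=\lambda_n^4\delta_{nk}$ together with $L^2$-orthonormality collapse the bending and pure-displacement terms, yielding the infinite ODE system
\[
c_k''+\Bigl[(1+\gamma_1\|u_{xx}\|_{L^2}^2)\lambda_k^4+\gamma_3\|u\|_{L^2}^2\Bigr]c_k+\gamma_2\|u_x\|_{L^2}^2\sum_{n}P_{nk}\,c_n+\int_I f(u)\,e_k\,dx=0,
\]
where $P_{nk}:=\int_I e_n'\,e_k'$. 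Under the hypotheses of the first bullet ($\gamma_2=0$ and $f(s)=\gamma s$) the coupling matrix disappears and $\int_I f(u)e_k=\gamma c_k$, so the $k$-th equation reduces to a scalar linear second-order ODE in $c_k$ alone whose coefficients depend on $u$ only through scalar norms. Cauchy-Lipschitz uniqueness applied componentwise then gives $c_k(0)=c_k'(0)=0\Rightarrow c_k\equiv 0$, proving that every subspace $\mathrm{span}\{e_j:j\in J\}$ is invariant (this being the natural reading of ``any subspace'', since a generic non-eigen subspace necessarily mixes frequencies).

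The third bullet follows from reflection symmetry. By Theorem \ref{symmetriceigen}, when $b=a$ every eigenfunction is either even or odd, hence $\langle e_{2n}\rangle_{n\in\mathbb{N}}$ and $\langle e_{2n+1}\rangle_{n\in\mathbb{N}}$ coincide with the even and odd subspaces of $V(I)$. Given a solution $u$, set $w(x,t)=u(-x,t)$ in the even case and $w(x,t)=-u(-x,t)$ in the odd case. The $L^2$-norms of $w$ and its spatial derivatives equal those of $u$, and in the odd case the oddness of $f$ yields $f(-u(-x,t))=-f(u(-x,t))$; substituting into Definition \ref{soluzionedebole} a direct computation shows that $w$ solves the same weak equation. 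If $u_0$ has the prescribed parity and $u_1\equiv 0$, then $w$ and $u$ share the same initial data, so the uniqueness in Proposition \ref{galerkin1} forces $w\equiv u$, i.e.\ the parity of $u(\cdot,t)$ is preserved for every $t>0$.

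For the second bullet I would argue by contradiction, supposing $X\subset V(I)$ is a nontrivial finite-dimensional invariant subspace. Since $t\mapsto u(t)\in X$ is $C^2$ and $X$ is finite-dimensional, $u_{tt}(0)\in X$ too, so the nonlinear map $N(u)=-u_{xxxx}-\gamma_1\|u_{xx}\|_{L^2}^2 u_{xxxx}+\gamma_2\|u_x\|_{L^2}^2 u_{xx}-\gamma_3\|u\|_{L^2}^2 u-f(u)$ must send $X$ into $X$ in the weak sense. Rescaling $u\to\epsilon u$ and reading off the leading order in $\epsilon$ forces the linear part $u\mapsto-u_{xxxx}-f'(0)u$ to preserve $X$; by simplicity of the spectrum in Theorem \ref{Michelle}, $X$ must then be spanned by eigenfunctions, $X=\mathrm{span}\{e_j:j\in J\}$ for some finite $J\subset\mathbb{N}$. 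Testing invariance against $e_k$ with $k\notin J$ reduces to
\[
\gamma_2\|u_x\|_{L^2}^2\sum_{j\in J}c_j\,P_{jk}+\int_I f(u)\,e_k\,dx=0\qquad\forall\, u=\sum_{j\in J}c_j e_j.
\]
If $\gamma_2\neq 0$, setting $u=\epsilon e_{j_0}$ and isolating the $\epsilon^3$-contribution yields $P_{j_0 k}=0$ for all $j_0\in J$ and $k\notin J$; this would make $J$ a block of the Gram matrix $(P_{nk})$ associated to the quadratic form of $-\partial_{xx}$ on $W(I)$, contradicting the fact that this operator has a spectrum strictly different from that of $\partial_{xxxx}$ on $V(I)$ (Proposition \ref{Upsilon} and Theorem \ref{autofz2}), so that no finite eigenspan of the fourth-order problem can be $(-\partial_{xx})$-invariant. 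If $f$ is nonlinear, taking $u=c_1 e_{j_1}+c_2 e_{j_2}$ with $j_1,j_2\in J$ and comparing the resulting functions $f(u)$ at several $(c_1,c_2)$ forces products of eigenfunctions to lie in the finite-dimensional $X$, which is generically impossible. The hard step is the reduction to $X=\mathrm{span}\{e_j:j\in J\}$ and the nonlinear-$f$ case under the sole assumption that $f$ is Lipschitz, since $f'(0)$ need not exist; I would handle this by replacing the differentiation by a comparison of $N(\epsilon_1 u)$ and $N(\epsilon_2 u)$ at two small amplitudes and exploiting the rigidity of the finite-dimensional constraint on $\epsilon\mapsto f(\epsilon v)$ via a Vandermonde-type argument at points where the eigenfunctions take distinct values.
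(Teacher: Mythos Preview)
Your treatment of the first and third bullets is correct and matches the paper's approach. For the first bullet, the paper argues exactly as you do: in the eigenbasis the system decouples because both $u_{xxxx}$ and $u$ act diagonally, so the scalar factors $\|u_{xx}\|_{L^2}^2$ and $\|u\|_{L^2}^2$ merely modulate each mode separately (this is made explicit later in Proposition~\ref{partform} and in system~\eqref{cwgeneral}). Your caveat that ``any subspace'' should be read as ``any span of eigenfunctions'' is well taken; as you note, a one-dimensional span like $\langle e_0+e_1\rangle$ is \emph{not} invariant once $\lambda_0\neq\lambda_1$, so the literal statement is imprecise. For the third bullet, the paper refers to~\cite{GarGaz} and your reflection argument (using that $e_{2n}$ are even and $e_{2n+1}$ are odd by Theorems~\ref{symmetriceigen} and~\ref{constant}, then invoking uniqueness) is the standard and correct route.

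For the second bullet, your strategy is more structured than the paper's but not complete, and some of the gaps you flag are real. The paper does \emph{not} give a general proof: it simply writes out the infinite ODE systems~\eqref{infinitostretch} and~\eqref{sistemainfinito} for the two prototype cases $\gamma_2\neq0$, $f\equiv0$ and $f(u)=u^3$, observes that the coupling coefficients $\Delta_{\lambda,\nu}=\int_I e_\lambda' e_\nu'$ (respectively $B_{n,m}=3\int_I e_n^3 e_m$) are generically nonzero, and concludes that a mode initially at rest is instantaneously excited. This is an argument by explicit computation of second derivatives at $t=0$, not the abstract reduction you attempt. Your step~(4) is morally the same as the paper's $\Delta_{\lambda,\nu}\neq0$ observation, but your appeal to Proposition~\ref{Upsilon} and Theorem~\ref{autofz2} to rule out $P_{jk}=0$ for all $j\in J$, $k\notin J$ needs more: what you would actually need is that the Gram matrix $(P_{nk})$ is irreducible in the eigenbasis, which the paper does not prove in general either. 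Your nonlinear-$f$ argument is the weakest part: the Vandermonde idea and the claim that ``products of eigenfunctions lie in $X$'' are not justified under the sole assumption $f\in\mathrm{Lip}_{\rm loc}$, and the paper sidesteps this entirely by working only with $f(u)=u^3$. In short, both the paper and you leave bullet two at the level of a plausibility argument; the paper's version is concrete but example-based, yours is more ambitious but with acknowledged holes.
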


The first statement is straightforward and will be clarified in the course, see Section \ref{twoms} below. The second statement may also be obtained with some computations, see Section \ref{howmix} below for the details. The third statement follows by direct computation as well,
see \cite{GarGaz}.
\par
Proposition \ref{allinvariant} states that the second alternative in \eq{versus} forces the invariant subspaces for \eqref{prototipo} to
be infinite-dimensional, whereas in the first case there are finite-dimensional invariant subspaces, including one-dimensional subspaces.
This striking difference is even more evident if we fix some $j\in \mathbb{N}$ and some $\alpha_j\neq0$, and we take initial data as in \eqref{iconemode}:
\neweq{datiunici}
u(x,0)=\alpha_je_j,\quad u_t(x,0)=0.
\endeq
Then, in the first case of \eq{versus}, we may simply find the solution of \eq{prototipo} with $j$-th prevailing mode, having
the form $u(x,t)=\varphi_j(t)e_j(x)$, for some $\varphi_j \in C^2(\mathbb{R}_+)$. On the contrary, in the second case of \eqref{versus}, the same initial conditions give rise to a solution with infinitely
many nontrivial Fourier coefficients $\varphi_n(t)$, as soon as $t>0$.

\subsection{Nonlinear instability and critical energy thresholds}\label{4.4}

We are interested in the following notion of nonlinear instability, introduced in \cite{GarGaz}.

\begin{definition}\label{unstable}
Let $T_W > 0$. We say that a weak solution $u \in \mathbb{P}_j$ of \eqref{prototipo}
is \emph{unstable} before time $T>2T_W$ if there exist
a residual mode $k$ and a time instant $\tau$ with $2T_W < \tau < T$  such that
\neweq{grande}
(i)  \ \ \Vert \varphi_k \Vert_{L^\infty(0, \tau)} > \eta \Vert \varphi_j \Vert_{L^\infty(0, \tau)}
\qquad \mbox{ and } \qquad
(ii) \ \ \frac{\Vert \varphi_k \Vert_{L^\infty(0, \tau)}}{\Vert \varphi_k \Vert_{L^\infty(0, \tau/2)}} > \frac{1}{\eta}
\endeq
(where $\eta$ is the number appearing in Definition \ref{prevalente}). We say that $u$ is \emph{stable} until time $T$ if, for any $k \neq j$, \eqref{grande} is not fulfilled for any
$\tau \in (2T_W, T)$.
\end{definition}

Clearly, this is a ``numerical definition'', which seems difficult to be characterized analytically with great precision. Roughly speaking, this notion of instability extends in a quantitative way the classical linear instability to a nonlinear context. In all our discussion, we will set $\eta=0.1$; with this choice, instability is somehow identified with a sufficiently abrupt change in the order of magnitude of the $k$-th Fourier component of $u \in \mathbb{P}_j$. In Section \ref{algoritmo},
we explain in detail how to fulfill it. As pointed out in \cite{GarGaz}, Definition \ref{unstable} is necessary to
\begin{center}
{\bf distinguish between physiological energy transfers and instability.}
\end{center}
The former occur at any energy level, are unavoidable and start instantaneously, being governed by the natural interactions between modes created by the particular nonlinearity in \eqref{prototipo}. The latter occur and become visible thanks to a (possibly delayed) sudden and violent transfer of energy from some modes to other ones, only occurring at high energies.
Condition $(i)$ in \eqref{grande} refers to the significance of the energy transfer,
while $(ii)$ refers to its abruptness. The interval $[0,T_W]$ in Definition \ref{unstable} represents a transient phase corresponding to the so-called
\emph{Wagner effect} \cite{Wag}, consisting in a time delay in the appearance of the response to a sudden change of the action of an external input in a system.
We will determine a reasonable value for $T_W$ in Section \ref{lininst} (formula \eq{sceltawagner}), after recalling the notion of linear instability.
After having fixed $T_W > 0$ and $T>2T_W$, for all $j \in \mathbb{N}$ the most natural way to define the
\emph{$j$-th energy threshold} of instability before time $T$ for \eqref{prototipo} would be the following:
\neweq{Eja}
E_j(a)=\inf_{u \in \mathbb{P}_j} \{\mathcal{E}(u) \mid u \textrm{ is unstable before time } T\} \qquad \forall a\in(0,1),
\endeq
where
$$
\mathcal{E}(u)= \frac{\Vert u_t \Vert^2_{L^2}}2 +\frac{\Vert u_{xx} \Vert_{L^2}^2}2 +\gamma_1\frac{\Vert u_{xx} \Vert_{L^2}^4}4+
\gamma_2\frac{\Vert u_{x} \Vert_{L^2}^4}4+\gamma_3\frac{\Vert u \Vert_{L^2}^4}4+ \int_I F(u)
$$
is the constant total energy associated with \eqref{prototipo} and $F$ is the primitive of $f$.
But then one may not be able to define the {\em critical energy threshold} of \eq{prototipo} as $\inf_jE_j(a)$,
since this number could be zero: this fact will be further motivated in Section \ref{againa}. A possible way out is then to restrict the attention
to a finite number of modes, a fairly
common procedure in classical engineering literature. Bleich-McCullough-Rosecrans-Vincent \cite[p.23]{bleich} write that {\em out of the
infinite number of possible modes of motion in which a suspension bridge might vibrate, we are interested only in a few, to wit: the ones
having the smaller numbers of loops or half waves}. The justification of this approach has physical roots: Smith-Vincent \cite[p.11]{tac2}
write that {\em the higher modes with their shorter waves involve sharper curvature in the truss and, therefore, greater bending moment at
a given amplitude and accordingly reflect the influence of the truss stiffness to a greater degree than do the lower modes}. This also occurs
in our setting: if $j\to\infty$, this means extremely small oscillations of the prevailing mode. Therefore, the mathematical lack of
compactness corresponds to a physically irrelevant phenomenon involving tiny oscillations of high modes, which can thus be neglected.\par
So, let us fix a finite number of modes. From the Federal Report \cite{ammvkwoo} we learn that, at the TNB, oscillations with more than 10
nodes on the three spans were never seen, see also Figure \ref{zeroTNB}. This means that 12 modes are more than enough to approximate the motion
of real bridges and of beams having similar structural responses. From a mathematical point of view, this finite-dimensional approximation is fully
justified by the Galerkin procedure that can be used to study \eq{prototipo}, see \cite{GarGaz} for more details; moreover, it enables us to
use Proposition \ref{galerkin1} also in the case $\gamma_1>0$. Therefore, we consider the space
$$
V_{12}(I):=\mbox{span}\{e_i;\, i=0,...,11\}
$$
and we point out that, if the first alternative in \eqref{versus} holds, then $V_{12}(I)$ is invariant, see Proposition \ref{allinvariant}.
Then, we seek \emph{approximated solutions} of the projection of equation \eq{prototipo} onto $V_{12}(I)$
in the form
\neweq{tronca}
U^A(x,t)= \sum_{n=0}^{11}\varphi_n(t)e_n(x),
\endeq
obtaining the system of 12 nonlinear ODE's ($n=0, \ldots, 11$)
\neweq{finitodim}
\int_I U^A_{tt} e_n +(1+\gamma_1\Vert U^A_{xx} \Vert_{L^2}^2)\int_I U^A_{xx} e_n'' + \gamma_2\Vert U^A_{x} \Vert_{L^2}^2 \int_I U^A_x e_n'
+ \gamma_3\Vert U^A\Vert_{L^2}^2 \int_I U^Ae_n +\int_I f(U^A)e_n = 0.
\endeq
These equations are derived from Definition \ref{soluzionedebole}, by taking $U^A$ and $v$ both in $V_{12}(I)$ and not in the whole space $V(I)$.
The function $U^A$ is an approximation of a solution $u$ of \eq{prototipo}: if instead of just 12 modes we considered an arbitrary number of modes
$N$ and we let $N\to\infty$, then it would be $U^A\to u$ in a suitable sense, see \cite[Theorem 2]{GarGaz}.\par
For each $j=0,...,11$, we take initial conditions being the projection of \eq{initnocin} on $V_{12}(I)$ satisfying \eq{prevalentec}:
$$
U^A(x,0)=\sum_{n=0}^{11}\alpha_n e_n, \quad U^A_t(x, 0)=0,\qquad\sum_{n\neq j}\alpha_n^2 < \eta^4 \alpha_j^2.
$$
Then, instead of \eq{Eja}, we define
\begin{equation}\label{energiamodoj}
E_j(a)=\inf_{U^A\in \mathbb{P}_j} \{\mathcal{E}_j(U^A) \mid U^A \textrm{ is unstable before time } T\} \qquad \forall a\in(0,1),
\end{equation}
where $\mathcal{E}_j(U^A):=\mathcal{E}(U^A_j)$, $U^A_j$ being the solution of \eqref{finitodim} having initial data \eqref{datiunici}
(we are thus omitting all the energies involving residual modes; since the residual modes are small, such energies are practically negligible and it is $\mathcal{E}_j(U^A) \approx \mathcal{E}(U^A)$).
We are now ready to define what we mean by \emph{energy threshold of instability}.

\begin{definition}\label{threshold}
The energy threshold of instability before time $T$ for \eqref{prototipo} is defined as
\begin{equation}\label{Edodici}
\mathbb{E}_{12}(a) = \inf_{0 \leq j \leq 11} E_j(a).
\end{equation}
\end{definition}

Some remarks are in order.
The dependence of $E_j$ on $a$ is due to the boundary-internal conditions \eqref{bcbeamnon}, which modify the space $V(I)$ where \eqref{finitodim}
is settled. Our purpose is to detect the ``optimal'' value for $a$, for which the threshold $\mathbb{E}_{12}(a)$ is as large as possible,
since this value is a measure of the global stability of the approximated beam, truncated on its lowest 12 modes. When \eq{finitodim} is
governed by energies $\mathcal{E}<\mathbb{E}_{12}(a)$, the oscillations are stable and approximate solutions maintain any prevailing mode before time $T$, whereas
the oscillations of the residual ones remain small and regular. This is no longer true when $\mathcal{E}>\mathbb{E}_{12}(a)$, since there may be a sudden transfer of
energy from the prevailing mode to residual modes. If every approximate solution $U^A\in \mathbb{P}_j$ of \eqref{finitodim} is stable
before time $T$, then $E_j(a)=+\infty$; if this occurs for every $j$, one has $\mathbb{E}_{12}(a)=+\infty$. In this case, the considered beam is totally stable before time $T$ and nothing else
has to be said, otherwise we can prove that the map $a \mapsto \mathbb{E}_{12}(a)$ is continuous (see Section \ref{pfsogliacontinua}).

\begin{theorem}\label{sogliacontinua}
Fix $T > 0$ and assume that $\mathbb{E}_{12}(\bar{a}) < +\infty$ for a certain $\bar{a}\in(0,1)$. Then, $a \mapsto \mathbb{E}_{12}(a)$ is continuous in $\bar{a}$ for \eqref{finitodim}.
\end{theorem}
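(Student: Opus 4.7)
The key structural fact is that we are in finite dimensions: \eqref{finitodim} is a $12$-component nonlinear ODE system for the Fourier coefficients $\varphi_n(t)$, whose coefficients are the various scalar products between the $e_n$'s and their derivatives, together with the contribution of the nonlinearity $f$. By Theorem~\ref{Michelle} the eigenvalues $\lambda_n(a)$ and the $L^2$-normalized eigenfunctions $e_n(\,\cdot\,;a)$ depend analytically on $a\in(0,1)$, and so do all these coefficients. Classical continuous dependence on parameters and initial data then gives that the map
\[
(a,\alpha_0,\dots,\alpha_{11})\,\longmapsto\, \bigl(\varphi_0(\,\cdot\,;a,\alpha),\dots,\varphi_{11}(\,\cdot\,;a,\alpha)\bigr)\in C^0\bigl([0,T];\R^{12}\bigr)
\]
is continuous, so that the norms $\|\varphi_k\|_{L^\infty(0,\tau)}$ and $\|\varphi_k\|_{L^\infty(0,\tau/2)}$ entering Definition~\ref{unstable}, the prevailing-mode condition \eqref{prevalentec}, and the energy $\mathcal{E}_{j}(U^A)$ all depend continuously on $(a,\alpha,\tau)\in(0,1)\times\R^{12}\times[2T_W,T]$.

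\emph{Upper semicontinuity.} Given $\varepsilon>0$, by the definition of $\mathbb{E}_{12}$ I pick at $a=\bar a$ a mode $j^\ast$ and coefficients $\alpha^\ast$ with $j^\ast$-prevailing mode such that the corresponding solution has energy below $\mathbb{E}_{12}(\bar a)+\varepsilon$ and admits a witness $(k^\ast,\tau^\ast)\in\{0,\dots,11\}\times(2T_W,T)$ making $(i)$ and $(ii)$ of \eqref{grande} strict. Since both \eqref{grande} and \eqref{prevalentec} are strict inequalities and every quantity involved depends continuously on $a$, the same $\alpha^\ast$ still belongs to $\mathbb{P}_{j^\ast}$ and still produces an unstable solution with witness $(k^\ast,\tau^\ast)$ for every $a$ sufficiently close to $\bar a$, with energy below $\mathbb{E}_{12}(\bar a)+2\varepsilon$. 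Letting $\varepsilon\to 0$ gives $\limsup_{a\to\bar a}\mathbb{E}_{12}(a)\le\mathbb{E}_{12}(\bar a)$.

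\emph{Lower semicontinuity.} I argue by contradiction: assume $a_n\to\bar a$ and $\mathbb{E}_{12}(a_n)\to L<\mathbb{E}_{12}(\bar a)$, and choose coefficients $\alpha^{(n)}$ at $a_n$ with prevailing mode $j_n$, witness $(k_n,\tau_n)$ and energy $E_n\le L+1/n$. The energy bound forces $\alpha^{(n)}$ to remain in a compact subset of $\R^{12}$; since $j_n,k_n$ live in a finite set and $\tau_n\in[2T_W,T]$, passing to a subsequence gives $j_n\equiv j^\ast$, $k_n\equiv k^\ast$, $\tau_n\to\tau^\ast\in[2T_W,T]$ and $\alpha^{(n)}\to\alpha^\ast$. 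By the continuity recalled above, the solution $U^A_\ast$ at $\bar a$ with initial datum $\alpha^\ast$ satisfies $\mathcal{E}_{j^\ast}(U^A_\ast)\le L<\mathbb{E}_{12}(\bar a)$, hence it is \emph{stable} by the very definition of $\mathbb{E}_{12}(\bar a)$; however, passing to the limit in the strict inequalities of \eqref{grande} yields their non-strict versions at $(k^\ast,\tau^\ast)$ for $U^A_\ast$.

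The main obstacle is exactly the jump from the strict inequalities of Definition~\ref{unstable} to the non-strict versions obtained in the limit. In the non-degenerate case where both limit inequalities remain strict, $U^A_\ast$ is unstable and the stability derived in the previous paragraph is directly contradicted. The degenerate case (equality at $(k^\ast,\tau^\ast)$) has to be handled by a perturbation argument: non-trivial instability in the limit forces $\|\varphi_{k^\ast}\|_{L^\infty(0,\tau^\ast)}>0$ (otherwise the $j^\ast$-prevailing bound in $(i)$ would be violated in the limit as well), and exploiting the analytic dependence of \eqref{finitodim} on $(a,\alpha)$, an arbitrarily small increment of $\alpha_{k^\ast}$ around $\alpha^\ast$ at $a=\bar a$ produces a solution in which $\|\varphi_{k^\ast}\|_{L^\infty(0,\tau^\ast)}$ grows by a strictly positive first-order term, while the other norms in $(i)$--$(ii)$ and the energy vary by $o(1)$; hence both $(i)$ and $(ii)$ become strict, yielding a genuinely unstable $\tilde\alpha\in\mathbb{P}_{j^\ast}$ with energy arbitrarily close to $\mathcal{E}_{j^\ast}(\alpha^\ast)\le L$, so that $\mathbb{E}_{12}(\bar a)\le L$ --- the desired contradiction, which completes $\liminf_{a\to\bar a}\mathbb{E}_{12}(a)\ge\mathbb{E}_{12}(\bar a)$.
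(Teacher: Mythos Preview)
Your overall architecture coincides with the paper's: upper semicontinuity by transporting a witness of instability from $\bar a$ to nearby $a$ via continuous dependence (the eigenfunctions $e_n^a$ converge in $H^2$, hence all the coefficients of \eqref{finitodim} converge), and lower semicontinuity by contradiction, extracting a subsequence with fixed $(j^\ast,k^\ast)$ and convergent data. The paper's proof actually stops at the sentence ``by continuous dependence, such a couple converges to a couple $(\varphi_j,\varphi_k)$ fulfilling the same requirements for $a=\bar a$'', without discussing the strict-versus-nonstrict issue you raise.

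Where you go beyond the paper, however, is precisely where your argument is not sound. The perturbation you propose for the degenerate case does not work as stated. You claim that a small increment of $\alpha_{k^\ast}$ increases $\|\varphi_{k^\ast}\|_{L^\infty(0,\tau^\ast)}$ by a strictly positive first-order amount while ``the other norms in $(i)$--$(ii)$ and the energy vary by $o(1)$''. But perturbing one initial component in the coupled nonlinear system \eqref{finitodim} affects \emph{all} components at first order, so $\|\varphi_{j^\ast}\|_{L^\infty(0,\tau^\ast)}$ in $(i)$ and, more critically, $\|\varphi_{k^\ast}\|_{L^\infty(0,\tau^\ast/2)}$ in $(ii)$ both move at the same order as the numerator. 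There is no a priori reason why the ratio in $(ii)$ should increase rather than decrease under such a perturbation; indeed, if the maximum on $[0,\tau^\ast/2]$ is attained near $t=0$, increasing $\alpha_{k^\ast}$ typically increases the denominator proportionally. The assertion about a ``strictly positive first-order term'' would require a monotonicity or transversality property of the flow that you have not established. So the degenerate case remains open in your write-up (as it is, implicitly, in the paper's own proof).
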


In our numerical experiments we always found that $\mathbb{E}_{12}(a)<\infty$, which leads to the conjecture that $\mathbb{E}_{12}\in C^0([0,1])$.
\par
The stability analysis is fairly different for the two cases in \eq{versus}, which will be studied separately. For the first case, Proposition \ref{allinvariant} states that every subspace of $V(I)$ is invariant, meaning that the modes do not mix or, equivalently, that there is only a low physiological energy transfer. In the second case of \eqref{versus}, the modes mix and there are significant physiological energy transfers. Section \ref{nonmischia} is devoted to the former situation, which is much simpler and will be used to better understand the essence of Definition \ref{unstable} through the notion of {\em linear stability}. Section \ref{nonlinevol} is instead dedicated to the latter situation.

\subsection{Equations that do not mix the modes}\label{nonmischia}

From now on, depending on the context, we denote the eigenfunctions of \eq{autovsym} both by $e_\lambda$ (with associated eigenvalue $\mu=\lambda^4$) and by $e_n$
(with eigenvalue $\mu_n=\lambda_n^4$).

\subsubsection{Reduction to a two-modes system}\label{twoms}

We assume here to be in the first case of \eq{versus}, namely we assume that \eq{assumptions} holds with
\neweq{nonsocomechiamarla}
\gamma_1+\gamma_3 > 0, \quad \gamma_2=0\quad\mbox{and}\quad f(s)\equiv0.
\endeq
This situation will help us to determine a suitable value for the Wagner time $T_W$, see \eq{sceltawagner} in Section \ref{lininst}.
If we assume \eq{nonsocomechiamarla}, then \eq{prototipo} becomes
\begin{equation}\label{prototipo2}
u_{tt}+u_{xxxx}+\gamma_1\Vert u_{xx} \Vert_{L^2}^2 u_{xxxx}+\gamma_3\Vert u\Vert_{L^2}^2 u = 0,
\end{equation}
to be intended in the weak sense of Definition \ref{soluzionedebole}.
We will treat separately the cases $(\gamma_1,\gamma_3)=(1,0)$ and $(\gamma_1,\gamma_3)=(0,1)$ but, at this stage, we deal with any
nontrivial couple $(\gamma_1,\gamma_3)$ satisfying \eqref{assumptions}. Assuming \eq{nonsocomechiamarla}, we consider the initial conditions
\neweq{datiunici2}
u(x,0)=\delta e_\lambda,\quad u_t(x,0)=0,
\endeq
for some $\delta>0$ and some ($L^2$-normalized) eigenfunction $e_\lambda$ of \eq{autovsym}, relative to the eigenvalue $\mu=\lambda^4$. Then we have the following statement.

\begin{proposition}\label{partform}
The unique weak solution of \eqref{prototipo2} satisfying \eqref{datiunici2} has the form
\neweq{separazione}
u_\lambda(x,t)=W_\lambda(t)e_\lambda(x),
\endeq
where $W_\lambda$ solves
\neweq{duffduff}
\ddot{W}_\lambda(t)+\lambda^4W_\lambda(t)+(\gamma_1\lambda^8+\gamma_3)W_\lambda(t)^3=0,\quad W_\lambda(0)=\delta,\quad\dot{W}_\lambda(0)=0.
\endeq
\end{proposition}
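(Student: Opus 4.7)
The plan is to exploit uniqueness of weak solutions (Proposition \ref{galerkin1}) so that it suffices to exhibit a single weak solution of the form \eqref{separazione}. Note that the initial datum $\delta e_\lambda$ has exactly one nonzero Fourier coefficient on the basis from Theorem \ref{Michelle}, so the uniqueness hypotheses apply in both cases $\gamma_1=0$ and $\gamma_1>0$. Hence, if the ansatz $u_\lambda(x,t)=W_\lambda(t)e_\lambda(x)$ provides a weak solution for some $W_\lambda \in C^2(\mathbb{R}_+)$, it is automatically \emph{the} weak solution, and uniqueness of the Cauchy problem for the ODE \eqref{duffduff} finishes the proof.

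First I would substitute the ansatz into Definition \ref{soluzionedebole}. For any test function $v \in V(I)$, one finds
\begin{equation*}
\langle u_{tt},v\rangle_V = \ddot W_\lambda(t)\int_I e_\lambda v, \qquad \int_I u_{xx}v'' = W_\lambda(t)\int_I e_\lambda'' v'',
\end{equation*}
and by the eigenvalue relation \eqref{autovsym} the latter integral equals $\lambda^4\int_I e_\lambda v$. Using the $L^2$-normalization $\|e_\lambda\|_{L^2}=1$, testing the eigenvalue identity against $v=e_\lambda$ yields $\|e_\lambda''\|_{L^2}^2=\lambda^4$, so that
\begin{equation*}
\|u_{xx}\|_{L^2}^2\int_I u_{xx}v'' = W_\lambda(t)^2\,\lambda^4 \cdot W_\lambda(t)\,\lambda^4\int_I e_\lambda v = \lambda^8 W_\lambda(t)^3\int_I e_\lambda v,
\end{equation*}
and similarly $\|u\|_{L^2}^2\int_I uv = W_\lambda(t)^3\int_I e_\lambda v$.

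Collecting all terms, the weak formulation reduces to
\begin{equation*}
\Bigl[\ddot W_\lambda(t)+\lambda^4 W_\lambda(t)+(\gamma_1\lambda^8+\gamma_3)W_\lambda(t)^3\Bigr]\int_I e_\lambda v = 0 \qquad \forall v \in V(I).
\end{equation*}
Choosing $v=e_\lambda \in V(I)$ (which is admissible and gives $\int_I e_\lambda^2=1\neq 0$) forces the bracket to vanish, which is exactly the Duffing equation in \eqref{duffduff}. The initial conditions $W_\lambda(0)=\delta$ and $\dot W_\lambda(0)=0$ come directly from \eqref{datiunici2}. Finally, I would invoke standard ODE theory: since the nonlinearity $W \mapsto \lambda^4 W+(\gamma_1\lambda^8+\gamma_3)W^3$ is locally Lipschitz and the conserved energy $\tfrac12\dot W_\lambda^2+\tfrac{\lambda^4}2 W_\lambda^2+\tfrac{\gamma_1\lambda^8+\gamma_3}{4}W_\lambda^4$ gives an a priori bound, the Cauchy problem \eqref{duffduff} admits a unique global $C^2$-solution. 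Then $u_\lambda(x,t)=W_\lambda(t)e_\lambda(x)$ clearly enjoys the regularity required in Definition \ref{soluzionedebole}, and uniqueness from Proposition \ref{galerkin1} identifies it with the weak solution of \eqref{prototipo2}-\eqref{datiunici2}.

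There is no significant obstacle: the only delicate point is recognizing that $\|e_\lambda''\|_{L^2}^2=\lambda^4$ under the normalization $\|e_\lambda\|_{L^2}=1$, which is obtained by testing \eqref{autovsym} against $e_\lambda$ itself (legitimate because $e_\lambda \in V(I)$). Everything else is a direct verification exploiting the fact that the chosen ansatz turns each nonlocal coefficient into a pure function of $t$.
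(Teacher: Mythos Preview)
Your argument is correct and follows essentially the same route as the paper, which simply remarks that the result is obtained ``by combining the uniqueness statement in Proposition \ref{galerkin1} with the orthogonality of the $e_\lambda$'s, both in $L^2$ and in $V$.'' You have spelled out in full the computation that the paper leaves implicit, including the key identity $\|e_\lambda''\|_{L^2}^2=\lambda^4$ coming from the eigenvalue relation and the $L^2$-normalization.
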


Proposition \ref{partform} may be obtained by combining the uniqueness statement in Proposition \ref{galerkin1} with the orthogonality of the $e_\lambda$'s, both in $L^2$ and in $V$. Together with Proposition \ref{allinvariant}, this result well explains why, under the assumptions \eq{nonsocomechiamarla}, there is no need
to consider approximate solutions such as \eq{tronca}: any subspace of $V(I)$ is invariant, in particular the space $V_{12}(I)$.\par
We call a function $u_\lambda$ in the form \eq{separazione} a \emph{$\lambda$-nonlinear-mode} of \eq{prototipo2}.
In fact, for any eigenvalue $\lambda^4$ there exist infinitely many $\lambda$-nonlinear-modes, one for each value of $\delta$; they are
not proportional to each other and they have different periods. Their shape is described by the solution $W_\lambda$ of \eq{duffduff},
which depends on $\delta$: for this reason, with an abuse of language, we call $W_\lambda$ the $\lambda$-nonlinear-mode of \eq{prototipo2}
of amplitude $\delta$.\par
Motivated again by Proposition \ref{allinvariant}, we now consider solutions of \eq{prototipo2} having two active nonlinear modes. These solutions allow a
simple and precise characterization of the stability of one nonlinear mode with respect to another mode.
We consider solutions of \eq{prototipo2} in the form
\neweq{form3}
u(x,t)=w(t)e_\lambda(x)+z(t)e_\rho(x)
\endeq
for some different eigenfunctions $e_\lambda$ and $e_\rho$ of \eq{autovsym}. Notice that, in view of the mutual orthogonality of the eigenfunctions
(in $L^2$ and in $V$), these solutions appear whenever the initial data (potential and kinetic) are completely concentrated on $e_\lambda$ and
$e_\rho$, since the vector space generated by $e_\lambda$ and $e_\rho$ is invariant for the dynamics of \eqref{prototipo2}, see Proposition
\ref{allinvariant}. After inserting \eq{form3} into \eq{prototipo2}, we reach the following nonlinear system of ODE's:
\neweq{cwgeneral}
\left\{\begin{array}{l}
\ddot{w}(t)+\lambda^4w(t)+\Big((\gamma_1\lambda^8+\gamma_3)w(t)^2+(\gamma_1\lambda^4\rho^4+\gamma_3)z(t)^2\Big)w(t)=0\\
\ddot{z}(t)+\rho^4z(t)+\Big((\gamma_1\lambda^4\rho^4+\gamma_3)w(t)^2+(\gamma_1\rho^8+\gamma_3)z(t)^2\Big)z(t)=0,
\end{array}\right.
\endeq
that we associate with the initial conditions
\neweq{initialsyst3}
w(0)=\delta>0,\quad z(0)=z_0,\quad\dot{w}(0)=\dot{z}(0)=0.
\endeq

If $z_0=0$, then the solution of \eq{cwgeneral}-\eq{initialsyst3} is $(w,z)=(W_\lambda,0)$ where $W_\lambda$ is the
solution of \eq{duffduff}, since in this case there is no physiological energy transfer towards modes that are initially zero. In order to characterize the stability of the nonlinear mode $W_\lambda$ with respect to
the nonlinear mode $W_\rho$ we argue as follows. We take initial data in \eq{initialsyst3} such that
$$
0<|z_0|<\eta^2\delta\quad\mbox{for some }\eta\in(0,1).
$$
This means that the energy of the autonomous system \eq{cwgeneral} is initially almost totally concentrated on the $\lambda$-nonlinear-mode, which thus plays the
role of the prevailing mode, according to Definition \ref{prevalente}.
One should then wonder whether this remains true for all time $t>0$ for the solution of \eqref{cwgeneral}-\eqref{initialsyst3}.\par\smallskip
$\bigstar$ We consider first the nonlinear equation
\neweq{toybeam}
u_{tt}+\big(1+\|u_{xx}\|^2_{L^2}\big) u_{xxxx}=0\qquad x \in I, \quad t>0,
\endeq
which corresponds to \eq{prototipo2} with $(\gamma_1,\gamma_3)=(1,0)$. According to Definition \ref{soluzionedebole}, a weak solution $u$
of \eq{toybeam} satisfies $u\in C^0(\R_+;V(I))\cap C^1(\R_+;L^2(I))\cap C^2(\R_+;V'(I))$ and
$$
\langle u_{tt}, v \rangle_{V}+\big(1+\|u_{xx}\|^2_{L^2}\big)\int_I u_{xx} v'' = 0\qquad\forall v\in V(I), \quad t>0.
$$
Since $(\gamma_1,\gamma_3)=(1,0)$, problem \eq{duffduff} becomes
\neweq{ODED}
\ddot{W}_\lambda(t)+\lambda^4W_\lambda(t)+\lambda^8W_\lambda(t)^3=0, \quad W_\lambda(0)=\delta,\quad\dot{W}_\lambda(0)=0.
\endeq
It is well-known \cite{stoker} that the unique solution of \eq{ODED} is periodic and that $\delta$ is the amplitude of oscillation of $W_\lambda$. Moreover, \eq{cwgeneral} becomes
\neweq{cw3}
\left\{\begin{array}{l}
\ddot{w}(t)+\lambda^4w(t)+\lambda^4\big(\lambda^4w(t)^2+\rho^4z(t)^2\big)w(t)=0\\
\ddot{z}(t)+\rho^4z(t)+\rho^4\big(\lambda^4w(t)^2+\rho^4z(t)^2\big)z(t)=0,
\end{array}\right.
\endeq
which is complemented with the initial conditions \eq{initialsyst3}, with $0<|z_0|<\eta^2\delta$.\par\smallskip
$\blacklozenge$ We then consider the nonlinear equation
\neweq{toybeam2}
u_{tt}+u_{xxxx}+\|u\|^2_{L^2}u=0\qquad x \in I, \quad t>0
\endeq
which corresponds to \eq{prototipo2} with $(\gamma_1,\gamma_3)=(0,1)$. According to Definition \ref{soluzionedebole}, the solution has to be
intended in the following weak sense:
$$
\langle u_{tt}, v \rangle_{V}+\int_I u_{xx} v''+\|u\|^2_{L^2}\int_I uv= 0\qquad\forall v\in V(I),\quad t>0
$$
and satisfies $u\in C^0(\R_+;V(I))\cap C^1(\R_+;L^2(I))\cap C^2(\R_+;V'(I))$.\par
Equation \eq{toybeam2} has a structure similar to \eq{toybeam}, although the stability analysis gives quite different responses.
Since $(\gamma_1, \gamma_3)=(0, 1)$, problem \eqref{duffduff} becomes
\neweq{ODED2}
\ddot{W}_\lambda(t)+\lambda^4 W_\lambda(t)+W_\lambda(t)^3=0, \quad W_\lambda(0)=\delta>0, \quad \dot{W}_\lambda(0)=0;
\endeq
compared with \eq{ODED}, there is no coefficient $\lambda^8$ in front of the cubic term.
Moreover, \eq{cwgeneral} becomes the following system of ODE's:
\neweq{cw32}
\left\{\begin{array}{l}
\ddot{w}(t)+\lambda^4w(t)+\big(w(t)^2+z(t)^2\big)w(t)=0\\
\ddot{z}(t)+\rho^4z(t)+\big(w(t)^2+z(t)^2\big)z(t)=0,
\end{array}\right.
\endeq
to be compared with \eq{cw3}. We add to \eqref{cw32} the initial conditions \eqref{initialsyst3}, with $0<\vert z_0 \vert < \eta^2\delta$.

\subsubsection{Some remarks on the linear instability}\label{lininst}

In order to explain the relationships between Definition \ref{unstable} and the classical Floquet theory, we need to recall and discuss the notion of linear
instability. This is much simpler when the modes of \eq{prototipo} do not mix, for instance under the assumptions \eq{nonsocomechiamarla}. This discussion is the
subject of the next three subsections.\par
The equation in \eq{duffduff} is named after Duffing, due to its first appearance in \cite{duffing}. If we set
$$
b:=\sqrt{\lambda^4+\delta^2(\gamma_1\lambda^8+\gamma_3)}\qquad\mbox{and}\qquad\beta:=\frac{\delta}{b}\sqrt{\frac{\gamma_1\lambda^8+\gamma_3}2},
$$
then from \cite{burg} (see also \cite{stoker}) we learn that
\neweq{explicitsol}
W_\lambda(t)=\delta\, {\rm cn}(bt,\beta),
\endeq
where cn is the Jacobi cosine, so that $W_\lambda$ is periodic with period
\neweq{TW}
T(\delta)=\frac{4}{b}\int_0^{\pi/2}\frac{d\varphi}{\sqrt{1-\beta^2\sin^2\varphi}}.
\endeq
The time-dependent coefficient $W_\lambda$ in \eq{separazione} starts at level $\delta>0$ with null
first derivative, that is, at a maximum point. Then the number $T(\delta)$ in \eq{TW} is the time needed for $W_\lambda$ to complete one cycle and to reach
the subsequent maximum point. Whence, if we consider a solution $u$ of \eq{prototipo} with an $\eta$-prevailing mode (Definition \ref{prevalente}), the number
$T(\delta)$ is a good approximation of the time needed for the Fourier component of the prevailing mode to complete one cycle and hence for the coupling effects between the modes to take place, even
in the case where \eq{nonsocomechiamarla} does not hold. Therefore, we choose this number as the Wagner time
\neweq{sceltawagner}
T_W=T(\delta)
\endeq
and our numerical experiments confirm that this choice for $T_W$ is good enough to highlight the physiological transfer of energy due to the nonlinearities
in \eq{prototipo}. Clearly, $T_W=T(\delta)$ depends on the amount of energy within \eqref{prototipo}.\par
The Duffing equation \eq{duffduff} is extremely useful to study the {\em linear instability} of a solution with a prevailing mode.

\begin{definition}\label{defstabb}
The $\lambda$-nonlinear mode $W_\lambda$ is said to be {\bf linearly stable} ({\bf unstable}) with respect to the $\rho$-nonlinear-mode $W_\rho$ if
$\xi\equiv0$ is a stable (unstable) solution of the linear Hill equation
\neweq{hill33}
\ddot{\xi}(t)+\Big(\rho^4+(\gamma_1\lambda^4\rho^4+\gamma_3)W_\lambda(t)^2\Big)\xi(t)=0.
\endeq
\end{definition}

Equation \eq{hill33} is the linearization of \eq{cwgeneral} around its solution $(w,z)=(W_\lambda,0)$. There exist also alternative
definitions of stability which, in some cases, can be shown to be equivalent; see e.g.\ \cite{ghg}. Moreover, for nonlinear PDE's such as \eq{prototipo2},
Definition \ref{defstabb} is appropriate to characterize the instabilities of the nonlinear modes of the equation, see \cite{babefega,bergaz,ederson}.
\par
An important tool to quantify the instability of a general Hill equation of the form
\begin{equation}\label{generalehill}
\ddot{q}(t) + a(t) q(t) = 0, \qquad a(t+\sigma)=a(t),
\end{equation}
is the \emph{expansion rate} introduced in \cite{arigaz}.
The eigenvalues $\nu_1$ and $\nu_2$ of the monodromy matrix associated with \eqref{generalehill} are the so-called {\em characteristic multipliers}. The instability for \eqref{generalehill} takes place when $\vert \nu_1+\nu_2 \vert > 2$ (see, e.g., \cite{yakubovich}) and the numbers $|\nu_j|^{1/\sigma}$, $j=1,2$, represent the growth rates of the amplitude of oscillation for the two solutions of \eq{generalehill} having initial
values $(q(0),\dot{q}(0))=(1,0)$ and $(q(0),\dot{q}(0))=(0,1)$, respectively, in the time interval $[0,\sigma]$. Then, following \cite{arigaz}, we call \emph{expansion rate} of \eqref{generalehill} the largest growth rate, namely
$$
\mathcal{ER}:=\max\{|\nu_1|,|\nu_2|\}^{1/\sigma}\,
$$
and \emph{expansion rate in time $\tau$} the number
\neweq{ERtau}
\mathcal{ER}_\tau:=\max\{|\nu_1|,|\nu_2|\}^{\tau/\sigma},
\endeq
which represents the amplitude growth in a lapse of time $\tau$.
This will be useful to relate Definitions \ref{unstable} and \ref{defstabb} in the forthcoming study of equation \eqref{prototipo2}.
\par\smallskip
$\bigstar$ Let us first deal with \eq{cw3} for which Definition \ref{defstabb} may be rephrased by saying that the $\lambda$-nonlinear-mode $W_\lambda$
is linearly stable (unstable) with respect to the $\rho$-nonlinear-mode $W_\rho$ if $\xi\equiv0$ is a stable (unstable) solution of the linear Hill equation
\neweq{hill3}
\ddot{\xi}(t)+\rho^4\Big(1+\lambda^4W_\lambda(t)^2\Big)\xi(t)=0.
\endeq

For all $\delta,\lambda,\rho>0$, we introduce the parameters
\neweq{gammaLambda}
\gamma:=\frac{\rho^2}{\lambda^2},\qquad\Lambda_\delta:=2\sqrt2\int_0^{\pi/2}
\sqrt{\frac{1+\delta^2\lambda^4\sin^2\theta}{2+\delta^2\lambda^4+\delta^2\lambda^4\sin^2\theta}}\, d\theta,
\endeq
that play a crucial role in the linear stability of the $\lambda$-nonlinear mode with respect to the $\rho$-nonlinear mode.
Let us also introduce the two sets
\neweq{strisceinfinito}
I_S:=\bigcup_{k=0}^{+\infty}\Big(k(2k+1),(k+1)(2k+1)\Big) \qquad I_U:=\bigcup_{k=0}^{+\infty} \Big((k+1)(2k+1),(k+1)(2k+3)\Big)\, ;
\endeq
note that $\overline{I_S\cup I_U}=[0,\infty)$. The next statement, which will be proved in Section \ref{pftoytheo}, gives sufficient conditions for the stability/instability of \eq{cw3}.

\begin{theorem}\label{toytheo}
Let $\lambda^4\neq\rho^4$ be two eigenvalues of \eqref{autovsym}, let $\Lambda_\delta$ be as in
\eqref{gammaLambda} and $I_S, I_U$ as in \eqref{strisceinfinito}.\par\noindent
$(i)$ The $\lambda$-nonlinear-mode of \eqref{toybeam} of amplitude $\delta$ is linearly stable with respect to the $\rho$-nonlinear-mode
whenever one of the following facts holds:\par
$(i)_1$ $\lambda>\rho$ and $\delta>0$;\par
$(i)_2$ $\lambda<\rho$ and $\delta$ is sufficiently small;\par
$(i)_3$ $\frac{\rho^4}{\lambda^4}\in I_S$ and $\delta$ is sufficiently large;\par
$(i)_4$ there exists $k\in\N$ such that
$$\log(1+\delta^2\lambda^4)<2\cdot\min\left\{\frac{\rho^2}{\lambda^2}\, \Lambda_\delta-k\pi,\, (k+1)\pi-\frac{\rho^2}{\lambda^2}\, \Lambda_\delta\right\}.$$
$(ii)$ The $\lambda$-nonlinear-mode of \eqref{toybeam} of amplitude $\delta$ is linearly unstable with respect to the $\rho$-nonlinear-mode
whenever one of the following facts holds:\par
$(ii)_1$ $1<\frac{\rho^2}{\lambda^2}<\psi_\lambda(\delta)$, where $\psi_\lambda:\R_+\to\R_+$ is a continuous function such that
$$
\psi_\lambda(\delta)>1\quad\forall\delta>0,\qquad\psi_\lambda(\delta)\le1+\left(\frac18 +\frac{1}{2\pi}\right)\delta^2\lambda^4+O(\delta^4)
\mbox{ as }\delta\to0,\qquad\lim_{\delta\to\infty}\psi_\lambda(\delta)=\sqrt{3}\, ;
$$

$(ii)_2$ $\frac{\rho^4}{\lambda^4}\in I_U$ and $\delta$ is sufficiently large.
\end{theorem}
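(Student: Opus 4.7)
The plan is to reduce \eqref{hill3} to Lam\'e form and then estimate the trace of its monodromy matrix in the various regimes. Using the explicit expression \eqref{explicitsol} of $W_\lambda$ and the change of variable $s=bt$, equation \eqref{hill3} becomes
$$\xi''(s)+\frac{\rho^4}{b^2}\Bigl((1+\delta^2\lambda^4)-\delta^2\lambda^4\,\mathrm{sn}^2(s,\beta)\Bigr)\xi(s)=0,$$
which is a Lam\'e equation in Jacobi form with modulus $\beta$ and parameter $n(n+1)=2\rho^4/\lambda^4$. Linear stability is equivalent to $|\nu_1+\nu_2|<2$, where $\nu_{1,2}$ are the characteristic multipliers; all the assertions will follow from an asymptotic analysis of this trace.

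The cases $(i)_1$, $(i)_2$ and $(ii)_1$ belong to the perturbative regime. In $(i)_1$ a direct calculation (of the type detailed in the next paragraph) shows that the WKB action $\int_0^{T(\delta)/2}\sqrt{\rho^4(1+\lambda^4W_\lambda^2)}\,dt=\gamma\Lambda_\delta$ never reaches $\pi$ when $\gamma<1$, placing the system inside the first stability band uniformly in $\delta$. Case $(i)_2$ is a regular perturbation of $\ddot\xi+\rho^4\xi=0$: since the unperturbed equation is stable and remains so under small analytic perturbations away from the discrete resonant frequencies, the monodromy trace stays in $(-2,2)$ for $\delta$ small enough. The instability $(ii)_1$ describes the principal parametric resonance tongue emanating from $\gamma=1$: the upper boundary $\gamma=\psi_\lambda(\delta)$ is defined implicitly by $\nu_1+\nu_2=2$, and the three prescribed properties follow from (a) a two-timescale expansion of \eqref{hill3} around $\delta=0$ giving the leading coefficient $1/8+1/(2\pi)$; (b) the rescaling $W_\lambda=\delta Z$ as $\delta\to\infty$, under which \eqref{ODED} reduces to a pure cubic oscillator and the associated Hill equation has a first resonance tongue known to close exactly at $\gamma^2=3$.

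The remaining cases $(i)_3$, $(i)_4$, $(ii)_2$ fall in the high-amplitude regime, where $\Lambda_\delta$ of \eqref{gammaLambda} enters naturally. Parametrising the orbit by $W_\lambda(t)=\delta\cos\phi(t)$ via the first integral of \eqref{ODED}, and then substituting $\cos\phi=\sin\theta$, converts the half-period action integral exactly into
$$\int_0^{T(\delta)/2}\sqrt{\rho^4(1+\lambda^4W_\lambda^2)}\,dt = \gamma\Lambda_\delta.$$
A Liouville-Green normal form then yields that the monodromy trace equals $2\cos(\gamma\Lambda_\delta)$ up to an error bounded by a multiple of $\log(1+\delta^2\lambda^4)$, which, after a quantitative estimate of this error, gives $(i)_4$ directly. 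Letting $\delta\to\infty$, the Lam\'e equation above converges to its limit with modulus $\beta^2=1/2$; the intervals $I_S,I_U$ in \eqref{strisceinfinito} are then identified as the ranges of $\rho^4/\lambda^4$ for which $n(n+1)=2\rho^4/\lambda^4$ lies between two consecutive integer values of $n$ having a fixed parity, which by the classical finite-gap theory of the Lam\'e operator correspond precisely to the (stable) even and (unstable) odd spectral intervals of the limit operator, giving $(i)_3$ and $(ii)_2$.

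The main obstacle is the construction in $(ii)_1$ of $\psi_\lambda$ with \emph{all three} prescribed features simultaneously: the $\delta\to0$ and $\delta\to\infty$ expansions have to be glued into a globally continuous function with $\psi_\lambda>1$ on $(0,\infty)$, which requires either monotonicity of the defining equation $\nu_1+\nu_2=2$ along the tongue boundary, or a global bifurcation argument controlling the tongue in between. Obtaining the sharp $\log(1+\delta^2\lambda^4)$ control of the WKB remainder needed in $(i)_4$ is the other technically demanding step.
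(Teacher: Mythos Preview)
Your overall strategy---reduce to a Hill/Lam\'e equation, study the monodromy trace, and treat the small-$\delta$ and large-$\delta$ regimes separately---is reasonable, and much of it runs parallel to the paper's proof. The paper likewise computes the action integral $\gamma\Lambda_\delta$ and uses it for $(i)_4$; the difference is that the paper invokes the Burdina criterion (a precise sufficient condition of exactly the form \eqref{condizioneBurdina}) rather than a Liouville--Green remainder estimate, and for $(i)_3$, $(ii)_2$ it cites the Cazenave--Weissler scaling argument rather than Lam\'e finite-gap theory. Either route can be made to work for those items, though your identification of $I_S,I_U$ via ``consecutive integer values of $n$ of fixed parity'' is not quite the right bookkeeping and would need to be redone carefully.

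There is, however, a genuine gap in your treatment of $(i)_1$ and of the property $\psi_\lambda(\delta)>1$ in $(ii)_1$. You argue that for $\gamma<1$ the action $\gamma\Lambda_\delta$ stays below $\pi$, ``placing the system inside the first stability band uniformly in $\delta$''. But being in the first stability band via any WKB/Burdina-type criterion requires not just $\gamma\Lambda_\delta\in(0,\pi)$ but the two-sided inequality with the correction term $\tfrac12\log(1+\delta^2\lambda^4)$, and this log grows without bound as $\delta\to\infty$. So your criterion \emph{fails} for large $\delta$ even though the system is in fact stable there; the WKB estimate is simply too crude to close $(i)_1$ uniformly. The paper's argument is entirely different and uses a key observation you are missing: when $\gamma=1$, the function $\xi=\Phi_\lambda$ (equivalently $W_\lambda$) is itself an explicit bounded periodic solution of the Hill equation \eqref{hill3}. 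By Floquet theory this forces $\gamma=1$ to lie on a resonance boundary for every $\delta>0$; combined with the large-$\delta$ asymptotics this identifies $\gamma=1$ as the \emph{lower} edge of the first tongue $U_1$, hence $\gamma<1$ is stable for all $\delta$, and $\psi_\lambda(\delta)>1$ automatically. Without this explicit-solution observation, you have no mechanism to prove $(i)_1$ for large $\delta$, nor to establish the strict inequality $\psi_\lambda>1$ globally (your own ``main obstacle'' paragraph essentially concedes this). The ``gluing'' or ``global bifurcation'' you propose for $(ii)_1$ is unnecessary once one has this ingredient.
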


From~\cite[Chapter VIII]{yakubovich} we know that for all $k=1,2,3...$ there exists a resonant tongue $U_k$ emanating from the point $(\delta\lambda^2,\gamma)=(0,k)$.
If $(\delta\lambda^2,\gamma)$ belongs to one of these tongues, then the trivial solution of \eqref{hill3} is unstable.
Since $\Lambda_\delta = \pi(1-\delta^2 \lambda^4/8)+ O(\delta^4)$ as $\delta \to 0$, Item $(i)_4$ implies that if $(\delta\lambda^2,\gamma)\in U_k$, then necessarily
\begin{equation}\label{asintoticalingua}
k+\left(\frac{k}{8}-\frac{1}{2\pi}\right)\delta^2\lambda^4+O(\delta^4)\le\frac{\rho^2}{\lambda^2}
\le k+\left(\frac{k}{8}+\frac{1}{2\pi}\right)\delta^2\lambda^4+O(\delta^4)\quad\mbox{as }\delta\to0.
\end{equation}
This enables us to depict the resonance tongues in a neighborhood of $\delta=0$. In Figure \ref{burdina2} we display the (white) regions of stability
described by Theorem \ref{toytheo}-$(i)$.

\begin{figure}[ht!]
\begin{center}
\includegraphics[scale=0.55]{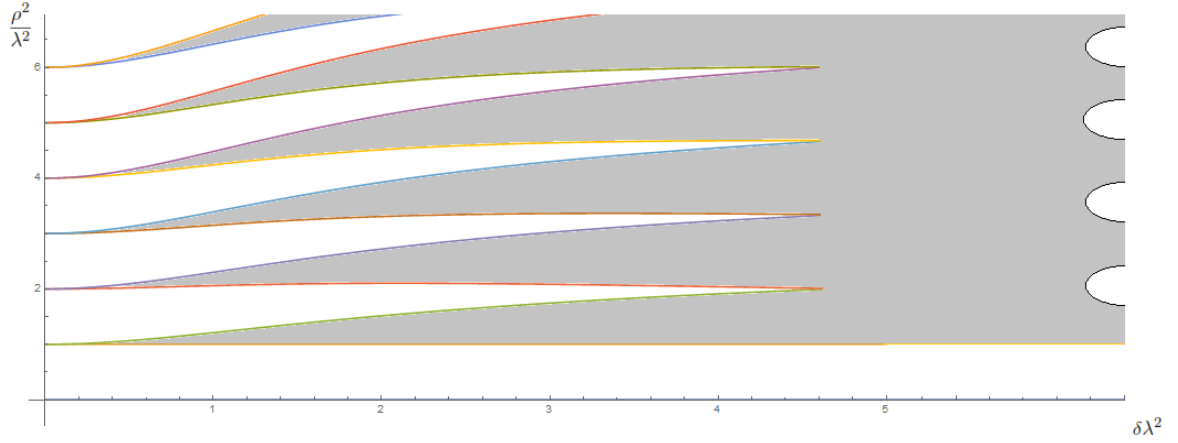}
\caption{Stability regions (white) obtained with the sufficient conditions $(i)$ of Theorem \ref{toytheo}.}\label{burdina2}
\end{center}
\end{figure}
We also numerically obtained a full picture of the resonance tongues of \eq{hill3}. For each value of $\delta=W_\lambda(0)$ we computed
the period of $W_\lambda$ through formula \eqref{TW} and we determined the monodromy matrix of \eq{hill3}. Then we computed the trace of the monodromy matrix after
one period of time, seeking when it was equal to $\pm2$. The results are reported in Figure \ref{monodromymnd1}. It turns out that the resonance tongues are very narrow for small
$\delta$ and they enlarge as $\delta\to\infty$.
\begin{figure}[ht!]
\begin{center}
\includegraphics[scale=0.7]{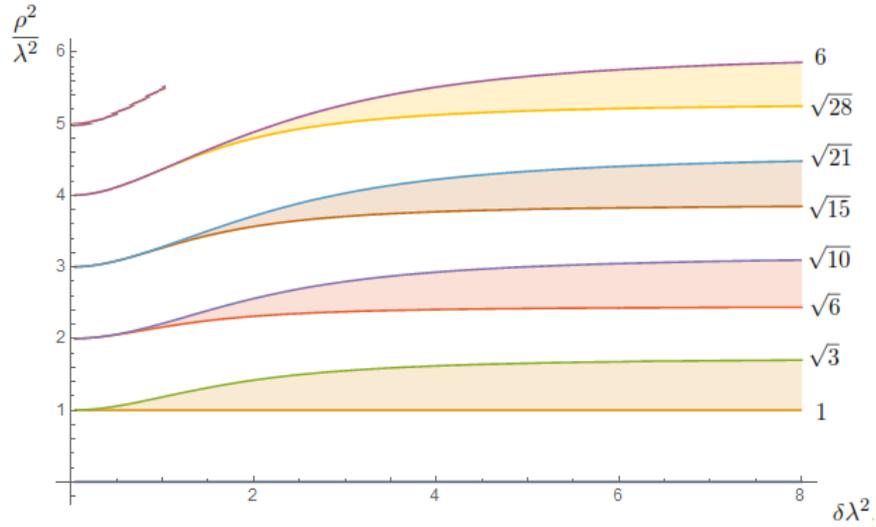}
\caption{Stability regions (white) for~\eq{hill3} obtained numerically through the monodromy matrix.}\label{monodromymnd1}
\end{center}
\end{figure}
As a consequence of Theorem \ref{toytheo} (see also Figure \ref{monodromymnd1}) we see that for all $k\in\N$ the resonance lines emanating
from the points $(0,k)$ of the $(\delta\lambda^2,\rho^2/\lambda^2)$-plane tend towards $\sqrt{k(2k-1)}$ and $\sqrt{k(2k+1)}$ as $\delta\to\infty$.
Therefore, the amplitude at infinity of the $k$-th resonant tongue is given by
$$\sqrt{k(2k+1)}-\sqrt{k(2k-1)}=\frac{2}{\sqrt{2+\frac1k }+\sqrt{2-\frac1k }}\in(\sqrt2 /2,\sqrt3 -1]\approx(0.707,0.732]\qquad\forall k\in\N$$
which shows that they all have, approximately, the same width. A similar bound holds for the width of the stability regions as $\delta\to\infty$.
Once the value of $\gamma=\rho^2/\lambda^2$ is fixed, if $\delta$ increases starting from the point $(0,\gamma)$, some narrow resonance tongues
are crossed before reaching the final stability or instability region characterized by the sets $I_S$ and $I_U$.\par\smallskip
$\blacklozenge$ Let us now turn to the case of \eq{cw32}. Following Definition \ref{defstabb}, we say that the $\lambda$-nonlinear-mode $W_\lambda$ is linearly
stable (unstable) with respect to the $\rho$-nonlinear-mode $W_\rho$ if $\xi\equiv0$ is a stable (unstable) solution of the linear Hill equation
\neweq{hill32}
\ddot{\xi}(t)+\Big(\rho^4+W_\lambda(t)^2\Big)\xi(t)=0.
\endeq

If $W_\lambda$ solves \eq{ODED2} and we set $W_\lambda(t)=\lambda^2\Psi_\lambda(\lambda^2t)$, we see that $\Psi_\lambda$ satisfies
$$
\ddot{\Psi}_\lambda(t)+\Psi_\lambda(t)+\Psi_\lambda(t)^3=0,\qquad \Psi_\lambda(0)=\frac{\delta}{\lambda^2},\qquad \dot{\Psi}_\lambda(0)=0.
$$
The same change of variables shows that the stability of \eq{hill32} is equivalent to the stability of
$$
\ddot{\xi}(t)+\left(\frac{\rho^4}{\lambda^4}+\Psi_\lambda(t)^2\right)\xi(t)=0.
$$
Then we can take advantage of the results in \cite[Section 2]{gaga} and obtain the counterpart of Theorem \ref{toytheo}:

\begin{proposition}\label{toytheo2}
Let $\lambda^4\neq\rho^4$ be two eigenvalues of \eqref{autovsym}. The $\lambda$-nonlinear-mode of \eqref{toybeam2} of amplitude $\delta$ is linearly stable with respect to the $\rho$-nonlinear-mode
if and only if one of the following facts holds:
$$
\lambda>\rho\mbox{ and }\delta>0\qquad\mbox{or}\qquad\lambda<\rho\mbox{ and }\delta \leq \sqrt{2(\rho^4-\lambda^4)}.
$$
\end{proposition}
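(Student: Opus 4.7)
The plan is to reduce the Hill equation \eqref{hill32} to a Lam\'e equation of index $n=1$, for which the stability regions are classical, and then translate back the conditions into $(\lambda,\rho,\delta)$. First I would exploit the rescaling $W_\lambda(t)=\lambda^2\Psi_\lambda(\lambda^2 t)$ already indicated in the excerpt: after the matching time rescaling on $\xi$, the stability of \eqref{hill32} is equivalent to that of
$$\ddot\xi(t) + \Big(\tfrac{\rho^4}{\lambda^4} + \Psi_\lambda(t)^2\Big)\xi(t) = 0,$$
where $\Psi_\lambda$ solves $\ddot\Psi+\Psi+\Psi^3=0$ with $\Psi(0)=d:=\delta/\lambda^2$ and $\dot\Psi(0)=0$.

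Next I would use the explicit Jacobi-elliptic form of $\Psi_\lambda$, which follows from \eqref{explicitsol} specialized to $(\gamma_1,\gamma_3)=(0,1)$: setting $B:=\sqrt{1+d^2}$ and $\beta:=d/(B\sqrt 2)$, one has $\Psi_\lambda(t)=d\,\mathrm{cn}(Bt,\beta)$, so that $\Psi_\lambda^2=d^2\bigl(1-\mathrm{sn}^2(Bt,\beta)\bigr)$. Substituting and changing variable $u=Bt$ (which does not affect boundedness of solutions) turns the Hill equation into
$$\frac{d^2\xi}{du^2}+\Big(\tilde A-2\beta^2\,\mathrm{sn}^2(u,\beta)\Big)\xi=0,\qquad \tilde A:=\frac{\rho^4+\delta^2}{\lambda^4+\delta^2},$$
which is precisely Lam\'e's equation with index $n=1$ (since $n(n+1)=2$).

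At this point I would invoke the classical finite-gap description of Lam\'e's equation with $n=1$, which is also the content of the relevant results of \cite[Section~2]{gaga}: the stable range of the spectral parameter is exactly $[\beta^2,1]\cup[1+\beta^2,+\infty)$, and the two instability gaps are $(-\infty,\beta^2)\cup(1,1+\beta^2)$. The heart of this input is the explicit identification of the three band-edge Lam\'e eigenfunctions $\mathrm{dn}(u,\beta)$, $\mathrm{cn}(u,\beta)$, $\mathrm{sn}(u,\beta)$ with eigenvalues $\beta^2$, $1$, $1+\beta^2$, together with the standard Floquet analysis; this is the only step I expect to be a genuine obstacle if one wishes to reprove it rather than quote it.

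Finally, I would translate these two stability intervals back in terms of $(\lambda,\rho,\delta)$. The inequality $\tilde A\le 1$ is equivalent to $\rho\le\lambda$, while $\tilde A\ge\beta^2$ reduces to $2\rho^4+\delta^2\ge 0$, which is automatic: this produces the first stability branch $\lambda>\rho$, valid for every $\delta>0$. On the other hand, $\tilde A\ge 1+\beta^2$ is equivalent to $2(\rho^4-\lambda^4)\ge\delta^2$, which forces $\lambda<\rho$ and yields the second branch $\delta\le\sqrt{2(\rho^4-\lambda^4)}$. Since $\lambda^4\neq\rho^4$ by assumption, these two branches exhaust all stability cases and match the claim of the proposition.
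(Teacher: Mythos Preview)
Your proposal is correct and follows essentially the same route as the paper: the paper performs the rescaling $W_\lambda(t)=\lambda^2\Psi_\lambda(\lambda^2 t)$ to reduce \eqref{hill32} to $\ddot\xi+(\rho^4/\lambda^4+\Psi_\lambda^2)\xi=0$ and then simply invokes \cite[Section~2]{gaga}, whereas you unpack that citation by explicitly rewriting the equation as the Lam\'e equation with $n=1$ and reading off the band structure. Your computations of $\tilde A$, of the band edges $\beta^2,1,1+\beta^2$, and of the back-translation to the conditions on $(\lambda,\rho,\delta)$ are all correct.
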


Obviously, this means that the $\lambda$-nonlinear-mode of \eqref{toybeam2} of amplitude $\delta$ is linearly unstable with respect to the
$\rho$-nonlinear-mode if and only if
$$
\lambda<\rho\quad \mbox{ and }\quad \delta>\sqrt{2(\rho^4-\lambda^4)}.
$$
Hence, Proposition \ref{toytheo2} states that
\begin{center}
{\bf whenever $\lambda<\rho$, the $\lambda$-nonlinear-mode
of \eqref{toybeam2} is linearly unstable with respect to the $\rho$-nonlinear-mode if and only if it oscillates with sufficiently large amplitude.}
\end{center}
\par
We may give a full description of the stability and instability regions for \eq{cw32} (see Figure \ref{parabole}, which should be compared with
Figure \ref{monodromymnd1}).

\begin{figure}[ht!]
\begin{center}
\includegraphics[height=80mm,width=120mm]{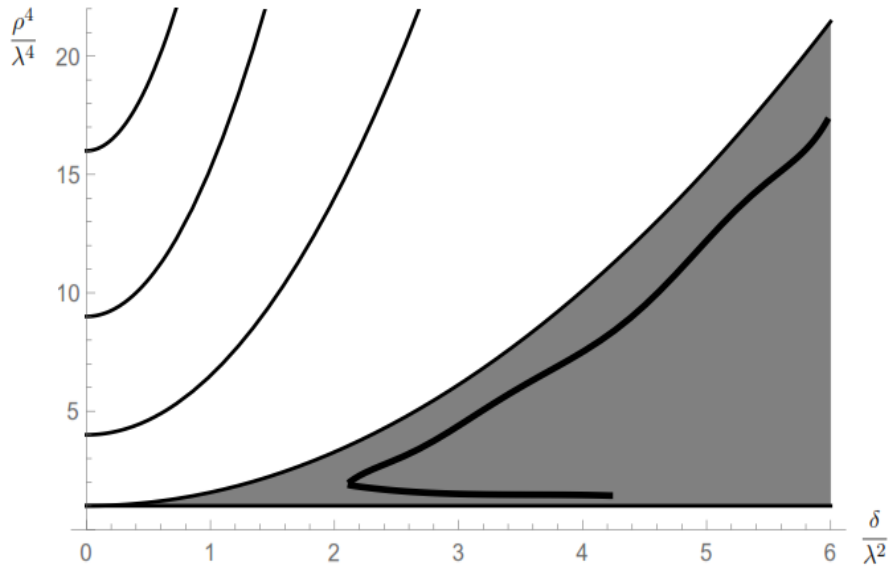}
\caption{Regions of linear stability (white) and instability (gray) for \eqref{cw32}.}\label{parabole}
\end{center}
\end{figure}

The lowest parabola in Figure \ref{parabole} bounds the first instability tongue and has equation
$$
\frac{\rho^4}{\lambda^4}=1+\frac12 \, \left(\frac{\delta}{\lambda^2}\right)^2,
$$
see \cite[Theorem 4]{gaga}. By using the so-called KdV hierarchy, one finds that the instability tongues emanating from $(\delta/\lambda^2, \rho^4/\lambda^4)=(0, n^2)$
(with $n\ge2$) are degenerate, see \cite{gesz} and also the introduction in \cite{goldberg}. This means that, contrary to Figure \ref{monodromymnd1} and except
for the first tongue, there are no open instability
tongues since their upper and lower boundaries coincide. We determined these degenerate tongues numerically, representing them by the black parabolas in
Figure \ref{parabole}. By applying the Burdina criterion \cite{burdina} (see also Lemma \ref{BurdCrit} below) and arguing as for Theorem \ref{toytheo}, we infer that
the degenerate instability tongues emanating from $(\delta/\lambda^2, \rho^4/\lambda^4)=(0, n^2)$ ($n\ge2$) asymptotically satisfy
$$
n^2+\left(\frac{3n^2}{4}-\frac12 -\frac{1}{\pi n}\right)\frac{\delta^2}{\lambda^4}+O(\delta^4)\le\frac{\rho^4}{\lambda^4}\le
n^2+\left(\frac{3n^2}{4}-\frac12 +\frac{1}{\pi n}\right)\frac{\delta^2}{\lambda^4}+O(\delta^4)\quad\mbox{as }\delta\to0.
$$
The characterization of the black line within the gray instability region will be given in Section \ref{instnonlineare}: it represents the thresholds of nonlinear instability.

\subsubsection{The critical energy threshold for the linear instability}\label{againa}

In this section, we compute the critical energy threshold for the linear stability of problems \eqref{toybeam} and \eqref{toybeam2}. To this end, it will be
useful to preliminarily introduce some notations. For all eigenvalues $\lambda^4$ and $\rho^4$ of \eq{autovsym}, we define
\neweq{D}
D(\lambda,\rho):=\inf\{d>0;\, \mbox{\eq{hill33} has unstable solutions whenever }W_\lambda(0)=\delta>d\}.
\endeq
We immediately observe that this quantity may be infinite: for \eq{toybeam} and \eq{toybeam2} respectively, we have
$$
D(\lambda,\rho)\left\{\begin{array}{ll}
=+\infty & \mbox{if }\rho^4/\lambda^4\in I_S\\
<+\infty & \mbox{if }\rho^4/\lambda^4\in I_U
\end{array}\right.\qquad\mbox{and}\qquad
D(\lambda,\rho)=\left\{\begin{array}{ll}
+\infty & \mbox{if }\rho<\lambda\\
\sqrt{2(\rho^4-\lambda^4)} & \mbox{if }\rho>\lambda
\end{array}\right.
$$
in view of Theorem \ref{toytheo} (Figure \ref{monodromymnd1}) for \eq{toybeam} and of Proposition \ref{toytheo2} (Figure \ref{parabole}) for \eq{toybeam2}.\par
Then we define the critical energy as the (constant) energy of the solutions of \eq{ODED} and \eq{ODED2}, respectively, when $\delta=D(\lambda,\rho)$:
\begin{equation}\label{ecritica}
E(\lambda,\rho):=\frac{\lambda^4D(\lambda,\rho)^2}{2}+\frac{\lambda^8D(\lambda,\rho)^4}4, \qquad
E(\lambda,\rho):=\frac{\lambda^4D(\lambda,\rho)^2}{2}+\frac{D(\lambda,\rho)^4}4,
\end{equation}
for \eqref{toybeam} and \eqref{toybeam2}, respectively.
Also this energy can be infinite:
we have, respectively,
\neweq{EEE}
E(\lambda,\rho)\left\{\begin{array}{ll}
=+\infty & \mbox{if }\rho^4/\lambda^4\in I_S\\
<+\infty & \mbox{if }\rho^4/\lambda^4\in I_U
\end{array}\right.\qquad\mbox{and}\qquad
E(\lambda,\rho)=\left\{\begin{array}{ll}
+\infty & \mbox{if }\rho<\lambda\\
(\rho^4-\lambda^4)\rho^4 & \mbox{if }\rho>\lambda.
\end{array}\right.
\endeq

Clearly, the stability of the mode $W_\lambda$ with respect to $W_\rho$ depends on $a$ through $\lambda=\lambda(a)$ and $\rho=\rho(a)$.
For both problems \eq{toybeam} and \eq{toybeam2}, the most natural way to define the energy threshold for linear stability in dependence of $a$ would be
$$\mathbb{E}^{\ell}(a):=\, \inf_{\lambda,\rho} E\big(\lambda(a),\rho(a)\big);$$
unfortunately, this function has several setbacks. First of all, from \eqref{asintoticaautov} and Theorem \ref{toytheo}-$(ii)_1$ we see that,
in case \eq{toybeam}, one has
$$
\lim_{n\to\infty} E\big(\lambda_n(a) ,\lambda_{n+1}(a)\big)=0\qquad\forall a\in(0,1),
$$
which shows that $\mathbb{E}^{\ell}(a)\equiv0$ and thus this threshold is not meaningful. Moreover, as for \eqref{toybeam2}, formula \eqref{EEE} implies that $\mathbb{E}^{\ell}(a)<\infty$
for all $a \in (0, 1)$; on the other hand, \eqref{lambdasimmetrico} guarantees that $\mathbb{E}^{\ell}(1/2) > 0$. A definition of critical energy that has so different behaviors for quite similar
nonlinearities does not appear appropriate. Therefore, we are again led to follow Definition \ref{threshold} and set
\begin{equation}\label{formulalineare}
\mathbb{E}_{12}^\ell(a):=\, \inf_{\lambda_0 \leq \lambda <\rho \leq \lambda_{11}} E\big(\lambda(a),\rho(a)\big)
\end{equation}
where the constraint $\lambda(a)<\rho(a)$ is due to the fact that for $\lambda(a) > \rho(a)$ there is no instability. This means that there
are 66 couples to evaluate and the above infimum is certainly a minimum if at least one of these couples yields finite $E(\lambda(a), \rho(a))$.\par
In order to determine the position $a\in(0,1)$ of the piers which maximizes $\mathbb{E}_{12}^\ell(a)$ (yielding more stability),
one is then interested in showing that the map $a\mapsto\mathbb{E}_{12}^\ell(a)$ is continuous. This can be easily proved for \eq{toybeam2}.

\begin{proposition}\label{energiafacile}
For equation \eqref{toybeam2} the map $a\mapsto\mathbb{E}_{12}^\ell(a)$ is strictly positive and continuous.
\end{proposition}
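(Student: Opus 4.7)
The plan is to reduce the claim to an assertion about the finite family of functions $a\mapsto E(\lambda_n(a),\lambda_m(a))$ with $0\le n<m\le 11$, exploiting the explicit formula for $E(\lambda,\rho)$ given by \eqref{EEE} in the case of equation \eqref{toybeam2}, together with the continuous (actually analytic) dependence of the eigenvalues on $a$ established in Theorem \ref{Michelle}.

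First I would recall that, by Theorem \ref{Michelle}, for every $a\in(0,1)$ the spectrum of \eqref{autovsym} is a simple, strictly increasing sequence $\{\lambda_n(a)^4\}_{n\in\N}$, and each map $a\mapsto\lambda_n(a)$ is analytic on $(0,1)$. In particular, for $n<m$ and for every $a\in(0,1)$,
\begin{equation*}
\lambda_n(a)<\lambda_m(a),
\end{equation*}
so that, by the second alternative in \eqref{EEE},
\begin{equation*}
E\bigl(\lambda_n(a),\lambda_m(a)\bigr)=\bigl(\lambda_m(a)^4-\lambda_n(a)^4\bigr)\lambda_m(a)^4>0.
\end{equation*}
On the other hand, for $n\ge m$, either the pair is excluded from the infimum in \eqref{formulalineare} (case $n=m$ is not present by $\lambda<\rho$) or $E(\lambda_n(a),\lambda_m(a))=+\infty$ and plays no role in the infimum.

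Second, I would rewrite \eqref{formulalineare} as a minimum over the finite set of admissible indices,
\begin{equation*}
\mathbb{E}_{12}^{\ell}(a)=\min_{0\le n<m\le 11}\bigl(\lambda_m(a)^4-\lambda_n(a)^4\bigr)\lambda_m(a)^4,
\end{equation*}
which is a minimum of $\binom{12}{2}=66$ functions of $a$, each continuous (even analytic) and strictly positive on $(0,1)$. The minimum of finitely many continuous functions is continuous, and the minimum of finitely many strictly positive numbers is strictly positive. This yields both the continuity and the strict positivity of $a\mapsto\mathbb{E}_{12}^{\ell}(a)$ on $(0,1)$.

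There is no real obstacle in this argument; the only subtle point to check is that the indexing by order of eigenvalues is preserved as $a$ varies, so that the finite family $\{(\lambda_n(a),\lambda_m(a))\}_{n<m\le 11}$ is well defined for every $a\in(0,1)$ and consists of \emph{ordered} pairs $\lambda_n(a)<\lambda_m(a)$. But this is precisely the content of the simplicity and analyticity statement in Theorem \ref{Michelle}, which guarantees that no crossing between distinct branches $\lambda_n(a)$ and $\lambda_m(a)$ can occur.
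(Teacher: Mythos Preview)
Your proof is correct and follows essentially the same route as the paper: combine the continuity of the maps $a\mapsto\lambda_n(a)$ from Theorem~\ref{Michelle} with the explicit formula in \eqref{EEE} to see that $\mathbb{E}_{12}^\ell(a)$ is the minimum of finitely many strictly positive continuous functions, hence itself strictly positive and continuous. Your additional remark that simplicity of the eigenvalues prevents crossings (so the ordering $\lambda_n(a)<\lambda_m(a)$ persists for all $a$) is a useful clarification but is already implicit in the paper's appeal to Theorem~\ref{Michelle}.
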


Proposition \ref{energiafacile} follows by combining the continuity of the maps $a\mapsto\lambda(a)$ (see Theorem \ref{Michelle}) with the explicit
form of the energy in \eq{EEE} (second formula): since $\mathbb{E}_{12}^\ell(a)$ is the minimum of a finite number of positive and continuous functions, it is positive and continuous, as well.\par
However, the map $a\mapsto\mathbb{E}_{12}^\ell(a)$ for \eq{toybeam} may not be continuous, as we now explain through a graphic counterexample.
Let us take the ``inverse'' of the plot in Figure \ref{monodromymnd1}, namely its reflection
about the first bisectrix and let us only plot the four upper boundaries of the resonant tongues, see Figure \ref{inversetongue}.
The black lines represent the value of $\lambda^2D(\lambda,\rho)$ and, since $E(\lambda,\rho)$ increasingly depends on this value, they maintain
the order of $E(\lambda,\rho)$ for different couples $(\lambda,\rho)$. If $\rho^4/\lambda^4\in I_U$, we plot the horizontal segments at the
top of the picture, which means that $\lambda^2D(\lambda,\rho)=+\infty$ and, in turn, also that $E(\lambda,\rho)=+\infty$.
\begin{figure}[ht!]
\begin{center}
\includegraphics[scale=0.8]{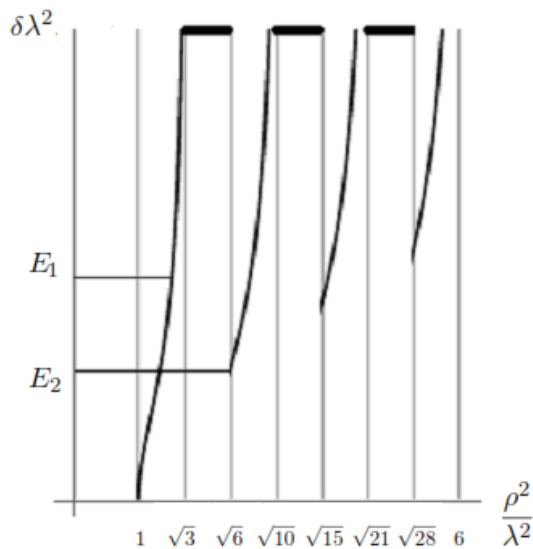}
\caption{Discontinuity of the critical energy threshold $a\mapsto\mathbb{E}_{12}^\ell(a)$.}\label{inversetongue}
\end{center}
\end{figure}
Consider now two couples $(\lambda_i(a),\rho_i(a))$ and $(\lambda_j(a),\rho_j(a))$. From Theorem \ref{Michelle} we know that the two ratios
$\rho_k(a)/\lambda_k(a)$ (for $k=i,j$) depend continuously on $a$. Assume that
$[\rho_i(a)/\lambda_i(a)]^2$ varies in a left neighborhood of $\sqrt3$ in such a way that $\lambda_i^2D(\lambda_i,\rho_i)$ varies in a small
neighborhood of $E_1$ on the vertical axis. Assume also that $[\rho_j(a)/\lambda_j(a)]^2$ varies in an interval centered at $\sqrt6$: then
$\lambda_j^2D(\lambda_j,\rho_j)$ can be either $+\infty$ (in a left neighborhood of $\sqrt6$) or around $E_2$ (in a right neighborhood of
$\sqrt6$). The minimum between the two $\lambda_k^2D(\lambda_k,\rho_k)$ will then be close to $E_1$ in the first case and close to $E_2$ in
the second case, thereby displaying discontinuity.\par
Clearly, this example is based on the fact that the ratios $[\rho(a)/\lambda(a)]^4$ may enter and exit from $I_U$. One could still try to prove
some continuity provided one could show that all the involved ratios remain in the same stability/instability interval, but this seems out
of reach. Anyway, according to Figure \ref{monodromymnd1},
\neweq{ipotesi}
\mbox{\textbf{the resonant lines are increasing functions of $\delta\lambda^2$.}}
\endeq
Proving this statement seems to be a difficult task. But, taking it for granted, it enables to prove the lower semicontinuity of the energy threshold (see Section \ref{pfenergiacontinua}).

\begin{theorem}\label{energiacontinua}
Consider problem \eqref{toybeam}. For every $a\in(0,1)$ we have $0<\mathbb{E}_{12}^\ell(a)<+\infty$. Moreover, if \eqref{ipotesi} holds, then
the map $a\mapsto\mathbb{E}_{12}^\ell(a)$ is lower semicontinuous.
\end{theorem}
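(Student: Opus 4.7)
The plan is to establish the three parts of Theorem~\ref{energiacontinua} in turn. Throughout, I will use that $\mathbb{E}_{12}^\ell(a)$ is the infimum over the finite collection of $66$ pairs $(\lambda_i(a),\lambda_j(a))$ with $0\le i<j\le 11$, hence a minimum; and by \eqref{EEE}, the quantity $E(\lambda_i(a),\lambda_j(a))$ is finite if and only if $\lambda_j(a)^4/\lambda_i(a)^4\in I_U$.

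For finiteness I exhibit at least one pair with ratio in $I_U$. Combining Theorem~\ref{teoasintotica} (whence $\lambda_{n+2}(a)\le \lambda_n(a)+3$ and $\lambda_{n+1}(a)/\lambda_n(a)\to 1$ as $n\to\infty$) with the continuity of $a\mapsto\lambda_n(a)$ from Theorem~\ref{Michelle}, a direct case analysis, which exploits the limit behaviour as $a\to 0^+$ and $a\to 1^-$ (where some consecutive ratio is forced to tend to $1$), shows that the topmost consecutive ratio satisfies $(\lambda_{n+1}(a)/\lambda_n(a))^4\in(1,3)\subset I_U$ for some $n\le 10$ and for every $a\in(0,1)$. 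For positivity I claim $D(\lambda_i(a),\lambda_j(a))>0$ for every allowed pair. At $\delta=0$ equation \eqref{hill33} reduces to $\ddot\xi+\rho^4\xi=0$, which is stable, and continuity of the Floquet multipliers preserves stability for small $\delta>0$ whenever $\rho^2/\lambda^2\notin\mathbb{N}$. The resonance $\rho^2/\lambda^2=1$ is ruled out by $\rho>\lambda$, while for $\rho^2/\lambda^2=k\ge 2$ the expansion \eqref{asintoticalingua} places the lower boundary of the $k$-th tongue strictly above $k$ for small $\delta>0$, since the coefficient $k/8-1/(2\pi)$ is strictly positive for $k\ge 2$. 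Hence the line $\rho^2/\lambda^2=k$ stays in the stable region for small $\delta>0$ and $D>0$; as the minimum of finitely many strictly positive numbers, $\mathbb{E}_{12}^\ell(a)>0$.

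For lower semicontinuity under~\eqref{ipotesi}, since the minimum of finitely many lower semicontinuous functions is lower semicontinuous, it suffices to show that each of the $66$ maps $a\mapsto E(\lambda_i(a),\lambda_j(a))$ is lower semicontinuous. By continuity of $a\mapsto \lambda_n(a)$, this reduces to the same property for $(\lambda,\rho)\mapsto E(\lambda,\rho)$ on $\{\lambda<\rho\}$, extended by $+\infty$ on the set where $\rho^4/\lambda^4\in I_S$, and thence to $D$ itself. Under~\eqref{ipotesi}, the boundaries $L_k,U_k$ of the $k$-th resonance tongue are continuous strictly monotone increasing functions of $\delta\lambda^2$; so on the asymptotic range of tongue $k+1$, namely $\rho^4/\lambda^4\in((k+1)(2k+1),(k+1)(2k+3))$, one has $D(\lambda,\rho)=U_{k+1}^{-1}(\rho^2/\lambda^2)/\lambda^2$, continuous in $\rho/\lambda$. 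At the upper asymptote $\rho^4/\lambda^4=(k+1)(2k+3)$ both one-sided limits of $D$ equal $+\infty$. At the lower asymptote $\rho^4/\lambda^4=(k+1)(2k+1)$, the value $D=U_{k+1}^{-1}(\sqrt{(k+1)(2k+1)})/\lambda^2$ is finite and matches the $I_U$-side limit, whereas the $I_S$-side limit is $+\infty$; hence $\liminf D\ge D$ at the boundary, yielding lower semicontinuity of $D$, and thence of $E$ and of $\mathbb{E}_{12}^\ell$.

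The main technical obstacle is precisely this boundary behaviour at the lower edges of the $I_U$-components, the locus of the pathological discontinuity illustrated in Figure~\ref{inversetongue}. Assumption~\eqref{ipotesi} is exactly what rules out the pathology: strict monotonicity of $U_{k+1}$ makes its inverse single-valued and forces $U_{k+1}$ to attain $\sqrt{(k+1)(2k+1)}$ at a finite value of $\delta\lambda^2$, so the boundary value of $D$ is finite and matches the limit from inside $I_U$, preventing a downward jump that would destroy lower semicontinuity.
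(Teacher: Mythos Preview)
Your arguments for strict positivity and for lower semicontinuity follow essentially the same route as the paper's proof: reduce to showing that each of the finitely many maps $a\mapsto E(\lambda_i(a),\lambda_j(a))$ is lower semicontinuous, then split into cases according to whether the limiting ratio $\rho^4/\lambda^4$ falls in the interior of $I_U$, the interior of $I_S$, or on the boundary, and in the boundary case use the monotonicity hypothesis~\eqref{ipotesi} to control the one-sided behaviour of $D$. Your positivity argument via~\eqref{asintoticalingua} is more detailed than necessary---one may simply invoke Theorem~\ref{toytheo}$(i)_2$---but it is correct. A minor quibble: \eqref{ipotesi} only asserts that the resonant lines are increasing, not strictly increasing, so the inverse $U_{k+1}^{-1}$ need not be single-valued; however, lower semicontinuity of $D$ survives under mere weak monotonicity, so this does not break your argument.

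Your finiteness argument, on the other hand, has a genuine gap. You assert that ``a direct case analysis, which exploits the limit behaviour as $a\to0^+$ and $a\to1^-$'' yields some consecutive ratio in $(1,3)$ for every $a\in(0,1)$; but endpoint behaviour says nothing about interior values of $a$, and the statement $\lambda_{n+1}(a)/\lambda_n(a)\to1$ from Theorem~\ref{teoasintotica} concerns $n\to\infty$, not the fixed range $n\le10$. The bound $\lambda_{n+2}(a)\le\lambda_n(a)+3$ alone is too weak without a lower bound on $\lambda_n$. The paper's proof fills this gap via the explicit location estimate $\lambda_{2m+1}(a)\in[m+1,m+2]$ (formula~\eqref{distribeigen}, established in the proof of Theorem~\ref{teoasintotica}), which gives $5\le\lambda_9(a)$ and $\lambda_{11}(a)\le7$: if $\lambda_{11}^4/\lambda_{10}^4<3$ one is done; otherwise $\lambda_{10}^2\le\lambda_{11}^2/\sqrt3\le49/\sqrt3$, whence $\lambda_{10}^2/\lambda_9^2\le49/(25\sqrt3)<\sqrt3$. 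This short numerical step is what your proposal is missing.
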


\subsubsection{Optimal position of the piers for the linear instability}\label{instabilitalineare}

In this section, we aim at finding the position of the piers which yields more linear stability, for each of the problems \eqref{toybeam} and \eqref{toybeam2}.
In other words, we seek the optimal $a\in(0,1)$ which maximizes $\mathbb{E}_{12}^\ell(a)$. As in the previous subsection, the two equations behave
quite differently and we analyze them separately.\par\smallskip
$\bigstar$ Overall, Theorem \ref{toytheo} states that for \eq{toybeam} the relevant parameter for stability is $\gamma$ defined in \eq{gammaLambda}, that is,
the square root of the ratio of the eigenvalues of \eq{autovsym}. In particular, instability arises if $\gamma^2(a) \in I_U$ (we emphasize here
the dependence on $a$). In order to study the maps $a \mapsto \gamma(a)$, we take advantage of the results in Section \ref{finitod}, in particular
of the numerical computation of the eigenvalues. The ratios turn out to be extremely irregular, with no obvious rule governing their variation. In
Figure \ref{ratios} we plot some of the graphs obtained by interpolation after computing the ratios with step 0.05 for $a$. The shaded horizontal
strips correspond to the square roots of the (endpoints of the) intervals composing $I_U$, see \eqref{strisceinfinito} and Figure \ref{monodromymnd1}. If the value of $\gamma=\lambda_k^2/\lambda_j^2$ is within this range, then for large energies the $\lambda_j$-nonlinear-mode
$W_{\lambda_j}$ is linearly unstable with respect to the $\lambda_k$-nonlinear-mode $W_{\lambda_k}$, otherwise it is linearly stable.

\begin{figure}[ht!]
\begin{center}
\includegraphics[scale=0.4]{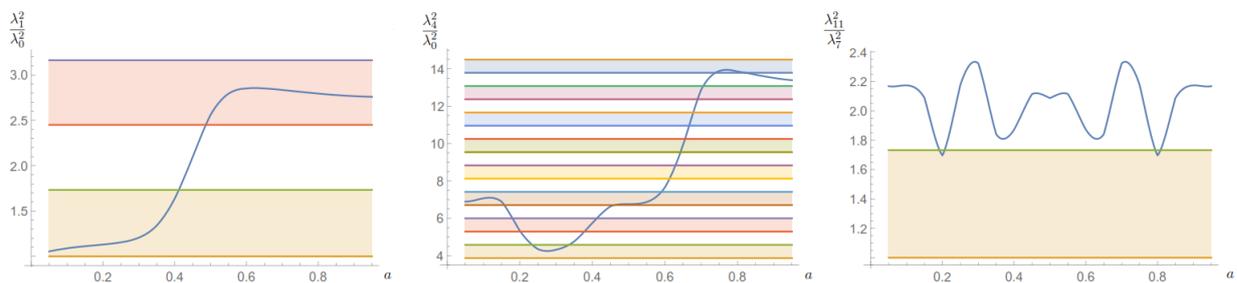}
\caption{Plot of the maps $a\mapsto\lambda_1^2/\lambda_0^2$, $a\mapsto\lambda_4^2/\lambda_0^2$, $a\mapsto\lambda_{11}^2/\lambda_7^2$
(from left to right) and their intersection with the (shaded) instability intervals.}\label{ratios}
\end{center}
\end{figure}

The ratio itself may enter and exit $I_U$ several times, as $a$ varies in $(0,1)$.
We see that $\lambda_1^2/\lambda_0^2$ lies within the stability region for large energies only if (approximately) $0.4<a<0.5$.
The ratio $\lambda_4^2/\lambda_0^2$ is much more complicated, as $a$ varies it enters and exits several times the instability regions: similar behaviors
are visible whenever the ratio between the eigenvalues has large variations for varying $a$. The ratio $\lambda_{11}^2/\lambda_7^2$ is almost always within the stability region.
On the contrary, there exist some ratios which never exit the instability region, see Figure \ref{quozasint}: this means that no choice of $a\in(0,1)$ can prevent the
linear instability for these couples of eigenvalues. Therefore, it is difficult to derive a precise rule telling which values of $a$ yield more stability.
\begin{figure}[ht!]
\begin{center}
\includegraphics[scale=0.4]{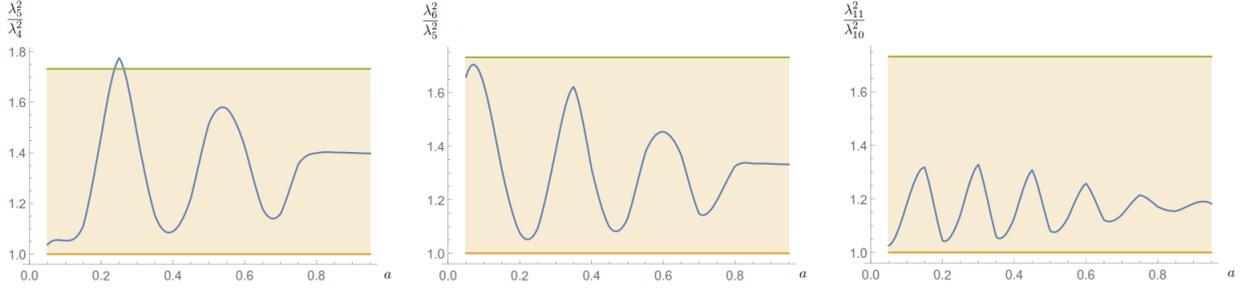}
\caption{
Plot of the maps $a \mapsto \lambda_5^2(a)/\lambda_4^2(a)$, $a \mapsto \lambda_6^2(a)/\lambda_5^2(a)$, $a \mapsto \lambda_{11}^2(a)/\lambda_{10}^2(a)$ (from
left to right) and their intersection with the first (shaded) instability interval.}\label{quozasint}
\end{center}
\end{figure}

Moreover, not all the intervals in $I_U$ yield {\em quantitatively} the same amount of instability.
We order them by means of their instability effects as follows. We compute the absolute value $|\mathcal{T}|$ of the trace of the monodromy matrix
associated with \eq{hill3} when $\gamma=\rho^2/\lambda^2=\frac{\sqrt{k(2k+1)}+\sqrt{k(2k-1)}}{2}$, namely at the midpoint of the limit interval
$[\sqrt{k(2k-1)},\sqrt{k(2k+1)}]$ (as $\delta\to\infty$) of a resonant tongue. We start with $\delta\lambda^2=4$ and we proceed by increasing $\delta\lambda^2$ with step 0.2 until $\delta\lambda^2=12$, then we interpolate. We do this for several different tongues. In the left picture of
Figure \ref{confronto} we display the plots for the lowest six tongues. It appears clearly that the values of $|\mathcal{T}|$ are ordered, the largest one being the
first, and then they decrease as the extremal values of the resonant tongue increase.

\begin{figure}[ht!]
\begin{center}
\includegraphics[scale=0.6]{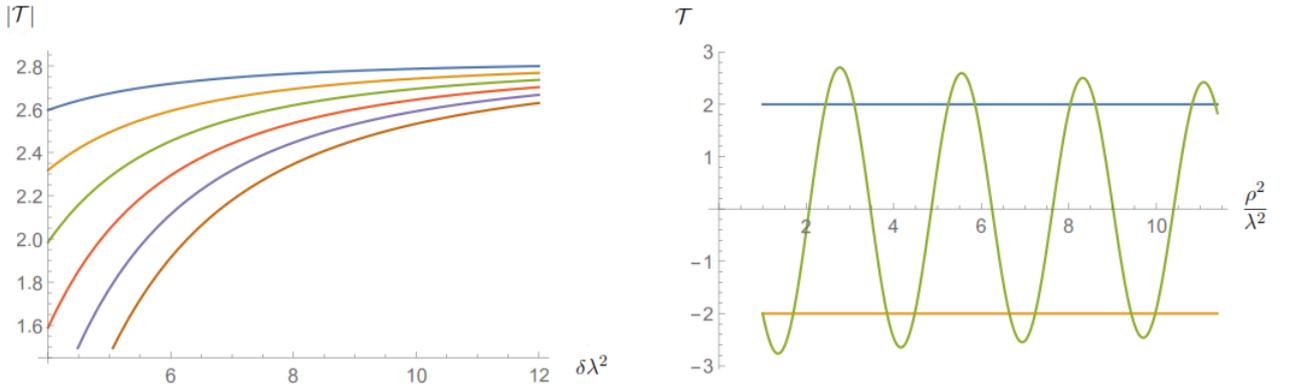}
\caption{Comparison between the traces of monodromy matrices in different resonant tongues.}\label{confronto}
\end{center}
\end{figure}

This provides the following principle:
\begin{center}
{\bf the first resonant tongue yields more instability.}
\end{center}
We made further experiments, studying the behavior of the trace $\mathcal{T}$ (without absolute value) for fixed
$\delta\lambda^2$ on varying of $\rho^2$: starting from $\rho^2=1$ we proceeded with step 0.1 and then we interpolated. In the right picture
of Figure \ref{confronto} we display the plot for $\delta\lambda^2=8$. It turns out that the sequence of the maxima of $\vert \mathcal{T} \vert$ is decreasing. Other values of this parameter showed the same behavior:
$\mathcal{T}$ oscillates above $2$ and below $-2$ with decreasing amplitude. This confirms the above principle.\par
By the first formula in \eqref{ecritica}, the critical energy threshold depends increasingly on $\delta \lambda^2$. Moreover, the critical
amplitude $D(\lambda,\rho)\lambda^2$ depends increasingly on $\rho/\lambda$, thus $\mathbb{E}_{12}^\ell(a)$ takes its maximum and its minimum at
the very same points as $\gamma$: the instability increases as $\gamma$ approaches $1$. In the left picture of Figure \ref{minstability} we display the graph of the map
\neweq{mappaa}
a\mapsto\min\left\{\frac{\lambda_k^4(a)}{\lambda_j^4(a)};\, k=1,...11,\, j=0,...,k-1\right\}.
\endeq
\begin{figure}[ht!]
\begin{center}
\includegraphics[scale=0.58]{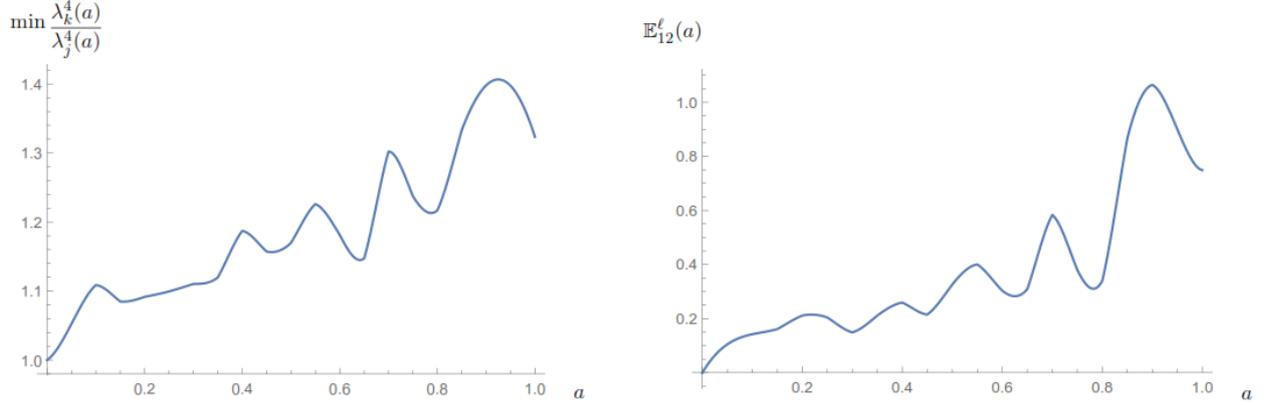}
\caption{Graph of the map \eqref{mappaa} and of $a \mapsto \mathbb{E}_{12}^\ell(a)$ for \eqref{toybeam}.}\label{minstability}
\end{center}
\end{figure}
\newline
It turns out that it is ``almost'' increasing and always below 3, meaning that there exists at least one couple of indexes $j$ and $k$ for which
$\lambda_k^4(a)/\lambda_j^4(a)\in(1,3)$, which is the first instability interval of $I_U$.
We numerically saw that all the ratios $\lambda_{n+1}(a)/\lambda_n(a)$ for $n \geq 5$ satisfy this condition (cf.\ Figure \ref{quozasint}),
for every $a \in (0, 1)$. The right picture in Figure \ref{minstability} depicts the graph of $\mathbb{E}_{12}^\ell(a)$: we see that
\begin{equation}\label{optimalbdg}
\textrm{{\bf for \eqref{toybeam}, the maximum of $\mathbb{E}_{12}^\ell(a)$ is attained for $a\approx 0.9$.
}}
\end{equation}
\par\smallskip
$\blacklozenge$ We now analyze the much simpler case of \eq{toybeam2}. In Figure \ref{zoomenergysuperL2} we display the plot of the map $a\mapsto\mathbb{E}_{12}^\ell(a)$
for \eqref{toybeam2}. It turns out that
\begin{equation}\label{optimall2}
\textrm{{\bf for \eqref{toybeam2}, the maximum of $\mathbb{E}_{12}^\ell(a)$ is attained for
$a\approx0.5$.
}}
\end{equation}

\begin{figure}[ht!]
\begin{center}
\includegraphics[scale=0.58]{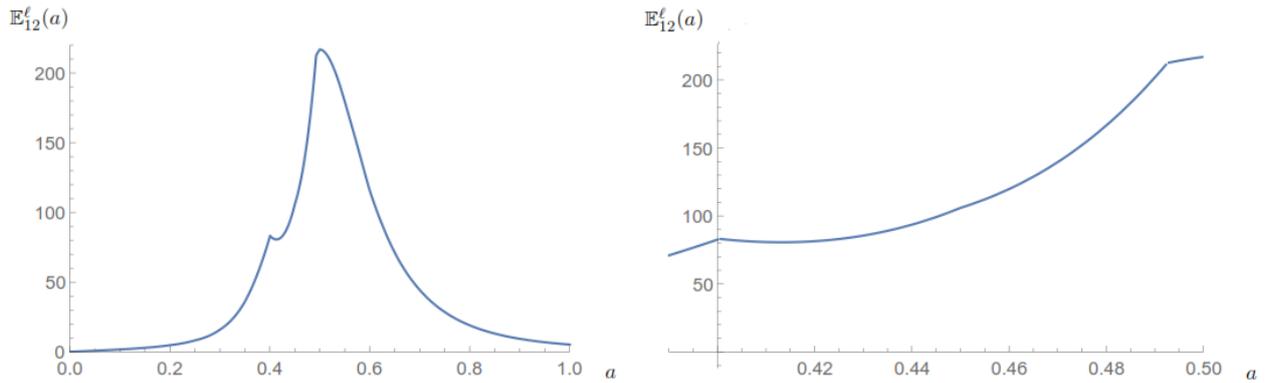}
\caption{Graph of the map $a\mapsto\mathbb{E}_{12}^\ell(a)$ (left), magnified for $a \in [0.4, 0.5]$ (right), for \eqref{toybeam2}.}\label{zoomenergysuperL2}
\end{center}
\end{figure}
As expected, the minimum of the energies $E\big(\lambda(a),\rho(a)\big)$ is attained by couples of low eigenvalues, more precisely by the couple $(\lambda_0(a),\lambda_1(a))$ when
$a\in(0,0.4)\cup(0.5,1)$ (approximately) and by the couple $(\lambda_1(a),\lambda_2(a))$ when $a\in(0.4,0.5)$. This explains the corner close to $a=0.4$. The reason
why these are the minima is easily explained. From \eqref{EEE}, we see that the minimum is necessarily reached by couples of consecutive
eigenvalues $(\lambda^4, \rho^4)$, possibly with a small $\rho$. This suggests that the minimum should always be attained by the
couple $(\lambda_0,\lambda_1)$, with some possible exception. By looking carefully at Figure \ref{bello} when $a\in(0.4,0.5)$ one understands why, in this interval,
the couple $(\lambda_1,\lambda_2)$ yields a smaller energy: the corresponding curves of eigenvalues are very close in this range.

\subsubsection{Back to the nonlinear instability: optimal position of the piers}
\label{instnonlineare}

In this section, we investigate the optimal position of the piers in terms of nonlinear instability and we compare the notions of stability introduced in Definitions \ref{unstable} and \ref{defstabb}, in light of the comments written in the previous sections.
\par
In Section \ref{instabilitalineare}, we have analyzed the linear stability for equations \eqref{toybeam} and \eqref{toybeam2}.
A careful look at Figure \ref{monodromymnd1} highlights that if we maintain fixed the ratio $\rho/\lambda$
and we increase $\delta$ (that is, the energy), then we may cross some tiny instability zones with absolute value of the trace of the
monodromy matrix associated with \eqref{hill3} very close to 2. Here, from a physical point of view, the instability is very weak and difficult
to be detected numerically. For these reasons, we have neglected such regions in our characterization of the amplitude threshold, see \eq{D}; linear instability has a purely theoretical characterization, so there may be cases when its effects are not concretely visible. Even simpler is the picture in Figure \ref{parabole}, where the tiny instability regions are empty and degenerate into double resonant lines.
\par
For both \eqref{toybeam} and \eqref{toybeam2}, we numerically found the critical energy threshold for nonlinear instability, as characterized in Definition \ref{threshold}.
The response with respect to the optimal position of the piers has been examined following the procedure described in Section \ref{algoritmo}, in particular fixing $\eta=0.1$. In Figure \ref{confrontolinn2}, we deal with equation \eqref{toybeam}.
\begin{figure}[ht!]
\begin{center}
\includegraphics[height=63mm,width=158mm]{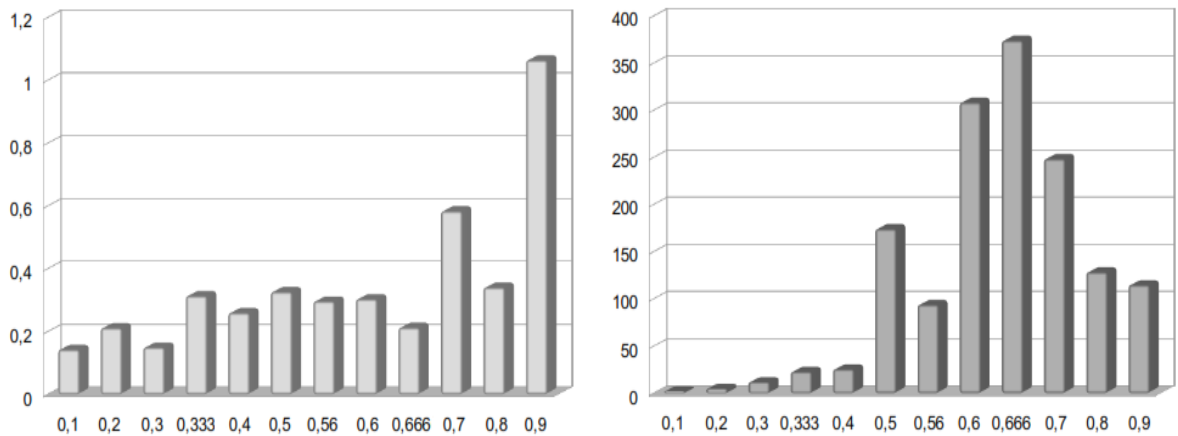}
\caption{Comparison between $a \mapsto \mathbb{E}_{12}^\ell(a)$ (left picture) and $a \mapsto \mathbb{E}_{12}(a)$ (right picture) for \eqref{toybeam}.\label{confrontolinn2}}
\end{center}
\end{figure}

\begin{table}[ht]
\begin{center}
\resizebox{\textwidth}{!}{\footnotesize
\begin{tabular}{|c|c|c|c|c|c|c|c|c|c|c|c|c|c|}
\hline
$a$ & $0.1$ & $0.2$ & $0.3$ & $1/3$ & $0.4$ & $0.5$ & $0.56$ & $0.6$ & $2/3$ & $0.7$ & $0.8$ & $0.9$ & no piers\\
\hline
eq. \eqref{toybeam} & -2.144 & -2.249 & -2.397 & -2.492 & -2.465 & -2.656 & -2.465  & -2.525 & 2.555 & 2.515 & 2.459 & 2.471 & -2.475 \\
\hline
eq. \eqref{toybeam2} & -2.047 & -2.085 &
-2.08 &  -2.069 &  -2.068 & -2.079 &
-2.081  & -2.088 & -2.101 &
-2.103 & -2.128 & -2.16 & -2.289\\
\hline
\end{tabular}
}
\caption{Traces of the monodromy matrix in correspondence of nonlinear instability.}\label{tracce}
\end{center}
\end{table}

%\begin{table}[ht]
%\begin{center}
%\resizebox{\textwidth}{!}{\footnotesize
%\begin{tabular}{|c|c|c|c|c|c|c|c|c|c|c|c|c|c|}
%\hline
%$a$ & $0.1$ & $0.2$ & $0.3$ & $1/3$ & $0.4$ & $0.5$ & $0.56$ & $0.6$ & $2/3$ & $0.7$ & $0.8$ & $0.9$ & no piers \\
%\hline
%eq. \eqref{toybeam} & 2.6 & 3.059 & 3.7487 & 4.214 & 4.078 & 5.055 & 4.079  & 4.377 & 4.53 & 4.327 & 4.049 & 4.108 & 4.130 \\
%\hline
%eq. \eqref{toybeam2} & 2.19 & 2.347 &
%2.328 & 2.283 &  2.278 & 2.313 &
%2.332  & 2.362 & 2.415 &
%2.424 & 2.53 & 2.665 & 3.243 \\
%\hline
%\end{tabular}
%}
%\caption{Traces of the monodromy matrix in correspondence of nonlinear instability}\label{tracce}
%\end{center}
%\end{table}

%\begin{table}[ht]
%\begin{center}
%\resizebox{\textwidth}{!}{\footnotesize
%\begin{tabular}{|c|c|c|c|c|c|c|c|c|c|c|c|c|}
%\hline
%$a$ & $0.1$ & $0.2$ & $0.3$ & $1/3$ & $0.4$ & $0.5$ & $0.56$ & $0.6$ & $2/3$ & $0.7$ & $0.8$ & $0.9$ \\
%\hline
%eq. \eqref{toybeam} & 11.38 & 8.695 & 8.081 & 9.805 & 9.336 & 13.383 & 9.493  & 10.77 & 10.9 & 10.09 & 8.798 & 8.923 \\
%\hline
%eq. \eqref{toybeam2} & 9.588 & 8.703 &
%10.28 &  8.861 & 11.148 & 10.323 &
%10.589  & 11.563 & 10.093 &
%10.371 & 9.361 & 5.493 \\
%\hline
%\%end{tabular}
%}%
%\caption{Expansion rates in correspondence of nonlinear instability}\label{%expansionraten}
%\end{center}
%\end{table}

\begin{table}[ht]
\begin{center}
\resizebox{\textwidth}{!}{\footnotesize
\begin{tabular}{|c|c|c|c|c|c|c|c|c|c|c|c|c|c|}
\hline
$a$ & $0.1$ & $0.2$ & $0.3$ & $1/3$ & $0.4$ & $0.5$ & $0.56$ & $0.6$ & $2/3$ & $0.7$ & $0.8$ & $0.9$ & no piers \\
\hline
eq. \eqref{toybeam} & 123.548 & 75.831 & 65.793 & 96.85 & 88.532 & 179.408 & 90.627  & 111.555 & 119.632 & 102.219 & 81.17 & 86.2 &  617.994  \\
\hline
eq. \eqref{toybeam2} & 91.828 & 74.486 &
108.586 &  80.212 & 127.101 & 97.985 &
111.482  & 136.453 & 101.399 &
108.093 & 89.127 & 69.748 & 96.017 \\
\hline
\end{tabular}
}
\caption{Expansion rates in correspondence of nonlinear instability.}\label{expansionraten}
\end{center}
\end{table}

In Tables \ref{tracce} and \ref{expansionraten}, we report the trace of the monodromy matrix of \eq{hill33} and the associated expansion rates $\mathcal{ER}_\tau$ defined in \eqref{ERtau} in correspondence of nonlinear instability, namely for the least amplitude of the prevailing mode for which \eqref{grande} holds true (here $\tau$ is the number appearing in Definition \ref{unstable}). We recall that the trace is computed after one period of $W_\lambda^2$, that is, half a period of $W_\lambda$. We notice that the trace is far away from $\pm 2$: indeed, for \eqref{toybeam2} the threshold of instability is obtained in correspondence of the black curve (drawn numerically) inside the region of linear instability in Figure \ref{parabole}. An analogous observation holds for \eqref{toybeam}.
This shows that condition $(ii)$ in \eq{grande} is somehow equivalent to a ``large absolute value of the trace of the monodromy matrix'', namely
\begin{center}
{\bf nonlinear instability corresponds to a sufficiently large Floquet multiplier}
\end{center}
and is detected only if we are ``sufficiently deep inside'' the resonance tongues.
In fact, we can make this more precise, by observing that the expansion rates reported in Table \ref{expansionraten} all lie around 100. This has a simple explanation: Definition \ref{unstable}, with the choice $\eta=0.1$, requires that, in the time interval $[0, \tau]$, the residual mode becomes 10 times larger than in the interval $[0, \tau/2]$. If the considered equation were linear, by the definition of expansion rate in \eqref{ERtau} we would have $\mathcal{ER}_{\tau/2}=\sqrt{\mathcal{ER}_\tau}$ and in order to fulfill the second condition in \eqref{grande} one should have
$$
\sqrt{\mathcal{ER}_\tau}=\frac{\mathcal{ER}_\tau}{\mathcal{ER}_{\tau/2}}= 10,
$$
so that $\mathcal{ER}_\tau=100$. Of course, the presence of a nonlinear term alters the value of $\mathcal{ER}_\tau$, but the general principle remains true also for the nonlinear equation \eqref{prototipo}. Hence,
\begin{center}
{\bf
the expansion rate \eqref{ERtau} provides a fairly precise measure of nonlinear instability.
}
\end{center}
We also tested other choices of $\eta$, such as $\eta=1/15$, $\eta=0.08$, $\eta=0.125$, always approximately finding an expansion rate equal to $1/\eta^2$ (see Tables \ref{instabsupertutti} and \ref{instabsuperL8} in Section \ref{sezesperim}).
\par
Nonlinear instability thus seems more in line with reflecting the occurrence of an abrupt and significant phenomenon inside the considered structure, so that
\begin{center}
{\bf Definition \ref{unstable} is more application-oriented and thus more suitable for practical analysis.}
\end{center}

In Figure \ref{confrontolinn} we deal with \eqref{toybeam2}, comparing the values of $\mathbb{E}_{12}^\ell(a)$ (bright) and $\mathbb{E}_{12}(a)$ (dark) for $a \in (0, 1)$ (left picture) and showing that the behaviors of $\mathbb{E}_{12}(a)$ and $\mathbb{E}_{12}^\ell(a)$ for $a \in (0.4, 0.5)$ are qualitatively the same (right picture). In this case,
\begin{equation}\label{glistessi}
\textrm{{\bf for \eqref{toybeam2}, the optimal values for $a$ are the same with the two definitions of instability.
}}
\end{equation}
The unexpected large value of $\mathbb{E}_{12}(0.1)$ has an explanation, as well; since the corresponding ratio $\lambda_1/\lambda_0 \approx 1.185$ is close to 1,
the trace of the monodromy matrix grows very slowly as $\delta$ increases, therefore it requires a large amount of energy in order to reach a sufficiently large trace
yielding nonlinear instability. Basically, the black curve in Figure \ref{parabole} has an asymptote for $\rho/\lambda=1$.\par
Summarizing, we may conclude that
\begin{equation}\label{indizio}
{\textrm{\bf linear instability is a clue for the possible occurrence of nonlinear instability,}}
\end{equation}
the latter requiring larger energies to occur. In particular, if there is no linear instability (as in the case $\rho<\lambda$, see both
Figures \ref{monodromymnd1} and \ref{parabole}), one should not expect nonlinear instability.

\begin{figure}[ht!]
\begin{center}
\includegraphics[height=63mm,width=158mm]{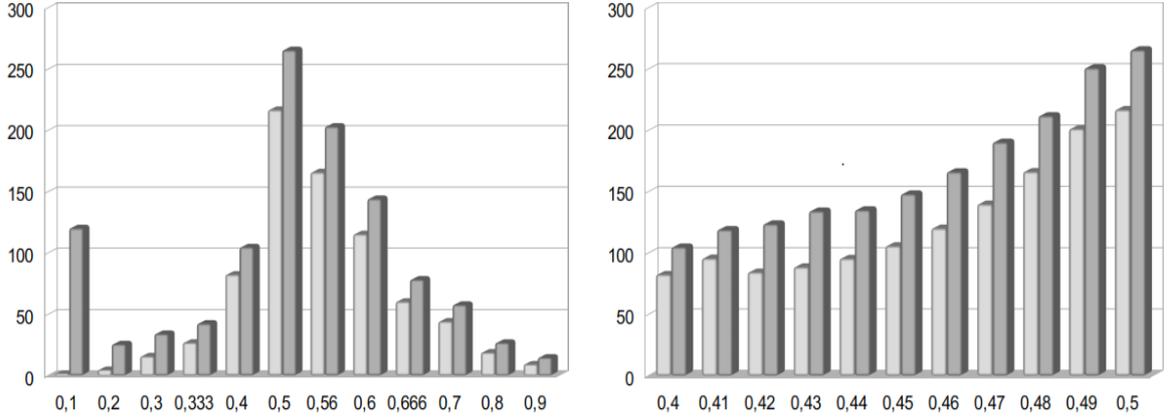}
\caption{Comparison between $a \mapsto \mathbb{E}_{12}^\ell(a)$ (brighter) and $a \mapsto \mathbb{E}_{12}(a)$ (darker) for problem \eqref{toybeam2}.\label{confrontolinn}}
\end{center}
\end{figure}

Beyond the features illustrated so far, there is another crucial concern for choosing the definition of instability to be used. When the modes are mixed and there is no possibility for a bi-modal solution to exist, one cannot define linear instability restricting the attention on a $2\times2$ system.
On the contrary, nonlinear instability may be checked also in cases where no nontrivial invariant subspaces exist for the considered equation. Thus, Definition \ref{unstable} is sufficiently flexible for practical purposes and, in the sequel, we will always stick to it when speaking about instability.\par

\subsection{Equations which tend to mix the modes}\label{nonlinevol}

\subsubsection{How do the modes mix?}\label{howmix}

In this section, we discuss particular situations where the modes are mixed by the nonlinearity, thereby displaying a relevant physiological energy transfer. For simplicity, we only analyze the case of equation \eqref{prototipo} with $\gamma_1=\gamma_3=0$, leading to
\begin{equation}\label{prototipo3}
u_{tt}+u_{xxxx}-\gamma_2\Vert u_{x} \Vert_{L^2}^2 u_{xx}+ f(u) = 0.
\end{equation}
Let us first quickly comment on the nonlinear equation
\neweq{beamstretch}
u_{tt}+u_{xxxx}-\|u_x\|^2_{L^2}\, u_{xx}=0\qquad x \in I, \quad t>0,
\endeq
that corresponds to \eqref{prototipo3} with $\gamma_2=1$ and $f\equiv 0$, thereby fitting in the second case of \eq{versus}.
According to Definition \ref{soluzionedebole}, a weak solution $u$ of \eq{beamstretch} satisfies $u\in C^0(\R_+;V(I))\cap C^1(\R_+;L^2(I))\cap C^2(\R_+;V'(I))$ and
$$
\langle u_{tt}, v \rangle_{V}+\int_I u_{xx} v''+\|u_x\|^2_{L^2}\int_I u_xv' = 0\qquad\forall v\in V(I),\quad t>0.
$$
Although \eq{beamstretch} strongly resembles to \eq{toybeam}, its solutions behave quite differently since they tend to mix the modes, thus reducing the number of invariant subspaces, see Proposition \ref{allinvariant}.
For this reason, the study of two-modes systems is useless, since two-modes solutions do not satisfy \eqref{beamstretch}.
This is also related to the discussion in Section \ref{secondorder}, where we showed that the operator $L$ defined on $V(I)$ by
$\langle Lu, v\rangle_V=\int_I u'' v''$ is not the square of the operator $\mathcal{L}$ defined on $W(I)$ by $\langle \mathcal{L}u, v\rangle_W = \int_I u' v'$. Let
$$
\Delta_{\lambda,\nu}:=\int_I e_\lambda' e_\nu'\ \quad\Longrightarrow\ \quad \left\{\begin{array}{l}
|\Delta_{\lambda,\nu}|\le\Upsilon_\lambda\Upsilon_\nu\mbox{ by the H\"older inequality}\\
\Delta_{\lambda,\nu}=0\mbox{ if $e_\lambda$ and $e_\nu$ have different parities}\\
\Delta_{\lambda,\nu}=0\mbox{ if $e_\lambda$ and $e_\nu$ both belong to $C^\infty(I)$}
\end{array}\right.
$$
and notice that $\Delta_{\lambda,\lambda}=\Upsilon_\lambda^2$, as defined in \eq{defupsilon}; here $e_\lambda$ and $e_\nu$ are $L^2$-normalized eigenfunctions of \eq{autovsym}
relative to the eigenvalues $\mu=\lambda^4$ and $\mu=\nu^4$.\par
If we take initial data $u_0\in V(I)$ and $u_1\in L^2(I)$, then the solution of \eq{beamstretch} can be written in Fourier series as
$$
u(x, t)=\sum_\lambda \varphi_\lambda(t) e_\lambda(x),
$$
where the Fourier coefficients $\varphi_\lambda$ satisfy the infinite-dimensional system of ODEs:
\begin{equation}\label{infinitostretch}
\ddot{\varphi}_\lambda(t)+\lambda^4 \varphi_\lambda(t)+
\bigg[\sum_\rho \Upsilon_\rho^2\varphi_\rho(t)^2+2\sum_{\nu>\rho}\Delta_{\nu,\rho}\varphi_\rho(t)\varphi_\nu(t)\bigg]
\cdot\bigg[\sum_\rho\Delta_{\lambda,\rho}\varphi_\rho(t)\bigg]=0
\end{equation}
which shows that the modes are ``mixed up'' having strong interactions with each other. In particular,
\begin{center}
{\bf even if some mode is initially at rest, it may become nontrivial as $t$ varies and}\\
{\bf there exist no solutions of \eq{beamstretch} with just one active mode, that is, in the form \eq{separazione}.}
\end{center}

We then consider the case where the nonlinearity acts as a local restoring force (namely, $\gamma_1=\gamma_2=\gamma_3=0$ in \eqref{prototipo}).
Motivated by the nonlinearities appearing in the dynamics of suspension bridges \cite{sepe1, plautdavis}, a possible simplified choice is given by $f(u)=u^3$; this leads to the equation
\begin{equation}\label{beamnonlineare}
u_{tt} + u_{xxxx} + u^3 = 0, \qquad x \in I, \quad t > 0,
\end{equation}
complemented with the boundary and internal conditions \eqref{bcbeamnon}.
According to Definition \ref{soluzionedebole}, a weak solution $u$ of \eq{beamnonlineare} satisfies $u\in C^0(\R_+;V(I))\cap  C^1(\R_+;L^2(I))\cap  C^2(\R_+;V'(I))$
and the equality
$$
\langle u_{tt}, v \rangle_{V}+\int_I \Big(u_{xx} v'' + u^3 v\Big)  = 0\qquad\forall v\in V(I),\quad t>0.
$$
Moreover, $u\in C^2(\overline{I}\times\R_+)$ and $u_{xx}(-\pi,t)=u_{xx}(\pi,t)=0$ for all $t>0$, see Proposition
\ref{galerkin1}. By taking initial data
$$
u(x, 0)=u_0(x)=\sum_{n \in \mathbb{N}} \alpha_n e_n, \quad u_t(x, 0)=u_1(x)=\sum_{n \in \mathbb{N}} \beta_n e_n,
$$
every weak solution of \eqref{beamnonlineare} can be expanded in Fourier series as
$$
u(x, t)=\sum_{n \in \mathbb{N}} \varphi_n(t) e_n(x),
$$
and we consider the infinite-dimensional system of ODEs obtained by projecting \eqref{beamnonlineare} onto the eigenspace spanned by each eigenfunction $e_n$:
\begin{equation}\label{sistemainfinito}
\ddot{\varphi}_n(t) + \lambda_n^4 \varphi_n(t) + A_n \varphi_n(t)^3 + \Big(\sum_{m \in \mathbb{N} \atop m \neq n} B_{n, m} \varphi_m(t) \Big) \varphi_n(t)^2 + \Big(\sum_{m, p \in \mathbb{N} \atop m, p \neq n} C_{n, m, p} \varphi_m(t) \varphi_p(t) \Big) \varphi_n(t) + D_n(t)  = 0,
\end{equation}
for $n \in \mathbb{N}$. Here
\begin{equation}\label{coefficienti}
A_n=\int_I e_n^4, \quad B_{n, m}= 3\int_I e_n^3 e_m, \quad C_{n, m, p}= 3 \int_I e_n^2 e_m e_p,
\end{equation}
and $D_n$ does not contain $\varphi_n$. It is worthwhile noticing that all the constants in \eqref{coefficienti} vary with $n$. This is not
the case in hinged beams without piers, for which the equivalent infinite-dimensional ODE system simply reads
$$
\ddot{\varphi}_n(t)+\lambda_n^4 \varphi_n(t) + \frac34 \varphi_n(t)^3 -\frac34 \varphi_{3n}(t)\varphi_n(t)^2 +\mathcal{C}_n(t)\varphi_n(t) +\mathcal{D}_n(t)=0,
$$
see \cite[Lemma 16]{GarGaz}: these equations should be compared with \eq{sistemainfinito}. However, in view of the symmetry of the interval
$I$, we immediately see that the only possible nonzero terms in $D_n$ are the products
of the kind $\varphi_m \varphi_p \varphi_q$, with $m+p+q+n$ even. As a consequence, the following statement holds.
\begin{proposition}\label{fisiologici}
Let $u_0(x)=\alpha_j e_j$, $u_1(x)=\beta_j e_j$ for some $j \in \mathbb{N}$. Then, there exist $C^2$-functions $\varphi_n$, infinitely many of which not identically zero, such that the solution of \eqref{beamnonlineare} is given by
$$
u(x, t)=\sum_{(n-j)\textnormal{ mod } 2=0} \varphi_n(t) e_n(x).
$$
\end{proposition}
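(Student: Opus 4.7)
The plan splits naturally into two claims: only modes of the same parity as $e_{j}$ can appear in the Fourier expansion, and infinitely many of them actually do.

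For the parity claim, I would exploit the reflection symmetry $x\mapsto -x$. Because $b=a$, the interval $I=(-\pi,\pi)$, the pier constraints at $\pm a\pi$, and equation \eqref{beamnonlineare} are all invariant under this reflection. Theorem~\ref{symmetriceigen} guarantees that $e_{j}$ has a definite parity $\sigma\in\{-1,+1\}$, which is therefore shared by both initial data. Since $(x,t)\mapsto\sigma\,u(-x,t)$ solves the same Cauchy problem, the uniqueness provided by Proposition~\ref{galerkin1} gives $u(-x,t)=\sigma u(x,t)$, so $u(\cdot,t)$ retains the parity $\sigma$ for every $t\geq 0$. On the other hand, from Theorem~\ref{constant} and the normalization convention fixed right after Theorem~\ref{symmetriceigen}, an even eigenfunction satisfies $e_{n}(0)\neq 0$ (so its non-pier zeros come in mirror pairs, yielding an even $i(e_{n})$), while an odd one vanishes at the origin (one zero plus an even number from mirror pairs, yielding an odd $i(e_{n})$); combined with $i(e_{n})=n$, this shows that $e_{n}$ has parity $\sigma$ iff $n-j$ is even. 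Projecting $u(\cdot,t)$ on the $L^{2}$-orthonormal basis $\{e_{n}\}$ (Theorem~\ref{Michelle}) then forces $\varphi_{n}\equiv 0$ whenever $n-j$ is odd.

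To show that infinitely many of the remaining $\varphi_{n}$ are nontrivial, I would argue by contradiction. Suppose $\mathcal{I}:=\{n\in\mathbb{N}:\varphi_{n}\not\equiv 0\}$ is finite and set $X:=\mathrm{span}\{e_{n}:n\in\mathcal{I}\}\subset V(I)$. Projecting \eqref{beamnonlineare} against any $e_{m}$ with $m\notin\mathcal{I}$ and $(m-j)$ even yields
\begin{equation*}
\int_{I}u(x,t)^{3}\,e_{m}(x)\,dx=0\qquad\forall\,t\geq 0.
\end{equation*}
The reduced system satisfied by $(\varphi_{n})_{n\in\mathcal{I}}$ is a finite-dimensional polynomial ODE, hence each $\varphi_{n}$ is real-analytic in $t$, and the previous identity is equivalent to the vanishing of all its Taylor coefficients at $t=0$. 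Computing these coefficients iteratively, with $u_{tt}$ systematically replaced via the weak form of the equation, generates an expanding sequence of integral identities of the type $\int_{I}P_{k}(e_{n}:n\in\mathcal{I})\,e_{m}=0$, where the polynomials $P_{k}$ range over a large enough family of cubic products of the active eigenfunctions. This closure forces every such cubic product to lie in $X$, which promotes $X$ to a genuine finite-dimensional invariant subspace in the sense of Definition~\ref{invarianti} and contradicts the second bullet of Proposition~\ref{allinvariant}, since $f(s)=s^{3}$ is nonlinear.

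The delicate step is this final upgrade. Knowing that a single trajectory stays in $X$ is strictly weaker than knowing that $X$ is invariant for every initial datum in $X$; bridging the gap requires squeezing enough independent information out of the Taylor coefficients of $t\mapsto\int_{I}u^{3}e_{m}$. The bookkeeping is complicated by the regularity of the eigenfunctions: for those that are only $C^{2}$, the identity $e_{n}''''=\lambda_{n}^{4}e_{n}$ holds solely modulo Dirac masses at $\pm a\pi$ as in Theorem~\ref{regular}$(ii)$. These masses disappear after pairing with elements of $X$ (which vanish at the piers), but they have to be tracked carefully when iterating time differentiation of the equation; this is the portion of the argument where most of the technical effort is concentrated.
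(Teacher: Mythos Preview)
Your parity argument is correct and amounts to the same mechanism the paper uses, packaged at the PDE level rather than the ODE level. The paper observes directly on system \eqref{sistemainfinito} that when all wrong-parity components vanish, the forcing term $D_n$ in each wrong-parity equation vanishes too (because $\int_I e_n e_m e_p e_q = 0$ unless $n+m+p+q$ is even), so $\varphi_n\equiv 0$ solves that equation and uniqueness concludes. You instead exploit the reflection $x\mapsto -x$ on the full PDE and invoke Proposition~\ref{galerkin1}. Both routes rest on the same parity structure; yours is arguably cleaner since it sidesteps the infinite ODE system entirely. Your identification of the parity of $e_n$ with the parity of $n$ via Theorem~\ref{constant} and Proposition~\ref{nodouble} is also correct.

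For the ``infinitely many nonzero'' claim, you correctly isolate Proposition~\ref{allinvariant} as the intended input and you correctly flag a genuine gap: a single trajectory lying in a finite-dimensional $X$ is strictly weaker than $X$ being invariant in the sense of Definition~\ref{invarianti}. It is worth noting that the paper's one-sentence justification addresses \emph{only} the parity half of the statement and says nothing explicit about ``infinitely many''; so on this point your write-up is at least as complete as the paper's, and more candid about where the difficulty lies. Concerning your proposed fix: the Dirac bookkeeping you worry about can largely be avoided by working entirely inside the finite-dimensional polynomial ODE on $X$---the constraints $\int_I u^3 e_m = 0$ for $m\notin\mathcal{I}$ then become honest polynomial identities in the analytic functions $(\varphi_n)_{n\in\mathcal{I}}$, with no distributional terms, since testing is only against eigenfunctions (which vanish at the piers). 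The real obstruction is the one you name: a cubic polynomial vanishing along one analytic curve need not vanish on all of $X$, so promoting the trajectory constraint to full invariance of $X$ still requires an inductive extraction of independent relations from successive time-derivatives at $t=0$, which neither you nor the paper carries out in detail.
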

This result can be deduced from the uniqueness for the ODE problem, by noticing that, under the assumptions of Proposition \ref{fisiologici}, $\varphi_n \equiv 0$ is a solution of \eqref{sistemainfinito} for every $n$ such that $(n-j) \textnormal{ mod } 2 \neq 0$.
In particular, recalling the discussion after Definition \ref{unstable}, Proposition \ref{fisiologici} states that
\begin{center}
{\bf for \eqref{beamstretch} and \eqref{beamnonlineare}, physiological energy transfers may occur}\\
{\bf only between modes with the same parity.}
\end{center}

\subsubsection{Optimal position of the piers in beams}

In this section, we numerically evaluate the energy thresholds of nonlinear instability of \eqref{beamstretch} and \eqref{beamnonlineare} for different values of $a$, comparing them with the ones for a free beam with no internal piers (see \cite{GarGaz}).
In view of the observations in the previous section, the study of linear stability for equation \eqref{prototipo3} does not make sense, because two-modes systems are not approximations of the original problem. We are thus forced to study only the nonlinear instability. A first problem in this direction is represented by the definition of the Wagner time $T_W$, which has been introduced in \eqref{sceltawagner} for equations \eqref{toybeam} and \eqref{toybeam2} and deeply relies on the existence of uni-modal solutions. However, we proceed similarly by linearizing the solutions of \eqref{prototipo3} with $j$-th prevailing mode around the approximate solution which has only the $j$-th component, obtaining the Duffing equations
\begin{equation}\label{duffst}
\ddot{W}_{\lambda_n}+\lambda_n^4 W_{\lambda_n}+\Upsilon_{\lambda_n}^4 W_{\lambda_n}^3 = 0, \quad  W_{\lambda_n}(0)=\delta, \quad \dot{W}_{\lambda_n}(0)=0
\end{equation}
for \eqref{beamstretch}, with $\Upsilon_\lambda$ defined as in \eqref{defupsilon},
and
\begin{equation}\label{duffcu}
\ddot{W}_{\lambda_n}+\lambda_n^4 W_{\lambda_n}+A_n W_{\lambda_n}^3 = 0, \quad  W_{\lambda_n}(0)=\delta, \quad \dot{W}_{\lambda_n}(0)=0
\end{equation}
for \eqref{beamnonlineare}, with $A_n$ as in \eqref{coefficienti}.
The Wagner time $T_W=T_W(\delta)$ for \eqref{beamstretch} (resp., for \eqref{beamnonlineare}) is then defined as the period of the solution of \eqref{duffst} (resp., \eqref{duffcu}). It can be computed via a formula similar to \eqref{TW}.
\par
To analyze the nonlinear instability, we truncate systems \eqref{infinitostretch} and \eqref{sistemainfinito} at the order $N=12$, as in \eqref{tronca}-\eqref{finitodim}, and numerically integrate the resulting finite-dimensional systems
\begin{equation}\label{sistemaS}
\ddot{\varphi}_n^A(t)+\lambda_n^4 \varphi_n^A(t)+
\bigg[\sum_{m=0 \atop m \neq n}^{11} \Upsilon_{\lambda_m}^2\varphi_m^A(t)^2+\sum_{m, p=0 \atop m \neq p}^{11}\Delta_{\lambda_p, \lambda_m}\varphi_{\lambda_m}^A(t)\varphi_{\lambda_p}^A(t)\bigg]
\cdot\bigg[\sum_{m=0 \atop m \neq n}^{11} \Delta_{\lambda_n,\lambda_m}\varphi_{\lambda_m}^A(t)\bigg]=0
\end{equation}
\begin{equation}\label{sistemaN}
\ddot{\varphi_n^A}(t) + \lambda_n^4 \varphi_n^A(t) + A_n \varphi_n^A(t)^3 + \Big[\sum_{m=0 \atop m \neq n}^{11} B_{n, m} \varphi_m^A(t) \Big]
\varphi_n^A(t)^2 + \Big[\sum_{m, p=0 \atop m, p \neq n}^{11} C_{n, m, p} \varphi_m^A(t) \varphi_p^A(t) \Big]\varphi_n^A(t) + D_n(t)  = 0,
\end{equation}
for $n=0, \ldots, 11$, where $u^A(x, t)=\sum_{n=0}^{11} \varphi_n^A(t) e_n(x)$.
 For every prevailing mode $j$, we determine the $j$-th energy threshold $E_j(a)$; the performed numerical analysis suggests that $E_j(a) < +\infty$ for every $j$ and therefore that $a \mapsto \mathbb{E}_{12}(a)$ is continuous, see Theorem \ref{sogliacontinua}.
\par
In Table \ref{stretchbrief}, we display the energy thresholds of nonlinear instability obtained for \eqref{sistemaS}, see Figure \ref{stretchingfigure} (where ``no'' denotes the case with no piers) and Table \ref{instabstretch} in Section \ref{sezesperim}; we will further comment on them in Section \ref{sezioneconfronti}.
\begin{table}[!ht]
{\small
\begin{center}
\begin{tabular}{|c|c|c|c|c|c|c|c|c|c|c|c|c|c|}
\hline
$a$ & 0.1 & 0.2 & 0.3 & 1/3 & 0.4 & 0.5 & 0.56 & 0.6 & 2/3 & 0.7 & 0.8 & 0.9 & no piers \\
\hline
$\mathbb{E}_{12}(a)$ & $19.8$ & $10.9$  & $7.5$ & $11.3$ & $17.3$ & 35.2 & 49.6 & 77.4 & 143.4 & 168.1 & 832.7 & 659 & 198.3\\
\hline
\end{tabular}
\caption{Energy thresholds of instability for \eqref{sistemaS}.}
\label{stretchbrief}
\end{center}
}
\end{table}
\begin{figure}[ht!]
\begin{center}
\includegraphics[scale=0.55]{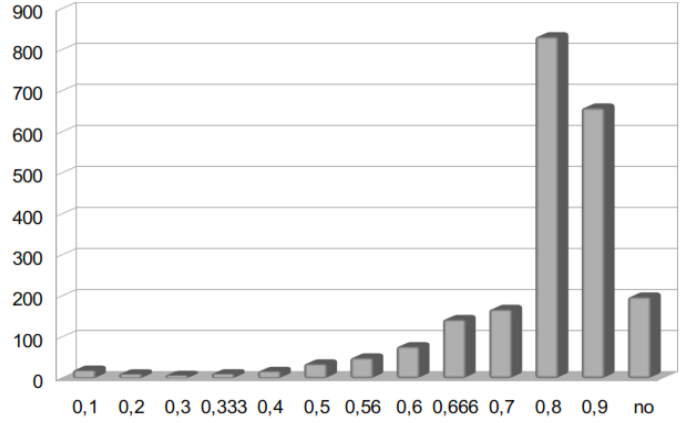}
\end{center}
\caption{Energy thresholds of nonlinear instability for \eqref{sistemaS} on varying of $a$.}
\label{stretchingfigure}
\end{figure}

In the right picture of Figure \ref{figurasoglie}, we deal with system \eqref{sistemaN} and we draw an approximation of the graph of the function $a \mapsto \mathbb{E}_{12}(a)$, where we exploit the data reported in Table \ref{tabellasoglie} (in Section \ref{sezesperim}). We notice that the resulting graph reaches two peaks, one around $0.35$ and the other around $0.5$, the latter being the optimal choice of $a$ for the beam to remain stable. Making a comparison with the instability threshold for the beam without piers, represented by the horizontal line, we deduce that
\begin{eqnarray}\label{claimbeam}
\textrm{{\bf for \eq{sistemaN}, the piers make the beam more stable}} \qquad \qquad & \vspace{-0.2cm} \\
\textrm{{\bf unless the central span is much shorter than the lateral ones.}} &\nonumber
\end{eqnarray}
Looking at Table \ref{tabellasoglie}, we also notice that $\mathbb{E}_{12}(a)$ is given either by $E_1(a)$ or by $E_2(a)$, namely the most fragile mode is  either the second or the third. We can be slightly more precise in this characterization by looking at the left picture of Figure \ref{figurasoglie}, where we plot the two graphs (obtained via interpolation from Table \ref{tabellasoglie}) of $a \mapsto E_1(a)$ and $a \mapsto E_2(a)$. Analyzing the shape of the third eigenfunction for $a \approx 0.354$ and $a \approx 0.517$ (see Figure \ref{fterze}), corresponding to $E_1(a) \approx E_2(a)$,  we notice that
\begin{center}
{\bf for \eq{sistemaN}, the second mode is the most fragile, unless the third \\ has a zero which is ``very close'' to the piers.}
\end{center}
However, this closeness has to be measured in an asymmetric way since a zero on the central span may be considered close to a pier even if its distance from it is larger. This suggests that zeros on the central span are more subject to instability than zeros on the lateral spans.
The interested reader may have a look at the numerical results and at the discussion in Section \ref{sezesperim}.
\begin{figure}[ht!]
\begin{center}
\includegraphics[scale=0.45]{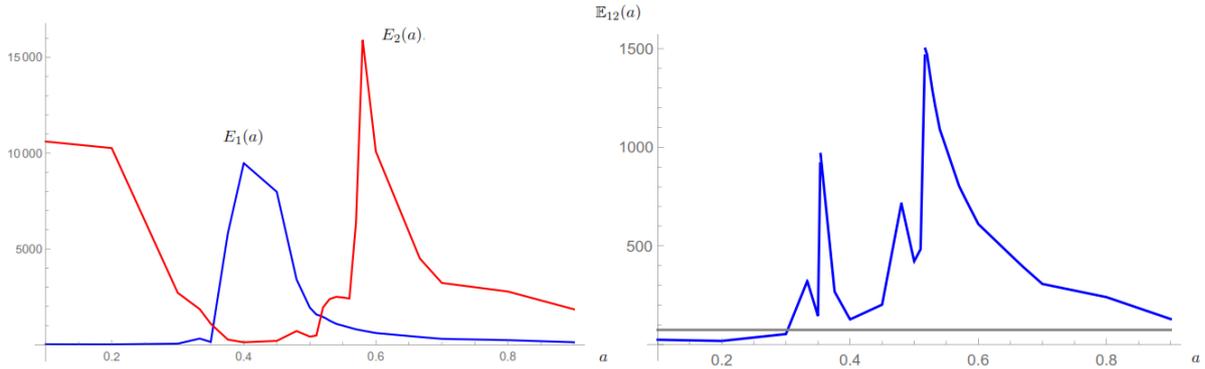}
\end{center}
\caption{On the left, the approximate graphs of $a \mapsto E_1(a)$ and $a \mapsto E_2(a)$. On the right, the approximate energy threshold
of instability for \eqref{sistemaN} on varying of $a$, given by $\min\{E_1(a), E_2(a)\}$. }
\label{figurasoglie}
\end{figure}
\par
\begin{figure}
\begin{center}
\includegraphics[height=35mm, width=15cm]{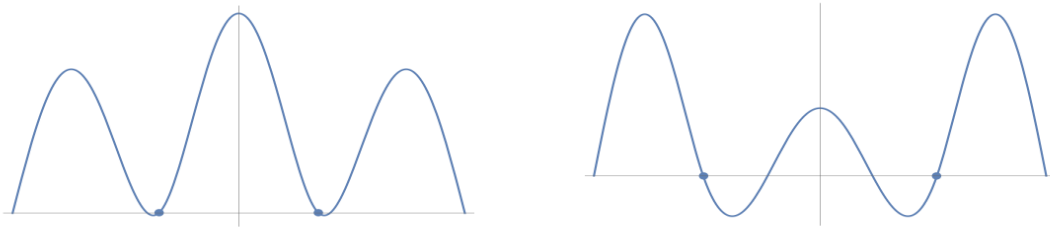}
\end{center}
\caption{The shape of the third eigenfunction for $a=0.354$ (left) and $a=0.517$ (right).}
\label{fterze}
\end{figure}

\subsection{Comparing the nonlinearities}\label{sezioneconfronti}

We discuss here the information collected in the previous sections, seeking which one among the four energies
$$
\mathcal{N}_1(u)=\frac{1}{4} \Big(\int_I u_{xx}^2 \Big)^2, \qquad \mathcal{N}_2(u)=\frac{1}{4} \Big(\int_I u_{x}^2 \Big)^2, \qquad  \mathcal{N}_3(u)=\frac{1}{4} \Big(\int_I u^2 \Big)^2, \qquad  \mathcal{N}_4(u)=\frac{1}{4} \int_I u^4,
$$
introduced in Section \ref{physmod}, appears more suitable for settling a model which describes the behavior of suspension bridges.
In this respect, we use three criteria to evaluate the suitability of each energy:
\begin{itemize}
\item[a)] we expect the results for the optimal position of the piers to be sufficiently robust so as not to depend on the notion of instability considered. In particular, we aim at obtaining the same optimal $a$ using both Definitions \ref{unstable} and \ref{defstabb} (when available), expecting claim \eqref{indizio} to be fully respected;
\item[b)] a nonlinearity with \emph{small physiological energy transfers} between modes (recall the discussion in Section \ref{4.4}) is more in line with what is observed in actual bridges: as already remarked, from \cite{ammvkwoo} we know that ``\emph{one mode of oscillation prevailed}''  and hence the oscillations of a suspension bridge appear in general close to a pure mode;
\item[c)] we expect the optimal $a$ to lie in the physical range \eqref{physicalrange}, representing the proportion between spans which is generally respected in the construction of real bridges. Moreover, the presence of the piers should generally improve the stability of the structure.
\end{itemize}
Let us first focus on the two energies $\mathcal{N}_1$ and $\mathcal{N}_3$, studied in Section \ref{nonmischia}; we have seen in Proposition \ref{allinvariant} that for both of them no physiological energy exchanges occur, since all the subspaces are invariant. However, their behavior with respect to linear stability is fairly different - see Figures \ref{monodromymnd1} and \ref{parabole}, where the instability tongues are depicted. Proposition \ref{toytheo2} states that for $\mathcal{N}_3$ there is always linear instability if the initial amplitude $\delta$ of the prevailing mode is sufficiently large, contrary to what happens for $\mathcal{N}_1$, see Theorem \ref{toytheo}.
\par
Concerning nonlinear instability, Figures \ref{confrontolinn2} and \ref{confrontolinn} bring strong arguments in favor of \eq{toybeam2} and against \eq{toybeam}. In particular,
Figure \ref{confrontolinn2} shows that for \eq{toybeam} it is $\mathbb{E}_{12}^\ell\ll\mathbb{E}_{12}$, namely there is a weak correlation between linear and
nonlinear instability, contrary to \eq{toybeam2} (apart from the case $a=0.1$, which has been explained in Section \ref{instnonlineare}).
Taking also into account \eqref{glistessi}, we thus see that the above criterion a) is fulfilled only for \eqref{toybeam2}.
Furthermore, only for \eqref{toybeam2} the optimal values of $a$ for linear and nonlinear instability both lie in the physical range (compare \eqref{optimalbdg} with \eqref{optimall2}). Finally, from Table \ref{tabellasuperbending1} in Section \ref{sezesperim} we infer that criterion c) is not satisfied for \eqref{toybeam}. Summarizing, we can conclude that
\begin{center}
{\bf the nonlinearity $\Vert u\Vert_{L^2}^2 u$ better describes the nonlocal behavior of a bridge with piers\\
than the nonlinearity $\Vert u_{xx} \Vert_{L^2}^2 u_{xxxx}$.}
\end{center}

In Section \ref{nonlinevol} we analyzed the energies $\mathcal{N}_2$ and $\mathcal{N}_4$, which behave in a much more complex way and, as we have seen, do not allow two-modes solutions of the considered problem. As for $\mathcal{N}_2$, the numerical responses for nonlinear instability reported in Table \ref{stretchbrief} are not in line with criterion c), both for the optimal value of $a$ equal to 0.8 and for the fact that a beam without piers mostly appears more stable than with piers (except for $a=0.8$ and $a=0.9$). It is thus more difficult to control the stability if the stretching effects can move across the piers due to the sliding of the cable (namely, multiplying $u_{xx}$ by the integral of $u_x^2$ on the whole $I$). Criterion c) is instead satisfied by $\mathcal{N}_4$, as we can infer by looking at Figure \ref{figurasoglie} (see also \eqref{claimbeam}). Hence,
\begin{center}
{\bf the nonlinearity $u^3$ better describes the behavior of a bridge with piers\\
than the nonlinearity $\Vert u_{x} \Vert_{L^2}^2 u_{xx}$.}
\end{center}
\begin{table}[ht!]
\begin{center}
{\footnotesize
\begin{tabular}{|c|c|c|c|c|c|c|c|c|c|c|c|c|c|}
\hline
$a$ & $0.1$ & 0.2 & 0.3 & 1/3 & 0.4 & 0.5 & 0.56 & 0.6 & 2/3 & 0.7 & 0.8 & 0.9 & no piers\\
\hline
$\mathbb{E}_{12}(a)$ & 23.1 & 17.7 & 52.1 & 319.2 & 126.8 & 414.1 &  878.6 & 611.6 & 403.2  & 299.1 & 230.3 & 127.5 & 73.4 \\
\hline
$\mathbb{E}_{*}(a)$ & 22.4 & 17.7 & 37.9 & 84.7 & 132.2 & 510  &  448.7 & 332.3 & 196.2  & 151.3 & 76.9 & 43.2 & 8.1 \\
\hline
\end{tabular}
}
\caption{Nonlinear instability for the local problem with $f(u)=u^3$: $12$ vs $2$ modes ($\mathbb{E}_{12}(a)$ vs $\mathbb{E}_*(a)$).}
\label{tabellamodoresiduo}
\end{center}
\end{table}

\begin{table}[ht!]
\begin{center}
{\footnotesize
\begin{tabular}{|c|c|c|c|c|c|c|c|c|c|c|c|c|c|}
\hline
$a$ & $0.1$ & 0.2 & 0.3 & 1/3 & 0.4 & 0.5 & 0.56 & 0.6 & 2/3 & 0.7 & 0.8 & 0.9 & no piers\\
\hline
$\mathbb{E}(a)$ & $120.4$ & $25.7$ & $34.1$ & $42.6$ & $104.9$ & $263.8$/$265.6$  &  $203.3$ & $144.3$ & $78.5$  & $57.9$ & $26.9$ &$14.8$ & $3.8$ \\
\hline
\end{tabular}
}
\caption{Energy thresholds of nonlinear instability for problem \eqref{toybeam2}.}
\label{tabellasuperL2brief}
\end{center}
\end{table}
However, Table \ref{tabellamodoresiduo}, together with Proposition \ref{fisiologici}, highlights the relevance of physiological energy transfers for $\mathcal{N}_4$. Indeed, the striking differences between the 2-modes and the 12-modes systems suggest to expect ``significant physiological spread of energy'' in the infinite-dimensional system. Even more, Proposition \ref{fisiologici} states that the modes having the same parity as the prevailing one are physiologically affected by the nonlinearity and can significantly grow, even if not abruptly (as also observed numerically). Criterion b) above is instead satisfied by $\mathcal{N}_3$, as shown by Table \ref{tabellasuperL2brief}, where we always find the same values of the critical energy except for $a=0.5$ (where the difference, probably due to a small numerical tolerance, is anyway not significant).
\par
Summarizing, criteria a), b) and c) are only satisfied in case of equation \eqref{toybeam2} and we can thus conclude that
\begin{center}
{\bf the nonlocal term $\Vert u\Vert_{L^2}^2 u$ is the nonlinearity better describing \\ the behavior of real structures with piers.
}
\end{center}
For this reason, when analyzing a degenerate plate model in the next section, the nonlinear restoring force due to the cables will be modeled by a suitable version of $\mathcal{N}_3$.

\subsection{Tables and numerical results}\label{sezesperim}

In this section, we report with more details some of the numerical results presented in the previous sections and briefly comment on each of them.
To ease their readability, we here list some notation: $\mathbb{E}_{12}^\ell$ and $\mathbb{E}_{12}$ will respectively denote the energy thresholds of linear (when defined) and nonlinear instability, as in \eqref{formulalineare} and \eqref{Edodici}. The energy threshold of nonlinear instability for a $2$-modes approximation will instead be denoted by $\mathbb{E}_*$. The critical amplitude for nonlinear instability will be denoted by $\delta$, while $\tau$ and $T_W$ will respectively indicate the time needed to observe instability and the Wagner time, cf. Definition \ref{unstable} and \eqref{sceltawagner}. Finally, $e_j$ and $e_k$ will respectively denote the eigenfunctions corresponding to the prevailing and to the residual mode fulfilling Definition \ref{unstable}, keeping the same notation therein.
\par
$\bigstar$ First, we report the numerical results obtained for equation \eqref{toybeam}, where the nonlinearity is represented by a superquadratic bending term. In
Table \ref{instabsuperblin}, we display the energy thresholds of linear stability, while in Tables \ref{tabellasuperbending1} and \ref{tabellasuperbending2} we compare the energy and the amplitude thresholds of nonlinear instability found numerically by integrating the system with $2$ and with $12$ modes, respectively. As described also in Section \ref{algoritmo}, the initial kinetic datum is taken identically equal to zero, while the potential one is taken equal to $0.01$ on each residual mode. We complement the information about the mentioned thresholds with some more details, in particular with the value of $\delta \lambda^2$ in correspondence of nonlinear instability - in order to make a comparison with Table \ref{instabsuperblin}, cf. also Figure \ref{monodromymnd1}. We notice that, though the couple $(j, k)$ is not the same passing from linear to nonlinear instability, yet $\delta \lambda^2$ is always larger for nonlinear instability, confirming claim \eqref{indizio}.
We also observe that higher modes seem to need less time to fulfill Definition \ref{unstable}; in any case, in all our experiments it is $T_W < \tau/2$, meaning that in the determination of the thresholds we are not misled by physiological energy exchanges.
\begin{table}[ht!]
\begin{center}
{\footnotesize
\begin{tabular}{|c|c|c|c|c|c|c|c|c|c|c|c|c|}
\hline
$a$ & $0.1$ & $0.2$ & $0.3$ & $1/3$ & $0.4$ & $0.5$ & $0.56$ & $0.6$ & $2/3$ & $0.7$ & $0.8$ & $0.9$ \\
\hline
$\mathbb{E}_{12}^\ell$ & 0.142 & 0.211 & 0.149 & 0.313 & 0.259 & 0.326 & 0.296 & 0.303 & 0.212 & 0.583 & 0.340 & 1.064 \\
\hline
ratio & $\lambda_7/\lambda_6$ & $\lambda_6/\lambda_5$ &
$\lambda_{10}/\lambda_{9}$ &  $\lambda_7/\lambda_6$ &  $\lambda_5/\lambda_4$&
$\lambda_{10}/\lambda_{9}$  & $\lambda_8/\lambda_7$ & $\lambda_{9}/\lambda_8$ &
$\lambda_{11}/\lambda_{10}$ & $\lambda_5/\lambda_4$ & $\lambda_{9}/\lambda_8$ &
$\lambda_{10}/\lambda_{9}$ \\
\hline
$\delta \lambda^2$	& 0.503 & 0.599 & 0.514 & 0.708 & 0.654 & 0.72 & 0.692 & 0.699 & 0.6 & 0.909 & 0.733 & 1.137 \\
\hline
\end{tabular}
}
\caption{Parameters of linear instability for \eqref{toybeam} with two-modes.}
\label{instabsuperblin}
\end{center}
\end{table}

\begin{table}[ht!]
\begin{center}
\resizebox{\textwidth}{!}{\footnotesize
\begin{tabular}{|c|c|c|c|c|c|c|c|c|c|c|c|c|c|}
\hline
$a$ & 0.1 & 0.2 & 0.3 & 1/3 & 0.4 & 0.5 & 0.56 & 0.6 & 2/3 & 0.7 & 0.8 & 0.9 &
no piers \\
\hline
$\mathbb{E}_*$ &  2.139 & 4.837 & 11.708 & 22.577 & 25.213 & 174.011 & 93.632 & 308.225 & 373.834 & 248.404 & 128.284 & 114.644 & 1483.48\\
\hline
$\delta$ & 0.87 & 0.98 & 1.1 & 1.3 & 0.89 & 1.26 & 0.72 & 0.75 & 6.06 & 5.87 & 6.02 & 6.91 & 34.88\\
\hline
$e_j$ & $e_0$ & $e_0$ & $e_0$ & $e_0$ & $e_1$ & $e_1$ & $e_2$ & $e_3$ & $e_0$ &$e_0$ &$e_0$ & $e_0$ & $e_0$\\
\hline
$e_k$ & $e_1$ & $e_1$ & $e_1$ &$e_1$ &$e_2$ & $e_2$ & $e_3$ & $e_4$ & $e_1$ &$e_1$ &$e_1$  & $e_1$ & $e_1$ \\
\hline
$\delta \lambda^2$ & 1.446 & 1.873 & 2.432& 2.925 & 3.015 & 5.04 & 4.287 & 5.841 & 6.138 & 5.526 & 4.655 & 4.52 & 8.72
\\
\hline
$\tau$  & 15.138 & 7.635 & 4.170 & 3.462 & 2.257 & 1.176 & 0.941 & 0.534 & 3.840 &
4.581 & 6.586 & 8.011 & 15.997 \\
\hline
$T_W$ & $2.377$ & 1.743 & 1.236 & $1.042$ & $0.674$ & $0.357$& $0.279$ & $0.159$ & $1.17$ & $1.392$ & $1.994$& $2.422$ & 3.369 \\
\hline
\end{tabular}
}
\caption{Parameters of nonlinear instability for \eqref{toybeam} with $2$ modes.}
\label{tabellasuperbending1}
\end{center}
\end{table}

\begin{table}[ht!]
\begin{center}
\resizebox{\textwidth}{!}{\footnotesize
\begin{tabular}{|c|c|c|c|c|c|c|c|c|c|c|c|c|}
\hline
$a$ & 0.1 & 0.2 & 0.3 & 1/3 & 0.4 & 0.5 & 0.56 & 0.6 & 2/3 & 0.7 & 0.8 & 0.9  \\
\hline
$\mathbb{E}_{12}$ & 3.087 & 5.699 & 17.378 & 38.131 & 55.172 & 274.462 & 655.22 & 605.881 & 1362.99 & 7175.86 & 274.217 & 216.296 \\
\hline
$\delta$ & 0.98 & 1.03 & 1.23 & 1.5 & 1.1 & 1.12 & 1.19 & 2.05 & 2.96 & 1.07 & 7.33 & 8.15  \\
\hline
$e_j$ & $e_0$ &$e_0$ &$e_0$ &$e_0$ & $e_1$ & $e_2$ & $e_2$ & $e_1$ & $e_1$ & $e_4$ & $e_0$ & $e_0$   \\
\hline
$e_k$ & $e_1$ & $e_1$ &$e_1$ &$e_1$ & $e_2$ & $e_3$ & $e_3$ & $e_5$ & $e_4$ & $e_{10}$ & $e_1$ &$e_1$  \\
\hline
$\delta \lambda^2$ & 1.628 & 1.969 & 2.719 & 3.375 & 3.726 & 5.67 & 7.085 & 6.945 & 8.534 &12.977  & 5.668 & 5.332 \\
\hline
$\tau$ & 15.236 & 8.585 & 4.719 & 3.005 & 1.848 & 0.836 & 0.749 & 2.093 & 2.448 & 6.214 & 7.013 & 8.802 \\
\hline
$T_W$ & 2.206 & 1.682 & 1.127 & 0.919  & 0.558 & 0.252 & 0.173 & 0.31 &0.298 & 0.046 &1.654 & 2.073 \\
\hline
\end{tabular}
}
\caption{Parameters of nonlinear instability for \eqref{toybeam} with $12$ modes.}
\label{tabellasuperbending2}
\end{center}
\end{table}

\smallbreak
\noindent
$\blacklozenge$ We then report the numerical results obtained for equation \eqref{toybeam2}, where the nonlinearity is given by the superquadratic $L^2$-term $\Vert u \Vert_{L^2}^2 u$. In
Table \ref{instabsuperLlin}, we display the energy thresholds of linear instability, while in Table \ref{tabellasuperL2} we report the energy thresholds of nonlinear instability found numerically. Here it is practically not necessary to distinguish between the approximations with $2$ and with $12$ modes, as we have already seen in Section \ref{sezioneconfronti}. The couple ($j$, $k$) for which Definition \ref{unstable} is fulfilled is always the same, $e_0$ being the prevailing mode and $e_1$ being the residual mode displaying instability. This is not any more true for $a$ strictly between $0.4$ and $0.5$, as we have somehow already noticed with Figure \ref{zoomenergysuperL2}; here the couple (prevailing mode, residual mode) is given by $(e_1, e_2)$. In Table \ref{curioso}, we thus find worth zooming on the behavior of the energy thresholds of instability for $a$ belonging to such a range.

\begin{table}[ht!]
\begin{center}
{\footnotesize
\begin{tabular}{|c|c|c|c|c|c|c|c|c|c|c|c|c|}
\hline
$a$ & $0.1$ & $0.2$ & $0.3$ & $1/3$ & $0.4$ & $0.5$ & $0.56$ & $0.6$ & $2/3$ & $0.7$ & $0.8$ & $0.9$ \\
\hline
$\mathbb{E}_{12}^\ell$ & 1.678 & 4.737 & 15.88 & 27.033 & 82.586  & 216.953 & 166.066 & 115.552 & 60.592 & 44.383 & 19.017  &  9.326   \\
\hline %sempre (\lambda_1/\lambda_0)^4
ratio & $1.185$ & $1.277$ &
$1.456$ & $1.642$ & 2.679 &
6.556  & 7.918 & 8.128 & 8.102
& 8.033 & 7.808 &
7.652 \\
\hline
$\delta$	& 1.012 & 1.424 & 2.112 & 2.55 & 3.793 & 5.207 & 4.908 & 4.487 & 3.817  & 3.53 & 2.853 & 2.386 \\
\hline
\end{tabular}
}
\caption{Parameters of linear instability for \eqref{toybeam2} with $2$ modes.}
\label{instabsuperLlin}
\end{center}
\end{table}

\begin{table}[ht!]
\begin{center}
\resizebox{\textwidth}{!}{\footnotesize
\begin{tabular}{|c|c|c|c|c|c|c|c|c|c|c|c|c|c|}
\hline
$a$ & 0.1 & 0.2 & 0.3 & 1/3 & 0.4 & 0.5 & 0.56 & 0.6 & 2/3 & 0.7 & 0.8 & 0.9 & no piers \\
\hline
$\mathbb{E}_{12}$ & 120.44 & 25.73 & 34.17 & 42.61 & 104.93 & 263.8/265.6
& 203.35 & 144.34 & 78.53 & 57.94 & 26.92 & 14.84 & 3.88 \\
\hline
$\delta$ & 4.4 & 2.67 & 2.79 & 2.99 & 4.08 & 5.49/5.5 & 5.18 & 4.76 & 4.09 & 3.79 & 3.13 & 2.7 & 1.97 \\
\hline
$\tau$ & 15.997 & 15.652 & 15.989 & 15.364 & 14.421 & 10.6/10.55 & 11.345 & 12.36 & 12.665 & 13.701 & 14.304 & 14.082 & 15.964 \\
\hline
$T_W$ & $1.534$ & $2.109$ & $1.93$ & $1.844$ & $1.552$ & $1.277$ & $1.368$
 & $1.492$ & $1.737$ & $1.874$ & $2.27$ & $2.636$ & $3.721$ \\
\hline
\end{tabular}
}
\caption{Parameters of nonlinear instability for problem \eqref{toybeam2}.}
\label{tabellasuperL2}
\end{center}
\end{table}

\begin{table}[ht!]
\begin{center}
{\footnotesize
\begin{tabular}{|c|c|c|c|c|c|c|c|c|c|c|c|}
\hline
$a$ & 0.4 & 0.41 & 0.42 & 0.43 & 0.44 & 0.45 & 0.46 & 0.47 & 0.48 & 0.49 & 0.5 \\
\hline
$\mathbb{E}_{12}$ & 104.932 & 119.12 & 123.761 & 134.277 & 135.221 & 148.216 & 166.302 & 190.427 & 212 & 250.928 & 265.677 \\
\hline
$\delta$ & 4.08 & 3.59 & 3.6 & 3.67 & 3.64 & 3.73 & 3.86 & 4.03 & 4.17 & 4.42 & 5.5 \\
\hline
$\tau$ & 14.421 & 15.335 & 15.134 & 14.764 & 15.939 & 15.604 & 15.211 & 14.787  & 15.665 & 15.189 & 10.554 \\
\hline
$T_W$ & $1.552$ & $1.354$ & $1.334$ & $1.306$ & $1.294$ & $1.266$ & $1.235$ & $1.202$ & $1.177$ & $1.141$ & $1.275$\\
\hline
\end{tabular}
}
\caption{Parameters of nonlinear instability for \eqref{toybeam2}, with $a$ ranging from $0.4$ to $0.5$.}
\label{curioso}
\end{center}
\end{table}

\begin{table}[ht!]
\begin{center}
{\footnotesize
\begin{tabular}{|c|c|c|c|c|c|c|c|c|c|c|c|c|c|}
\hline
$\!\eta\downarrow$ $a\to$\! & 0.2 & 0.3 & 1/3 & 0.4 & 0.5 & 0.56 & 0.6 & 2/3 & 0.7 & 0.8 & 0.9 \\
\hline
1/15 &  $71.23$ & $43.94$ & $56.68$ & $139.38$ & $321.87$ & $251.59$ & $188.89$ & $102.74$ & $77.37$ &  $38.54$ & $21.89$  \\
\hline
0.08 &  $42.32$ & $35.98$ & $48.79$ & $117.43$ & $289.85$ & $226.49$ & $159.83$ & $90.02$ & $67.77$ &  $30.68$ & $17.51$  \\
\hline
0.125 &  $21.68$ & $29.81$ & $38.19$ & $98.27$ & $246.45$ & $190.36$ & $134.3$ & $71.53$ & $53.46$ &  $24.41$ & $13.24$  \\
\hline
\end{tabular}
}
\caption{The energy threshold $\mathbb{E}_*$ for the two-modes problem \eqref{toybeam2}, for different choices of $\eta$.}
\label{instabsupertutti}
\end{center}
\end{table}

\begin{table}[ht!]
\begin{center}
{\footnotesize
\begin{tabular}{|c|c|c|c|c|c|c|c|c|c|c|c|c|}
\hline
$\!\eta\downarrow$ $a\to$\! & 0.2 & 0.3 & 1/3 & 0.4 & 0.5 & 0.56 & 0.6 & 2/3 & 0.7 & 0.8 & 0.9 \\
\hline
1/15 & 3.69 & 3.04 & 3.29 & 4.44   & 5.79 & 5.48 & 5.11 & 4.39 & 4.09 & 3.44 & 2.99  \\
\hline
0.08 & 3.14 & 2.84 & 3.13 & 4.22 & 5.63& 5.33 & 4.89 & 4.24 & 3.95 & 3.24 & 2.82 \\
\hline
0.125 & 2.52 & 2.66 & 2.88 & 4 & 5.39 & 5.09  & 4.67 & 3.99  & 3.71 & 3.05 & 2.62  \\
\hline
\end{tabular}
}
\caption{The amplitude threshold of instability $\delta$  for \eqref{toybeam2}, for different choices of $\eta$.}
\label{instabsuperL8}
\end{center}
\end{table}

Once the superquadratic $L^2$-nonlinearity $\Vert u \Vert_{L^2} u$ has been identified to be the most suitable for our analysis, we have made sure that changing the value of $\eta$ appearing in Definition \ref{prevalente} does not affect the qualitative picture of our results. We chose the three values of $\eta$ equal to $1/15, 0.08, 0.125$, seeking the corresponding nonlinear instability thresholds; we report the obtained values of $\mathbb{E}_{12}$ in Table \ref{instabsupertutti}. We chose not to display the values obtained for $a=0.1$, since they are unrealistically large; this fact has already been given an explanation which relies on the particular position in the picture of the resonance tongues, see Figure \ref{parabole}. In all the other cases, the qualitative behavior of the energy thresholds is the same, as well as the couple ($j$, $k$) of Definition \ref{unstable}; the time in correspondence of which instability is found ranges from 9.74 (for $a=0.56$) to $15.995$ (for $a=0.9$), always largely more than twice the Wagner time (ranging from $1.249$ for $a=0.5$ to $2.532$ for $a=0.9$).

$\bullet$ In Table \ref{instabstretch}, we show the numerical results obtained for system \eqref{sistemaS}, where the data do not seem to follow any clear rule and are quite far from being realistic, as we have commented in Section \ref{sezioneconfronti}. Notice the irregular trend of the critical amplitude and the fact that, the higher the prevailing mode delivering energy is, the less is the time needed for instability to manifest.

\begin{table}[ht!]
\begin{center}
\resizebox{\textwidth}{!}{\footnotesize
\begin{tabular}{|c|c|c|c|c|c|c|c|c|c|c|c|c|c|}
\hline
$a$ & 0.1 & 0.2 & 0.3 & 1/3 & 0.4 & 0.5 & 0.56 & 0.6 & 2/3 & 0.7 & 0.8 & 0.9 & no piers \\
\hline
$\mathbb{E}_{12}$ & 20.216 & 10.966 & 7.501 & 11.379 & 17.394 & 35.227 & 49.654 & 77.429 & 143.492 & 168.135 & 832.751 & 659.026 & 198.391 \\
\hline
$e_j$ &  $e_2$ & $e_0$ & $e_0$  &  $e_0$  & $e_1$  & $e_1$  & $e_2$ & $e_2$ &  $e_4$ & $e_4$ & $e_8$  & $e_4$ & $e_2$ \\
\hline
$e_k$ & $e_3$ & $e_1$ & $e_1$ & $e_1$ & $e_2$ & $e_2$ & $e_3$ & $e_3$ & $e_5$ & $e_5$ & $e_9$ & $e_5$ & $e_3$ \\
\hline
$\delta$ & 0.97
& 1.69 & 1.3 & 1.47 & 1.33 & 1.46 & 1.27 & 1.41 & 1.21 & 1.17 & 1.11 & 2.31 & 3.4 \\
\hline
$\tau$ & 12.035 & 15.191 & 15.983 & 12.739 & 13.526 & 7.214 & 6.909 & 4.703 & 3.903 & 3.448 & 1.655 & 1.862 & 3.922 \\
\hline
$T_W$ & $0.899$ & 2.035 & 1.917 & 1.739 & 1.296 & 0.982 & 0.731 & 0.641 & 0.413 & 0.367 & 0.157 &0.349 & 0.913 \\
\hline
\end{tabular}
}
\caption{Parameters of nonlinear instability for the stretching problem \eqref{sistemaS}.}
\label{instabstretch}
\end{center}
\end{table}

$\bullet$ Finally, we dedicate some more words to \eqref{sistemaN}. First, in Tables \ref{tabellaprecisacubo2} and \ref{tabellaprecisacubo1} we display the energy thresholds of instability found by analyzing the $2$-modes and the $12$-modes systems, respectively. We find curious that the transfer of energy is one-way (from higher to lower modes) except for $a=0.5$ in the $2$-modes system, where the first mode delivers energy to the second one.

\begin{table}[ht!]
\begin{center}
\resizebox{\textwidth}{!}{\footnotesize
\begin{tabular}{|c|c|c|c|c|c|c|c|c|c|c|c|c|c|}
\hline
$a$ & 0.1 & 0.2 & 0.3 & 1/3 & 0.4 & 0.5 & 0.56 & 0.6 & 2/3 & 0.7 & 0.8 & 0.9 & no piers \\
\hline
$\mathbb{E}_{*}$ & 22.435 & 17.704 & 37.951 & 84.721 & 132.215 & 510.073 & 448.787 & 332.383 & 196.2 & 151.349 & 76.977 & 43.204 & 8.167 \\
\hline
$\delta$ & 3.1 & 2.5 & 2.96 & 3.91 & 3.77 & 7.8 & 6.55 & 6.05 & 5.38  & 5.09 & 4.44 & 3.95 & 2.87 \\
\hline
$e_j$ & $e_1$ & $e_1$ & $e_1$ & $e_1$ & $e_2$ & $e_0$ & $e_1$ & $e_1$ & $e_1$ & $e_1$ & $e_1$ & $e_1$ & $e_1$  \\
\hline
$e_k$ & $e_0$ & $e_0$ &$e_0$ &$e_0$ &  $e_1$ &$e_1$ & $e_0$ &$e_0$ &$e_0$ &$e_0$ &$e_0$ &$e_0$ & $e_0$ \\
\hline
$\tau$ & 14.52 & 15.97 & 15.17 & 15.05 & 14.75 & 14.76 & 15.9 & 15.53 & 15.59 & 15.91  & 15.3 & 15.74 & 13.83 \\
\hline
$T_W$ & 2.721 & 2.534 & 2.048 & 1.783 & 1.418 & 1.307 & 1.274 & 1.361 & 1.569 & 1.689  & 2.062 & 2.446 & 4.024 \\
\hline
\end{tabular}
}
\caption{Parameters of nonlinear instability for the local problem \eqref{beamnonlineare} with $2$ modes.}
\label{tabellaprecisacubo2}
\end{center}
\end{table}
\begin{table}[ht!]
\begin{center}
\resizebox{\textwidth}{!}{\footnotesize
\begin{tabular}{|c|c|c|c|c|c|c|c|c|c|c|c|c|c|}
\hline
$a$ & 0.1 & 0.2 & 0.3 & 1/3 & 0.4 & 0.5 & 0.56 & 0.6 & 2/3 & 0.7 & 0.8 & 0.9 & no piers \\
\hline
$\mathbb{E}_{12}$ & $23.196$ & 17.704 & 52.165 & 319.27 & 126.826 & 414.125 & 878.671 & 611.694 & 403.213 & 299.135 & 230.371 & 127.594 & 73.429 \\
\hline
$\delta$ & 3.14 & 2.5 & 3.38 & 6.38 & 3.7 & 5.16 & 8.33 & 7.5 & 6.9 & 6.44 & 6.41 & 5.67 & 5.58 \\
\hline
$e_j$ & $e_1$ & $e_1$ & $e_1$ & $e_1$ & $e_2$ & $e_2$ & $e_1$ & $e_1$ & $e_1$ & $e_1$ & $e_1$ & $e_1$ & $e_1$  \\
\hline
$e_k$ & $e_0$ & $e_0$ &$e_0$ &$e_0$ &  $e_1$ &$e_1$ & $e_0$ &$e_0$ &$e_0$ &$e_0$ &$e_0$ &$e_0$ & $e_0$ \\
\hline
$\tau$ & 14.478 & 15.998 & 15.66 & 15.992 & 14.8 & 15.56 & 15.754 & 15.499 & 15.826 & 15.657 & 15.83 & 14.82 & 15.996 \\
\hline
$T_W$ & 2.707 & 2.534 & 1.976 & 1.437 & 1.422 & 1.081 & 1.134 & 1.22 & 1.373 & 1.489 & 1.667 & 1.98 & 2.488 \\
\hline
\end{tabular}
}
\caption{Parameters of nonlinear instability for the local problem \eqref{beamnonlineare} with $12$ modes.}
\label{tabellaprecisacubo1}
\end{center}
\end{table}

In Table \ref{tabellasoglie}, we determine the instability thresholds for each prevailing mode up to the tenth (reporting a less precise approximation of the values), increasing $\delta$ with step $0.1$. We then relate such thresholds with the position of the zeros of the corresponding eigenfunctions, observing that their distance from the piers seems indeed to play a crucial role. A careful study of the trend for odd and even modes suggests that, for $a$ fixed, the functions $j \mapsto E_{2j}(a)$ and $j \mapsto E_{2j+1}(a)$ are increasing, up to few exceptions $E_{n_i}$. While looking at the shape of the corresponding eigenfunctions $e_{n_i}$, one sees quite clearly that their zeros not lying in the piers are considerably closer to the piers than the ones of the ``preceding'' eigenfunction $e_{n_i-2}$.
We quote these two factors in Tables \ref{comparisonI} and \ref{cpcp2}, reporting on the left the energy thresholds of Table \ref{tabellasoglie} for even and odd modes separately, and on the right the minimal distances $D_n$ of the zeros of $e_n$ (not lying in a pier) from the piers. In correspondence of a double zero, we set $D_n=0$ and we use the letter $D$. Losses of monotonicity of the functions $j \mapsto E_{2j}(a)$, $j \mapsto E_{2j+1}(a)$ are highlighted by bold numbers. In Tables \ref{comparisonI} and \ref{cpcp2} we have highlighted with the symbol * the situations where the zero realizing the minimal distance from the piers lies in the central span. We exclude $D_0$ from this analysis, since the first eigenfunction does not present zeros out of the piers.
We conclude that
\begin{center}
{\bf the functions $j \mapsto E_{2j}$ and $j \mapsto E_{2j+1}$ have a monotone increasing trend; when this fails, the zeros of the corresponding eigenfunction have got considerably closer to the piers.}
\end{center}
Somehow the prevailing mode undergoes more the impulsive effect of the piers if one of its zeros is closer to a pier.
This appears reasonable, since, in this case, a smaller perturbation may be sufficient to ``move'' such zero to another span. Whether the distance from the pier has a different weight if the zero belongs to the lateral or to the central span should be further investigated.

%\begin{adjustbox}{max width=0.7/\textwidth}

\vfill
\eject

\begin{table}[ht!]
\begin{center}
{\footnotesize
\begin{tabular}{|c|c|c|c|c|c|c|c|c|c|c|}
\hline
$a$ & $E_0$ & $E_1$ & $E_2$ & $E_3$ & $E_4$ & $E_5$ & $E_6$ & $E_7$ & $E_8$ & $E_9$ \\
\hline
$0.1$ & $619$
 & $24$
 & $10609$
& $328$
& $96845$
 & $3913$
&  $641741$
&  $33881$
& $107734$
& $676852$ \\
\hline
$0.2$ & $1352$
 & $18$
 & $10267$
& 1938
& 28998
 & 208477
& 13933
& $183748$
& $186779$
& $250931$\\
\hline
$0.3$ & $3376$
 & $53$
 & $2711$
& $33993$
& $2206$
 & $32431$
& $519474$
& $29515$
& $585070$
& $1861480$\\
\hline
$1/3$ & $5233$
 & $322$
 & $1856$
& $38126$
& $9481$
& $28016$
& $178051$
& $118350$
& $321733$
& $1493750$\\
\hline
$0.4$ & $2922$
 & $9481$
& $127$
& $14815$
& $54148$ & $7009$
& $116255$ & $478827$ & $137040$ & $150139$ \\
\hline
$0.5$ & $4385$  & $1941$& $422$ & $1012$ & $36028$ & $88890$ & $16203$ & $61350$ & $182189$ & $1447500$ \\
\hline
$0.56$ & $4770$
& $900$
& $2413$
& $3560$
 & $12139$
& $65170$
& $154411$ & $40426$
& $73710$
& $842544$
\\
\hline
$0.6$ & $5065$
& $611$
& $10089$
& $1146$
& $2070$
 & $73482$
 & $341889$ & $270462$ & $51749$
& $148354$ \\
\hline
$2/3$ & $2523$
 & $403$
& $4502$
 & $40568$
& $3587$
& $5866$
& $27843$
& $453118$ & $418454$ & $687344$ \\
\hline
$0.7$ & $2061$
& $307$
& $3224$
& $25384$
& $13631$
 & $6123$
 & $16503$ & $445445$ & $1089400$
& $1937780$ \\
\hline
$0.8$ & $3391$
& $240$
& $2777$
& $9736$
& $45075$
 & $169085$
 & $488353$ & $183748$ & $89404$
& $176802$ \\
\hline
$0.9$ & $2375$
& $129$
& $1850$
& $6716$
& $33116$
 & $82774$
 & $234085$ & $591588$ & $1421620$
& $2622600$ \\
\hline
\textnormal{no piers} & $1160$
& $74$
& $194$
& $1697$
& $9741$
& $35347$
& $107364$
& $300251$ & $688532$ & $1261680$\\
\hline
\end{tabular}
}
\caption{Energy thresholds $E_j$ on varying of $a$ for \eqref{sistemaN}.}
\label{tabellasoglie}
\end{center}
\end{table}

%\begin{adjustbox}{max width=0.5/\textwidth}

\begin{table}[ht!]
\begin{center}
{\footnotesize
\begin{tabular}{|c|}
\hline
$a$ \\
\hline
0.1 \\
\hline
0.2 \\
\hline
0.3 \\
\hline
1/3 \\
\hline
0.4 \\
\hline
0.5 \\
\hline
0.56 \\
\hline
0.6 \\
\hline
2/3 \\
\hline
0.7 \\
\hline
0.8 \\
\hline
0.9 \\
\hline
\end{tabular}
}
{\footnotesize
\begin{tabular}{|c|c|c|c|}
\hline $E_2$ & $E_4$ & $E_6$ & $E_8$ \\
\hline
 $10609$
& $96845$
&  $641741$
& \textbf{107734}
\\
\hline
 10267
& 28998
& \textbf{13933}
& $186779$
\\
\hline
2711
& \textbf{2206}
& $519474$
& $585070$
\\
\hline
 1856
& $9481$
& $178051$
& $321733$
\\
\hline
127
& $54148$
& $116255$ & $137040$ \\
\hline
422 & $36028$ & \textbf{16203} & $182189$  \\
\hline
2413
 & $12139$
& $154411$
& \textbf{73710}
\\
\hline
10089
& \textbf{2070}
 & $341889$ & \textbf{51749}
\\
\hline
$4502$
& \textbf{3587}
& 27843 & $418454$ \\
\hline
$3224$
& $13631$
 & $16503$ & $1089400$\\
\hline
2777
& $45075$
 & $488353$ & \textbf{89404}
\\
\hline
1850
& $33116$
 & $234085$
& $1421620$ \\
\hline
\end{tabular}
}
{\footnotesize
\begin{tabular}{|c|c|c|c|}
\hline
$D_2$ & $D_4$ & $D_6$ & $D_8$ \\
\hline
1.498 & 0.997 & 0.729 & \textbf{0.294}  \\
\hline
1.257 & 0.48 & \textbf{0.075}* & 0.278*  \\
\hline
0.621 & \textbf{0.307}*  & 0.576* & 0.422  \\
\hline
0.325 &
0.491* &  $0.698$*
& 0.129
\\
\hline
0.159* & 0.794* & 0.289  & 0.319* \\
\hline
0.803* & 0.558 & \textbf{0.426}* & $0.345$ \\
\hline
 1.104*
& 0.158
& 0.683 & \textbf{0.07}* \\
\hline
 $1.257$* & \textbf{0.117}* & 0.515 & \textbf{0.379}* \\
\hline
 1.441*
& \textbf{0.651}*  & 0.136 & 0.455\\
\hline
1.522* & 0.843* & 0.091* & 0.297\\
\hline
1.76*  & 1.077* & 0.752* & \textbf{0.476}* \\
\hline
 2.002* & 1.247*  &0.897*  & 0.697* \\
\hline
\end{tabular}
}
\caption{Energy thresholds $E_n$ for \eqref{sistemaN} and minimal distances $D_n$ from the piers for even modes.}
\label{comparisonI}
\end{center}
\end{table}

\vfill
\eject

\begin{table}[ht!]
\begin{center}
{\footnotesize
\begin{tabular}{|c|}
\hline
$a$ \\
\hline
0.1 \\
\hline
0.2 \\
\hline
0.3 \\
\hline
1/3 \\
\hline
0.4 \\
\hline
0.5 \\
\hline
0.56 \\
\hline
0.6 \\
\hline
2/3 \\
\hline
0.7 \\
\hline
0.8 \\
\hline
0.9 \\
\hline
\end{tabular}
}
{\footnotesize
\begin{tabular}{|c|c|c|c|c|}
\hline
$E_1$ & $E_3$ & $E_5$ & $E_7$ & $E_9$ \\
\hline
$24$
& $328$
 & $3913$
&  $33881$
& $676852$ \\
\hline
$18$
& 1938
& 208477
& \textbf{183748}
& $250931$\\
\hline
$53$
& $33993$
 & \textbf{32431}
& \textbf{29515}
& $1861480$\\
\hline
$322$
& 38126
& \textbf{28016}
& $118350$
& 1493750\\
\hline
$9481$
& $14815$
& \textbf{7009}
& 478827 & \textbf{150139} \\
\hline
1941 & \textbf{1012} & 88890 & \textbf{61350} & $1447500$ \\
\hline
$900$
& $3560$
& $65170$
& \textbf{40426}
& $842544$
\\
\hline
$611$
& $1146$
& $73482$
& $270462$
& \textbf{148354} \\
\hline
 $403$
 & 40568
& \textbf{5866}
& $453118$ & $687344$ \\
\hline
$307$
& $25384$
 & \textbf{6123}
 & $445445$
& $1937780$ \\
\hline
$240$
& $9736$
 & $169085$
 & 183748
& \textbf{176802} \\
\hline
$129$
& $6716$
 & $82774$
 & $591588$
& $2622600$ \\
\hline
\end{tabular}
}
{\footnotesize
\begin{tabular}{|c|c|c|c|c|}
\hline
$D_1$ & $D_3$ & $D_5$ & $D_7$ & $D_9$ \\
\hline
$0.314$* & 0.314* & 0.314* & 0.314* &  0.314* \\
\hline
$0.628$* & 0.628* & 0.628*  & \textbf{0.628}* & $0.266$ \\
\hline
$0.942$* & 0.942* & \textbf{0.483} & \textbf{0.142}* & 0.395*  \\
\hline
$1.047$* & $1.047$*
& \textbf{0.184}
& 0.336*
& $0.524$* \\
\hline
$1.257$* & 0.718 & \textbf{0.284}*  & 0.628*  &\textbf{0.064}\\
\hline
$1.571$* & \textbf{D} & $0.785$* &
\textbf{D} & $0.524$* \\
\hline
$1.759$*
& 0.44* &  0.519 & \textbf{0.43}* & 0.211 \\
\hline
$1.885$* & 0.718*  & 0.284 & 0.628* & \textbf{0.064}* \\
\hline
$2.094$*
& 1.047*
& \textbf{0.184}*
& 0.336 & 0.524* \\
\hline
$2.199$* & 1.138* & \textbf{0.483}* & 0.142 & 0.395 \\
\hline
$2.513$* & 1.341*  & 0.893* & 0.628* & \textbf{0.266}* \\
\hline
$2.827$* & 1.54* & 1.045* & 0.785* & 0.625* \\
\hline
\end{tabular}
}
\caption{Energy thresholds $E_n$ for \eqref{sistemaN} and minimal distances $D_n$ from the piers for odd modes.}
\label{cpcp2}
\end{center}
\end{table}

\section{Nonlinear evolution equations for degenerate plates}\label{suspbridge}

\subsection{Weak formulation: well-posedness and torsional stability}\label{weakformTI}

We turn here our attention to possible suspension bridge models. A beam only possesses one degree of freedom (the vertical displacement) but,
for the analysis of a bridge, it is crucial to consider a model allowing to view a torsion of the deck. In this section we propose a degenerate plate-type one, focusing on its
well-posedness and on some of its qualitative properties.\par
In Section \ref{physmod}, we introduced system \eq{system0}, which in adimensional form may be written as
\begin{equation}\label{nonloc}
\left\{
\begin{array}
[c]{l}%
\displaystyle u_{tt}+u_{xxxx}+\Big( \int_I(u^2+\theta^2)\Big)u+2\Big(\int_I u\theta\Big)\theta+f(u+\theta)+f(u-\theta)=0\\
\displaystyle \theta_{tt}-\theta_{xx}+2\Big(\int_I u\theta \Big) u+\Big(\int_I(u^2+\theta^2)\Big)\theta+f(u+\theta)-f(u-\theta)=0.
\end{array}
\right.
\end{equation}
We complement \eq{nonloc} with the boundary-internal-initial conditions \eq{boundary}-\eq{jc1}-\eq{initial2}, that we rewrite here for the
reader's convenience:
\begin{equation}\label{jc2}
u(\pm\pi,t)=u(\pm a\pi,t)=\theta(\pm\pi,t)=\theta(\pm a\pi,t)=0\qquad t\geq 0,
\end{equation}
\begin{equation}\label{ic2}
u(x,0)=u_0(x),\quad u_t(x,0)=u_1(x),\quad\theta(x,0)=\theta_0(x),\quad\theta_t(x,0)=\theta_1(x) \qquad x\in I.
\end{equation}
Similarly as in Definition \ref{soluzionedebole}, we define weak solutions as follows.
\begin{definition}\label{definizionedebole}
We say that the functions
$$
\begin{array}{c}
u\in C^0(\R_+;V(I))\cap  C^1(\R_+;L^2(I))\cap  C^2(\R_+;V'(I))\\
\theta\in C^0(\R_+;W(I))\cap  C^1(\R_+;L^2(I))\cap  C^2(\R_+;W'(I))
\end{array}
$$
are weak solutions of \eqref{nonloc}-\eqref{jc2}-\eqref{ic2} if they satisfy the initial conditions \eqref{ic2} with $u_0 \in V(I)$, $\theta_0 \in W(I)$, $u_1 \in L^2(I)$, $\theta_1 \in L^2(I)$ and if
$$
\langle u_{tt}, \varphi\rangle_V+\int_Iu_{xx}\varphi''+\int_I(u^2+\theta^2)\cdot\int_Iu\varphi+2\int_I u\theta\cdot\int_I\theta\varphi
+\int_I\big(f(u-\theta)+f(u+\theta)\big)\varphi=0,
$$
$$
\langle \theta_{tt},\psi\rangle_W+\int_I\theta_x\psi'+2\int_I u\theta\cdot\int_Iu\psi+\int_I(u^2+\theta^2)\cdot\int_I\theta\psi
+\int_I\big(f(u+\theta)-f(u-\theta)\big)\psi=0,
$$
for all $(\varphi,\psi)\in V(I)\times W(I)$ and all $t>0$ (where the spaces $V(I)$ and $W(I)$ are defined in \eqref{V} and \eqref{W}, respectively).
\end{definition}
The corresponding well-posedness result reads as follows.
\begin{proposition}\label{Galerkin}
Let $u_0\in V(I)$, $\theta_0\in W(I)$, $u_1,\theta_1\in L^2(I)$. Assume that $f$ satisfies \eqref{assumptions} and $|f(s)|\leq C(1+|s|^p)$ for every
$s\in \R\setminus\{0\}$ and for some $p\geq1$. Then there exists a unique weak solution $(u,\theta)$ of \eqref{nonloc}-\eqref{jc2}-\eqref{ic2}.
Moreover, $u\in C^2(\overline{I}\times\R_+)$ and $u_{xx}(-\pi,t)=u_{xx}(\pi,t)=0$ for all $t>0$.
\end{proposition}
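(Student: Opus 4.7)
The plan is to carry out a standard Faedo--Galerkin approximation, patterned on the schemes of \cite{holubova,bergaz} and on the proof of Proposition \ref{galerkin1}. Exploiting the $L^2$-orthonormal eigenbasis $\{e_n\}\subset V(I)$ of \eqref{autov0} provided by Theorem \ref{Michelle} and the $L^2$-orthonormal eigenbasis $\{\varepsilon_n\}\subset W(I)$ of \eqref{autov2} provided by Theorem \ref{autofz2}, I look for approximate solutions
$$u_n(x,t)=\sum_{k=0}^n \varphi^n_k(t)\,e_k(x),\qquad \theta_n(x,t)=\sum_{k=0}^n \psi^n_k(t)\,\varepsilon_k(x),$$
with initial data being the orthogonal projections of $(u_0,\theta_0,u_1,\theta_1)$ on the corresponding finite-dimensional subspaces. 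Testing \eqref{nonloc} against $e_j$ and $\varepsilon_j$ for $0\le j\le n$ yields a coupled system of $2(n+1)$ second order ODE's for $(\varphi^n_k,\psi^n_k)$; since $f\in{\rm Lip}_{\rm loc}(\R)$ and the nonlocal factors are polynomial in the unknowns, Cauchy--Lipschitz gives local existence and uniqueness of the ODE system.

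Global a priori bounds come from multiplying the $j$-th $u$-equation by $\dot\varphi^n_j$ and the $j$-th $\theta$-equation by $\dot\psi^n_j$, summing over $j$, and recognising (via the same variational computation that produced \eqref{nonloc} from the energy $E(u,\theta)$ introduced in Section \ref{physmod}) the conservation law
$$\mathcal{E}_n(t)=\tfrac12\|u_{n,t}\|_{L^2}^2+\tfrac12\|\theta_{n,t}\|_{L^2}^2+\tfrac12\|u_{n,xx}\|_{L^2}^2+\tfrac12\|\theta_{n,x}\|_{L^2}^2+\tfrac14\Big(\int_I(u_n+\theta_n)^2\Big)^{\!2}+\tfrac14\Big(\int_I(u_n-\theta_n)^2\Big)^{\!2}+\int_I F(u_n\!+\!\theta_n)+\int_I F(u_n\!-\!\theta_n).$$
Since $f(s)s\ge0$ gives $F\ge0$, I obtain $\mathcal{E}_n(t)=\mathcal{E}_n(0)$, which is uniformly bounded in $n$ by the data. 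This yields uniform bounds for $u_n$ in $L^\infty(\R_+;V(I))\cap W^{1,\infty}(\R_+;L^2(I))$ and for $\theta_n$ in $L^\infty(\R_+;W(I))\cap W^{1,\infty}(\R_+;L^2(I))$, together with global existence of the ODE system. The growth bound $|f(s)|\le C(1+|s|^p)$, combined with the one-dimensional embeddings $V(I),W(I)\hookrightarrow L^\infty(I)$, then controls every nonlinear and nonlocal term.

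Passage to the limit follows a classical Aubin--Lions argument: up to subsequences, $(u_n,\theta_n)$ converges weakly-$*$ in the energy spaces and strongly in $L^2_{\rm loc}(\R_+;L^2(I))$ (with pointwise a.e.\ convergence), so the continuity of $f$ and the polynomial structure of the nonlocal terms allow me to pass to the limit in the variational identities, and the limit $(u,\theta)$ solves \eqref{nonloc} in the sense of Definition \ref{definizionedebole}; reading $u_{tt}$ and $\theta_{tt}$ off the equations themselves delivers the claimed time regularity into $V'(I)$ and $W'(I)$. For uniqueness, I would take two solutions $(u,\theta)$, $(\tilde u,\tilde\theta)$ with the same data, test the differences with $(u_t-\tilde u_t,\theta_t-\tilde\theta_t)$, and apply Gronwall: the locally Lipschitz character of $f$ and the a priori $L^\infty$-control of the unknowns dominate all the cross terms. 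The main technical obstacle is precisely here, in the mixed cubic nonlocal term $2\bigl(\int u\theta\bigr)\theta$, because $u$ and $\theta$ live in different functional spaces and the two equations are of different order; handling it requires carefully using the $L^\infty$-bound for both unknowns coming from the energy estimate, rather than any cross-regularity across the piers.

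Finally, the pointwise statements on $u$ are obtained by freezing $t$ and invoking the stationary theory: viewing
$$g(x,t):=-u_{tt}(x,t)-\Big(\int_I(u^2+\theta^2)\Big)u(x,t)-2\Big(\int_I u\theta\Big)\theta(x,t)-f(u(x,t)+\theta(x,t))-f(u(x,t)-\theta(x,t))$$
as a continuous forcing term (thanks to the regularity already secured and the embeddings), the argument of Theorem \ref{regular} reproduced in Section \ref{pfregular} (cf.\ also \cite[Lemma 2.2]{HolNec10}) furnishes $u(\cdot,t)\in C^4(\overline I_-)\cap C^4(\overline I_0)\cap C^4(\overline I_+)\cap C^2(\overline I)$ and in particular $u_{xx}(\pm\pi,t)=0$; joint continuity in $(x,t)$ follows by the same scheme applied in the $t$-variable.
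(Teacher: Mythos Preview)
Your proposal is correct and follows exactly the route the paper indicates: the paper omits the proof, stating that it is obtained by combining the classical Galerkin scheme from \cite{dickey0}, the plate argument in \cite[Theorem 3]{ederson}, and the string-beam results of \cite[Theorems 8 and 11]{holubova} (see also \cite{bergaz}), with the $C^2$-regularity of $u$ again drawn from \cite[Lemma 2.2]{HolNec10}. Your Faedo--Galerkin outline, energy conservation, Aubin--Lions compactness, Gronwall uniqueness, and the freezing-of-$t$ regularity argument are precisely this scheme.
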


We omit the proof of Proposition \ref{Galerkin} because it can be obtained by combining the classical proof for simple beams in \cite{dickey0},
the proof for a full plate in \cite[Theorem 3]{ederson}, and the proof of \cite[Theorems 8 and 11]{holubova}, see also \cite{bergaz}. For the
$C^2$-regularity of $u$ we invoke again \cite[Lemma 2.2]{HolNec10}.\par
Clearly, there exist solutions which have both nontrivial longitudinal and torsional components. However, if $\theta_0(x)=\theta_1(x)=0$ (resp., $u_0(x)=u_1(x)=0$) in \eq{ic2}, then
the solution of \eq{nonloc} satisfies $\theta(x,t)\equiv0$ (resp., $u(x, t) \equiv 0$). Hence, the above zero-initial conditions for $\theta$ or $u$ give rise
to purely longitudinal (resp., purely torsional) solutions. Since the phase space is now $V(I)\times W(I)$, according to Definition \ref{invarianti} this means that
\neweq{invariantigenerici}
\mbox{{\bf the spaces $V(I)\times\{0\}$ and $\{0\}\times W(I)$ are invariant for \eqref{nonloc}}}.
\endeq

Our purpose is to consider solutions which are both longitudinal and torsional but that are ``initially close to a purely longitudinal configuration''.
Therefore, we consider solutions $(u,\theta)$ of \eq{nonloc} with a {\em longitudinal prevailing mode}, see Definition \ref{prevalente}, suitably adapted to take into account also the presence of the torsional components.
In turn, this enables us to study the {\em stability of purely longitudinal modes}, according to the following definition.

\begin{definition}\label{torsunstable}
Let $T_W > 0$. We say that a weak solution $(u,\theta)$ of \eqref{nonloc} having $\eta$-prevailing longitudinal mode $j$,
is \emph{torsionally unstable} before time $T>2T_W$ if there exist a torsional mode $k$ and a time instant $\tau$ with $2T_W < \tau < T$  such that
\neweq{grandetors}
\frac{\Vert \psi_k \Vert_{L^\infty(0, \tau)}}{\Vert \psi_k \Vert_{L^\infty(0, \tau/2)}} > \frac{1}{\eta},
\endeq
where $\eta$ is the number appearing in Definition \ref{prevalente}. We say that $(u,\theta)$ is \emph{torsionally stable} until time $T$ if, for any torsional mode $k$, \eqref{grandetors}
is not fulfilled for any $\tau \in (2T_W, T)$.
\end{definition}

Compared with \eq{grande}, notice that here the indexes $j$ and $k$ do not refer to the same kind of modes, since the former is always longitudinal, while the latter is torsional. Furthermore, there is one condition missing in \eq{grandetors}; it is not required here that the torsional mode becomes significantly large
when compared to the prevailing longitudinal mode, since these two amplitudes measure two different kinds of oscillations. What really measures the torsional instability is the sudden growth of the torsional component, when
compared to itself. Indeed, small torsional oscillations were never observed in suspension bridges, which means that they suddenly switch from tiny invisible
oscillations to large and dangerous oscillations. This is the phenomenon under study, which we call torsional instability.
In order to study the torsional stability we take advantage of some of the material developed in Sections \ref{functional} and \ref{evolutionbeam}.

\subsection{The case of rigid hangers}\label{rigidhangers}

\subsubsection{Linear torsional instability for two-modes systems}

As already mentioned in Section \ref{physmod}, the main contribution to the instability of suspension bridges comes from the sustaining cables.
A possible simplification consists then in considering inextensible hangers, see \cite{luco}. This means that the hangers rigidly connect the deck with the cables
and that the only restoring force acting on the degenerate plate is due to the cables. In this case, \eq{nonloc} becomes
\begin{equation}\label{nohang}
\left\{
\begin{array}
[c]{l}%
\displaystyle u_{tt}+u_{xxxx}+\Big( \int_I(u^2+\theta^2)\Big)u+2\Big(\int_I u\theta\Big)\theta=0\\
\displaystyle \theta_{tt}-\theta_{xx}+2\Big(\int_I u\theta \Big) u+\Big(\int_I(u^2+\theta^2)\Big)\theta=0,
\end{array}
\right.
\end{equation}
which has several common points with \eq{toybeam2} and, therefore, enables us to exploit part of the results obtained in Section \ref{nonmischia}.
From Proposition \ref{allinvariant} we learn that \eq{nohang} has many invariant subspaces and a full analysis of all the cases would be quite
lengthy. Both \eq{invariantigenerici} and Table \ref{tabellasuperL2} in Section \ref{sezesperim} (together with the discussion therein) suggest that the case of a couple of modes, one longitudinal and one torsional, is the most meaningful and is quite representative of
the whole instability phenomenon leading to an energy transfer from longitudinal to torsional modes. Therefore, we reduce again to the case of
a two modes system but, contrary to \eq{cwgeneral}, we obtain a system that {\em does not} necessarily solve \eq{nohang}. Let us explain this
fact with full precision.\par
Let $e_\lambda$ be an $L^2$-normalized eigenfunction of \eq{autovsym} related to the eigenvalue $\mu=\lambda^4$ and let $\eta_\kappa$ be
an $L^2$-normalized eigenfunction of \eq{autov2} related to the eigenvalue $\mu=\kappa^2$, see Theorem \ref{autofz2}. The coupling between
these modes is measured by the coefficient
$$
\Alk=\Alk(a):=\int_Ie_\lambda \eta_\kappa.
$$
Note that
\begin{equation}\label{Alkmin1}
\Alk^2 < 1
\end{equation}
in view of the H\"older inequality (recall that $e_\lambda \not\equiv \eta_\kappa$, since the latter is identically zero on the lateral spans).
Moreover,
\begin{equation}\label{propAlk}
\textrm{if $e_\lambda$ and $\eta_\kappa$ have opposite parities, then $\Alk=0$}.
\end{equation}
But there are also cases where $\Alk\neq0$, see Table \ref{Alka}.  We here choose to consider only the first two torsional modes. Thus, we stick to values of $a$ for which such modes are simple, as provided by Proposition \ref{ordine}; the only exception is represented by $a=0.5$, which is included since it belongs to the range \eqref{physicalrange}. For $a=0.5$, Theorem \ref{autofz2} states that the second eigenvalue has multiplicity 3; the corresponding value of $A_{\lambda_n, \kappa_1}$ in Table \ref{Alka} corresponds to the eigenfunction $\mathbf{D}_{\kappa_1}(x)=\chi_0(x) \sin(2x)$, see the second picture in Figure \ref{22}.

\begin{table}[ht!]
\begin{center}
\resizebox{\textwidth}{!}{\footnotesize
\begin{tabular}{|c|c|c|c|c|c|c|c|c|c|c|c|c|}
\hline
$\!a\downarrow$ $(n,\!m)\to\!$ & $(0,0)$ & $(2,0)$ & $(4,0)$ & $(6,0)$ & $(8,0)$ & $(10,0)$ & $(1,1)$ & $(3,1)$ & $(5,1)$ & $(7,1)$ & $(9,1)$ & $(11,1)$ \\
\hline
$0.5$ & 0.953 & 0.034 & 0.007 & 0.002 & 0.001 & 5 $\cdot 10^{-4}$ & 0.5 & 0.476 & 0 & 0.017 & 0 & 0.003 \\
\hline
$14/25$ & 0.979 & 0.005 & 0.012 & 2 $\cdot 10^{-5}$  & 0.001 & $10^{-4}$ & 0.857 & 0.121 & 0.01 & 0.004 & 0.003 & 2 $\cdot 10^{-4}$\\
\hline
$0.6$ & 0.986 & 0 & 0.011 & 8 $\cdot 10^{-4}$  & 6 $\cdot 10^{-4}$ & 4 $\cdot 10^{-4}$ & 0.936 & 0.041 & 0.016 & 0 & 0.004 & 5 $\cdot 10^{-4}$ \\
\hline
$2/3$ & 0.989 & 0.004 & 0.002 & 0.002 & $10^{-4}$ & 2 $\cdot 10^{-4}$ & 0.975 & 0 & 0.019 & 0.002 & 0 & 0.001 \\
\hline
$0.7$ & 0.989 & 0.006 & 2 $\cdot 10^{-4}$ & 0.002 & 3 $\cdot 10^{-4}$ & $10^{-5}$ & 0.981 & 0.003 & 0.008 & 0.004 & 5 $\cdot 10^{-4}$ & 2 $\cdot 10^{-4}$\\
\hline
$0.8$ & 0.986 & 0.011 & $10^{-4}$ & 9 $\cdot 10^{-5}$  & $10^{-4}$ & 3 $\cdot 10^{-4}$ & 0.981  & 0.014 & 0.001 & 0 & 0.001 & 7 $\cdot 10^{-4}$ \\
\hline
$0.9$ & 0.98  & 0.016 & 0.002  & $4 \cdot 10^{-4}$  & $10^{-4}$ & 5 $\cdot 10^{-5}$ & 0.970 & 0.023 & 0.003 & 0.001 & 3 $\cdot 10^{-4}$ & $10^{-4}$ \\
\hline
\end{tabular}
}
\caption{Some values of $A_{\lambda_n,\kappa_1}=A_{\lambda_n,\kappa_1}(a)$.}\label{Alka}
\end{center}
\end{table}

It appears that only for the couples (0,0) and (1,1) the value of $\Alk$ is relevant, especially if we take into account that the
``coupling between modes'' is measured by $2\Alk^2$, as we now explain.\par
If $\Alk=0$, then the space $\langle e_\lambda\rangle\times\langle\eta_\kappa\rangle$ is invariant and one can seek solutions of \eq{nohang} in
the form
\neweq{formd}
\Big(u(x,t),\theta(x,t)\Big)=\Big(w(t)e_\lambda(x),z(t)\eta_\kappa(x)\Big),
\endeq
to be compared with \eq{form3}. By plugging \eq{formd} into \eq{nohang} we see that the couple $(w,z)$ satisfies the system
\neweq{systemtorsional}
\left\{\begin{array}{l}
\ddot{w}(t)+\lambda^4w(t)+(1+2\Alk^2)z(t)^2w(t)+w(t)^3=0\\
\ddot{z}(t)+\kappa^2z(t)+(1+2\Alk^2)w(t)^2z(t)+z(t)^3=0,
\end{array}\right.
\endeq
where, for later use, we left $\Alk$ explicitly written even if it is zero.\par
If $\Alk\neq0$, then the space $\langle e_\lambda\rangle\times\langle\eta_\kappa\rangle$ is not invariant for \eq{nohang}.
Therefore, there are no solutions of \eq{nohang} in the form \eq{formd}. Nevertheless, since a couple of (longitudinal,\, torsional) modes
is quite representative of the dynamics in view of \eq{invariantigenerici}, since $\Alk$ is quite small in most cases (see Table \ref{Alka}), and since
Table \ref{tabellasuperL2} in Section \ref{sezesperim} (together with the discussion therein) shows that the energy thresholds
do not vary significantly while comparing two-modes systems with twelve-modes systems, we still focus our attention on the two-modes
system \eq{systemtorsional}. Clearly, the resulting solutions should be seen as nonexact (approximated) solutions of \eq{nohang}.\par
We associate with \eq{systemtorsional} the initial conditions
\neweq{initialsystors}
w(0)=\delta>0,\quad z(0)=z_0,\quad\dot{w}(0)=\dot{z}(0)=0
\endeq
and we notice that if $z_0=0$, then the solution of \eq{systemtorsional}-\eq{initialsystors} is $(w,z)=(W_\lambda,0)$ where $W_\lambda$ solves
\eq{ODED2}, see \eq{explicitsol}. Hence, $W_\lambda$ has minimal period as in \eqref{TW}, so that $W_\lambda^2$ has minimal period equal to
\neweq{Tld}
T_\lambda(\delta):=\frac{T(\delta)}{2}= 2\sqrt2\int_0^{\pi/2}\frac{d\phi}{\sqrt{2\lambda^4+\delta^2(1+\sin^2\phi)}};
\endeq
the function $\delta\mapsto T_\lambda(\delta)$ is continuous, decreasing and such that $T_\lambda(0)=\pi/\lambda^2$.\par
We then consider initial conditions \eqref{initialsystors} with $0<|z_0|<\eta^2\delta$, with $\eta$ small, which justifies \eqref{trigo} and the stability analysis of \eqref{nohang}. For this study, we slightly modify Definition \ref{defstabb}.

\begin{definition}\label{defstabtors}
The $\lambda$-longitudinal mode $W_\lambda$ is said to be {\bf linearly stable} ({\bf unstable}) with respect to the $\kappa$-torsional-mode
if $\xi\equiv0$ is a stable (unstable) solution of the linear Hill equation
\neweq{hilltors}
\ddot{\xi}(t)+\Big(\kappa^2+(1+2\Alk^2)W_\lambda(t)^2\Big)\xi(t)=0.
\endeq
\end{definition}

Several remarks are in order. With this definition, we see that the linear stability {\em does not} depend on the positive coefficient $\gamma$ in \eqref{system0} multiplying the nonlocal terms in \eq{nohang}: up to a suitable scaling \eq{hilltors} remains the same.
In fact, \eq{hilltors} {\em is not} a standard Hill equation with a periodic coefficient depending on two parameters measuring
its mean value and its amplitude of oscillation: the presence of the coefficient $\Alk$ prevents the use of classical methods for the study of the stability.
This coefficient depends in a very hidden way on the two parameters $\lambda$ and $\kappa$ and is well-defined only if $\lambda^4$ is an eigenvalue of
\eqref{autovsym} and $\kappa^2$ is an eigenvalue of \eqref{autov2}. Summarizing, there is no way, not even numerically, to determine the resonant tongues because...
they are not defined for all $\lambda$ and $\kappa$! Let us recall that, from a physical point of view, $\Alk$ depends on the coupling between the
torsional and the longitudinal modes involved. Finally, since it may happen both that $\Alk=0$ and $\Alk\neq0$, for the description of the linear stability we
need to distinguish two cases. The case $\Alk=0$ can be directly inferred from Proposition \ref{toytheo2} simply by replacing $\rho^2$ with $\kappa$.

\begin{proposition}\label{toytheo3}
Let $\lambda^4$ and $\kappa^2$ be, respectively, eigenvalues of \eqref{autovsym} and \eqref{autov2} such that $\Alk=0$.
The $\lambda$-longitudinal-mode of \eqref{nohang} of amplitude $\delta$ is linearly stable with respect to the $\kappa$-torsional-mode
if and only if one of the following facts holds:
$$
\lambda^2>\kappa\mbox{ and }\delta>0\qquad\mbox{or}\qquad\lambda^2<\kappa\mbox{ and }\delta \leq \sqrt{2(\kappa^2-\lambda^4)}.
$$
\end{proposition}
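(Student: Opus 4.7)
\textbf{Proof plan for Proposition \ref{toytheo3}.} The strategy is a direct reduction to Proposition~\ref{toytheo2}, as hinted by the paper, but let me make explicit why the reduction is legitimate. When $\Alk=0$, the coupling term disappears and the Hill equation \eqref{hilltors} collapses to
\begin{equation*}
\ddot{\xi}(t)+\bigl(\kappa^{2}+W_\lambda(t)^{2}\bigr)\xi(t)=0,
\end{equation*}
while the system \eqref{systemtorsional} with $z\equiv 0$ forces $W_\lambda$ to solve
$\ddot{W}_\lambda+\lambda^{4}W_\lambda+W_\lambda^{3}=0$ with $W_\lambda(0)=\delta$, $\dot{W}_\lambda(0)=0$, which is precisely the Duffing equation \eqref{ODED2}. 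Hence $W_\lambda$ here is the same periodic function that governs \eqref{hill32}.

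The first step is therefore to observe that, setting the auxiliary quantity $\rho>0$ by $\rho^{2}:=\kappa$, the Hill equation above takes the form
\begin{equation*}
\ddot{\xi}(t)+\bigl(\rho^{4}+W_\lambda(t)^{2}\bigr)\xi(t)=0,
\end{equation*}
which is literally \eqref{hill32}. The second step is to invoke the stability analysis on which Proposition~\ref{toytheo2} rests: the proof of that proposition (as indicated in the paper, relying on the results of \cite[Section 2]{gaga}) depends only on the constant $\rho^{4}>0$ appearing in front of $\xi$ and on the amplitude $\delta$ of $W_\lambda$, not on whether $\rho^{4}$ is actually an eigenvalue of \eqref{autovsym}; the eigenvalue hypothesis in Proposition~\ref{toytheo2} merely serves to interpret the two-modes decomposition, and is not used in the Floquet analysis of the associated Hill equation. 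Hence the stability criterion of Proposition~\ref{toytheo2} applies verbatim with the parameter replacement $\rho^{4}\mapsto\kappa^{2}$.

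The third step is to translate the conclusion. By Proposition~\ref{toytheo2}, linear stability holds if and only if either $\lambda>\rho$ with $\delta>0$, or $\lambda<\rho$ with $\delta\leq\sqrt{2(\rho^{4}-\lambda^{4})}$. Substituting $\rho^{2}=\kappa$ gives the two alternatives $\lambda^{2}>\kappa$ with $\delta>0$ and $\lambda^{2}<\kappa$ with $\delta\leq\sqrt{2(\kappa^{2}-\lambda^{4})}$, which is exactly the statement of Proposition~\ref{toytheo3}. The (mild) obstacle is purely bookkeeping: one must check that Proposition~\ref{toytheo2} is genuinely a statement about the Hill equation \eqref{hill32} and does not tacitly use that $\rho^{4}$ comes from the beam spectrum; once this is confirmed (which follows from inspecting its proof via the KdV-hierarchy argument cited in the paper, where the relevant parameter is a free positive constant), no further work is needed.
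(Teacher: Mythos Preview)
Your proposal is correct and follows exactly the paper's approach: the paper dispatches Proposition~\ref{toytheo3} in a single line, stating that it ``can be directly inferred from Proposition~\ref{toytheo2} simply by replacing $\rho^2$ with $\kappa$.'' Your write-up is just a more explicit unfolding of that substitution, including the (correct) observation that the Floquet analysis behind Proposition~\ref{toytheo2} treats $\rho^4$ as a free positive parameter rather than relying on its spectral origin.
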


Since the linear instability is a clue for the nonlinear instability, Proposition \ref{toytheo3} states that if $\lambda^2 > \kappa$ then \eq{nohang} does not display torsional instability according to Definition \ref{unstable} (in fact, for any $T_W$ and $T$). The stability picture describing the situation
of Proposition \ref{toytheo3} may be derived from Figure \ref{parabole}, again by replacing $\rho^2$ with $\kappa$.\par
When $\Alk\neq0$, it appears out of reach to obtain a statement as precise as Proposition \ref{toytheo3}.
We are able to prove the following, see Section \ref{pftheotors}.

\begin{theorem}\label{theotors}
Let $\lambda^4$ and $\kappa^2$ be, respectively, eigenvalues of \eqref{autovsym} and \eqref{autov2} such that $\Alk\neq0$.\par\noindent
$\bullet$ The $\lambda$-longitudinal-mode of \eqref{systemtorsional} of amplitude $\delta$ is linearly unstable with respect to the $\kappa$-torsional-mode
whenever $\delta$ is sufficiently large.\par\noindent
$\bullet$ If $\kappa^2\neq\lambda^4$, then the $\lambda$-longitudinal-mode of \eqref{systemtorsional} of amplitude $\delta$ is linearly stable with respect to
the $\kappa$-torsional-mode whenever $\delta$ is sufficiently small. In particular, linear stability is guaranteed whenever there exists an integer $m$ such that
both the two following conditions hold:
\neweq{burdinam}
\frac12\log\left(\!1+\tfrac{3\delta^2}{\kappa^2}\!\right)+
2\sqrt{2}\int_0^{\pi/2}\!\!\sqrt{\tfrac{1+3(\delta^2/\kappa^2)\sin^2\phi}{2(\lambda^4/\kappa^2)+(\delta^2/\kappa^2)(1+\sin^2\phi)}}\, d\phi\le(m+1)\pi,
\endeq
\neweq{burdinam2}
2\sqrt{2}\int_0^{\pi/2}\!\!\sqrt{\tfrac{1+(\delta^2/\kappa^2)\sin^2\phi}{2(\lambda^4/\kappa^2)+(\delta^2/\kappa^2)(1+\sin^2\phi)}}\, d\phi
-\frac12\log\left(\!1+\tfrac{3\delta^2}{\kappa^2}\!\right)\ge m\pi.
\endeq
\end{theorem}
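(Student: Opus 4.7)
Write \eqref{hilltors} as $\ddot\xi + a(t)\xi = 0$ with $a(t) := \kappa^2 + cW_\lambda(t)^2$ and $c := 1 + 2A_{\lambda,\kappa}^2$. Since $A_{\lambda,\kappa}\neq 0$ and \eqref{Alkmin1} holds, we have $c \in (1,3)$. By \eqref{Tld} and \eqref{ODED2}, the coefficient $a$ is positive and $T_\lambda(\delta)$-periodic, monotone on each halfperiod of $W_\lambda^2$, oscillating between $\kappa^2$ and $\kappa^2 + c\delta^2$. The natural tool is the Burdina criterion (Lemma~\ref{BurdCrit}): $\xi \equiv 0$ is linearly stable if there exists $m \in \N$ with
\[
m\pi \leq \int_0^{T_\lambda}\!\sqrt{a}\,dt - \tfrac14\!\int_0^{T_\lambda}\!\tfrac{|a'|}{a}\,dt, \qquad \int_0^{T_\lambda}\!\sqrt{a}\,dt + \tfrac14\!\int_0^{T_\lambda}\!\tfrac{|a'|}{a}\,dt \leq (m+1)\pi.
\]

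Both integrals can be evaluated explicitly. The Duffing energy identity $\dot W_\lambda^2 = (\delta^2 - W_\lambda^2)[\lambda^4 + \tfrac12(\delta^2 + W_\lambda^2)]$ from \eqref{ODED2}, combined with the substitution $W_\lambda = \delta\cos\phi$ on a quarter-period, yields
\[
\int_0^{T_\lambda(\delta)}\!\sqrt{a}\,dt = 2\int_0^{\pi/2}\frac{\sqrt{\kappa^2 + c\delta^2\sin^2\phi}}{\sqrt{\lambda^4 + \tfrac{\delta^2}{2}(1+\sin^2\phi)}}\,d\phi,
\]
while the monotonicity of $a$ on each halfperiod gives $\int_0^{T_\lambda}|a'|/a\,dt = 2\log(1 + c\delta^2/\kappa^2)$. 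Replacing $c$ by $3$ in the log term and in the $\sqrt a$-numerator tightens Burdina's upper inequality; replacing $c$ by $1$ in the $\sqrt a$-numerator (and keeping $3$ in the log) weakens the lower side. The resulting sufficient conditions are exactly \eqref{burdinam}--\eqref{burdinam2}. For the small-$\delta$ statement, note that as $\delta \to 0$ the log term vanishes and $\int \sqrt a\,dt \to \pi\kappa/\lambda^2$; since $\kappa^2 \neq \lambda^4$ means $\kappa/\lambda^2 \neq 1$, one takes $m = \lfloor \kappa/\lambda^2\rfloor$ and both inequalities hold strictly in the limit, hence for all small $\delta$ by continuity (when $\kappa/\lambda^2$ is an integer $\geq 2$, one expands to order $\delta^2$ to see that the $O(\delta^2)$-shift in the integral dominates the $O(\delta^2)$-size of the log term, keeping an admissible $m$).

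For the instability statement, rescale $s = \delta t$ and set $\phi_\delta(s) = W_\lambda(s/\delta)/\delta$, which satisfies $\phi_\delta'' + (\lambda^4/\delta^2)\phi_\delta + \phi_\delta^3 = 0$ and converges uniformly on compacts, as $\delta \to \infty$, to the $P$-periodic solution $\phi$ of $\phi''+\phi^3 = 0$, $\phi(0) = 1$, $\phi'(0) = 0$. In this rescaling, \eqref{hilltors} reads $\xi''(s) + (\kappa^2/\delta^2 + c\phi_\delta^2)\xi = 0$, whose $\delta\to\infty$ limit is the Lam\'e-type equation $\xi'' + c\phi^2\xi = 0$. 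At the parameter $c=1$ this admits the solution $\xi = \phi$ (since $\phi'' = -\phi^3 = -\phi^2\phi$), and at $c=3$ it admits $\xi = \phi'$ (by differentiating the Duffing equation: $(\phi')'' = -3\phi^2\phi'$). Because $\phi^2$ has half the period of $\phi$, both $\phi$ and $\phi'$ are \emph{antiperiodic} with respect to the true period of the equation, and by Floquet-Lyapunov theory applied to the Hill operator with spectral parameter $c$, the values $c=1$ and $c=3$ are the first two antiperiodic eigenvalues. The open gap $(1,3)$ between them is therefore an instability interval, so the limit equation is linearly unstable for every $c\in(1,3)$. By continuity of the monodromy matrix in the coefficient, \eqref{hilltors} inherits this instability for $\delta$ sufficiently large.

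The main obstacle is the Floquet-Lyapunov classification: confirming that $(1,3)$ is a \emph{single} open antiperiodic gap, rather than two boundaries separated by further stability/instability intervals. This reduces to counting the zeros of $\phi$ and $\phi'$ per half-period of $\phi^2$ and checking minimality, which can be read off the explicit cnoidal profile of $\phi$. A purely local alternative, bypassing this global spectral identification, is to compute the $c$-derivative of the trace of the monodromy matrix at $c=1$ via variation of parameters with a non-periodic second solution $\phi_2$ of $\xi''+\phi^2\xi = 0$, and to verify that this derivative is nonzero with the appropriate sign; this alone suffices to push the trace outside $[-2,2]$ for $c>1$ and hence to conclude instability.
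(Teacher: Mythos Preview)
Your approach is essentially the same as the paper's: Burdina's criterion for the stability part, and a rescaling to the Cazenave--Weissler limit problem $\ddot\xi + c\phi^2\xi = 0$ (with $c\in(1,3)$) for the instability part. The paper simply cites \cite[Theorem~3.1]{cazw} for the latter, whereas you sketch the spectral argument behind it; either is acceptable.

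The one spot where your write-up is weaker than the paper's concerns the boundary case $\kappa/\lambda^2 = m \in \{2,3,\dots\}$. Your parenthetical claim that ``the $O(\delta^2)$-shift in the integral dominates the $O(\delta^2)$-size of the log term'' is what has to be proved, not assumed. Concretely, with $s=\kappa/\lambda^2=m$ one must take Burdina's index equal to $m-1$ (not $m$), and the right Burdina inequality then reduces, at order $\delta^2$, to
\[
\frac{3\pi}{4}m^3-\frac{\pi c}{2}m - c \;>\;0\qquad (c=1+2A_{\lambda,\kappa}^2\in(1,3)).
\]
The paper verifies this by analysing the cubic $F_-(s)=\tfrac{3\pi}{4}s^3-\tfrac{\pi c}{2}s-c$ and showing that its unique positive zero $\bar s$ lies in $(1,2)$, so $F_-(m)>0$ for every integer $m\ge 2$; this is exactly why the hypothesis $\kappa^2\neq\lambda^4$ (i.e.\ $m\neq 1$) suffices. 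Your heuristic is correct but you should supply this computation.
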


The bounds \eq{burdinam} and \eq{burdinam2} give sufficient conditions for the stability and the related regions are depicted in white in Figure \ref{Burdfish}.
\begin{figure}[ht]
\begin{center}
\includegraphics[height=72mm,width=86mm]{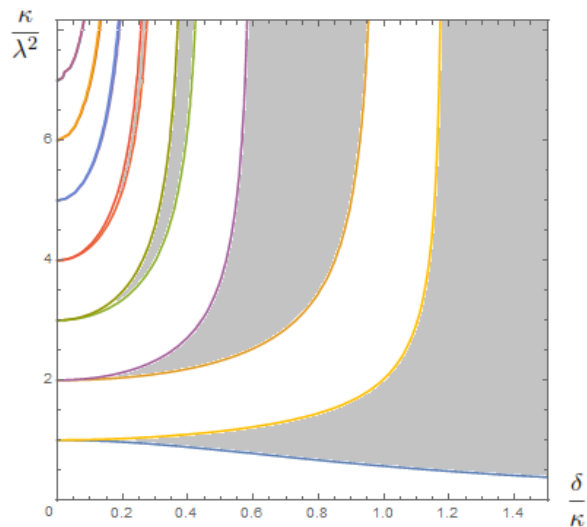}
\caption{In white, subsets of the stability regions for \eq{systemtorsional}.}\label{Burdfish}
\end{center}
\end{figure}
It appears that they are ``deformations'' of the regions in Figure \ref{parabole}. We underline once more that the stability regions may be wider than
in Figure \ref{Burdfish}, although Theorem \ref{theotors} guarantees that they cannot contain horizontal strips, contrary to Figure \ref{parabole}.
In fact, at least for $\Alk\approx0$ (the majority of cases, see Table \ref{Alka}), we expect the instability diagram to be very similar to Figure \ref{parabole}, with possibly tiny instability tongues emanating from the points $(0, n)$, for $n$ integer.
Formula \eq{burdinam2} may be used for all integer $m$ but in the case $m=0$ the estimate can be improved, see \cite[Figure 2]{gaga}.
Assuming that $\kappa<\lambda^2$, we may proceed by using a stability criterion due to Zhukovskii \cite{zhk} (see also \cite[Chapter VIII]{yakubovich}), that
guarantees linear stability provided that
$$
T_\lambda(\delta)^2\Big[\kappa^2+(1+2\Alk^2)\delta^2\Big]\le\pi^2,
$$
where $T_\lambda(\delta)$ is the period of $W_\lambda^2$, see \eq{Tld}. By using \eq{Alkmin1} and by emphasizing the relevant variables, we obtain
the following uniform sufficient condition
$$
2\sqrt{2}\int_0^{\pi/2}\frac{d\phi}{\sqrt{2\lambda^4/\kappa^2+\delta^2/\kappa^2(1+\sin^2\phi)}}\, \sqrt{1+3(\delta^2/\kappa^2)}\le\pi.
$$
Numerically one can verify that this bound is less restrictive than \eq{burdinam2}. Moreover, by deleting $\sin\phi$ in the integral, it gives the following
elegant sufficient condition for the linear stability of \eq{systemtorsional}.
\begin{corollary}
Under the hypotheses of Theorem \ref{theotors}, assume moreover that $\kappa < \lambda^2$. Then, the $\lambda$-longitudinal mode of \eqref{systemtorsional} of amplitude $\delta$ is linearly stable with respect to the $\kappa$-torsional mode if
$$\delta^2\le\frac25 (\lambda^4-\kappa^2).$$
\end{corollary}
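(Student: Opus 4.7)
The proof plan is to follow the route sketched in the paragraph preceding the statement, turning the Zhukovskii sufficient condition into an explicit polynomial inequality in $\delta^2$. Specifically, I would take for granted the Zhukovskii stability criterion for the Hill equation \eqref{hilltors}: the trivial solution of \eqref{hilltors} is linearly stable whenever
\[
T_\lambda(\delta)^2\Big[\kappa^2+(1+2\Alk^2)\delta^2\Big]\le\pi^2,
\]
where $T_\lambda(\delta)$ is the period of $W_\lambda^2$, given by \eqref{Tld}. This is the key analytical input; the rest is a chain of elementary majorizations.

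The first step is to get rid of the coupling coefficient $\Alk$ by invoking \eqref{Alkmin1}, which gives $1+2\Alk^2<3$. Substituting the integral formula \eqref{Tld} for $T_\lambda(\delta)$ and factoring $1/\kappa^2$ inside the radicals then transforms the Zhukovskii bound into the (more restrictive, hence still sufficient) inequality
\[
2\sqrt{2}\int_0^{\pi/2}\frac{d\phi}{\sqrt{2\lambda^4/\kappa^2+(\delta^2/\kappa^2)(1+\sin^2\phi)}}\, \sqrt{1+3(\delta^2/\kappa^2)}\le\pi,
\]
as already written in the excerpt.

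The second step is the very cheap majorization suggested in the text: since the integrand is monotone decreasing in $\sin^2\phi$ (the radicand is minimal, hence the quotient maximal, at $\sin\phi=0$), I bound it from above by its value at $\phi=0$, which gives
\[
\int_0^{\pi/2}\frac{d\phi}{\sqrt{2\lambda^4/\kappa^2+(\delta^2/\kappa^2)(1+\sin^2\phi)}}\le \frac{\pi/2}{\sqrt{2\lambda^4/\kappa^2+\delta^2/\kappa^2}}.
\]
Plugging this into the previous display, a sufficient condition for stability becomes
\[
\frac{2\,\bigl(1+3\delta^2/\kappa^2\bigr)}{2\lambda^4/\kappa^2+\delta^2/\kappa^2}\le 1,
\]
which, clearing the denominator and simplifying, is exactly $5\delta^2\le 2(\lambda^4-\kappa^2)$, i.e.\ the claim. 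The assumption $\kappa<\lambda^2$ ensures that the right-hand side is positive so that the inequality admits nontrivial $\delta>0$.

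There is no real obstacle here, since the Zhukovskii criterion itself and \eqref{Alkmin1} are already available; the only care needed is to check that each inequality in the chain is in the \emph{sufficient} direction (smaller left-hand side, larger right-hand side), which is straightforward because we are bounding the integrand from above and using $1+2\Alk^2<3$, both of which make the Zhukovskii test harder to pass and hence the resulting condition more restrictive.
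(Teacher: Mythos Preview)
Your proof is correct and follows exactly the route the paper takes: apply the Zhukovskii criterion to \eqref{hilltors}, use \eqref{Alkmin1} to replace $1+2\Alk^2$ by $3$, then majorize the integral by dropping $\sin\phi$, and solve the resulting inequality. The computation $2(1+3\delta^2/\kappa^2)\le 2\lambda^4/\kappa^2+\delta^2/\kappa^2$ is precisely what the paper means by ``deleting $\sin\phi$ in the integral''.
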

We close this section by briefly commenting about the assumption $\kappa^2 \neq \lambda^4$ appearing in the second statement of Theorem \ref{theotors}.
Such a condition is certainly fulfilled if $\lambda$ is associated with a $C^\infty$-eigenfunction of \eq{autovsym} and $\kappa^2$ is a simple eigenvalue of \eq{autov2}.
Indeed, in view of Theorem \ref{autofz2}, we have either $\kappa a \in \mathbb{N}$ or $\kappa a \in \mathbb{N}+1/2$, and in both cases $\kappa (1-a)\notin \mathbb{N}$.
If it were $\kappa=\lambda^2$, writing $\kappa (1-a) = \lambda (\lambda (1-a))$ and $\kappa a = \lambda(\lambda a)$ this would be contradicted. Namely,
\begin{center}
{\bf a simple torsional eigenvalue cannot be a longitudinal eigenvalue  \\ associated with a $C^\infty$-eigenfunction.}
\end{center}
If $\lambda$ is associated with a longitudinal eigenfunction which is not $C^\infty$, there are instead cases where $\kappa^2=\lambda^4$, as Figure \ref{longtors} shows: precisely, this occurs at each intersection between the bold curves (representing the eigenvalues of \eqref{autovsym}) and the dashed/dot-dashed ones (representing the eigenvalues of \eqref{autov2}); notice that, on the vertical axis, $\sqrt[4]{\mu}$ represents either $\lambda$ (bold curves) or $\sqrt{\kappa}$ (dashed/dot-dashed curves). However, we checked numerically that the corresponding values of $a$ are not among the ones of our interest, so this possibility, as well as the case when the considered torsional eigenvalue is multiple, will not be further deepened in our study.
\begin{figure}[ht]
\begin{center}
\includegraphics[scale=0.7]{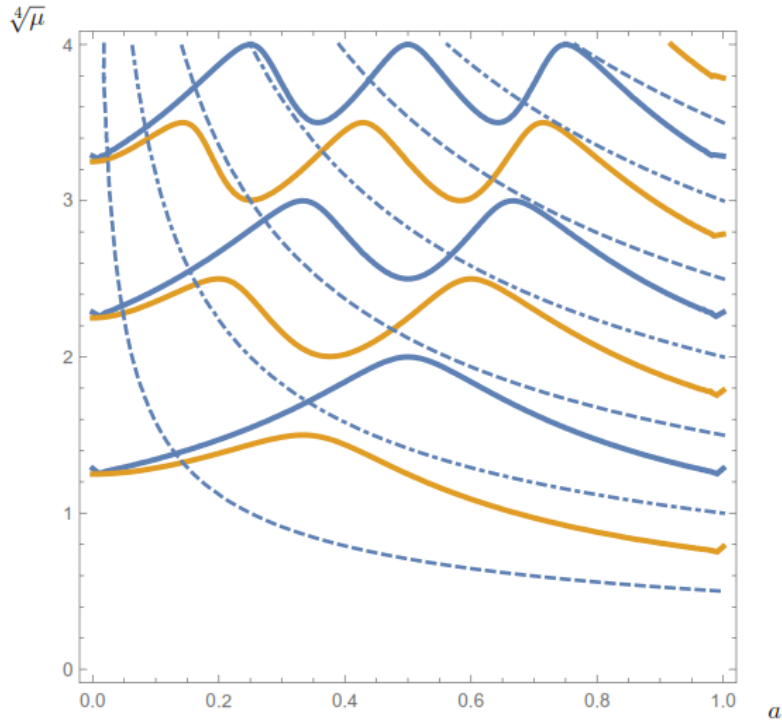}
\caption{Correspondence between longitudinal (bold) and torsional (dashed/dot-dashed) eigenvalues.}\label{longtors}
\end{center}
\end{figure}

\subsubsection{Optimal position of the piers for linear and nonlinear instability}\label{optfish}

A full stability analysis of \eq{nohang}, for any value of $a$ and any couple of (longitudinal, \, torsional) modes covers too many different situations.
Therefore, we restrict our attention on the most relevant cases, both from a mathematical and a physical point of view, briefly commenting the general situation.
The first restriction involves the position of the piers: we take
$$
a\ge\frac12
$$
because this case contains the physical range \eq{physicalrange} and, for $a > 1/2$, the second torsional eigenvalue is simple (see Proposition \ref{ordine}). Actually, the instability analysis in case of multiple torsional eigenvalues is much more complicated, due to the strong interactions which occur between the corresponding Fourier components.
\par
The second torsional mode is of crucial importance. Irvine \cite[Example 4.6, p.180]{irvine} describes the oscillations of the Matukituki Suspension Footbridge
as follows: {\em ...the deck persisted in lurching and twisting wildly until failure occurred, and for part of the time a node was noticeable at midspan}. Moreover, according to the
detailed analysis on the TNB by Smith-Vincent \cite[p.21]{tac2}, this form of torsional oscillation is the only one ever seen: {\em The only torsional mode
which developed under wind action on the bridge or on the model is that with a single node at the center of the main span}. Therefore,
\begin{equation}\label{solosecondo}
\textbf{we restrict our attention to the second torsional mode,}
\end{equation}
that is, we take $\kappa=\kappa_1$.
Concerning the longitudinal mode, we take one among the least 12 modes as in Section \ref{evolutionbeam}, see the motivation in Section \ref{4.4}. This enables us to exploit
several results obtained for the stability of the nonlinear beam equation \eq{toybeam2}.\par
With these restrictions, we tackle the problem of the optimal position of the piers and we proceed numerically, analyzing only the energy transfers towards the second torsional mode. For the linear instability, we maintain \eqref{D} as the characterization of the critical amplitude. We do not expect the picture for linear instability to be significantly different from the one in Figure \ref{parabole} since $A_{\lambda, \kappa}$ is small, see Table \ref{Alka}. On the other hand, the energy threshold of instability is characterized as in Definition \ref{threshold}, where nonlinear instability is meant in the sense of Definition \ref{torsunstable}.
In Table \ref{tfishbone} we quote the energy thresholds of linear and nonlinear instability thus found, according to Definitions \ref{torsunstable} and \ref{defstabtors}.
\begin{table}[ht!]
\begin{center}
{\footnotesize
\begin{tabular}{|c|c|c|c|}
\hline
$a$ & \textrm{linear} & \textrm{nonlinear} & \textrm{prevailing mode} \\
\hline
$0.5$ & 6.276 & 14.933 & $e_0$ \\
\hline
$0.56$ & 4.683 & 14.232 & $e_0$ \\
\hline
$0.6$ & 3.844 & 10.276  & $e_0$ \\
\hline
$2/3$ & 2.783 & 7.713 & $e_0$ \\
\hline
$0.7$ & 2.358 & 7.875 & $e_0$ \\
\hline
$0.8$ & 1.511
&
7.616 & $e_0$ (lin) /$e_1$ (nonlin) \\
\hline
$0.9$ & 1.021
&
3.884 & $e_0$ (lin) /$e_1$ (nonlin) \\
\hline
\end{tabular}
}
\caption{Energy thresholds of torsional instability for \eqref{systemtorsional} with $\kappa=\kappa_1$.}
\label{tfishbone}
\end{center}
\end{table}
\\
Some comments are in order. The most important is that, as in the discussion for the beam carried out in Section \ref{instnonlineare},
\begin{center}
\bf{linear instability is a clue for the occurrence of nonlinear instability}.
\end{center}
We also notice that instability appears first for low prevailing longitudinal modes, since the ratio $\kappa^2/\lambda^4$ becomes very small on growing of $\lambda$; hence, looking at both Figure \ref{parabole} (for $A_{\lambda, \kappa}=0$) and Figure \ref{Burdfish} (for $A_{\lambda, \kappa} \neq 0$), we understand that the more the ratio $\kappa^2/\lambda^4$  approaches $0$, the larger is the energy needed to reach instability. In some cases instability is never reached, this occurring for instance when $A_{\lambda, \kappa}=0$ and $\lambda^2 > \kappa$, since in this case we fall into the first linear stability region in Figure \ref{parabole} (the one below $\rho^4/\lambda^4=1$).
\par
In the next section, we will analyze the role of a further nonlinearity.

\subsection{The case of extensible hangers}\label{slack}

\subsubsection{Choice of the nonlinearity}

Not always the hangers may be considered rigid, which means that they can lose tension if the deck and the cables are too close to each other.
This phenomenon is called {\em slackening} and was observed during the TNB collapse, see \cite[V-12]{ammvkwoo}. The restoring action of
the hangers is described by a {\em local nonlinearity} that should be able to represent a slightly superlinear spring when extended and only
gravity when slacken. Recalling the convention \eqref{spostagi첫}, several nonlinearities of this kind have been considered in literature: among others,
we mention here
\neweq{possibility}
s\mapsto(1+s)^+-1,\qquad s\mapsto e^s-1,\qquad s\mapsto s-1+\sqrt{1+s^2},\qquad s\mapsto\left\{\begin{array}{ll}
\frac{s}{\sqrt{1+s^2}} & \mbox{if }s\le0\\
s &\mbox{if }s\ge0
\end{array}\right.,
\endeq
whose graphs are represented in the pictures of Figure \ref{fourgraphs}: one can notice that their qualitative behavior is the same.
\begin{figure}[ht]
\begin{center}
\includegraphics[height=26mm,width=39mm]{possibilities1.pdf}\quad\includegraphics[height=26mm,width=39mm]{possibilities4.pdf}
\quad\includegraphics[height=26mm,width=39mm]{possibilities2.pdf}\quad\includegraphics[height=26mm,width=39mm]{possibilities3.pdf}
\end{center}
\caption{Graphs of the functions in \eq{possibility}.}\label{fourgraphs}
\end{figure}

The first nonlinearity in \eq{possibility} was introduced by Lazer-McKenna \cite{lam} and describes a linear restoring force when the hangers
are in tension and a constant gravity force (normalized to be -1) when they slacken. This nonlinearity, that well describes the slackening
mechanism, has two drawbacks. First, it is nonsmooth and this makes numerical experiments very difficult. Second, Brownjohn \cite[p.1364]{brown}
remarks that {\em the hangers are critical elements in a suspension bridge and for large-amplitude motion their behaviour is not well modelled
by either simple on/off stiffness or invariant connections}. Moreover, supported by analytical and experimental studies on the dynamic response
of suspension bridges, Brownjohn \cite{brown} is able to show a strong nonlinear contribution of the cable/hanger effects while
McKenna-Tuama \cite{mckO} write {\em ...we expect the bridge to behave like a stiff spring, with a restoring force that becomes somewhat
superlinear}. For all these reasons, one is led to slightly modify the first nonlinearity in \eq{possibility}. The second, third, fourth
nonlinearities have been considered, respectively, in \cite{lambis}, \cite{March}, \cite{benforgaz}, as possible smooth alternative choices for
the restoring force. They all tend to $-1$ as $s\to-\infty$ and they all have first derivative equal to 1 when $s=0$: this number represents
the {\em Hooke constant of elasticity} of the hangers. Since we aim at emphasizing the role of the rigidity of the hangers, we adopt here
the modified nonlinearity
$$
f_\varsigma(s)=\varsigma s-1+\sqrt{1+\varsigma^2s^2}\qquad(\varsigma>0)
$$
which is a variant of the third nonlinearity in \eq{possibility}. We still have $f_\varsigma(s)\to-1$ as $s\to-\infty$ so that the gravity
constant $-1$ is conserved. Moreover, $f_\varsigma'(0)=\varsigma$ so that $\varsigma$ measures the elasticity of the hangers. Finally,
$f_\varsigma(s)\sim2\varsigma s$ as $s\to+\infty$, showing that $f_\varsigma(s)$ has a slightly superlinear behavior for $s>0$, with a slope going from
$\varsigma$ towards $2\varsigma$. Note also
that $f_\varsigma\to0$ in $L^\infty_{\rm loc}(\R)$ for $\varsigma \to 0$, so that for vanishing $\varsigma$ we are back in the situation of rigid hangers, see Section \ref{rigidhangers}.\par
By taking $f=f_\varsigma$ for some $\varsigma>0$, system \eq{nonloc} becomes
\begin{equation}\label{nonloc2}
\left\{
\begin{array}
[c]{l}%
\!\!\!u_{tt}+u_{xxxx}+\Big(\!\!\int_I(u^2+\theta^2)\!\Big)u+2\Big(\!\!\int_I u\theta\!\Big)\theta+2\varsigma u+
\Big[\sqrt{1+\varsigma^2(u+\theta)^2}+\sqrt{1+\varsigma^2(u-\theta)^2}-2\Big]=0\\
\!\!\!\theta_{tt}-\theta_{xx}+2\Big(\!\!\int_I u\theta\!\Big)u+\Big(\!\!\int_I(u^2+\theta^2)\!\Big)\theta+2\varsigma\theta+
\Big[\sqrt{1+\varsigma^2(u+\theta)^2}-\sqrt{1+\varsigma^2(u-\theta)^2}\Big]=0.
\end{array}
\right.
\end{equation}
We complement \eq{nonloc2} with the boundary-internal-initial conditions \eq{jc2}-\eq{ic2} and we define
weak solutions as in Definition \ref{definizionedebole}. Since $f_\varsigma$ satisfies the two latter conditions in \eq{assumptions}, Proposition \ref{Galerkin} can be applied and problem \eqref{nonloc2} is well-posed.

\subsubsection{Linear and nonlinear instability for the two modes system}

As already mentioned in Section \ref{weakformTI}, the spaces $V(I)\times\{0\}$ and $\{0\}\times W(I)$ are invariant for \eq{nonloc2}:
if we take $(\theta_0,\theta_1)=(0,0)$, then the weak solution $(u,\theta)$ of \eq{nonloc2}-\eq{ic2} has no torsional component
($\theta\equiv0$) while the longitudinal component $u$ satisfies the equation
$$
u_{tt}+u_{xxxx}+\Big(\int_Iu^2\Big)u+2\Big(\varsigma u-1+\sqrt{1+\varsigma^2u^2}\Big)=0\qquad x\in I, \quad t>0,
$$
together with the corresponding conditions in \eq{jc2}. Also this problem should be intended in its weak form, that is,
$$
\langle u_{tt},v\rangle_V+\int_I u_{xx}v''+\int_Iu^2\int_Iuv+2\int_I\Big(\varsigma u-1+\sqrt{1+\varsigma^2u^2}\Big)v=0\qquad\forall v\in V(I),
\quad t>0.
$$

As in Section \ref{rigidhangers}, we reduce our analysis to the case of approximate two-modes solutions of \eq{nonloc2}
in the form \eq{formd}. To this end, for any couple $(\lambda^4,\kappa^2)$ of eigenvalues of \eq{autovsym} and \eq{autov2} we need to introduce
the two functions
$$
\Gamma_\varsigma^{\lambda,\kappa}(w,z):=\int_I\Big[\sqrt{1+\varsigma^2\big(we_\lambda(x)+z\eta_\kappa(x)\big)^2}+
\sqrt{1+\varsigma^2\big(we_\lambda(x)-z\eta_\kappa(x)\big)^2}-2\Big]e_\lambda(x)\, dx,
$$
$$
\Xi_\varsigma^{\lambda,\kappa}(w,z):=\int_I\Big[\sqrt{1+\varsigma^2\big(we_\lambda(x)+z\eta_\kappa(x)\big)^2}-
\sqrt{1+\varsigma^2\big(we_\lambda(x)-z\eta_\kappa(x)\big)^2}\Big]\eta_\kappa(x)\, dx.
$$
These functions have the following symmetries:
\neweq{symmGU}
\begin{array}{ll}
\Gamma_\varsigma^{\lambda,\kappa}(w,z)=\Gamma_\varsigma^{\lambda,\kappa}(-w,z)=\Gamma_\varsigma^{\lambda,\kappa}(w,-z)=\Gamma_\varsigma^{\lambda,\kappa}(-w,-z)\\
\Xi_\varsigma^{\lambda,\kappa}(w,z)=-\Xi_\varsigma^{\lambda,\kappa}(-w,z)=-\Xi_\varsigma^{\lambda,\kappa}(w,-z)=\Xi_\varsigma^{\lambda,\kappa}(-w,-z)
\end{array}\qquad\quad\forall(w,z)\in\R^2.
\endeq

Then the time-dependent coefficients $w, z$ of approximate two-modes solutions of \eq{nonloc2} in the form \eq{formd} satisfy the system
\neweq{mostgeneral}
\left\{\begin{array}{l}
\ddot{w}(t)+(\lambda^4+2\varsigma)w(t)+(1+2\Alk^2)z(t)^2w(t)+w(t)^3+\Gamma_\varsigma^{\lambda,\kappa}(w(t),z(t))=0\\
\ddot{z}(t)+(\kappa^2+2\varsigma)z(t)+(1+2\Alk^2)w(t)^2z(t)+z(t)^3+\Xi_\varsigma^{\lambda,\kappa}(w(t),z(t))=0,
\end{array}\right.
\endeq
to which we associate the initial conditions
\neweq{initialsyst}
w(0)=\delta,\quad z(0)=z_0,\quad\dot{w}(0)=\dot{z}(0)=0.
\endeq
If $z_0=0$, then the solution of \eq{mostgeneral}-\eq{initialsyst} is $(w,z)=(W_\lambda,0)$, where $W_\lambda$ solves
\neweq{superduffing}
\ddot{W}_\lambda(t)+(\lambda^4+2\varsigma)W_\lambda(t)+W_\lambda(t)^3+\Psi_\varsigma^\lambda\big(W_\lambda(t)\big)=0,\qquad
W_\lambda(0)=\delta,\qquad\dot{W}_\lambda(0)=0,
\endeq
where
$$
\Psi_\varsigma^\lambda(w)=2\int_I\Big[\sqrt{1+\varsigma^2w^2e_\lambda(x)^2}-1\Big]e_\lambda(x)\, dx.
$$

The following statement holds.

\begin{proposition}\label{periodic}
Let $g(s):=(\lambda^4+2\varsigma)s+s^3+\Psi_\varsigma^\lambda(s)$ and $G(s):=\int_0^sg(\tau)d\tau$. Then, for any $\delta>0$, the solution $W_\lambda$ of \eqref{superduffing} is periodic with period
\begin{equation}\label{tautau}
\tau_\lambda=\sqrt{2} \int_{\gamma_\delta}^\delta \frac{ds}{\sqrt{G(\delta)-G(s)}},
\end{equation}
where $\gamma_\delta$ is the unique negative solution of $G(\gamma_\delta)=G(\delta)$.
\end{proposition}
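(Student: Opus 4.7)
The plan is to read \eqref{superduffing} as a one-dimensional autonomous Hamiltonian system $\ddot{W}_\lambda = -g(W_\lambda)$ with energy $H(W,\dot W) = \tfrac{1}{2}\dot W^2 + G(W)$, and exploit the fact that, for such a system, every bounded orbit encircling an isolated equilibrium is periodic with period given by the standard quadrature formula. First I would note that, because $s \mapsto \sqrt{1+\varsigma^2 s^2 e_\lambda(x)^2}$ is smooth in $s$ locally uniformly in $x \in I$, the function $\Psi_\varsigma^\lambda$ is of class $C^1(\mathbb R)$ with locally Lipschitz derivative, so $g$ is locally Lipschitz and the Cauchy problem is well-posed. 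Multiplying \eqref{superduffing} by $\dot W_\lambda$ and integrating from $0$ to $t$ gives
\begin{equation*}
\tfrac{1}{2}\dot W_\lambda(t)^2 + G(W_\lambda(t)) \equiv G(\delta).
\end{equation*}

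The key technical step, and the main obstacle, is to prove that $g$ is strictly sign-preserving with its only zero at $s=0$, which simultaneously pins down the shape of $G$ and guarantees global-in-time existence. Using the elementary inequality $\sqrt{1+u^2}-1 \le |u|$ and the $L^2$-normalization of $e_\lambda$, I would estimate
\begin{equation*}
|\Psi_\varsigma^\lambda(s)| \le 2\int_I \bigl(\sqrt{1+\varsigma^2 s^2 e_\lambda(x)^2}-1\bigr)|e_\lambda(x)|\, dx \le 2\varsigma|s|\int_I e_\lambda(x)^2\, dx = 2\varsigma|s|.
\end{equation*}
The point is that the correction $2\varsigma|s|$ is exactly absorbed by the $2\varsigma s$ contribution inside $(\lambda^4+2\varsigma)s$, leaving a clean lower bound
\begin{equation*}
s\, g(s) \ge \lambda^4 s^2 + s^4 > 0 \qquad \forall s \in \mathbb R \setminus\{0\}.
\end{equation*}
Hence $g < 0$ on $(-\infty,0)$ and $g > 0$ on $(0,+\infty)$, so $G$ is strictly decreasing on $(-\infty,0]$, strictly increasing on $[0,+\infty)$, with $G(0)=0$ as the unique minimum, and $G(s) \to +\infty$ as $|s| \to \infty$ because of the dominant $s^4/4$ term.

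From these properties, the level set $\{G \le G(\delta)\}$ is a bounded interval $[\gamma_\delta,\delta]$ in which $\gamma_\delta < 0$ is uniquely determined by $G(\gamma_\delta) = G(\delta)$, so the trajectory of \eqref{superduffing} lies on the compact, smooth, closed curve $\{\tfrac{1}{2}\dot W^2 + G(W) = G(\delta)\}$ encircling the origin. By the standard phase-plane argument (no equilibria lie on this curve since $g(s)\neq 0$ for $s\in[\gamma_\delta,\delta]\setminus\{0\}$), the solution is defined for all $t$ and is periodic. To compute the period, I solve $\dot W_\lambda = -\sqrt{2(G(\delta)-G(W_\lambda))}$ along the descending half-cycle from $W=\delta$ to $W=\gamma_\delta$, obtaining the half-period
\begin{equation*}
\frac{\tau_\lambda}{2} = \int_{\gamma_\delta}^\delta \frac{dW}{\sqrt{2(G(\delta)-G(W))}} = \frac{1}{\sqrt 2}\int_{\gamma_\delta}^\delta\frac{ds}{\sqrt{G(\delta)-G(s)}};
\end{equation*}
the ascending half-cycle contributes the same amount by the time-reversal symmetry $t \mapsto -t$ of the autonomous second-order equation, and doubling yields exactly \eqref{tautau}.
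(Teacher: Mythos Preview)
Your proof is correct and follows essentially the same strategy as the paper: energy conservation for the autonomous second-order equation, control of the perturbation $\Psi_\varsigma^\lambda$ to pin down the shape of the potential $G$, phase-plane periodicity, and the standard quadrature for the period. The only technical difference is in the estimate: the paper bounds the \emph{derivative} $\bigl|\tfrac{d}{ds}\Psi_\varsigma^\lambda(s)\bigr|\le 2\varsigma$ (via $|\varsigma s e_\lambda^3|/\sqrt{1+\varsigma^2 s^2 e_\lambda^2}\le e_\lambda^2$), which yields $g$ strictly increasing and $G$ strictly convex, whereas you bound $|\Psi_\varsigma^\lambda(s)|\le 2\varsigma|s|$ directly, obtaining only $s\,g(s)>0$ for $s\neq 0$. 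Your weaker conclusion (unique minimum of $G$ at $0$, monotone on each half-line, coercive) is exactly what is needed for the proposition, so nothing is lost; the paper's convexity is a slightly stronger byproduct but is not used here.
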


To see this, it suffices to notice that
$$
\left|\frac{d}{ds}\Psi_\varsigma^\lambda(s)\right|=2\varsigma\left|\int_I\frac{\varsigma s e_\lambda(x)^3}{\sqrt{1+\varsigma^2s^2e_\lambda(x)^2}}\, dx\right|
\le2\varsigma\int_Ie_\lambda(x)^2dx=2\varsigma,
$$
which shows that $g$ is strictly increasing over $\R$ and
that $G$ is strictly convex. Therefore, the sublevels of the energy
$$\frac{\dot{w}^2}2 + G(w)$$
are convex sets of the phase plane $(w, \dot{w})$ and its level lines are closed curves. This shows that any solution of \eqref{superduffing} is
periodic; a standard computation then provides expression \eqref{tautau}.
\par
In fact, the ``unpleasant parts'' in \eq{mostgeneral} vanish whenever the longitudinal eigenfunction is odd.

\begin{proposition}\label{simplify}
Assume that $e_\lambda$ is an odd eigenfunction of \eqref{autovsym}, as given by Theorem \ref{symmetriceigen}. Then $\Gamma_\varsigma^{\lambda,\kappa}(w,z)=\Xi_\varsigma^{\lambda,\kappa}(w,z)=0$
for any $(w,z)\in\R^2$ and any eigenfunction $\eta_\kappa$ of \eqref{autov2}, as given by Theorem \ref{autofz2}.
\end{proposition}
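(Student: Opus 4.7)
The plan is to exploit the parity symmetries of the eigenfunctions together with the symmetry of the interval $I=(-\pi,\pi)$. By Theorem \ref{autofz2}, every eigenfunction $\eta_\kappa$ of \eqref{autov2} has a definite parity (even or odd); recall in particular that the piecewise eigenfunctions $\mathcal{D}_\kappa$ and $\mathcal{P}_\kappa$ are respectively odd and even by construction. Since $e_\lambda$ is assumed odd, I would split the argument into two cases according to whether $\eta_\kappa$ is odd or even, and in each case verify that the integrands of $\Gamma_\varsigma^{\lambda,\kappa}$ and $\Xi_\varsigma^{\lambda,\kappa}$ are odd functions on the symmetric interval $I$.

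Concretely, I set $A(x):=we_\lambda(x)+z\eta_\kappa(x)$ and $B(x):=we_\lambda(x)-z\eta_\kappa(x)$, and write
$$
h(x):=\sqrt{1+\varsigma^2 A(x)^2}+\sqrt{1+\varsigma^2 B(x)^2}-2, \qquad k(x):=\sqrt{1+\varsigma^2 A(x)^2}-\sqrt{1+\varsigma^2 B(x)^2},
$$
so that $\Gamma_\varsigma^{\lambda,\kappa}=\int_I h(x)\,e_\lambda(x)\,dx$ and $\Xi_\varsigma^{\lambda,\kappa}=\int_I k(x)\,\eta_\kappa(x)\,dx$. If $\eta_\kappa$ is odd, then $A(-x)=-A(x)$ and $B(-x)=-B(x)$, hence both $h$ and $k$ are even; thus $h\cdot e_\lambda$ and $k\cdot\eta_\kappa$ are odd, and both integrals vanish. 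If instead $\eta_\kappa$ is even, then $A(-x)=-B(x)$ and $B(-x)=-A(x)$, so the two square-root terms swap under $x\mapsto -x$; consequently $h$ is even while $k$ is odd, and again $h\cdot e_\lambda$ (odd$\times$odd is even... wait, odd$\cdot$even$\times$odd) and $k\cdot\eta_\kappa$ are both odd, so their integrals over the symmetric interval $I$ are zero.

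Let me correct the parity bookkeeping in the last step: $h$ even times $e_\lambda$ odd gives an odd integrand (vanishing integral), while $k$ odd times $\eta_\kappa$ even gives an odd integrand (vanishing integral). So in both cases the conclusion $\Gamma_\varsigma^{\lambda,\kappa}(w,z)=\Xi_\varsigma^{\lambda,\kappa}(w,z)=0$ follows immediately.

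There is no real obstacle: the only subtle point is verifying that the piecewise eigenfunctions $\mathcal{D}_\kappa$ and $\mathcal{P}_\kappa$ from Theorem \ref{autofz2} really do have the asserted parities, which is a direct check from their explicit formulas (for $x\in I_+$, $-x\in I_-$, and $\sin[\kappa(-x+\pi)]=\sin[\kappa(\pi-x)]=\mp\sin[\kappa(x\mp\pi)]$ according to which of the two definitions is used). Once this is in place, the proof reduces to the elementary observation that integrals of odd functions over $(-\pi,\pi)$ vanish.
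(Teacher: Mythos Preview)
Your proof is correct and follows essentially the same parity argument as the paper: split into the cases $\eta_\kappa$ odd and $\eta_\kappa$ even, and in each case verify that the integrands are odd on the symmetric interval $I$. The only cosmetic difference is that for the even case the paper phrases the swap $A\leftrightarrow -B$ via the pre-established symmetries \eq{symmGU} combined with $x\mapsto -x$, whereas you compute the pointwise parity of $h$ and $k$ directly; the content is identical.
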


To prove Proposition \ref{simplify} one needs to distinguish two cases. If $\eta_\kappa$ is also odd, then
$\big(we_\lambda(x)\pm z\eta_\kappa(x)\big)^2$ are even functions so that the integrands in $\Gamma_\varsigma^{\lambda,\kappa}$ and $\Xi_\varsigma^{\lambda,\kappa}$ are odd and the integrals vanish.
If $\eta_\kappa$ is even, then one combines \eq{symmGU} with the change of variables $x\mapsto-x$ within the integrals
that define $\Gamma_\varsigma^{\lambda,\kappa}$ and $\Xi_\varsigma^{\lambda,\kappa}$.\par
Similarly as in Definition \ref{defstabtors}, we characterize the linear stability of longitudinal modes as follows.

\begin{definition}\label{defstabb3}
The mode $W_\lambda$ is said to be {\bf linearly stable} ({\bf unstable}) with respect to the $\kappa$-torsional-mode
if $\xi\equiv0$ is a stable (unstable) solution of the linear Hill equation
\neweq{hill2}
\ddot{\xi}(t)+\Big(\kappa^2+2\varsigma+(1+2\Alk^2)W_\lambda(t)^2+2\varsigma^2B_\varsigma(W_\lambda(t))\Big)\xi(t)=0,
\endeq
where
$$
B_\varsigma(w)=\int_I\frac{e_\lambda(x)\eta_\kappa(x)^2\, w}{\sqrt{1+\varsigma^2w^2e_\lambda(x)^2}}\, dx.
$$
\end{definition}

Equation \eq{hill2} is obtained by linearizing the second equation in system \eq{mostgeneral} around the solution $(w,z)=(W_\lambda,0)$, obtained with
initial datum $z_0=0$. It is a Hill equation, since the coefficient multiplying $\xi$ is periodic, see Proposition \ref{periodic}.
\par
We now state a result which highlights striking differences between odd and even longitudinal modes when slackening occurs; in particular, it will allow us to simplify \eqref{hill2} in some cases. Its proof will be given in
Section \ref{pfBalphanot0}.

\begin{theorem}\label{Balphanot0}
Let $\varsigma \neq 0$ and let $\lambda^4$ and $\kappa^2$ be, respectively, eigenvalues of \eqref{autovsym} and \eqref{autov2}, with associated eigenfunctions $e_\lambda$ and $\eta_\kappa$ provided by Theorems \ref{symmetriceigen} and \ref{autofz2}, respectively.
Then:
\begin{itemize}
\item if $e_\lambda$ is odd, then $B_\varsigma(w)=0$ for all $w\in\R$;
\item if $e_\lambda$ is even and either $\eta_\kappa=\mathbf{D}_\kappa$ or $\eta_\kappa=\mathbf{P}_\kappa$ (being zero on the
side spans), then $B_\varsigma(w)\not\equiv0$ in any neighborhood of $w=0$.
\end{itemize}
\end{theorem}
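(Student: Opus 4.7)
The first assertion follows from a direct parity computation. The plan is to observe that every eigenfunction $\eta_\kappa$ provided by Theorem \ref{autofz2} is either even or odd in $x$, so $\eta_\kappa^2$ is always even; similarly $\sqrt{1+\varsigma^2 w^2 e_\lambda(x)^2}$ depends only on $e_\lambda(x)^2$ and is therefore even in $x$. When $e_\lambda$ is odd, the integrand defining $B_\varsigma(w)$ is thus odd in $x$, and integration over the symmetric interval $I$ yields $0$ for every $w\in\R$.

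For the second assertion, my plan is to argue by analyticity and moments. Since $1+\varsigma^2 w^2 e_\lambda(x)^2\geq 1$ uniformly in $x$, the function $B_\varsigma$ is real-analytic in $w$ on all of $\R$; by the identity principle, $B_\varsigma\equiv 0$ on some neighborhood of $0$ is equivalent to $B_\varsigma$ being identically zero. Because $\eta_\kappa\in\{\mathbf{D}_\kappa,\mathbf{P}_\kappa\}$ is supported in $\overline{I}_0$, I would expand the integrand in a convergent power series for $|w|<1/(|\varsigma|\,\|e_\lambda\|_\infty)$ to obtain
\begin{equation*}
B_\varsigma(w)=\sum_{k=0}^{\infty}\binom{-1/2}{k}\varsigma^{2k}w^{2k+1}\int_{I_0}e_\lambda(x)^{2k+1}\eta_\kappa(x)^2\,dx,
\end{equation*}
so that the vanishing of $B_\varsigma$ near the origin would force every moment $m_k:=\int_{I_0}e_\lambda^{2k+1}\eta_\kappa^2\,dx$ to be zero. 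Assuming this for contradiction, a Weierstrass approximation yields $\int_{I_0}F(e_\lambda)\,\eta_\kappa^2\,dx=0$ for every continuous odd $F$ on the range of $e_\lambda$, i.e.\ the pushforward of $\eta_\kappa^2\,dx$ on $I_0$ under $x\mapsto e_\lambda(x)$ is a measure symmetric about the origin.

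To contradict this, I would exploit the explicit form of the eigenfunctions from Theorems \ref{symmetriceigen} and \ref{autofz2}: on $\overline{I}_0$, $e_\lambda$ is either $\cos(\lambda x)$ or the specific even combination $\mathscr{E}_\lambda$ of $\cos(\lambda x)$ and $\cosh(\lambda x)$, in particular an even analytic function satisfying $e_\lambda(0)\neq 0$, while $\eta_\kappa^2\in\{\sin^2(\kappa x),\,\cos^2(\kappa x)\}$ is nonnegative and not identically zero on any subinterval of $I_0$. A Laplace-type asymptotic analysis of $m_k$ as $k\to\infty$ localizes near the points where $|e_\lambda|$ attains its maximum on $\overline{I}_0$, and the explicit location and sign of these extrema, combined with the weight $\eta_\kappa^2$ at those points, will be shown to yield a non-cancelling dominant contribution, incompatible with $m_k=0$ for all large $k$. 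The main obstacle will be executing this asymptotic step cleanly across all admissible $(\lambda,\kappa,a)$, since the locations of the extrema of $|e_\lambda|$ depend nontrivially on $(\lambda,a)$; as a practical shortcut often sufficient in the cases relevant for the subsequent stability analysis, one can instead verify directly that the first moment $m_0=2\int_0^{a\pi}e_\lambda\,\eta_\kappa^2\,dx$ is nonzero via an elementary trigonometric identity, which reduces the problem to a finite computation in the concrete admissible $(\lambda,\kappa,a)$.
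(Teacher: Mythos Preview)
Your treatment of the odd case and your reduction of the even case to the vanishing of all odd moments
\[
m_k=\int_{I_0} e_\lambda^{2k+1}\,\eta_\kappa^2\,dx
\]
is correct and coincides with the paper's argument (the paper obtains the same moment conditions by repeated differentiation of $w\mapsto B_1(w)/w$ rather than by analyticity, but this is equivalent). Your shortcut of checking $m_0\neq 0$ directly is exactly what the paper does when $e_\lambda=\mathbf{E}_\lambda=\cos(\lambda x)$: there an elementary trigonometric computation (using $\cos(\lambda a\pi)=0$ and $\cos(2\kappa a\pi)=\pm1$) gives $m_0=\tfrac{4\kappa^2\sin(\lambda a\pi)}{\lambda(4\kappa^2-\lambda^2)}\neq 0$, uniformly in the parameters.

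The genuine gap is in the nonsmooth case $e_\lambda=\mathscr{E}_\lambda$. Your Laplace heuristic localizes $m_k$ near the points where $|e_\lambda|$ is maximal on $[0,a\pi]$, but this yields a contradiction only if you can exclude cancellation between a positive maximum and a negative minimum of \emph{equal absolute value}; without that, the dominant contributions to $m_k$ may have opposite signs and you cannot conclude. You flag this as ``the main obstacle'' but do not resolve it, and the fallback of checking $m_0$ case by case is not a proof for general $(\lambda,\kappa,a)$. The paper closes this gap with a structural lemma specific to the form of $\mathscr{E}_\lambda$: for $f(t)=\cos t+A\cosh t$ with $0<|A|<1$, no positive maximum and negative minimum can have the same absolute value. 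This allows a rescaling $g_\lambda=Ke_\lambda$ with $\max_{[0,a\pi)} g_\lambda>1$ and $\min_{[0,a\pi)} g_\lambda\geq -1$; splitting the moment integral over $H=\{g_\lambda>1\}$ and its complement, dominated convergence kills the latter while the former diverges to $+\infty$ (since $\eta_\kappa^2$ vanishes only on a null set), contradicting $m_k=0$. This asymmetry-of-extrema lemma is the missing ingredient in your outline.
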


Theorem \ref{Balphanot0} does not clarify what happens in case of a multiple torsional eigenvalue $\kappa$, when choosing the associated eigenfunctions differently from Theorem \ref{autofz2}. Also the case when $e_\lambda$ is even and $\eta_\kappa$ is an eigenfunction
of \eqref{autov2} other than $\mathbf{D}_\kappa$ or $\mathbf{P}_\kappa$ is not considered. We refer the reader to the discussion in Section \ref{probopen}.
\par
Combining Theorem \ref{Balphanot0} with \eqref{propAlk}, we can simplify \eqref{hill2}.
\begin{corollary}\label{corollarion}
Let $\lambda^4$ and $\kappa^2$ be, respectively, eigenvalues of \eqref{autovsym} and \eqref{autov2}, with associated eigenfunctions $e_\lambda$ and $\eta_\kappa$ provided by Theorems \ref{symmetriceigen} and \ref{autofz2}, respectively. Then:
\begin{itemize}
\item if $e_\lambda$ is odd and $\eta_\kappa$ is even, then equation \eqref{hill2} simplifies to
$$
\ddot{\xi}(t)+\Big(\kappa^2+2\varsigma+W_\lambda(t)^2\Big)\xi(t)=0\,;
$$
\item if $e_\lambda$ is odd and $\eta_\kappa$ is odd, then \eqref{hill2} simplifies to
$$
\ddot{\xi}(t)+\Big(\kappa^2+2\varsigma+(1+2\Alk^2)W_\lambda(t)^2\Big)\xi(t)=0\,;
$$
\item if $e_\lambda$ is even and $\eta_\kappa$ is odd, then \eqref{hill2} simplifies to
$$
\ddot{\xi}(t)+\Big(\kappa^2+2\varsigma+W_\lambda(t)^2+2\varsigma^2B_\varsigma(W_\lambda(t))\Big)\xi(t)=0.
$$
\end{itemize}
\end{corollary}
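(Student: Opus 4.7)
The corollary is a direct bookkeeping consequence of two facts already in hand: the parity-based vanishing of the coupling coefficient from \eqref{propAlk}, and the first bullet of Theorem \ref{Balphanot0} asserting that $B_\varsigma \equiv 0$ whenever $e_\lambda$ is odd. My plan is to simply match each of the three listed parity configurations against these two results and read off which of the two ``nonlinear correction'' terms in \eqref{hill2}, namely $2\Alk^2 W_\lambda(t)^2$ and $2\varsigma^2 B_\varsigma(W_\lambda(t))$, survives in the simplified Hill equation.

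First I would observe that \eqref{hill2} can be decomposed as
$$
\ddot{\xi}(t) + \Big(\kappa^2 + 2\varsigma + W_\lambda(t)^2\Big)\xi(t) + 2\Alk^2 W_\lambda(t)^2 \xi(t) + 2\varsigma^2 B_\varsigma(W_\lambda(t))\xi(t) = 0,
$$
so that the three claims amount to showing that in each case the appropriate ones among the last two terms vanish identically. For the first bullet ($e_\lambda$ odd, $\eta_\kappa$ even), $e_\lambda$ and $\eta_\kappa$ have opposite parities, so \eqref{propAlk} gives $\Alk = 0$ and the third term disappears; the first bullet of Theorem \ref{Balphanot0} applied to the odd $e_\lambda$ yields $B_\varsigma(w) = 0$ for all $w \in \mathbb{R}$, killing the fourth term.

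For the second bullet ($e_\lambda$ odd, $\eta_\kappa$ odd) only Theorem \ref{Balphanot0} is relevant: $B_\varsigma \equiv 0$ again, but $\Alk$ is generally nonzero since the two eigenfunctions have the same parity, and the $(1+2\Alk^2)W_\lambda^2$ term is retained in the stated form. For the third bullet ($e_\lambda$ even, $\eta_\kappa$ odd), the opposite parities force $\Alk = 0$ by \eqref{propAlk}, so the coefficient of $W_\lambda(t)^2$ reduces to $1$, while the $B_\varsigma$ term is left untouched since Theorem \ref{Balphanot0} gives no vanishing when $e_\lambda$ is even. Since each simplification is just the substitution of either $B_\varsigma \equiv 0$ or $\Alk = 0$ (or both) into \eqref{hill2}, there is no genuine obstacle — the only thing to be careful about is to invoke \eqref{propAlk} and Theorem \ref{Balphanot0} in the correct case, and to note that the combination ``$e_\lambda$ even, $\eta_\kappa$ even'' is deliberately not listed because then neither term need vanish in general.
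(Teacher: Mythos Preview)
Your proposal is correct and follows exactly the same approach as the paper, which simply states that the corollary is obtained by ``combining Theorem \ref{Balphanot0} with \eqref{propAlk}''. Your case-by-case breakdown of which correction term vanishes in each parity configuration is precisely the intended argument.
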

For the linear stability of system \eq{mostgeneral}, we thus have to take into account all these possible combinations, resulting in the following theorem, which will be proved in Section \ref{pftheotors2}.
\begin{theorem}\label{theotors2}
Let $\varsigma \neq 0$ and let $\lambda^4$ and $\kappa^2$ be, respectively, eigenvalues of \eqref{autovsym} and \eqref{autov2}, with $\kappa^2$ associated with an eigenfunction of the kind $\mathbf{D}_\kappa$ or $\mathbf{P}_\kappa$ (being zero on the side spans). The following hold:
\begin{itemize}
\item the $\lambda$-longitudinal-mode of \eqref{mostgeneral} of amplitude $\delta$ is linearly unstable with respect to the $\kappa$-torsional-mode
whenever $\delta$ is sufficiently large;
\item if $e_\lambda$ is odd, then the $\lambda$-longitudinal-mode of \eqref{mostgeneral} of amplitude $\delta$ is linearly stable with respect to
the $\kappa$-torsional-mode if $\delta$ is sufficiently small, provided that $(\kappa^2+2\varsigma)/(\lambda^4+2\varsigma) \notin \mathbb{N}^2$;
\item if $e_\lambda$ is even, then the $\lambda$-longitudinal-mode of \eqref{mostgeneral} of amplitude $\delta$ is linearly stable with respect to
the $\kappa$-torsional-mode if $\delta$ is sufficiently small, provided that $4(\kappa^2+2\varsigma)/(\lambda^4+2\varsigma) \notin \mathbb{N}^2$.
\end{itemize}
\end{theorem}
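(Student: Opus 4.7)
The plan is to view the stability/instability problem for \eqref{mostgeneral} as a Floquet problem for \eqref{hill2} and to analyze the trace of its monodromy matrix in the two asymptotic regimes $\delta\to+\infty$ and $\delta\to 0^+$, using the simplifications provided by Corollary \ref{corollarion}. Preliminarily, I would observe that $\Psi_\varsigma^\lambda(s)$ depends on $s$ only through $s^2$ and is therefore an \emph{even} function of $s$, and that $s=0$ is the unique (nondegenerate) critical point of the potential $G$ of Proposition \ref{periodic}. Consequently, $W_\lambda$ is a periodic even function of $t$ whose orbit encircles the origin; only in the odd-$e_\lambda$ case does one have $\Psi_\varsigma^\lambda\equiv 0$, so \eqref{superduffing} collapses to a pure Duffing equation and $W_\lambda$ acquires the anti-symmetry $W_\lambda(t+\tau_\lambda/2)=-W_\lambda(t)$. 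This is the reason why the coefficient of $\xi$ in \eqref{hill2} is $\tau_\lambda/2$-periodic if $e_\lambda$ is odd and only $\tau_\lambda$-periodic if $e_\lambda$ is even. Using \eqref{tautau} and $|\tfrac{d}{ds}\Psi_\varsigma^\lambda|\le 2\varsigma$, I would further verify that $\tau_\lambda(\delta)\to 2\pi/\sqrt{\lambda^4+2\varsigma}$ as $\delta\to 0^+$ and $\tau_\lambda(\delta)=\mathcal{O}(1/\delta)$ as $\delta\to+\infty$, with $W_\lambda(t)\sim \delta V_0(\delta t)$ for a fixed periodic solution $V_0$ of $V_0''+V_0^3=0$.

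For the first bullet (large $\delta$), the key observation is that the ``slackening'' term $2\varsigma^2 B_\varsigma(W_\lambda(t))$ is uniformly bounded in $t$ and $\delta$, since
$$
|B_\varsigma(w)|\le \int_I\!\frac{|e_\lambda(x)|\eta_\kappa(x)^2\,|w|}{\sqrt{1+\varsigma^2 w^2 e_\lambda(x)^2}}\,dx\le \frac{1}{\varsigma}\int_I\!\eta_\kappa(x)^2\,dx=\frac{1}{\varsigma}.
$$
After rescaling time by $\delta$, equation \eqref{hill2} converges (uniformly on one limit period) to $\xi''+(1+2\Alk^2)V_0(s)^2\xi=0$, a nontrivial Hill equation whose coefficient vanishes whenever $V_0(s)=0$. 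Its instability would be established via a Burdina-type estimate of the trace of the monodromy matrix, in the same spirit as for Theorem \ref{toytheo}-$(ii)_2$ and Theorem \ref{theotors}; continuous dependence of this trace on $\delta$ then gives $|\operatorname{tr}(M_\delta)|>2$ for $\delta$ large enough, i.e., linear instability.

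For the second and third bullets (small $\delta$), I would use continuous dependence of the monodromy matrix $M_\delta$ of \eqref{hill2} on $\delta$, computed over one period of its coefficient. At $\delta=0$ the Hill equation reduces to $\ddot\xi+(\kappa^2+2\varsigma)\xi=0$, so a direct computation yields
$$
\operatorname{tr}(M_0)=2\cos\!\Big(\tau\sqrt{\kappa^2+2\varsigma}\Big),\qquad \tau=\begin{cases}\pi/\sqrt{\lambda^4+2\varsigma}&\text{if $e_\lambda$ is odd,}\\[2pt] 2\pi/\sqrt{\lambda^4+2\varsigma}&\text{if $e_\lambda$ is even.}\end{cases}
$$
The nonresonance hypotheses $(\kappa^2+2\varsigma)/(\lambda^4+2\varsigma)\notin\mathbb{N}^2$ (odd case) and $4(\kappa^2+2\varsigma)/(\lambda^4+2\varsigma)\notin\mathbb{N}^2$ (even case) are precisely equivalent to the requirements that $\pi\sqrt{(\kappa^2+2\varsigma)/(\lambda^4+2\varsigma)}$ and $2\pi\sqrt{(\kappa^2+2\varsigma)/(\lambda^4+2\varsigma)}$, respectively, do not belong to $\pi\mathbb{N}$; under them $|\operatorname{tr}(M_0)|<2$, hence $|\operatorname{tr}(M_\delta)|<2$ for $\delta$ small by continuity, yielding linear stability.

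The hard part will be the large-$\delta$ instability, specifically the verification that $\xi''+(1+2\Alk^2)V_0(s)^2\xi=0$ is unstable: its coefficient oscillates between $0$ and $(1+2\Alk^2)$ and so the equation is close to a degenerate Hill equation, which prevents a black-box use of the Burdina bound. One likely has to compare the trace of its monodromy with $\pm 2$ by an explicit phase-plane analysis inspired by \cite[Theorem 4]{gaga}, and then upgrade the conclusion to the full Hill equation \eqref{hill2} by treating $(\kappa^2+2\varsigma)/\delta^2+2\varsigma^2 B_\varsigma(\delta V_0(\cdot))/\delta^2$ as a vanishing perturbation uniformly in $s$ as $\delta\to\infty$.
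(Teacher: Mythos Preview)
Your proposal is correct and follows essentially the same route as the paper. For small $\delta$ you use continuity of the monodromy trace while the paper applies the Burdina criterion (Lemma \ref{BurdCrit}); both reduce to the same limiting condition $\tau\sqrt{\kappa^2+2\varsigma}\notin\pi\mathbb{N}$ with $\tau=\pi/\sqrt{\lambda^4+2\varsigma}$ (odd) or $2\pi/\sqrt{\lambda^4+2\varsigma}$ (even), so the two arguments are equivalent and yours is arguably more elementary. For large $\delta$ you perform the same rescaling and reach the same limit problem $\ddot Z+Z^3=0$, $\ddot\xi+(1+2\Alk^2)Z^2\xi=0$ as the paper. The ``hard part'' you anticipate is in fact not hard: exactly as in the proof of Theorem \ref{theotors} (Lemma \ref{asymptotics}), the paper simply invokes \cite[Theorem 3.1]{cazw}, which gives instability of this limit Hill equation for $\gamma=1+2\Alk^2\in(1,3)$, so no ad hoc Burdina or phase-plane analysis is needed.
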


This statement should be compared with Theorem \ref{theotors}. Again, it merely gives qualitative sufficient conditions both for linear
stability and instability. Some differences appear here due to the presence of the term $B_\varsigma$, which makes the coefficient of $\xi$ in \eqref{hill2} $\tau_\lambda$ (and not $\tau_\lambda/2$)-periodic, thus causing a discontinuity with respect to the case $\varsigma=0$. This shows that in general any (even small) nonlinearity due to the hangers may destroy the stability diagram for a given system studied without hangers.
\par More results are obtained numerically, as reported in the next section. Therein we also study
numerically the nonlinear stability (according to Definition \ref{torsunstable}) and we seek the optimal position of the piers that maximizes the critical energy.

\subsubsection{Optimal position of the piers in degenerate plates}\label{posottimale}

In this section, we analyze the stability of the second torsional mode (cf. \eqref{solosecondo}) for system \eqref{mostgeneral}. The procedure is similar to the one in Section \ref{optfish}: however, we have already seen that the presence of $f_\varsigma$ makes the situation richer, alternatively involving each of the equations displayed in Corollary \ref{corollarion}.
\par
The situation is already more complicated at the level of linear stability, since we do not have a precise picture of the resonance tongues for equation \eqref{hill2}, especially if $e_\lambda$ is even. In principle, it may happen to cross thick instability regions which alternate to stability ones; hence the definition of critical amplitude given in \eqref{D} would be nonsense. Nevertheless, we could not numerically detect any of them. We thus simply took as critical amplitude threshold of linear instability the least value of $\delta$ for which the absolute value of the monodromy matrix associated with \eqref{hill2} is larger than 2. Concerning nonlinear instability, we proceeded as usual, following the steps in Section \ref{algoritmo}.
\par
As for the choice of $\varsigma$, we are guided by the claims reported in the engineering literature \cite{bartoli, luco} (see Section \ref{physmod}): the cables represent the main source of nonlinearity, so we stick to \emph{small values of $\varsigma$}. A full quantitative analysis would require a huge effort and would be useless without taking into account the exact values of the real parameters inside the considered equations, see Section \ref{probopen} for further comments.
We chose to perform our analysis for $\varsigma \in \{0.1, 0.2, 0.5, 1\}$. The corresponding results are displayed in Table \ref{tabellafinale}. For the reader's convenience, we also report the results from Table \ref{tfishbone} for $\varsigma=0$. The global energy of the system, given by
\begin{eqnarray*}
E_\varsigma(u, \theta) = \frac{1}{2} \int_I (u_t^2\!+\! \theta_t^2\!+\! \theta_x^2 \!+\!u_{xx}^2)
+\frac{1}{4}\left[\int_I(u+\theta)^2\right]^2+\frac{1}{4}\left[\int_I(u-\theta)^2\right]^2 +  \int_{I} F_\varsigma(u+\theta)+\int_I F_\varsigma(u-\theta)
\end{eqnarray*}
with
$$
F_\varsigma(s)=\frac{1}{2}\Big(\varsigma s^2 -2s+s\sqrt{1+\varsigma^2 s^2} + \frac{\textnormal{arcsinh}(\varsigma s)}{\varsigma}\Big),
$$
increases with the hangers rigidity $\varsigma$, thus $E_\varsigma$ is not the most suitable parameter for the comparison of the stability performances on varying of $\varsigma$. Much more meaningful appears the critical amplitude. For this reason, in Table \ref{tabellafinale} we report the two critical amplitudes $\delta_{lin}$ and $\delta$ for linear and nonlinear stability, respectively. Table \ref{tabellafinale} also contains the time $\tau$ in correspondence of which nonlinear instability is observed and the associated expansion rate $\mathcal{ER}_\tau$, which is around $100$, in line with what we observed for the beam.
\par
It appears evident that, for a given $a$, $\varsigma \mapsto \delta_{lin}(\varsigma)$ is decreasing, showing that increasing the elasticity of the hangers lowers the linear stability. The pattern for nonlinear instability is by far less clear.
First, for $a=0.8$ and $a=0.9$ the longitudinal mode having the smallest instability threshold is $e_1$ (odd), while for the other values of $a$ it is $e_0$ (even). Hence, one should not compare the critical amplitudes for $a=0.8$, $a=0.9$ (marked with the symbol $^*$ in Table \ref{tabellafinale}) with the others; actually, they are much smaller. Moreover, they are \emph{increasing} with respect to $\varsigma$, a feature observed
in all our experiments with odd longitudinal modes. As for the other values of $a$, there is no clear emerging pattern; hence, in this case linear stability gives important complementary information. Overall, we observe that
\begin{center}
{\bf
in the range \eqref{physicalrange}, the best placement of the piers for system \eqref{mostgeneral} lies around $a=0.5$.
}
\end{center}

\begin{table}[ht!]
\begin{center}
{\footnotesize
\begin{tabular}{|c|c|c|c|c|c|}
\hline
$a$ & $\varsigma$ & $\delta_{lin}$  & $\delta$ & $\tau$ & $\mathcal{ER}_\tau$ \\
\hline
 & 0 & 1.77  & 2.38 & 15.61 & 70.07 \\
\cline{2-6}
& 0.1 & 1.77 & 2.44 & 15.16 & 69.85 \\
\cline{2-6}
0.5 & 0.2 & 1.77 & 2.51 & 14.68 & 69.45 \\
\cline{2-6}
& 0.5 & 1.75 & 2.54 & 15.96 & 97.78  \\
\cline{2-6}
& 1 & 1.7 & 2.64 & 14.76 & 74.14 \\ % 30.5446 energia
\hline
 & 0 & 1.71 & 2.45 & 15.95 &  125.61 \\
\cline{2-6}
 & 0.1 & 1.71 & 2.4 & 15.95 & 101.08 \\
\cline{2-6}
0.56 & 0.2 & 1.7 & 2.35 & 15.96 & 83.37 \\
\cline{2-6}
 & 0.5 & 1.68 & 2.41 & 15.08 & 67.43 \\
\cline{2-6}
& 1 & 1.63 & 2.43 & 15.96 & 77.84 \\
\hline
 & 0 & 1.66 & 2.27 & 14.89 & 65.64 \\
\cline{2-6}
& 0.1 & 1.66 & 2.38 & 14.17 & 65.59 \\
\cline{2-6}
0.6 & 0.2 & 1.65 & 2.42 & 15.99 & 116 \\
\cline{2-6}
& 0.5 & 1.63 & 2.28 & 15.94 & 68.05 \\ %energia 13.6097
\cline{2-6}
& 1 & 1.58 & 2.49 & 15.97 & 95.98 \\ %energia 22.7204
\hline
 & 0 & 1.57   & 2.15 & 15.97 & 78.6 \\
\cline{2-6}
 & 0.1 & 1.57 & 2.13 & 15.89 & 66.21 \\
\cline{2-6}
2/3 & 0.2 & 1.56  & 2.24 & 15.03  & 65.92 \\
\cline{2-6}
& 0.5 & 1.54 & 2.35 & 15.96 & 93.56  \\
\cline{2-6}
& 1 & 1.49 & 2.49 & 14.31 & 61.77 \\
\hline
& 0 & 1.52 & 2.19 & 15.94 & 95.91 \\
\cline{2-6}
 & 0.1 & 1.52 & 2.14 & 15.94 & 75.82\\
\cline{2-6}
0.7 & 0.2 & 1.52 & 2.14 & 15.73 & 66.1 \\
\cline{2-6}
& 0.5 & 1.5 &  2.38 & 15.99 & 103.57 \\
\cline{2-6}
& 1 & 1.45 & 2.44 & 14.62  & 61.53\\
\hline
& 0 & 1.39 &  1.6* & 15.71  & 100.07 \\
\cline{2-6}
 & 0.1 & 1.39 & 1.61* & 15.44 & 99.19 \\
\cline{2-6}
0.8 & 0.2 & 1.39 & 1.61*  & 15.28  & 88.08 \\
\cline{2-6}
& 0.5 & 1.37  & 1.64* & 14.56 & 85.26 \\
\cline{2-6}
& 1 & 1.32 & 1.64* & 15.82 & 92.27 \\
\hline
& 0 & 1.28  & 1.36* & 15.8 & 81.91 \\
\cline{2-6}
 & 0.1 & 1.27  & 1.38* & 15.36  & 82.02 \\
\cline{2-6}
0.9 & 0.2 & 1.27 & 1.4* & 14.97 & 81.62  \\
\cline{2-6}
& 0.5 & 1.25 & 1.45* & 14.05 & 74.36  \\
\cline{2-6}
& 1 & 1.21 & 1.46* & 15.33 &  82.52 \\
\hline
\end{tabular}
}
\caption{Linear and nonlinear instability for \eqref{systemtorsional}.}
\label{tabellafinale}
\end{center}
\end{table}

%\begin{table}[ht]
%\begin{center}
%{\footnotesize
%\begin{tabular}{|c|c|c|c|c|c|c|c|}
%\hline
%$\alpha \downarrow$ $a \to$ & $0.5$ & $0.56$ & $0.6$ & $2/3$ & $0.7$ & $0.8$ & $0.9$ \\
%\hline
%0 & 6.276/14.933 & 4.683/14.232 & 3.844/10.276 & 2.783/7.713 & 2.358/7.875 & 1.511/7.616  & 1.021/3.884  \\
%\hline
%0.1 & 6.601/16.75 & 4.986/13.914  & 4.129/12.615
% & 3.038/7.947 & 2.596/7.75 & 1.709/7.991 & 1.16/4.216
%   \\
%\hline
%0.2 & 6.95/19.004  & 5.224/13.647  & 4.357/13.999
% & 3.246/9.963 & 2.848/8.266  & 1.917/8.251 & 1.333/4.562
%  \\
%\hline
%\end{tabular}
%}
%\caption{Energy thresholds of linear and nonlinear instability on varying of $\alpha$.}
%\end{center}
%\end{table}

\vspace{3cm}
\
\eject

\subsection{The algorithm for the computation of the energy thresholds}\label{algoritmo}

In this section, we briefly describe the program we implemented for the numerical simulations in order to check whether nonlinear instability
appears. The program has been written and run using Mathematica$^\copyright$ software. Recalling Definition \ref{unstable},
the aim is to determine whether a residual mode has increased its oscillations so as to subvert of an order of magnitude the inequality in \eqref{prevalentec2} and it has grown sufficiently abruptly on itself. This is done in an iterative way for different values of $a \in (0, 1)$,
as follows.
\begin{itemize}
\item[-] \textit{Step 1}. Fix
$
0 < a < 1, \ N \in \{2, 3, 4, \ldots\}, \ 0 < \eta < 1, \ 0 < tol \ll 1, \ 0 < step \ll 1, \ 0 < r < 1, EN_{\max} > 0
$
and set $j=0$.
\item[-] \textit{Step 2}. Find the first $\delta_0 > r/\eta^2$ such that, setting $u(x, 0)$ and $u_t(x, 0)$ as in \eqref{initnocin} with $\alpha_j=\delta_0$, $\alpha_n=r$ for $n \neq j$, condition \eqref{prevalentec} holds. Set $\delta=\delta_0$.
\item[-] \textit{Step 3}. Define $EN=E_j(a)$ as $\mathcal{E}(U_j^A)$, as discussed after \eqref{energiamodoj}, with $\alpha_j=\delta$; if $EN > EN_{\max}$, go to Step 8, otherwise set $BD=\delta/10$,
compute $T_W(\delta)$ through formula \eqref{sceltawagner} and go to Step 4.
\item[-] \textit{Step 4}. Fix $T > 2\lim_{\delta \to 0^+} T_W(\delta)$ (so that $T > 2T_W(\delta)$ for every $\delta > 0$).
\item[-] \textit{Step 5}.
Numerically integrate (e.g., via the ``NDSolve'' procedure) the finite-dimensional system \eqref{finitodim} on the interval $[0, T]$.
Check whether there exist $k \in \{0, 1, \ldots, N-1\} \setminus \{j\}$ and $\tau \in (2T_W(\delta), T)$ for which $\vert \phi_k(\tau) \vert =BD$:
if yes, proceed to Step 6, if no, proceed to Step 7.
\item[-] \textit{Step 6}. For every $k$ as in Step 5, evaluate
$$
M=\max_{t \in [0, \tau/2]} \vert \phi_k(t) \vert
$$
and check if the ratio $BD/M > 1/\eta - tol$: if yes, compute $E_j(a)$ via the expression in \eqref{energiamodoj} and proceed to Steps 8-9. Otherwise,
set $BD=M/\eta$ and repeat Step 5.
\item[-] \textit{Step 7}. Increase $\delta$ by $step$ and repeat Steps 3-5.
\item[-] \textit{Step 8}. If $j \leq N-2$, increase $j$ by one unit and perform Steps 2-5, otherwise go to Step 9.
\item[-] \textit{Step 9}. Find $\mathbb{E}_{12}(a)$ as in \eqref{Edodici}; then change the value of $a$ and repeat Steps 2-8.
\end{itemize}
For our experiments, we chose $a$ ranging from $0.1$ to $0.9$ with a step of $0.1$, together with the remarkable values $a=1/3$, $a=14/25=0.56$, $a=2/3$, and
$$
N=12, \ j \in \{0, \ldots, 11\}, \ \eta=0.1, \ tol=10^{-8}, \ step=0.01, \ r=0.01.
$$
We integrated the corresponding system on the interval $[0, 16]$, after having checked that the condition $16 > 2T_W(\delta)$ was always fulfilled.
Notice that the presence of $EN_{\max}$ in the above algorithm is only needed to ensure that the algorithm ends; however, in our experiments we continued increasing $\delta$ until we found instability and this process always had success. Possible loop cycles in controlling instability, due to an insufficient machine precision, have been bypassed by further slightly increasing the initial value $\delta$ by $step$.
\par
As for Definition \ref{torsunstable}, if the torsional component is initially set equal to $r$, of course in Step 2 it has to be taken $BD=r/\eta$.

\section{Proofs}

\addtocontents{toc}{\setcounter{tocdepth}{0}}

\secttoc

\subsection{Proof of Theorem \ref{VI}}\label{pfVI}

It is well-known that an orthogonal basis of the space $H^2\cap H^1_0(I)$ is formed by the eigenfunctions of the eigenvalue problem
$$
u''''(x)=\mu u(x) \qquad x\in I,\qquad u(\pm\pi)=u''(\pm\pi)=0.
$$
Some calculus computations show that the eigenvalues and the eigenfunctions are given by
$$
\frac{n^4}{16}\qquad\mbox{and}\qquad \sin\frac{n(x+\pi)}{2}\qquad(n\in\N).
$$
Writing $u\in H^2\cap H^1_0(I)$ in Fourier series with respect to this basis, that is,
$$
u(x)=\sum_{n=1}^\infty\alpha_n\, \sin\frac{n(x+\pi)}{2}\,  \quad \textrm{ with } \quad \alpha_n= \frac{1}{\pi} \int_{-\pi}^{\pi} u(s) \sin\frac{n(s+\pi)}{2} \, ds,
$$
and imposing the pointwise conditions in \eqref{V}, we obtain the following characterization of the space $V(I)$:
$$
u\in V(I)\ \Longleftrightarrow\ \sum_{n=1}^\infty\alpha_n\, \sin\frac{n(1-b)\pi}{2}=\sum_{n=1}^\infty\alpha_n\, \sin\frac{n(1+a)\pi}{2}=0.
$$
This double constraint on the $\alpha_n$'s shows that the space $V(I)$ has codimension 2. On the other hand, let $h \in H^2 \cap H^1_0(I)$ be a continuous and piecewise affine function on $[-\pi,\pi]$, namely
$$
h(x)=\left\{
\begin{array}{ll}
\alpha(x+\pi) & \mbox{ if } x \in [-\pi, -b\pi] \vspace{0.1 cm} \\
\displaystyle \frac{\alpha(b-1)+\beta(1-a)}{a+b}\, x + \frac{\alpha a(1-b)+\beta b(1-a)}{a+b}\, \pi\quad & \mbox{ if } x \in [-b\pi,a\pi] \vspace{0.1cm} \\
\beta (\pi-x) & \mbox{ if } x \in [a\pi,\pi]
\end{array}
\right.
$$
for some $\alpha, \beta \in\R$. Then, by integration by parts on each of the intervals $I_-$, $I_0$, $I_+$, it is immediately checked that
$$
\int_I h u'' = 0 \qquad \forall u \in V(I).
$$
This fully characterizes $V(I)^\perp$. The functions $v_1$ and $v_2$ are found explicitly by integrating twice the function $h$ for a suitable choice of
the integration constants and by imposing vanishing conditions at the endpoints $\pm\pi$.

\subsection{Proof of Theorem \ref{regular}}\label{pfregular}

Existence and uniqueness of the weak solution $u\in V(I)$ follow directly from the Lax-Milgram Theorem, since $\gamma \geq 0$.\par
$(i)$ By arguing as in \cite[Lemma 2.2]{HolNec10} and performing an integration by parts similar to that in \cite[Example 1]{Loc73} in the final part,
we find that a weak solution $u(x)$ of \eq{statforte}-\eq{bcstat} belongs to $C^2(I)$ and is of class $C^4$ on each subinterval $\overline{I}_-$,
$\overline{I}_0$ and $\overline{I}_+$. Moreover, by performing two integration by parts on each subinterval, the terms computed in $-b\pi, a\pi$ compensate (due to the $C^2$-regularity); Definition \ref{defweakstat} is then fulfilled only if $u''(\pm\pi)=0$, due to the arbitrariness of $v \in V(I)$.
\par
$(ii)$ In order to prove this statement, we introduce a general procedure for the analysis of \eq{statforte} that we first describe in detail in the
case $\gamma=0$. Let $f\in C^0(\overline{I})$ and consider the classical solution $U_f$ of the problem
$$
U_f''''(x)=f(x)\qquad x \in I,\qquad U_f(-\pi)=U_f(-b\pi)=U_f(a\pi)=U_f(\pi)=0.
$$
Clearly, $U_f\in C^4(\overline{I})\cap V(I)$ but $U_f$ may not be a weak solution (according to Definition \ref{defweakstat})
because it may fail to fulfill the conditions $U_f''(-\pi)=U_f''(\pi)=0$ required by the just proved Item $(i)$. Wishing to satisfy \eq{weakstat},
we add to $U_f$ three third order polynomials $P_b$, $P_0$, $P_a$ defined, respectively, in $[-\pi,-b\pi]$, $[-b\pi,a\pi]$, $[a\pi,\pi]$, satisfying
\neweq{fourconditions}\begin{array}{cc}
P_b(x)\!=\!(x\!+\!\pi)(x\!+\!b\pi)(Ax\!+\!B\pi),\ P_0(x)\!=\!(x\!+\!b\pi)(x\!-\!a\pi)(Cx\!+\!D\pi),\ P_a(x)\!=\!(x\!-\!a\pi)(x\!-\!\pi)(Ex\!+\!F\pi),\\
P_b'(-b\pi)\!=\!P_0'(-b\pi),\quad P_b''(-b\pi)\!=\!P_0''(-b\pi),\quad P_a'(a\pi)\!=\!P_0'(a\pi),\quad P_a''(a\pi)\!=\!P_0''(a\pi).
\end{array}\endeq

The first line in \eq{fourconditions} guarantees that these polynomials vanish at the endpoints of the interval where they are defined, whereas the second
line in \eq{fourconditions} ensures that they match $C^2$ in $\{-b\pi,a\pi\}$. Then we introduce two further constraints of no bending at the endpoints,
that is,
\neweq{otherconditions}
P_b''(-\pi)=-U_f''(-\pi)=:f_b,\qquad P_a''(\pi)=-U_f''(\pi)=:f_a.
\endeq

The first line in \eq{fourconditions} introduces six unknowns $A$, $B$, $C$, $D$, $E$, $F$, to be determined. The second line in \eq{fourconditions}
gives four equations, whereas \eq{otherconditions} gives two further equations, all being linear and linking the six unknowns. The former four conditions
are ``structural'' since they only depend on $a$ and $b$, while the two latter conditions are ``forced'' since they also depend on the source $f$ through the
function $U_f$. Since these six equations are linearly independent, they uniquely determine the unknowns $A$, $B$, $C$, $D$, $E$, $F$.
Take these constants and replace them into the polynomials in \eq{fourconditions}. Then the solution of \eq{weakstat} has the form
\neweq{weaksolu}
u(x)=U_f(x)+\left\{\begin{array}{ll}
P_b(x) \quad & \mbox{if } x \in [-\pi,-b\pi]\\
P_0(x) \quad & \mbox{if } x \in [-b\pi,a\pi]\\
P_a(x) \quad & \mbox{if } x \in [a\pi,\pi].
\end{array}\right.
\endeq

Next, we analyze the possible discontinuities of the third derivative of this weak solution. In view of \eq{fourconditions} and \eq{weaksolu}, we have
\neweq{gapsthird}
\begin{array}{cc}
u'''(-b\pi^+)-u'''(-b\pi^-)=P_0'''(-b\pi)-P_b'''(-b\pi)=6(C-A)=:\beta_f,\\
u'''(a\pi^+)-u'''(a\pi^-)=P_a'''(a\pi)-P_0'''(a\pi)=6(E-C)=:\alpha_f.
\end{array}
\endeq
Therefore, we may rewrite \eq{statforte} with $\gamma=0$ in the following distributional form:
\neweq{distributional}
u''''=f+\beta_f\delta_{-b\pi}+\alpha_f\delta_{a\pi},
\endeq
which completes the proof of Item $(ii)$ in the case $\gamma=0$.\par
If $\gamma=4\nu^4>0$, consider the classical solution $U_f$ of the problem
\neweq{wrong2}
U_f''''(x)+4\nu^4 U_f(x)=f(x)\qquad x \in I,\qquad U_f(-\pi)=U_f(-b\pi)=U_f(a\pi)=U_f(\pi)=0.
\endeq
Such a solution exists and is unique: one may fulfill the four-point conditions in \eq{wrong2} by adding to any solution of the differential equation in \eq{wrong2} a suitable linear combination of the functions in
$$K:=\Big\{\cos(\nu x)\cosh(\nu x),\, \cos(\nu x)\sinh(\nu x),\, \sin(\nu x)\cosh(\nu x),\, \sin(\nu x)\sinh(\nu x)\Big\},$$
which generate the 4-dimensional kernel of the operator $w\mapsto w''''+4\nu^4w$. Again, $U_f$ may not be a weak solution and, in order to satisfy \eq{weakstat},
we seek three linear combinations of the functions in $K$, that we call $P_b$, $P_0$, $P_a$, defined respectively in $\overline{I}_-$,
$\overline{I}_0$, $\overline{I}_+$, and satisfying the conditions
\neweq{twelveconditions}
\begin{array}{cc}
P_b(-\pi)=P_b(-b\pi)=0,\ P_0(-b\pi)=P_0(a\pi)=0,\ P_a(a\pi)=P_a(\pi)=0,\\
P_b'(-b\pi)=P_0'(-b\pi),\quad P_b''(-b\pi)=P_0''(-b\pi),\quad P_a'(a\pi)=P_0'(a\pi),\quad P_a''(a\pi)=P_0''(a\pi),\\
P_b''(-\pi)=-U_f''(-\pi),\quad P_a''(\pi)=-U_f''(\pi).
\end{array}\endeq

The first line in \eq{twelveconditions} guarantees that these combinations vanish at the endpoints of the interval where they are defined.
The second line in \eq{twelveconditions} ensures that they match $C^2$ in $\{-b\pi,a\pi\}$.
The third line in \eq{twelveconditions} forces the solution to have no bending at the endpoints, in line with Item $(i)$.
Overall, \eq{twelveconditions} contains twelve conditions. There are also twelve unknowns: the four coefficients of the linear combinations of the elements in $K$,
for each of the three functions $P_b$, $P_0$, $P_a$. One finds these unknowns by solving the corresponding linear system and
the weak solution of \eq{weakstat} is again given by \eq{weaksolu}. The same arguments used for \eq{gapsthird} and \eq{distributional} lead to
the distributional equation $u''''+4\nu^4 u=f+\beta_f\delta_{-b\pi}+\alpha_f\delta_{a\pi}$. This completes the proof of Item $(ii)$ also for $\gamma>0$.\par
$(iii)$ The subspace $X(I)\subset C^0(\overline{I})$ is defined by the two (linear) constraints $\beta_f=\alpha_f=0$. \par
$(iv)$ In view of \eq{gapsthird}, we have $u\in C^3(\overline{I})$ if and only if $\beta_f=\alpha_f=0$ which, by the just proved Item $(iii)$, yields
$u\in C^4(\overline{I})$.

\subsection{Proof of Theorem \ref{autovalorias}}\label{pfautovalorias}

Since $e(x)$ is of class $C^4$ on $\overline{I}_-, \overline{I}_0$ and $\overline{I}_+$ in view of Theorem \ref{regular}, we can write
\begin{equation}\label{decomposizione}
e(x)=
\left\{
\begin{array}{ll}
e_b(x)\quad & \mbox{if } x \in [-\pi,-b\pi]\\
e_0(x)\quad & \mbox{if } x \in [-b\pi,a\pi] \\
e_a(x)\quad & \mbox{if } x \in [a\pi,\pi],
\end{array}
\right.
\end{equation}
where $e_b, e_0$ and $e_a$ are classical solutions of $e''''=\lambda^4 e$ on their intervals of definition. Consequently, on each span,
$e(x)$ is a linear combination of the four functions
\begin{equation}\label{labase}
\cos(\lambda x),\quad\sin(\lambda x),\quad\cosh(\lambda x),\quad\sinh(\lambda x).
\end{equation}
We thus seek the coefficients of all the nonzero linear combinations of \eqref{labase} for which $e(x)$ as in \eqref{decomposizione} is a solution
of \eqref{autovsym}.\par
By imposing that $e_b(-\pi)=e_b''(-\pi)=e_b(-b\pi)=0$, one finds that ($K_b \in \mathbb{R}$)
$$
e_b(x)=K_b\Big\{\sinh\big[\lambda(1-b)\pi\big]\sin\big[\lambda(x+\pi)\big]-\sin\big[\lambda(1-b)\pi\big]\sinh\big[\lambda(x+\pi)\big]\Big\}.
$$
By imposing that $e_a(\pi)=e_a''(\pi)=e_a(a\pi)=0$, one finds that ($K_a \in \mathbb{R}$)
$$
e_a(x)=K_a\Big\{\sinh\big[\lambda(1-a)\pi\big]\sin\big[\lambda(x-\pi)\big]-\sin\big[\lambda(1-a)\pi\big]\sinh\big[\lambda(x-\pi)\big]\Big\}.
$$
Finally, by requiring that $e_0(-b\pi)=e_0(a\pi)=0$ one deduces that ($K_1, K_2 \in \mathbb{R}$)
\begin{eqnarray*}
e_0(x) &=& K_1\Big\{\sinh\big[\lambda(a+b)\pi\big]\cos(\lambda x)+\cos(\lambda b\pi)\sinh\big[\lambda(x-a\pi)\big]-
\cos(\lambda a\pi)\sinh\big[\lambda(x+b\pi)\big]\Big\}\\
 &\ & +K_2\Big\{\sinh\big[\lambda(a+b)\pi\big]\sin(\lambda x)-\sin(\lambda b\pi)\sinh\big[\lambda(x-a\pi)\big]-
\sin(\lambda a\pi)\sinh\big[\lambda(x+b\pi)\big]\Big\}.
\end{eqnarray*}
Then \eqref{autovsym} is satisfied if and only if these functions match $C^2$ in $-b\pi$ and in $a\pi$, so that the eigenfunction $e$ in \eq{decomposizione} belongs
to $C^2(\overline{I})$. This leads to a $4\times4$ linear system of $K_b$, $K_a$, $K_1$, $K_2$, and nontrivial solutions are obtained by imposing that the determinant
of such system is equal to $0$. This requirement is precisely the condition in the statement.

\subsection{Proof of Theorem \ref{symmetriceigen}}\label{pfsymmetriceigen}

Thanks to the symmetry of the interval $I$, we can seek odd and even eigenfunctions of \eqref{autovsym} separately. Moreover, we can restrict our attention to the half-interval $[0, \pi]$, aiming at determining $e_0, e_a$ such that
\begin{equation}\label{esym}
e(x)=
\left\{
\begin{array}{ll}
e_0(x)\quad & \mbox{if } x \in [0, a\pi] \\
e_a(x)\quad & \mbox{if } x \in [a\pi, \pi]
\end{array}
\right.
\end{equation}
solves \eqref{autovsym} (recall \eqref{decomposizione}); the expression of $e$ over $[-\pi, 0]$ is then obtained by prolonging \eqref{esym} by symmetry, either even or odd. Following this scheme, we only have to impose the $C^2$-matching of the functions $e_0$ and $e_a$
in $x=a\pi$.\par
Using the same notation as in the proof of Theorem \ref{autovalorias}, when searching for odd solutions this leads to set $K_1 = 0$ and results into the system
\begin{equation}\label{sistemaproof6}
\footnotesize
\!\left\{
\begin{array}{l}
\! \!\! (\cos[\lambda(1\!-\!a)\pi]\!\sinh[\lambda(1\!-\!a)\pi]\!-\!\sin[\lambda(1\!-\!a)\pi]\!\cosh[\lambda(1\!-\!a)\pi])K_a \!=\!2\cosh(\lambda a \pi) (\cos (\lambda a \pi)\!\sinh (\lambda a \pi)\!-\!\sin (\lambda a \pi) \!\cosh (\lambda a \pi))K_2 \\
\! \!\! \sin[\lambda(1\!-\!a)\pi]\sinh[\lambda(1\!-\!a)\pi] K_a + 2\sin(\lambda a \pi)\sinh(\lambda a \pi) \cosh(\lambda a \pi)  K_2 = 0.
\end{array}
\right.
\end{equation}
Nontrivial solutions of this system exist when the associated determinant is equal to zero, namely when \eqref{auto1} holds. We distinguish two cases:
\begin{itemize}
\item[-] if $\lambda \in \mathbb{N}$, then \eqref{auto1} has to hold with both sides equal to $0$, so that necessarily $\lambda a \in \mathbb{N}$ and, consequently, also $\lambda(1-a) \in \mathbb{N}$. The second equation in \eqref{sistemaproof6} is thus identically satisfied, while the first equation reads
$$
\cos(\lambda \pi) \sinh[\lambda(1-a)\pi] K_a = 2\cosh(\lambda a \pi) \sinh (\lambda a \pi) K_2
$$
(recall that $\sin(\lambda a \pi)=0$, so that $\vert\cos(\lambda a \pi)\vert=1$). This yields the eigenfunction $\mathbf{O}_\lambda=\sin(\lambda x)$ for the choices $K_a=\frac{1}{\cos (\lambda \pi)\sinh[\lambda(1-a)\pi]}$, $K_2=\frac{1}{2\cosh(\lambda a \pi)\sinh (\lambda a \pi)}$;
\item[-] if $\lambda \notin \mathbb{N}$, then by \eqref{auto1} it is necessarily $\lambda a \notin \mathbb{N}$ and $\lambda(1-a) \notin \mathbb{N}$.
The explicit form of $\mathscr{O}_\lambda$ can then be derived by using the second equation of system \eqref{sistemaproof6}, choosing $K_a=-\frac{\sin(\lambda a \pi)}{\sin[\lambda(1-a) \pi]}$ and $K_2=\frac{\sinh[\lambda(1-a)\pi]}{2\cosh(\lambda a \pi) \sinh(\lambda a \pi)}$.
\end{itemize}
This completes the proof for odd eigenfunctions.\par
Similarly, the search for even solutions leads to set $K_2 = 0$ and results into the system
$$
\footnotesize
\!\left\{
\begin{array}{l}
\! \! \!(\sin[\lambda(1\!-\!a)\pi]\!\cosh[\lambda(1\!-\!a)\pi]\!-\!\cos[\lambda(1\!-\!a)\pi]\!\sinh[\lambda(1\!-\!a)\pi] )K_a \!=\! 2\sinh(\lambda a \pi) (\cos (\lambda a \pi)\!\sinh (\lambda a \pi)\!+\!\sin (\lambda a \pi)\!\cosh (\lambda a \pi))K_1 \\
\! \! \! \sin[\lambda(1\!-\!a)\pi]\sinh[\lambda(1\!-\!a)\pi]K_a + 2\cos(\lambda a\pi) \sinh(\lambda a \pi) \cosh(\lambda a\pi)  K_1 = 0.
\end{array}
\right.
$$
The proof is here analogous to the one for odd eigenfunctions: the determinant associated with the system is equal to zero if and only if
\eqref{auto2} holds, and a similar analysis to the one performed above yields the result.

\subsection{Proof of Theorem \ref{Michelle}}\label{pfMichelle}

The linear operator $L$ defined on $V(I)$ by $\langle Lu, v \rangle_V=\int_{I} u''v''$ is self-adjoint. Hence, its eigenfunctions form a basis of $V(I)$.
For the rest of the proof, it is convenient to make a change of variables. We set $\Lambda=\lambda a$ and $\alpha=1/a$, so that $\alpha \in (1, +\infty)$ and the eigenvalue problem \eqref{autov0} is rephrased as
\begin{equation}\label{autovautov0}
\int_J e'' v'' =\mu\int_J e v \qquad \forall v\in V(J),
\end{equation}
being $J=(-\alpha\pi, \alpha\pi)$, $\overline{J}=\overline{J}_-\cup \overline{J}_0 \cup \overline{J}_+$, with $\overline{J}_-=[-\alpha\pi, -\pi]$, $\overline{J}_0=[-\pi, \pi]$ and $\overline{J}_+=[\pi, \alpha\pi]$, and $V(J)=\{u\in H^2\cap H^1_0(J);\, u(-\pi)=u(\pi)=0\}\,$. With this procedure, we view the piers as fixed in $\pm \pi$ and move the endpoints $\pm \alpha \pi$ of the beam.
Accordingly, \eqref{auto1} and \eqref{auto2} are changed, respectively, into
\begin{equation}\label{autoauto1}
\sin(\Lambda \alpha \pi)\sinh(\Lambda \pi)\sinh[\Lambda(\alpha-1)\pi]=\sinh(\Lambda \alpha \pi)\sin(\Lambda \pi)\sin[\Lambda(\alpha-1)\pi]
\end{equation}
\begin{equation}\label{autoauto2}
\cos(\Lambda \alpha \pi)\cosh(\Lambda \pi)\sinh[\Lambda(\alpha-1)\pi]=\cosh(\Lambda \alpha \pi)\cos(\Lambda \pi)\sin[\Lambda(\alpha-1)\pi],
\end{equation}
and the corresponding eigenfunctions $\mathscr{O}_\Lambda$ and $\mathscr{E}_\Lambda$ are given, respectively, by the odd and the even extensions of
$$
{\footnotesize
\mathscr{O}_\lambda(x)=
\left\{
\begin{array}{ll}
\!\!\displaystyle \frac{\sinh[\Lambda(\alpha-1)\pi]}{\sinh (\Lambda \pi)} \,  (\sinh (\Lambda \pi) \sin (\Lambda x)- \sin(\Lambda \pi) \sinh (\Lambda x) ) & \mbox{if } x \in [0, \pi] \vspace{0.25cm} \\
\!\!\displaystyle \frac{\sin(\Lambda \pi)}{\sin[\Lambda(\alpha-1)\pi]} \, (\sin[\Lambda(\alpha-1)\pi]\sinh[\Lambda(x-\alpha\pi)]-\sinh[\Lambda(\alpha-1)\pi] \sin[\Lambda(x-\alpha\pi)]) & \mbox{if } x \in [\pi, \alpha\pi],
\end{array}
\right.
}
$$
and
$$
{\footnotesize
\mathscr{E}_\lambda(x)=
\left\{
\begin{array}{ll}
\!\!\displaystyle \frac{\sinh[\Lambda(\alpha-1)\pi]}{\cosh (\Lambda \pi)} \,  (\cosh (\Lambda \pi) \cos (\Lambda x)- \cos(\Lambda \pi) \cosh (\Lambda x) ) & \mbox{if } x \in [0, \pi] \vspace{0.25cm} \\
\!\!\displaystyle \frac{\cos(\Lambda \pi)}{\sin[\Lambda(\alpha-1)\pi]} \, (\sinh[\Lambda(\alpha-1)\pi] \sin[\Lambda(\alpha\pi-x)]- \sin[\Lambda(\alpha-1)\pi]\sinh[\Lambda(\alpha\pi-x)]) & \mbox{if } x \in [\pi, \alpha\pi],
\end{array}
\right.
}
$$
while $\mathbf{O}_\Lambda$ and $\mathbf{E}_\Lambda$ respectively become equal to $\sin(\Lambda x)$ and $\cos(\Lambda x)$ for $x \in J$.
Furthermore, the hyperbolas $\lambda=\Lambda_n/a$ in the $(a, \lambda)$-plane become the horizontal lines $\Lambda=\Lambda_n$ in the $(\alpha, \Lambda)$-plane, and the curves defining the eigenvalues become strictly decreasing, as we state in the following theorem. This is well depicted in Figures \ref{confronto2} and \ref{figurateoteo}.
\begin{figure}[ht]
\begin{center}
\includegraphics[scale=0.54]{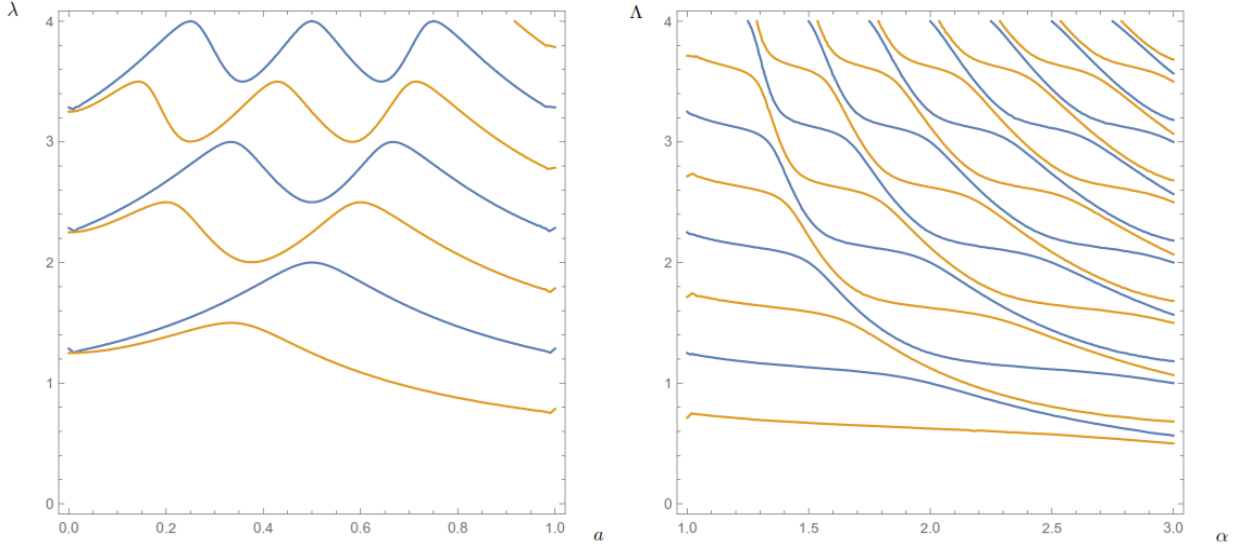}
\caption{The solutions $\lambda=\lambda(a)$ of \eqref{auto1} and \eqref{auto2} (left) and $\Lambda=\Lambda(\alpha)$ of \eqref{autoauto1} and \eqref{autoauto2} (right).}\label{confronto2}
\end{center}
\end{figure}
\par
The statement that we are going to prove in this new framework is the following.
\medbreak
\noindent
\textbf{Theorem \ref{Michelle}'}.
\emph{For any $\alpha > 1$, the eigenvalues $\mu=\Lambda^4$ of problem \eqref{autovautov0} are simple and form a countable set. Moreover, \eqref{autoauto1} and \eqref{autoauto2} implicitly define, for $\alpha\in(1, +\infty)$, a countable family of analytic and strictly decreasing
functions $\alpha \mapsto \Lambda_n(\alpha)$ satisfying $\Lambda_n(\alpha)\to\Lambda_{n}$ (see \eqref{clampedeigen}) for $\alpha\to1$ (n=0, 1, 2, \ldots).}
\medbreak
The proof of Theorem \ref{Michelle}' is based on two lemmas. With the first one, we characterize the eigenvalues of problem \eqref{autovautov0} as functions of $\alpha$. To this end,
we define $I, P: (1, +\infty) \times (0, +\infty)  \to \mathbb{R}$ as
$$\begin{array}{cc}
I(\alpha, \Lambda)= \sin(\Lambda \alpha\pi)\sinh(\Lambda\pi)\sinh[\Lambda(\alpha-1)\pi]-\sinh(\Lambda \alpha\pi)\sin(\Lambda\pi)\sin[\Lambda(\alpha-1)\pi],\\
P(\alpha, \Lambda)= \cos(\Lambda \alpha\pi)\cosh(\Lambda\pi)\sinh[\Lambda(\alpha-1)\pi]-\cosh(\Lambda\alpha\pi)\cos(\Lambda\pi)\sin[\Lambda(\alpha-1)\pi].
\end{array}$$
Moreover, we denote by $C_I$ and $C_P$ the $0$-level sets of $I$ and $P$, respectively, and we observe that they are characterized by \eq{autoauto1} and \eq{autoauto2}.
We prove the following result.

\begin{lemma}\label{IFT}
The equalities $I(\alpha,\Lambda)=0$ and $P(\alpha,\Lambda)=0$ implicitly define a family of analytic strictly decreasing functions $\alpha \mapsto \Lambda(\alpha)$ on each connected
component of $C_I$ and $C_P$, respectively.
\end{lemma}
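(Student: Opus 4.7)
The plan is to prove the claim by combining the analytic implicit function theorem (for local parameterization and analyticity) with a Hadamard-type domain variation formula (for strict monotonicity).

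First, both $I$ and $P$ are real-analytic on $(1,+\infty)\times(0,+\infty)$, being sums of products of $\sin$, $\cos$, $\sinh$, $\cosh$. Hence at any point $(\alpha_0,\Lambda_0)\in C_I$ with $\partial_\Lambda I(\alpha_0,\Lambda_0)\neq 0$ (and analogously for $P$), the analytic implicit function theorem furnishes a local analytic parameterization $\alpha\mapsto\Lambda(\alpha)$. The proof thus reduces to two subtasks: (i) verifying that $\partial_\Lambda I$ never vanishes on $C_I$ (and similarly $\partial_\Lambda P$ on $C_P$); (ii) showing that every such branch is strictly decreasing.

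For (ii), I would normalize an eigenfunction $e_n$ of \eqref{autovautov0} by $\int_{J_\alpha}e_n^2=1$ and view $e_n(\cdot;\alpha)$ as a smooth curve in $V(J_\alpha)$, working on a fixed reference interval after a smooth diffeomorphism. Differentiating $\mu_n(\alpha)=\int_{J_\alpha}(e_n'')^2$ with respect to $\alpha$ and integrating by parts on each of the three spans $J_-$, $J_0$, $J_+$, I use the following facts: both $e_n$ and $\partial_\alpha e_n$ are $C^2$ at the piers (by Theorem \ref{regular}, noting that $\partial_\alpha e_n(\pm\pi)=0$ since $e_n(\pm\pi;\alpha)=0$ for all $\alpha$), so the pier boundary contributions cancel; moreover, differentiating the hinged conditions at $\pm\alpha\pi$ yields $\partial_\alpha e_n(\pm\alpha\pi)=\mp\pi\, e_n'(\pm\alpha\pi)$ and $(\partial_\alpha e_n)''(\pm\alpha\pi)=\mp\pi\, e_n'''(\pm\alpha\pi)$. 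This produces the Hadamard-type formula
$$
\partial_\alpha \mu_n=2\pi\bigl[e_n'(\alpha\pi)\,e_n'''(\alpha\pi)+e_n'(-\alpha\pi)\,e_n'''(-\alpha\pi)\bigr].
$$
Substituting the explicit form of $e_n$ on the side spans obtained in the proof of Theorem \ref{autovalorias}, each product evaluates to $-K^2\Lambda^4\bigl(\sinh^2[\Lambda(\alpha-1)\pi]-\sin^2[\Lambda(\alpha-1)\pi]\bigr)<0$ for $\alpha>1$ and $\Lambda>0$ (the degenerate case $K=0$, corresponding to $e_n$ vanishing identically on a side span, cannot arise for a nontrivial eigenfunction of \eqref{autovautov0}, since it would force too many independent conditions on $e_n|_{J_0}$). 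Hence $\partial_\alpha\mu_n<0$, and $\Lambda_n(\alpha)=\mu_n(\alpha)^{1/4}$ is strictly decreasing along every analytic branch.

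For (i), I would deduce the nonvanishing of $\partial_\Lambda I$ from simplicity of the odd eigenvalues of \eqref{autovautov0}. A dimension count paralleling the one in the proof of Theorem \ref{autovalorias} shows that whenever $I(\alpha,\Lambda)=0$, the corresponding eigenfunction is unique up to a scalar multiple. The function $I$ is, up to sign, the characteristic determinant of the linear system obtained by imposing oddness, the hinged conditions at $\pm\alpha\pi$, and the $C^2$-matching at the piers; a standard Wronskian/Green's identity computation then relates $\partial_\Lambda I$ at a zero to the $L^2$-norm of the corresponding eigenfunction, which is strictly positive. The same argument applies to $P$ and the even eigenvalues. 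The main obstacle lies precisely in the careful bookkeeping of the integration by parts across the piers in the derivation of the Hadamard-type formula: since $e_n$ is only $C^2$ and not $C^3$ at $\pm\pi$, the jumps of $e_n'''$ generate boundary contributions from each of the three spans, which must be tracked and shown to cancel using $\partial_\alpha e_n(\pm\pi)=0$ and the $C^2$-regularity of $\partial_\alpha e_n$. Once this bookkeeping is handled, the remainder of the argument is essentially mechanical.
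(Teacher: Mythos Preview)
Your approach differs from the paper's. The paper computes $I_\alpha$ and $I_\Lambda$ explicitly on $C_I$, splitting into the cases $\Lambda\in\mathbb{N}$ and $\Lambda\notin\mathbb{N}$, and checks by inspection that both partial derivatives are nonzero and have the same sign; the Implicit Function Theorem then gives analyticity and $\Lambda'(\alpha)=-I_\alpha/I_\Lambda<0$ in one stroke. Your Hadamard-type variation formula for $\partial_\alpha\mu_n$ is a correct and more conceptual route to strict monotonicity, and it avoids the case distinction entirely: the integration-by-parts bookkeeping at the piers (using $\dot e_n(\pm\pi)=0$ and the $C^2$-matching of $\dot e_n$, obtained by differentiating the matching conditions in $\alpha$) is sound, and the sign computation on the side span is right. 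Incidentally, the ``main obstacle'' you flag---the pier bookkeeping---is in fact the easy part; the real issue is elsewhere.

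The gap is in your step (i). The assertion that ``a standard Wronskian/Green's identity relates $\partial_\Lambda I$ at a zero to the $L^2$-norm of the eigenfunction'' is classical for second-order Sturm--Liouville problems, but for a fourth-order problem with $C^2$-matching at two interior piers it is neither standard nor carried out here. Without $\partial_\Lambda I\neq 0$ you cannot apply the IFT to $I$, and then you have no differentiable branch $\Lambda(\alpha)$ on which to run the Hadamard formula---so as written the argument is circular. A clean remedy is to bypass (i) altogether: once simplicity is shown, analytic perturbation theory for self-adjoint operators (Kato--Rellich, after pulling back to a fixed reference domain) makes each simple eigenvalue $\mu_n(\alpha)$ real-analytic in $\alpha$; since $C_I$ coincides with the union of the odd eigenvalue curves, each connected component is then the graph of an analytic function, and your Hadamard computation supplies strict decrease. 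This, however, requires handling the $\alpha$-dependence of the domain $J_\alpha$ (or of the constraint subspace), which you only gesture at with ``a smooth diffeomorphism'' and which needs a careful sentence, since no single diffeomorphism fixes both the endpoints and the piers simultaneously.
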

\begin{proof} We carry on a detailed proof only for the odd eigenfunctions, since the arguments for the even ones are similar.
We set $C_I^0:=C_I \cap \{(\alpha, \Lambda)\mid \Lambda \in \mathbb{N}\}$
and we denote the partial derivatives with respect to $\Lambda$ and $\alpha$ through the subscripts ${}_\Lambda$ and ${}_\alpha$. Since
\begin{eqnarray*}
\footnotesize
I_\alpha(\alpha, \Lambda) & = & \Lambda \pi(\cos(\Lambda \alpha \pi) \sinh(\Lambda \pi) \sinh[\Lambda(\alpha-1)\pi] +\sin(\Lambda \alpha \pi) \sinh(\Lambda \pi) \cosh[\Lambda(\alpha-1)\pi] \\
& & - \cosh(\Lambda \alpha \pi) \sin(\Lambda \pi) \sin[\Lambda(\alpha-1)\pi]-\sinh(\Lambda \alpha \pi) \sin(\Lambda \pi) \cos[\Lambda(\alpha-1)\pi]),
\end{eqnarray*}
noticing that $\Lambda \in \mathbb{N}$ is equivalent to $\Lambda \alpha \in \mathbb{N}$ for the solutions of $I(\alpha, \Lambda)=0$ we immediately have that
\begin{equation}\label{Ia0}
{I_\alpha}_{{\big\vert}_{C_I^0}} = \Lambda \pi \cos(\Lambda \alpha \pi) \sinh(\Lambda \pi) \sinh[\Lambda(\alpha-1)\pi],
\end{equation}
while some computations show that
\begin{equation}\label{Ia}
{I_\alpha}_{{\big\vert}_{C_I \setminus C_I^0}}= \Lambda \pi \sin(\Lambda \pi) \sinh(\Lambda \pi)\frac{\sin[\Lambda(\alpha-1)\pi]^2-\sinh[\Lambda(\alpha-1)\pi]^2}{\sin[\Lambda(\alpha-1)\pi]\sinh[\Lambda(\alpha-1)\pi]}.
\end{equation}

On the other hand,
\begin{equation}\label{Il0}
{I_\Lambda}_{{\big\vert}_{C_I^0}}= \alpha\pi \cos(\Lambda \alpha \pi) \sinh(\Lambda \pi) \sinh[\Lambda(\alpha-1)\pi],
\end{equation}
and
\begin{equation}\label{Il}
{I_\Lambda}_{{\big\vert}_{C_I \setminus C_I^0}}= \frac{\alpha-1}{\Lambda} I_\alpha(\Lambda, \alpha) + \pi \sin[\Lambda(\alpha-1)\pi] \sinh[\Lambda(\alpha-1)\pi] \frac{\sin(\Lambda\pi)^2-\sinh(\Lambda\pi)^2}{\sin(\Lambda\pi)\sinh(\Lambda\pi)}.
\end{equation}
Therefore, $I_\alpha$ and $I_\Lambda$ are everywhere different from zero on $C_I$ and
\begin{equation}\label{segno}
I_\alpha\cdot I_\Lambda > 0 \quad\textrm{for every } (\alpha, \Lambda) \in C_I\, ;
\end{equation}
the sign in \eq{segno} is obtained on $C_I^0$ by using \eqref{Ia0} and \eqref{Il0}, while on $C_I \setminus C_I^0$ we see that \eqref{Ia} and
\eqref{Il} both have the sign of $-\sin(\Lambda\pi) \sin[\Lambda(\alpha-1)\pi]$. As for \eqref{Il}, notice that the second summand therein has the
same sign as the first. Thus \eqref{segno} follows and, by the Implicit Function Theorem, the curves depicted in Figure \ref{bello} are graphs of an
analytic function $\Lambda=\Lambda(\alpha)$ in any neighborhood of each point of $C_I$; moreover, the function $\alpha\mapsto \Lambda(\alpha)$ is strictly decreasing in view of \eqref{segno}.\par
The statement for even eigenfunctions can be obtained by reasoning in an analogous way on the function $P$, changing $C_I^0$ with
$C_P^0:=C_P\cap \Big\{(\alpha, \Lambda)\mid \Lambda -\frac{1}{2} \in \mathbb{N}\Big\}$.
\end{proof}

We now determine the limits of the function $\alpha \mapsto \Lambda(\alpha)$ for $\alpha \to 1$ and $\alpha \to +\infty$.

\begin{lemma}\label{autoclamped}
For any curve $\Lambda=\Lambda(\alpha)$ whose graph is a connected subset of $C_I$ or $C_P$, there exists a unique $n \in \mathbb{N}$ such that
\begin{equation}\label{limiteuno}
\lim_{\alpha \to 1^+} \Lambda(\alpha) = \Lambda_{n}.
\end{equation}
Moreover, this correspondence is one-to-one.
\end{lemma}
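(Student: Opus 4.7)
The approach is to combine the monotonicity provided by Lemma \ref{IFT} with an asymptotic expansion of $I$ and $P$ near $\alpha=1$, together with the variational characterization of the eigenvalues of \eqref{autovautov0} applied to a natural embedding of $H^2_0(-\pi,\pi)$ into $V(J)$.

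By Lemma \ref{IFT}, each branch $\alpha\mapsto\Lambda(\alpha)$ is analytic and strictly decreasing, so the limit $L:=\lim_{\alpha\to1^+}\Lambda(\alpha)\in(0,+\infty]$ exists. Setting $\eps=\alpha-1$ and combining the sum formulas for $\sin(\Lambda\alpha\pi)$, $\cos(\Lambda\alpha\pi)$, $\sinh(\Lambda\alpha\pi)$, $\cosh(\Lambda\alpha\pi)$ with the Taylor expansions $\sin(\Lambda\eps\pi)=\Lambda\eps\pi+O(\eps^3)$ and $\sinh(\Lambda\eps\pi)=\Lambda\eps\pi+O(\eps^3)$, a direct computation yields
\begin{align*}
I(1+\eps,\Lambda)&=\pi^2\Lambda^2\eps^2\bigl[\cos(\Lambda\pi)\sinh(\Lambda\pi)-\cosh(\Lambda\pi)\sin(\Lambda\pi)\bigr]+O(\eps^3),\\
P(1+\eps,\Lambda)&=-\pi^2\Lambda^2\eps^2\bigl[\sin(\Lambda\pi)\cosh(\Lambda\pi)+\sinh(\Lambda\pi)\cos(\Lambda\pi)\bigr]+O(\eps^3),
\end{align*}
with remainders analytic in $(\eps,\Lambda)$ and uniform on compact sets in $\Lambda$. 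Dividing by $\eps^2$ and letting $\eps\to0^+$, if $L<+\infty$ one obtains $\tan(L\pi)=\tanh(L\pi)$ (on $C_I$) or $\tan(L\pi)=-\tanh(L\pi)$ (on $C_P$); by \eqref{clampedeigen}, this forces $L=\Lambda_n$ for some odd $n$ or some even $n$, respectively.

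To rule out $L=+\infty$, I would invoke the variational characterization. The linear injection $E_0:H^2_0(-\pi,\pi)\to V(J)$ defined by extension by zero to $J\setminus[-\pi,\pi]$ is well-defined, because any $\phi\in H^2_0$ vanishes together with $\phi'$ at $\pm\pi$, so $E_0\phi$ lies in $H^2(J)\cap H^1_0(J)$ and vanishes at the pier points $\pm\pi$. Moreover $E_0$ preserves both $\int\phi^2$ and $\int(\phi'')^2$. Hence, by the min-max principle, the eigenvalues $\mu_n=\Lambda_n(\alpha)^4$ of \eqref{autovautov0} (ordered increasingly in $n$) satisfy $\Lambda_n(\alpha)\le\Lambda_n$ for every $\alpha>1$ and every $n\ge0$, so each branch stays bounded as $\alpha\to1^+$ and the limit $L$ is finite and hence is a clamped eigenvalue of the appropriate parity.

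For the one-to-one correspondence, I would introduce $\tilde I(\eps,\Lambda):=I(1+\eps,\Lambda)/\eps^2$ for $\eps>0$; by the expansion above, $\tilde I$ extends analytically to $\eps=0$ with $\tilde I(0,\Lambda)=\pi^2\Lambda^2\bigl[\cos(\Lambda\pi)\sinh(\Lambda\pi)-\cosh(\Lambda\pi)\sin(\Lambda\pi)\bigr]$, and a direct computation shows $\partial_\Lambda\tilde I(0,\Lambda_{2k+1})=-2\pi^3\Lambda_{2k+1}^2\sin(\Lambda_{2k+1}\pi)\sinh(\Lambda_{2k+1}\pi)\ne 0$ (since otherwise $\tan(\Lambda_{2k+1}\pi)=0=\tanh(\Lambda_{2k+1}\pi)$ would force $\Lambda_{2k+1}=0$). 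The implicit function theorem applied to $\tilde I$ at $(0,\Lambda_{2k+1})$ produces a unique local analytic branch through this point, which for $\eps>0$ is contained in $C_I$ and therefore prolongs uniquely, by analyticity and strict monotonicity, to a global branch from Lemma \ref{IFT}. The analogous argument with $\tilde P:=P/\eps^2$ works at every $(0,\Lambda_{2k})$. This produces the claimed bijection: injectivity follows from the IFT uniqueness, while surjectivity follows by matching each global branch, via its limit established in the previous step, with the unique local branch emanating from the corresponding clamped eigenvalue. The main obstacle is the bookkeeping of the Taylor expansion with uniform control of the remainder and the verification of the non-degeneracy $\partial_\Lambda\tilde I,\partial_\Lambda\tilde P\ne 0$ at every clamped eigenvalue; the variational embedding needed for boundedness is comparatively routine.
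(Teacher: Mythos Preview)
Your argument is correct and follows a genuinely different route from the paper's. Both proofs use the monotonicity from Lemma~\ref{IFT} to guarantee existence of the limit $L$, and both identify $L$ as a clamped eigenvalue via an asymptotic expansion as $\alpha\to1^+$; you expand $I(1+\eps,\Lambda)$ in $\eps$ with $\Lambda$ free and then substitute $\Lambda=\Lambda(\alpha)$, while the paper parametrizes directly along the curve. The real divergence is in the bijection: the paper traps each odd branch between consecutive hyperbolas $\mathcal{H}_n=\{\Lambda\alpha=n\}$, shows that $I$ changes sign across them to obtain one branch per strip, and invokes the sign of $I_\Lambda$ (already computed in Lemma~\ref{IFT}) for uniqueness. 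You instead desingularize, applying the implicit function theorem to $\tilde I=I/\eps^2$ at $(0,\Lambda_{2k+1})$, which is cleaner and avoids the hyperbola geometry entirely. The paper's approach has the advantage that those same hyperbolas are reused in the proof of Theorem~\ref{constant} (Fact~(I)), so the machinery is not wasted; your IFT route is more self-contained. Your variational bound via extension by zero, which the paper does not use, elegantly rules out $L=+\infty$; note however that to pass from ``the $n$-th ordered eigenvalue is bounded by $\Lambda_n$'' to ``each branch is bounded'' you need the branch to carry a fixed index, which follows once you restrict to a single parity class (where $I_\Lambda\ne0$, resp.\ $P_\Lambda\ne0$, forces simple zeros and hence non-crossing branches) and apply the min-max comparison on the odd, resp.\ even, subspace of $V(J)$.
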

\begin{proof}
We restrict again our attention to odd eigenfunctions, the arguments for the even ones being similar. Since $\alpha \mapsto \Lambda(\alpha)$ is
strictly decreasing by Lemma \ref{IFT}, the limit in \eqref{limiteuno} exists and we denote it by $\hat{\Lambda}$. To compute $\hat{\Lambda}$,
we analyze the behavior of $\Lambda(\alpha)$ in a neighborhood of the point $(\alpha, \Lambda)=(1,\hat{\Lambda})$, parametrizing its graph as the curve
$$
\Lambda(s)=\hat{\Lambda} + l(s),\quad \alpha(s)= 1 + \beta(s),\quad(s \geq 0),\quad l(0)=\beta(0)=0.
$$
We first notice that
$$
\frac{\sin[\Lambda\beta(s)\pi]}{\sinh[\Lambda\beta(s)\pi]}=1+o(1)\qquad\mbox{as }s\to0.
$$
Therefore, if we perform an asymptotic expansion of the identity $I(\alpha(s), \Lambda(s)) \equiv 0$ as $s\to0$, we obtain
$$
\sin\big[(\hat{\Lambda}+l(s))(1+\beta(s))\pi\big]\sinh\big[(\hat{\Lambda}+l(s))\pi\big]=\sinh\big[(\hat{\Lambda}+l(s))(1+\beta(s))\pi\big]
\sin\big[(\hat{\Lambda}+l(s))\pi\big](1+o(1)).
$$
Introducing the infinitesimal $\eps(s):=l(s)+\hat{\Lambda}\beta(s)$, the last identity reads
$$
\Big(\sin\big[\hat{\Lambda}\pi\big]+\cos\big[\hat{\Lambda}\pi\big]\eps(s)\pi+o(\eps(s))\Big)
\Big(\sinh\big[\hat{\Lambda}\pi\big]+\cosh\big[\hat{\Lambda}\pi\big]l(s)\pi+o(\eps(s))\Big)
$$
$$
=\Big(\sinh\big[\hat{\Lambda}\pi\big]+\cosh\big[\hat{\Lambda}\pi\big]\eps(s)\pi+o(\eps(s))\Big)
\Big(\sin\big[\hat{\Lambda}\pi\big]+\cos\big[\hat{\Lambda}\pi\big]l(s)\pi+o(\eps(s))\Big)(1+o(1)).
$$
After computing all the products, some terms cancel. Then, by dropping the lower order terms and by recalling that $I(\alpha,\Lambda)\equiv0$, we obtain
$$
\frac{\sin(\hat{\Lambda} \pi)}{\sinh(\hat{\Lambda}\pi)}= \frac{\cos(\hat{\Lambda} \pi)}{\cosh(\hat{\Lambda}\pi)},
$$
namely $\hat{\Lambda}^4$ satisfies \eqref{clampedeigen} and therefore it is an odd eigenvalue of \eqref{clamped}.\par
We now prove that for every eigenvalue $\Lambda_n$ of the clamped problem \eqref{clamped} there exists exactly one connected branch of eigenvalues
of \eq{autovsym} emanating from it. We consider the hyperbolas
$$
\mathcal{H}_n=\{(\alpha, \Lambda) \mid \Lambda \alpha = n \}, \quad n \in \mathbb{N}.
$$
From \eqref{clampedeigen} we deduce that for every couple of consecutive integers $n$ and $n+1$, there exists a unique odd eigenvalue $\hat{\Lambda}$
of the clamped problem \eqref{clamped} such that $(1, \hat{\Lambda})$ belongs to the region delimited by $\mathcal{H}_n$ and $\mathcal{H}_{n+1}$:
precisely, $\hat{\Lambda}= \Lambda_{2n-1}$. Moreover, since
$$
I(\alpha, \Lambda)\!=\!\sin(\Lambda \alpha \pi)\big[\!\sinh(\Lambda \pi) \sinh[\Lambda(\alpha-1)\pi] - \sinh(\Lambda \alpha \pi) \sin(\Lambda \pi) \cos(\Lambda \pi)\big]+
\sinh(\Lambda \alpha \pi) \sin^2(\Lambda \pi) \! \cos(\Lambda \alpha \pi)
$$
changes sign passing from $\mathcal{H}_n$ to $\mathcal{H}_{n+1}$, there exists at least one curve $\Lambda=\Lambda(\alpha)$ of solutions of $I(\alpha, \Lambda)=0$
included therein. In fact, such a curve is unique, since formula \eqref{Il} implies that on each branch emanating from $(1, \Lambda_{2n-1})$ the function $I_\Lambda$ has strictly the same sign: this would be a contradiction in presence of multiple branches, since on each branch it is $I(\alpha, \lambda)=0$.
\end{proof}

Lemmas \ref{IFT} and \ref{autoclamped} prove all the statements of Theorem \ref{Michelle}' and thus all the ones of Theorem \ref{Michelle}, except the limit $\lambda_n(a) \to \Lambda_n^*$ for $a \to 0$. For even eigenfunctions, the limit is obtained simply by letting $a \to 0$ in \eqref{auto2}, noticing that the limit equation is the one defining the eigenvalues $\Lambda_n^*$ (recall \eqref{clampedeigen} and \eqref{coincidenzaaut}). For odd eigenfunctions, this does not work since it produces an identity; the thesis instead follows from the just proved fact that $\lambda_n(a) \to \Lambda_n$ for $a \to 1$ (Lemma \ref{autoclamped}), together with the invariance of \eqref{auto1} upon the substitution $a \mapsto 1-a$.

\subsection{Proof of Theorem \ref{constant}}\label{pfconstant}

As in the proof of Theorem \ref{Michelle}, we work with the variables $\Lambda = \lambda a$ and $\alpha=1/a$. In view of Theorem \ref{Michelle}, we can sort the eigenvalues of \eqref{autovautov0} in increasing order $\{\Lambda_0, \Lambda_1, \Lambda_2,\ldots\}$ and label the corresponding eigenfunctions again as $\{e_0, e_1, e_2, \ldots\}$. Theorem \ref{constant} is now rephrased as follows.
\medbreak
\noindent
\textbf{Theorem \ref{constant}'}. \emph{For $\alpha > 1$, it holds that $i(e_n)=n$, for every $n=0, 1, 2, \ldots$. Moreover, as $\alpha$ increases, the zeros of $e_n=e_{\Lambda_n(\alpha)}$ move by couples from the central span to the side spans whenever the
curve $\Lambda=\Lambda_n(\alpha)$ intersects any of the horizontal lines $\{\Lambda=\Lambda_k\}$, for integers $k \geq 0$ having the same parity as $n$.}
\medbreak
The nice properties of the curves $\Lambda=\Lambda(\alpha)$, which we visualize in Figure \ref{figurateoteo}, bring further evidence of the convenience of the change of variables made above.
\begin{center}
\begin{figure}
\begin{center}
\begin{tikzpicture}
\node[inner sep=0pt] (whitehead) at (10,0)
    {\includegraphics[width=9cm]{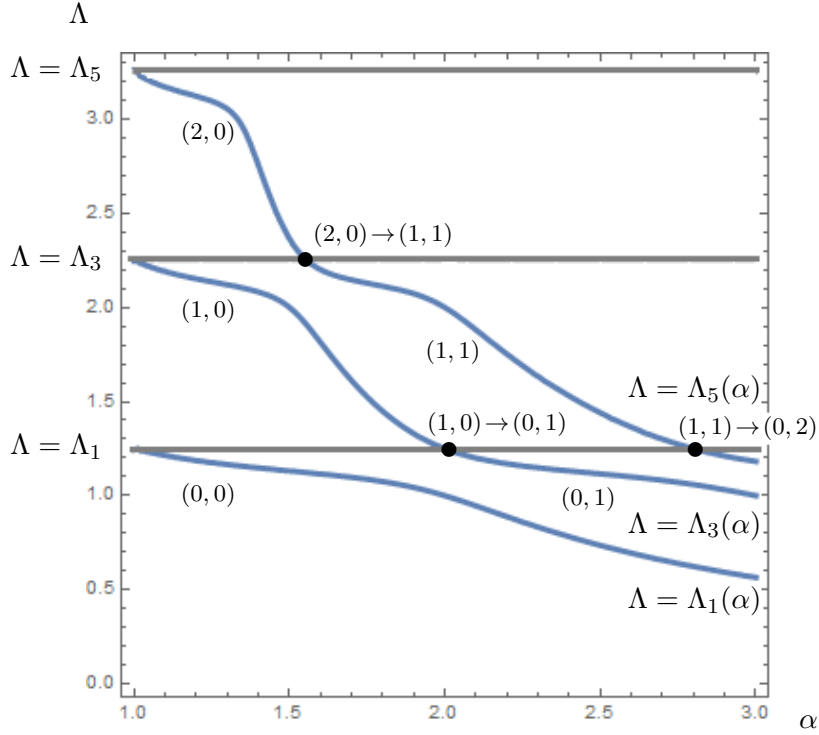}};

\node at (15, -4.4){$\alpha$};
\node at (5.4, 5){$\Lambda$};

\node at (5.1, 4.25){$\Lambda=\Lambda_5$};
\node at (7.1, 3.4){\footnotesize{{$(2,0)$}}};
\node at (9.4, 2.05){\footnotesize{{$(2,0) \!\to\! (1,1)$}}};
\node at (10.32, 0.5){\footnotesize{{$(1,1)$}}};
\node at (14.2, -0.5){\footnotesize{{$(1,1) \!\to\! (0,2)$}}};
\node at (8.38, 1.72){\footnotesize{{\Large{$\bullet$}}}};
\node at (13.51, -0.8){\footnotesize{{\Large{$\bullet$}}}};
\node at (13.5, 0){$\Lambda=\Lambda_5(\alpha)$};

\node at (5.1, 1.75){$\Lambda=\Lambda_3$};
\node at (7.1, 1.05){\footnotesize{{$(1,0)$}}};
\node at (10.9, -0.45){\footnotesize{{$(1,0) \!\to\! (0,1)$}}};
\node at (12.1, -1.45){\footnotesize{{$(0,1)$}}};
\node at (10.27, -0.8){\footnotesize{{\Large{$\bullet$}}}};
\node at (13.5, -1.8){$\Lambda=\Lambda_3(\alpha)$};

\node at (5.1, -0.75){$\Lambda=\Lambda_1$};
\node at (7.1, -1.4){\footnotesize{{$(0,0)$}}};
\node at (13.5, -2.8){$\Lambda=\Lambda_1(\alpha)$};
\end{tikzpicture}
\end{center}
\caption{A visual description of Theorem \ref{Michelle}' for odd eigenfunctions.}\label{figurateoteo}
\end{figure}
\end{center}
To prove Theorem \ref{constant}', we fix the notations
\begin{equation}\label{oddeven}
k_o(\Lambda)= \left[\Lambda-\frac{1}{2}\right], \qquad k_e(\Lambda)=[\Lambda], \qquad r(\Lambda)=\left[\Lambda(\alpha-1)-\frac{1}{2}\right],
\end{equation}
where $[\cdot]$ denotes the integer part and is set equal to $0$ for negative numbers. Moreover, we introduce the two functions
$$
g(s)=\frac{\sin(s \pi)}{\sinh(s \pi)}-\frac{\cos(s \pi)}{\cosh(s \pi)}, \qquad h(s)=\frac{\sin(s \pi)}{\sinh(s \pi)}+\frac{\cos(s \pi)}{\cosh(s \pi)}.
$$
With these preliminaries, we first state a technical result.

\begin{lemma}\label{zeriaut}
Let $\alpha >1$ be fixed. Let $\mu=\Lambda^4$ be an eigenvalue of \eqref{autovautov0}, with corresponding eigenfunction $e_\Lambda$. If $e_\Lambda$ is \emph{odd},
then its number of zeros in $J_0$ is
\begin{eqnarray*}
\displaystyle 2k_o(\Lambda)+1 \textrm{ if } g(\Lambda) < 0, \qquad 2k_o(\Lambda)+3 \textrm { if }g(\Lambda) > 0, & \quad \mbox{ when $k_o(\Lambda)$ is \emph{odd}}; \\
\displaystyle 2k_o(\Lambda)+1 \textrm{ if } g(\Lambda) > 0, \qquad 2k_o(\Lambda)+3 \textrm { if }g(\Lambda) < 0, & \quad \mbox{ when $k_o(\Lambda)$ is \emph{even}}.
\end{eqnarray*}
If $e_\Lambda$ is \emph{even}, then its number of zeros in $J_0$ is
\begin{eqnarray*}
\displaystyle 2k_e(\Lambda) \textrm{ if } h(\Lambda) < 0, \qquad 2k_e(\Lambda)+2 \textrm { if }h(\Lambda) > 0, & \quad \mbox{ when $k_e(\Lambda)$ is \emph{odd}}; \\
\displaystyle 2k_e(\Lambda) \textrm{ if } h(\Lambda) > 0, \qquad 2k_e(\Lambda)+2 \textrm { if }h(\Lambda) < 0, & \quad \mbox{ when $k_e(\Lambda)$ is \emph{even}}.
\end{eqnarray*}
Finally, the number of zeros of $e_\Lambda$ in $J_-$ (and thus in $J_+$) is
\begin{eqnarray*}
\displaystyle r(\Lambda) \textrm{ if } g[\Lambda(\alpha-1)] < 0, \qquad r(\Lambda)+1 \textrm { if }g[\Lambda(\alpha-1)] > 0, & \quad \mbox{ when $r(\Lambda)$ is \emph{odd}}; \\
\displaystyle r(\Lambda) \textrm{ if } g[\Lambda(\alpha-1)] > 0, \qquad r(\Lambda)+1 \textrm { if }g[\Lambda(\alpha-1)] < 0, & \quad \mbox{ when $r(\Lambda)$ is \emph{even}}.
\end{eqnarray*}
\end{lemma}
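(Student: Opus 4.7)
My plan is to reduce the zero-counting problem to a level-set problem for the functions $\xi(y):=\sin(y)/\sinh(y)$ (odd eigenfunctions) and $\eta(y):=\cos(y)/\cosh(y)$ (even eigenfunctions), and then carry out a case analysis. First, starting from the explicit expressions in Theorem \ref{symmetriceigen}, I would factor the relevant restrictions of $\mathscr{O}_\Lambda$ and $\mathscr{E}_\Lambda$ on $[0,\pi]$ as $\sinh(\Lambda x)\sinh(\Lambda\pi)[\xi(\Lambda x)-\xi(\Lambda\pi)]$ and $\cosh(\Lambda x)\cosh(\Lambda\pi)[\eta(\Lambda x)-\eta(\Lambda\pi)]$, respectively. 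Since the hyperbolic prefactors are strictly positive on $(0,\pi]$, the number of zeros of the eigenfunction in $(0,\pi)$ equals the number of solutions in $y=\Lambda x\in(0,\Lambda\pi)$ of $\xi(y)=\xi(\Lambda\pi)$ (resp. $\eta(y)=\eta(\Lambda\pi)$). By oddness/evenness, the total count in $J_0$ is then $2N+1$ (odd) or $2N$ (even), where $N$ is the number of interior solutions. An identical substitution $z=\alpha\pi-x$ turns the lateral span problem into the same kind of count with $\Lambda(\alpha-1)$ in place of $\Lambda$, which is why only the odd-type quantity $r(\Lambda)$ appears for $J_-$ and $J_+$.

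Second, I would establish the qualitative picture of $\xi$: it is strictly decreasing from $1$ to $0$ on $(0,\pi]$; on each subsequent interval $((j-1)\pi,j\pi)$, $j\ge 2$, it has constant sign $(-1)^{j-1}$ and exactly one interior extremum $x_j$; the sequence $|\xi(x_j)|$ is strictly decreasing. These facts all follow from studying $\psi(y):=\cos(y)\sinh(y)-\sin(y)\cosh(y)$ (a positive multiple of $\xi'$), noting $\psi'(y)=-2\sin(y)\sinh(y)$, and using the elementary bound $|\sin y|\le y\le\sinh y$. Crucially, evaluating $\psi$ at $(j-\tfrac12)\pi$ and comparing to its values at $(j-1)\pi$ and $j\pi$ yields the \emph{localization} $x_j\in((j-1)\pi,(j-\tfrac12)\pi)$: the extremum always sits in the first half of its interval. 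Next, the equation $\xi'(y)=0$ rewrites as $\sin y/\sinh y=\cos y/\cosh y$, i.e.\ $g(y/\pi)=0$; so $\mathrm{sgn}\,g(\Lambda)$ determines on which side of $x_{k+1}$ the point $\Lambda\pi$ lies, with the precise dictionary flipping according to the parity of $k$ (maxima vs.\ minima). Analogous statements for $\eta$ give the role of $h$, with extrema on intervals $((k-\tfrac12)\pi,(k+\tfrac12)\pi)$.

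Third, I would assemble the count by splitting $(0,\Lambda\pi)$, with $\Lambda\pi\in(k\pi,(k+1)\pi)$, into the full subintervals $((j-1)\pi,j\pi)$ for $1\le j\le k$ plus the tail $(k\pi,\Lambda\pi)$. On $(0,\pi)$ the monotonicity of $\xi$ contributes $1$ if $c:=\xi(\Lambda\pi)>0$ and $0$ otherwise; on each full later interval, sign match plus $|c|<|\xi(x_j)|$ (automatic from the strict decrease of the extremal amplitudes) yields exactly $2$ solutions if the signs match and $0$ otherwise; the tail contributes $1$ if $\Lambda\pi>x_{k+1}$ and $0$ if $\Lambda\pi<x_{k+1}$. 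Summing these contributions and converting $k$ to $k_o(\Lambda)\in\{k-1,k\}$ via $\Lambda\lessgtr k+\tfrac12$ recovers the stated dichotomy $2k_o+1$ vs.\ $2k_o+3$. The even case runs identically with $\eta$ and $h$ playing the roles of $\xi$ and $g$, and $k_e(\Lambda)=[\Lambda]$ replacing $k_o(\Lambda)$. The main obstacle I foresee is the combinatorial bookkeeping: the sign of $g(\Lambda)$ swapping meaning with the parity of $k$, combined with the two possibilities $k_o=k-1$ or $k_o=k$, produces four subcases that must each be matched against the lemma's statement. The localization $x_{k+1}<(k+\tfrac12)\pi$ is the key simplification, because it forces $\Lambda\pi>x_{k+1}$ whenever $\Lambda\ge k+\tfrac12$, collapsing one subcase and leaving $g$ as the sole deciding quantity in the other.
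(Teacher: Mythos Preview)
Your approach is essentially the same as the paper's: both reduce the problem to counting solutions of $\xi(\Lambda x)=\xi(\Lambda\pi)$ with $\xi(y)=\sin y/\sinh y$, analyze the oscillatory shape of $\xi$, and link the sign of $g(\Lambda)$ to the derivative $e_\Lambda'(\pi^-)$. The only cosmetic difference is the partition---you split $(0,\Lambda\pi)$ by zeros of $\sin$ (intervals $((j-1)\pi,j\pi)$, each contributing $0$ or $2$), whereas the paper splits by extrema of $\sin$ (half-cycles $[(2s+1)\pi/2,(2s+3)\pi/2]$, each contributing $1$)---but both rely on the same monotonicity and decreasing-amplitude facts you spell out, and your localization $x_{k+1}<(k+\tfrac12)\pi$ is exactly what makes the residual-interval analysis in the paper go through.
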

\begin{proof}
We carry on a detailed proof only for odd eigenfunctions, the argument for the even ones being similar. In this case, $x=0$ is a zero of $e_\Lambda$
in $J_0$; on this interval, we thus restrict our attention to the sub-interval $(0,\pi)$. Therein, the zeros of $e_\Lambda$ correspond to the zeros of the function
$$
\zeta(x):=\frac{\sin(\Lambda x)}{\sinh(\Lambda x)} - \frac{\sin(\Lambda \pi)}{\sinh(\Lambda \pi)},
$$
which we now count by focusing on the behavior of the sine-function appearing in the numerator. To this end, observe first that $\zeta(x)$ is decreasing and positive for $x \in (0, \pi/2\Lambda]$. Therefore, if $\Lambda \leq 1/2$ then $\zeta(x) > 0$ for every $x \in (0, \pi)$, otherwise $\zeta(x) > 0$ for every $x \in (0, \pi/2\Lambda)$. In the former case, $k_o(\Lambda) = 0$ and the statement is proved since $\tan(\Lambda \pi) > \tanh(\Lambda \pi)$ for $\Lambda \in (0, 1/2]$. In the latter case, in any interval of the kind $[(2s+1)\pi/2\Lambda, (2s+3)\pi/2\Lambda]$, with $s \geq 0$ integer (corresponding to a ``half-cycle'' of the sine function between two consecutive extremal values), $\zeta(x)$ has exactly one zero in view of the Intermediate Value Theorem. Since there are exactly $k_o(\Lambda)$ such intervals strictly contained in $(0, \pi)$, we obtain $k_o(\Lambda)$ zeros in $(0, \pi)$. We then find at least $2k_o(\Lambda)+1$ zeros in $J_0$; however, there may be another zero in the ``residual'' interval $R=((2k_o(\Lambda)+3)\pi/2\Lambda, \pi)$, where the sine function does not perform a complete half-cycle. Indeed, since $\zeta(\pi)=0$, there is a supplementary zero in $(0, \pi)$ if and only if $\zeta$ has in $R$ a local extremum with opposite sign with respect to $\zeta((2k_o(\Lambda)+3)\pi/2\Lambda)$. This happens if and only if $e_\Lambda'(\pi^-) < 0$, so that $g(\Lambda) > 0$ when $k_o$ is odd, and $e_\Lambda'(\pi^-) > 0$, so that $g(\Lambda) < 0$ when $k_o$ is even.
Similar arguments prove the statement for the lateral spans. This completes the proof of the lemma.\end{proof}

In Lemma \ref{zeriaut} we have not considered the cases when $g(\Lambda)=0$ or $h(\Lambda)=0$ (and similar conditions on the lateral spans).
This is due to the fact that
\begin{equation}\label{doppio}
e_\Lambda(\pm \pi) = e_\Lambda'(\pm \pi)=0, \textrm{ that is, a double zero appears in the piers}\quad \Longleftrightarrow \quad g(\Lambda) h(\Lambda) = 0.
\end{equation}
Indeed, it is immediate to check that \eqref{doppio} may be fulfilled only for the eigenfunctions $\mathscr{O}_\Lambda$ and $\mathscr{E}_\Lambda$ for which, respectively, such condition reads
$$
g(\Lambda) = 0 \quad \textrm{ and } \quad  g[\Lambda(\alpha-1)]=0,
$$
$$
h(\Lambda) = 0 \quad \textrm{ and } \quad g[\Lambda(\alpha-1)]=0.
$$
We continue the proof of Theorem \ref{constant}, carrying on the details only for odd eigenfunctions, the case of even eigenfunctions being similar.
Let $\Lambda_{2m+1}$ be an odd eigenvalue
of \eqref{clamped} for some integer $m\ge0$. The strategy is to start from the point $(1,\Lambda_{2m+1})$ of the $(\alpha,\Lambda)$-plane and to show that the number of
zeros along the branch $(\alpha,\Lambda(\alpha))$ is constant, so that it equals $\#\{x \in J_0 \mid \psi_{2m+1}(x)=0\}=2m+1$ as can be checked by recalling
\eq{psieigen}; here, $\Lambda(\alpha)$ is the function found and characterized in Theorem \ref{Michelle}'.\par
To this end, we introduce some notations. We define the open strips determined by the
horizontal lines $\{\Lambda=\Lambda_{2k+1}\}$ in the $(\alpha,\Lambda)$-plane, that is,
\begin{equation}\label{lastriscia}
R_0= \mathbb{R}_+ \times (0, \Lambda_1), \qquad R_k=\mathbb{R}_+ \times (\Lambda_{2k-1}, \Lambda_{2k+1}), \qquad k=1, 2, 3, \ldots;
\end{equation}
notice that, since $\Lambda_n \approx n/2+3/4$ by \eq{clampedeigen}, such strips
contain one horizontal line corresponding to an integer $\Lambda$; more precisely, the line $\{\Lambda=k\}$ is contained in $R_{k-1}$.
\par
We denote by $\mathcal{C}_{2m+1}$ the branch $(\alpha,\Lambda(\alpha))$ that begins at $(1,\Lambda_{2m+1})$ and
continues in the direction of increasing $\alpha$ and, for $k \leq m$,
we set $\mathcal{C}_{2m+1}^k:=\mathcal{C}_{2m+1}\cap R_k$. Notice that, recalling \eqref{clampedeigen}, it holds
\begin{equation}\label{segnoimportante}
\textrm{sgn}\, g(\Lambda) = (-1)^k \quad \textrm{ on } \mathcal{C}_{2m+1}^k.
\end{equation}
By \eqref{doppio}, we know that in correspondence of $\mathcal{C}_{2m+1} \cap \{\Lambda=\Lambda_{2k+1}\}$ the eigenfunction
$e_{\Lambda(\alpha)}$ displays a double zero in the piers.
By \eq{clampedeigen} and \eq{psieigen}, we have that $\Lambda_{2m+1}\approx m+5/4$ and $\psi_{2m+1}$ has $2m+1$ zeros in $J_0$. Hence, if $\alpha-1$ is
sufficiently small, by continuity it turns out that $k_o(\Lambda)=k_o(\Lambda_{2m+1})$ and $r(\Lambda)=0$: therefore, the numbers of zeros on
the central span $J_0$ and on the lateral spans $J_\pm$ are maintained equal to $2m+1$ and $0$, respectively, in a neighborhood of $(1,\Lambda_{2m+1})$
while following $\mathcal{C}_{2m+1}$ (notice that $g$ has constant sign on $\mathcal{C}_{2m+1}^m$, that $g[\Lambda(\alpha-1)] > 0$ for $\alpha$ close to $1$ and that the procedure to count the number of zeros in $J_0$ used to prove Lemma \ref{zeriaut} holds as well for the eigenfunctions of the clamped problem on $[-\pi, \pi]$).
By Lemma \ref{zeriaut}, on growing of $\alpha$ these numbers are ruled
by the maps $k_o(\Lambda(\alpha))$ and $r(\Lambda(\alpha))$ defined in \eqref{oddeven}; indeed, in view of \eq{clampedeigen}, following the (decreasing) branch
$\mathcal{C}_{2m+1}$ the curve $\Lambda=\Lambda(\alpha)$ alternatively crosses the lines $\{\Lambda=n+1/2\}$ for integers $n \leq m$, and $\{\Lambda=\Lambda_{2k+1}\}$ for integers
$k<m$. The following lemmas detect the changes in the number of zeros of the eigenfunctions on each span in correspondence of such crossings.
\begin{lemma}\label{centrale}
The number of zeros of $e_{\Lambda(\alpha)}$ in $J_0$ decreases by $2$ at each crossing of $\Lambda=\Lambda(\alpha)$ with the lines $\{\Lambda=\Lambda_{2k+1}\}$, elsewhere it does not vary.
\end{lemma}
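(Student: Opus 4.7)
The plan is to use Lemma \ref{zeriaut} as the main combinatorial tool, combined with a careful accounting of the two discrete quantities that govern the zero count in $J_0$: the integer $k_o(\Lambda)=[\Lambda-\frac{1}{2}]$ and the sign of $g(\Lambda)$. Along the branch $\mathcal{C}_{2m+1}$, the function $\alpha\mapsto\Lambda(\alpha)$ is analytic and strictly decreasing by Theorem \ref{Michelle}, so the number of zeros of $e_{\Lambda(\alpha)}$ in $J_0$ is locally constant in $\alpha$ and can only jump when either $k_o$ or $\mathrm{sgn}\, g$ does, namely when $\Lambda$ meets a half-integer $n+\frac{1}{2}$ or a zero of $g$. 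By \eqref{clampedeigen} the zeros of $g$ are exactly the numbers $\Lambda_{2k+1}$, and since $\tan(\Lambda_{2k+1}\pi)=\tanh(\Lambda_{2k+1}\pi)>0$ one has $\Lambda_{2k+1}\notin\frac{1}{2}+\mathbb{Z}$, so the two kinds of crossings are disjoint and can be analyzed separately.

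At a half-integer level $\Lambda=n+\frac{1}{2}$ one has $\cos((n+\frac{1}{2})\pi)=0$, hence $g(n+\frac{1}{2})=(-1)^n/\sinh((n+\frac{1}{2})\pi)\ne 0$, so $g$ keeps a constant sign $(-1)^n$ in a neighborhood. Meanwhile $k_o$ drops from $n$ (for $\Lambda$ slightly above $n+\frac{1}{2}$) to $n-1$ (slightly below). A short case distinction on the parity of $n$ applied to Lemma \ref{zeriaut} shows that, in each of the four subcases, both sides of the crossing contribute $2n+1$ zeros: the gain from $2k_o+1$ to $2k_o+3$ triggered by the change of relative parity of $k_o$ against the fixed sign of $g$ is exactly offset by the drop $k_o\to k_o-1$. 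Therefore half-integer crossings leave the number of zeros in $J_0$ invariant.

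At a level $\Lambda=\Lambda_{2k+1}$ with $0\le k\le m-1$, the sign of $g$ flips from $(-1)^{k+1}$ on $\mathcal{C}_{2m+1}^{k+1}$ (just above) to $(-1)^k$ on $\mathcal{C}_{2m+1}^k$ (just below) by \eqref{segnoimportante}. Since $\Lambda_{2k+1}\approx k+\frac{5}{4}$, one has $k_o(\Lambda_{2k+1})=k$, and this value is constant in a neighborhood of $\Lambda_{2k+1}$. Applying Lemma \ref{zeriaut} with the same $k_o=k$ on both sides but opposite signs of $g$, the count reads $2k+3$ just above (the ``wrong-sign'' regime relative to the parity of $k$) and $2k+1$ just below, giving a net decrease of exactly $2$.

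The subtle point to be explicit about is that these are truly the only ways the count can change: strict monotonicity of $\Lambda(\alpha)$ rules out tangential touches of the relevant levels, and Lemma \ref{zeriaut} already captures all zeros in the interior of $J_0$. The meaning of the decrease by $2$ at $\Lambda=\Lambda_{2k+1}$ is further clarified by \eqref{doppio}, which states that a double zero appears at the piers precisely when $g(\Lambda)=0$; thus the two zeros that disappear from the interior of $J_0$ coalesce with the piers and then emerge into the side spans as $\alpha$ continues to increase, in agreement with the geometric picture described in Theorem \ref{constant}.
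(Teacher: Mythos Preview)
Your proof is correct and follows essentially the same approach as the paper: both arguments reduce the question, via Lemma~\ref{zeriaut}, to tracking the pair $(k_o(\Lambda),\mathrm{sgn}\,g(\Lambda))$ along the monotone branch and checking separately the crossings of half-integer levels (where $k_o$ drops by one but $g$ keeps its sign, so the count is preserved) and of the levels $\Lambda_{2k+1}$ (where $k_o$ is constant but $g$ flips sign, so the count drops by $2$). Your version is in fact slightly more explicit than the paper's, notably in verifying that $\Lambda_{2k+1}\notin\tfrac12+\mathbb{Z}$ so the two kinds of crossings are disjoint, and in computing $g(n+\tfrac12)=(-1)^n/\sinh((n+\tfrac12)\pi)$ directly rather than invoking \eqref{segnoimportante} for the half-integer case.
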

\begin{proof}
Recalling \eqref{segnoimportante}, we see that
whenever the branch $\Lambda=\Lambda(\alpha)$ crosses the lines $\Lambda=\Lambda_{2k+1}$, $k<m$, the integer $k_o(\Lambda)$ remains constant but $g(\Lambda)$ changes sign, so that the number of zeros on $J_0$ changes from $2k_o(
\Lambda)+3$ to $2k_o(\Lambda)+1$. This proves the first part of the statement.

On the other hand, any crossing with the lines $\{\Lambda = n+1/2\}$ does not modify the total number of zeros in the spans, since in this case $k_o(\Lambda)$ changes parity but $g(\Lambda)$ maintains the same sign. To see this, assume first that $m$ is odd; then, the sign of $g(\Lambda)$ is negative in view of \eqref{segnoimportante}, implying that above the line $\Lambda=n+1/2$ it is $k_o(\Lambda) = m$ (odd) and the corresponding $e_\Lambda$ has $2k_o(\Lambda)+1 = 2m+1$ zeros, and below such line $k_o(\Lambda)=m-1$ (even) and $e_\Lambda$ possesses $2k_o(\Lambda)+3=2(m-1)+3=2m+1$ zeros. If $m$ is even, a similar argument holds.
\end{proof}

\begin{lemma}\label{cent->lat}
The numbers of zeros of $e_{\Lambda(\alpha)}$ in $J_-$ and in $J_+$ both increase by one unit at every crossing of $\Lambda=\Lambda(\alpha)$ with the lines $\{\Lambda=\Lambda_{2k+1}\}$, elsewhere they do not vary.
\end{lemma}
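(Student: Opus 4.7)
The plan is to mirror the strategy of Lemma~\ref{centrale} for the lateral spans. Using Lemma~\ref{zeriaut}, the number of zeros in $J_+$ (and in $J_-$ by symmetry) is expressed through $r(\Lambda)=[\Lambda(\alpha-1)-1/2]$ and $\sgn g[\Lambda(\alpha-1)]$. A casuistic on the parity of $r$ and on the sign of $g[\Lambda(\alpha-1)]$, entirely parallel to the one carried out for the central span in Lemma~\ref{centrale} but with $\Lambda$ replaced by $\Lambda(\alpha-1)$, shows that this count equals $l$ whenever $\Lambda(\alpha-1)\in R_l=(\Lambda_{2l-1},\Lambda_{2l+1})$ (with the convention $\Lambda_{-1}:=0$ so that $R_0=(0,\Lambda_1)$). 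In particular, the count is locally constant except when $\Lambda(\alpha-1)$ crosses one of the values $\Lambda_{2l+1}$.

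Next I would invoke the characterization \eqref{doppio} of double zeros at the piers: for the odd, non-smooth eigenfunctions $\mathscr{O}_\Lambda$ this reads as the simultaneous conditions $g(\Lambda)=0$ and $g[\Lambda(\alpha-1)]=0$, and each of these equations is equivalent to its argument being some $\Lambda_{2j+1}$ (recall \eqref{clampedeigen}). Hence along $\mathcal{C}_{2m+1}$ the vanishing of $g[\Lambda(\alpha-1)]$ is equivalent to the crossing of a line $\{\Lambda=\Lambda_{2k+1}\}$, and outside of such crossings the count in $J_+$ does not vary.

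The remaining and crucial step is to show that at every such crossing the count \emph{increases} by one, equivalently that $\Lambda(\alpha-1)$ always passes from $R_l$ to $R_{l+1}$. I would deduce this from the global monotonicity of $\alpha\mapsto\Lambda(\alpha)(\alpha-1)$ along $\mathcal{C}_{2m+1}$. Implicit differentiation of $I(\alpha,\Lambda(\alpha))\equiv 0$ yields
\begin{equation*}
\frac{d}{d\alpha}\bigl[\Lambda(\alpha)(\alpha-1)\bigr]=\frac{\Lambda\,I_\Lambda-(\alpha-1)I_\alpha}{I_\Lambda},
\end{equation*}
and, substituting \eqref{Ia}-\eqref{Il}, the summand $\tfrac{\alpha-1}{\Lambda}I_\alpha$ appearing in $I_\Lambda$ cancels $(\alpha-1)I_\alpha$ in the numerator, leaving
\begin{equation*}
\Lambda\pi\,\sin[\Lambda(\alpha-1)\pi]\sinh[\Lambda(\alpha-1)\pi]\,\frac{\sin^2(\Lambda\pi)-\sinh^2(\Lambda\pi)}{\sin(\Lambda\pi)\sinh(\Lambda\pi)}.
\end{equation*}
Because $\sin^2(\Lambda\pi)-\sinh^2(\Lambda\pi)<0$, this has the sign of $-\sin(\Lambda\pi)\sin[\Lambda(\alpha-1)\pi]$, which is exactly the sign of $I_\Lambda$ on $\mathcal{C}_I\setminus\mathcal{C}_I^0$ by the very same computation used to obtain \eqref{segno} in the proof of Lemma~\ref{IFT}; hence the ratio is strictly positive, and the monotonicity extends to the entire branch by continuity. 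Combined with the endpoint values $\Lambda(\alpha-1)\to 0$ as $\alpha\to 1^+$ and $\Lambda(\alpha-1)\to\Lambda_{2m+1}^\ast=\Lambda_{2m+1}$ as $\alpha\to +\infty$ (the latter coming from $\lambda_n(a)\to\Lambda_n^\ast$ as $a\to 0$, specialized via \eqref{coincidenzaaut} to odd indices), this forces $\Lambda(\alpha-1)$ to traverse $\Lambda_1,\Lambda_3,\ldots,\Lambda_{2m-1}$ in increasing order, bumping $l$ by one at each crossing with $\{\Lambda=\Lambda_{2k+1}\}$. The main delicate point of the plan is precisely this sign analysis, relying on the fortunate cancellation between the two summands of $I_\Lambda$ which, to the best of my expectation, is the only way to obtain a tractable and definite-sign expression for the derivative.
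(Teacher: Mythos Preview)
Your approach is correct and takes a genuinely different route from the paper's. Both proofs rest on Lemma~\ref{zeriaut}, but the paper proceeds geometrically: it introduces the hyperbolas $\widetilde{\mathcal H}_n=\{\Lambda(\alpha-1)=n+\tfrac12\}$ where $r$ jumps, shows via a change of variables (Fact~III) that $\widetilde{\mathcal H}_n$ meets $\mathcal C_{2m+1}$ exactly once and in the strip $R_{m-n}$, and separately establishes (Fact~II, by rewriting \eqref{autoauto1}) that $\sgn g[\Lambda(\alpha-1)]=(-1)^{m+k+2}$ on $\mathcal C_{2m+1}^k$. It then checks case by case that the $\widetilde{\mathcal H}_n$-crossings leave the lateral count unchanged while the $\{\Lambda=\Lambda_{2k+1}\}$-crossings raise it by one.

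You bypass Facts~II and~III entirely by recognizing that the $J_+$-count is a function of the single quantity $\Lambda(\alpha-1)$ (equal to $l$ on $(\Lambda_{2l-1},\Lambda_{2l+1})$, by the same casuistic as Lemma~\ref{centrale}), and then proving directly that $\alpha\mapsto\Lambda(\alpha)(\alpha-1)$ is strictly increasing along the branch via implicit differentiation and the cancellation built into \eqref{Il}. Combined with the endpoint behavior, this forces the $m$ crossings and their direction. This is more analytic and arguably cleaner; the paper's route, by contrast, yields the extra geometric information of \emph{where} (in which strip $R_{m-n}$) each change of $r$ occurs. One small point worth spelling out: the equivalence ``$g[\Lambda(\alpha-1)]=0\Leftrightarrow g(\Lambda)=0$ along $\mathcal C_{2m+1}$'' is really a consequence of the $C^1$-regularity of $e_\Lambda$ at the pier (so $e_\Lambda'(\pi^-)=e_\Lambda'(\pi^+)$, and each side vanishes iff the corresponding $g$-condition holds); your ``Hence'' after invoking \eqref{doppio} is correct but this $C^1$ step is the actual reason, and deserves one explicit sentence.
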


\begin{proof}
We begin by stating three facts which follow from some calculus arguments. For the sake of brevity, we only give a hint of their proof.
\par
{\bf Fact (I).} \emph{The hyperbolas
$$
\mathcal{H}_n=\left\{(\alpha, \Lambda) \in \mathbb{R}_+^2 \mid \Lambda \alpha =n\right\}, \quad n=1, 2, \ldots,
$$
are such that $\mathcal{C}_{2m+1}$ is contained in the region between $\mathcal{H}_{m+1}$ and $\mathcal{H}_{m+2}$ and is tangent to $\mathcal{H}_{m+2}$,
see the left picture in Figure \ref{regolari}. The tangency points correspond to couples $(\alpha,\Lambda)$ for which $e_\Lambda\in C^4(\overline{J})$.}
\par
To prove this, one can change variables by introducing the homeomorphism
\begin{equation}\label{deffi}
\Phi: (1, +\infty) \times (0, +\infty) \to \{(x, y) \in \mathbb{R}_+^2 \mid x > y\}, \quad (x, y)=\Phi(\alpha, \Lambda)=(\Lambda \alpha \pi, \Lambda \pi),
\end{equation}
so as to straighten the hyperbolas $\mathcal{H}_n$, which become vertical lines, see the right picture in Figure \ref{regolari}.
One can then check that $\Phi(\mathcal{C}_{2m+1})$ is contained in the region between $\Phi(\mathcal{H}_{m+1})$ and $\Phi(\mathcal{H}_{m+2})$ and
that it is tangent to $\Phi(\mathcal{H}_{m+2})$. The second part of the statement follows from the fact that $\Lambda \alpha=\lambda$.
\begin{figure}[ht]
\begin{center}
\includegraphics[scale=0.54]{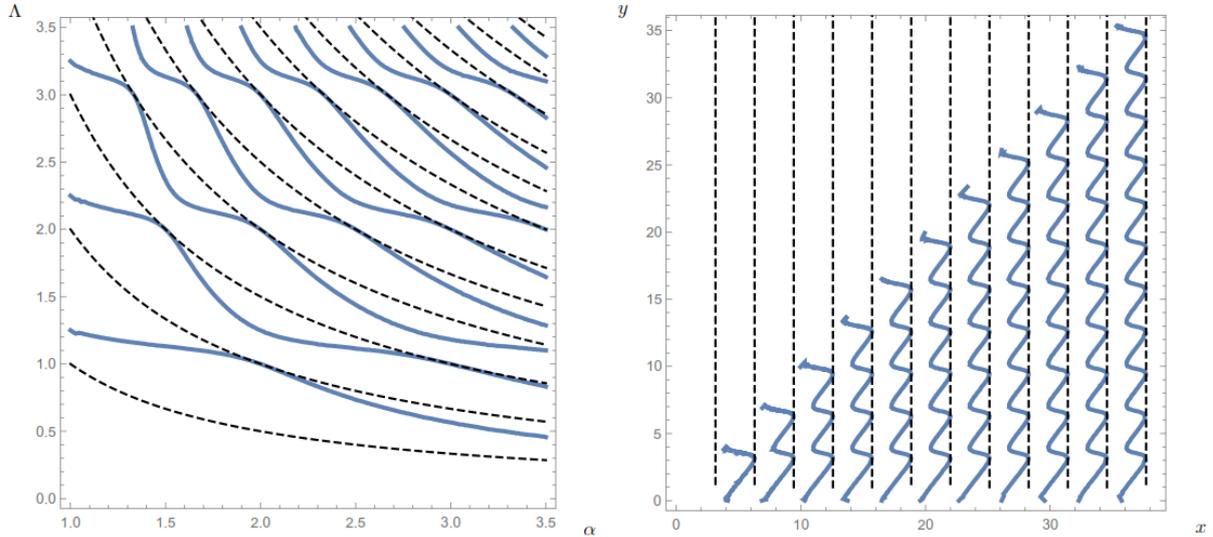}
\caption{The hyperbolas $\mathcal{H}_n$ (left, dashed) and their images (right) through the map $\Phi$ in \eq{deffi}.}\label{regolari}
\end{center}
\end{figure}

{\bf Fact (II).} \emph{On the branch $\alpha \mapsto \Lambda(\alpha)$, it holds $\textnormal{sgn} \, g[\Lambda(\alpha-1)] = -\textnormal{sgn} \, [g(\Lambda)\sin(\Lambda \alpha \pi)]$, so that
$$
\textnormal{sgn}\,g[\Lambda(\alpha-1)] = (-1)^{m+k+2} \quad \emph{ on } \mathcal{C}_{2m+1}^k,
$$
implying in particular}
$$
g[\Lambda(\alpha-1)] > 0 \quad \textrm{ on } \mathcal{C}_{2m+1}^m.
$$
This follows by rewriting \eq{autoauto1} as
\begin{eqnarray*}
& & \sin(\Lambda \pi) \sinh(\Lambda\pi) (\cos[\Lambda(\alpha-1)\pi] \sinh[\Lambda(\alpha-1)\pi]-\cosh[\Lambda(\alpha-1)\pi] \sin[\Lambda(\alpha-1)\pi]) = \\
& & \sin[\Lambda(\alpha-1)\pi] \sinh[\Lambda(\alpha-1)\pi] (\sin(\Lambda\pi) \cosh(\Lambda \pi)-\cos(\Lambda \pi) \sinh(\Lambda \pi)),
\end{eqnarray*}
and using \eqref{segnoimportante} and Fact (I), which says that
$$
\textnormal{sgn}\, \sin(\Lambda \alpha \pi)=(-1)^{m+1} \quad \textrm{ on } \mathcal{C}_{2m+1}.
$$

{\bf Fact (III).} \emph{The hyperbola
$$
\widetilde{\mathcal{H}}_n = \left\{(\alpha, \Lambda) \in \mathbb{R}_+^2 \mid \Lambda(\alpha-1) =n+\frac{1}{2}\right\}, \quad n=0, 1, \ldots
$$
crosses $\mathcal{C}_{2m+1}$ if and only if $n \leq m$. In this case, the intersection between $\widetilde{\mathcal{H}}_n$ and $\mathcal{C}_{2m+1}$ is unique and
takes place in the strip $R_{m-n}$ (recall the notation introduced in \eqref{lastriscia}).}\par
This is quite evident in the left picture in Figure \ref{iperboli}. Through the change of variables $\Phi$ defined in \eqref{deffi} one may again
straighten the considered hyperbolas, as displayed in the right picture in Figure \ref{iperboli}.
Then one rewrites \eq{autoauto1} in this new coordinate system as
\begin{equation}\label{autovgirati}
\sin(x)\sinh(y)\sinh(x-y)=\sinh(x)\sin(y)\sin(x-y).
\end{equation}
Noticing that the regions $\Phi(R_k)$ remain horizontal strips, while the vertical lines $\Phi(\mathcal{H}_n)$ depicted in the right picture in Figure \ref{regolari} determine lower and upper bounds for $x$
on each transformed branch $\Phi(\mathcal{C}_{2m+1})$, one can then show that there exists a unique intersection between $\Phi(\widetilde{\mathcal{H}}_n)$
and $\Phi(\mathcal{C}_{2m+1})$ for $x$ within such bounds. This may be done rigorously by combining equation \eqref{autovgirati} with $x-y=(n+1/2)\pi$:
existence and uniqueness of the solutions of this system follow from monotonicity arguments.
Notice that, in correspondence of the hyperbola $\widetilde{\mathcal{H}}_n$, the number $r(\Lambda)$ changes.
\par\smallskip
\begin{figure}[ht]
\begin{center}
\includegraphics[scale=0.54]{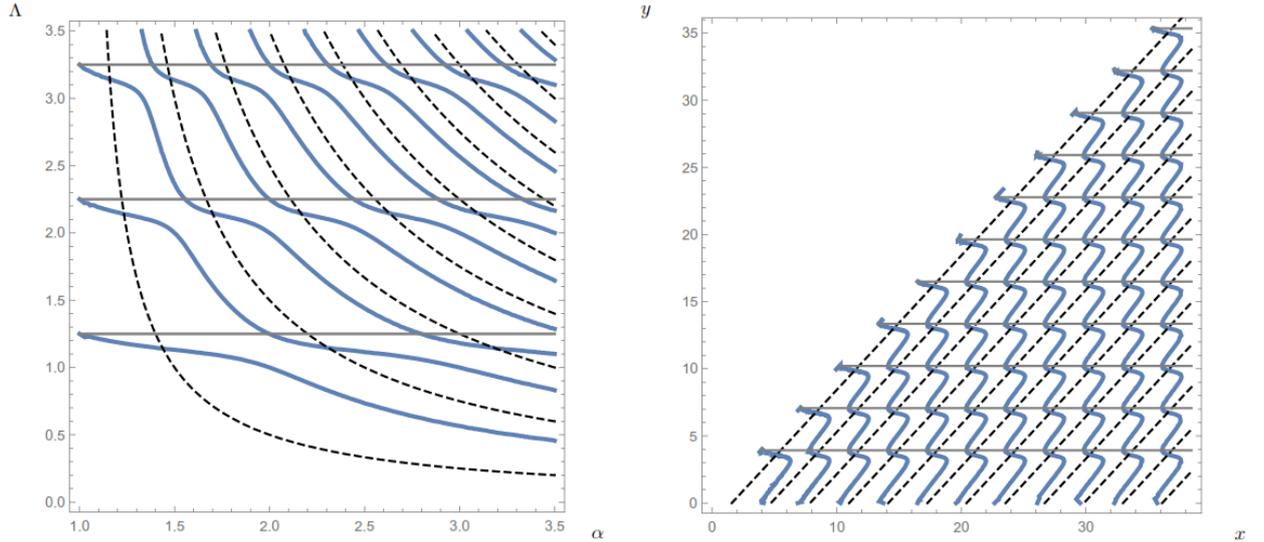}
\caption{The hyperbolas $\widetilde{\mathcal{H}}_n$ and their straightening through the map $\Phi$ in \eq{deffi}.}\label{iperboli}
\end{center}
\end{figure}
With these three facts, we can now complete the proof of the statement. In the strip $R_m$, the branch $\mathcal{C}_{2m+1}$ crosses the hyperbola $\widetilde{\mathcal{H}}_0$ thanks to Fact (III)
but, according to Lemma \ref{zeriaut}, no zeros appear in the lateral span in correspondence of such intersection, since $r(\Lambda)$ remains equal to $0$ and $g[\Lambda(\alpha-1)] > 0$ on $\mathcal{C}_{2m+1}^m$. Thanks to this fact and with a similar argument, using again Lemma \ref{zeriaut} it is possible to show that \emph{none} of the crossings with the hyperbolas $\widetilde{\mathcal{H}}_n$ contributes to modify the number of zeros in the lateral span. Indeed, by Fact (III) the hyperbola $\widetilde{\mathcal{H}}_n$ crosses $\mathcal{C}_{2m+1}$ in $R_{m-n}$, with $r(\Lambda)$ passing from $n-1$ to $n$ in correspondence of the intersection. Moreover, by Fact (II) $\textnormal{sgn}\,g[\Lambda(\alpha-1)]=(-1)^{2m+2-n}$ in a neighborhood of the intersection. Lemma \ref{zeriaut} then applies, showing that the number of zeros of $e_\Lambda$ in $J_-$ and $J_+$ is not affected by this crossing. On the other hand, at each crossing of $\mathcal{C}_{2m+1}$ with $\{\Lambda=\Lambda_{2k-1}\}$, $k \leq m$, the integer $r(\Lambda)$ remains constantly equal to $m-k$, but $\textnormal{sgn}\,g[\Lambda(\alpha-1)]$ passes from $(-1)^{m+k+2}$ to $(-1)^{m+k+1}$, so that by Lemma \ref{zeriaut} $e_{\Lambda(\alpha)}$ acquires a zero in $J_-$ and a zero in $J_+$.
\end{proof}
The proof of Theorem \ref{constant}' follows from Lemmas \ref{centrale} and \ref{cent->lat}.

\subsection{Proof of Theorem \ref{teoasintotica}}\label{pfteoasintotica}

From Fact (I) in the proof of Lemma \ref{cent->lat}, using the notations therein, we know that $\mathcal{C}_{2m+1}$ lies between $\mathcal{H}_{m+1}$ and $\mathcal{H}_{m+2}$, which
means that
\begin{equation}\label{distribeigen}
\lambda_{2m+1}\in [m+1,m+2].
\end{equation}
In turn, this means that any interval of the kind $[m,m+1]$ (for integer $m\ge1$) contains at least an odd eigenvalue
of \eqref{autovsym} (namely, an eigenvalue with an odd label). By the nodal properties of the eigenfunctions stated in Theorem \ref{constant}, odd and even eigenvalues alternate and so, in the interval
$[m,m+2]$, there are at least three eigenvalues, two odd and one even. The thesis follows from the fact that any interval of width 3 contains an interval
of the kind $[m,m+2]$ for integer $m$.

\subsection{Proof of Theorem \ref{autofz2}}\label{pfautofz2}

First, reasoning as in \cite[Lemma 2.2]{holubova}, one can see that any solution of \eqref{autov2} is of class $C^\infty$ on each single span $\overline{I}_-$,
$\overline{I}_0$ and $\overline{I}_+$, but, differently from the fourth order case \eqref{autov0}, \emph{it is not in general $C^1$ on the whole $I$}. Any solution is a linear
combination of the two functions $\cos(\kappa x)$ and $\sin(\kappa x)$ on each span, so that the solutions over $I$ are obtained by extending by symmetry
$$
e(x)=
\left\{
\begin{array}{ll}
e_0(x)\quad & \mbox{if } x \in [0, a\pi] \\
e_a(x)\quad & \mbox{if } x \in [a\pi, \pi],
\end{array}
\right.
$$
where $e_0= H_0 \sin(\kappa x)$, $e_a(x)=K\cos(\kappa x) + H\sin(\kappa x)$ for odd eigenfunctions, while for the even ones it is $e_0= K_0 \cos(\kappa x)$
and $e_a(x)=K\cos(\kappa x) + H\sin(\kappa x)$. Imposing the three vanishing conditions in $a\pi$ and $\pi$, we obtain a $3\times 3$ linear system in the unknowns $H_0, K, H$ (or $K_0, K, H$) and nontrivial solutions are obtained imposing that the determinant
of such system is equal to $0$. All the statements of Theorem \ref{autofz2} are then obtained by mimicking the arguments developed in the proof of
Theorem \ref{symmetriceigen}, with obvious changes.
\endproof
We notice that, after a change of variables $(a, \kappa) \mapsto (\alpha, K)$ similar to the one performed to prove Theorems \ref{Michelle} and \ref{constant}, the eigenvalue problem \eqref{autov2} is transformed into
$$
\int_J e' w' = \mu \int_J e w \qquad \forall w \in W(J),
$$
where $J=(-\alpha \pi, \alpha \pi)$ and
$W(J):=\{u\in H^1_0(J);\, u(\pm \pi)=0\}\,$.
In Figure \ref{leiperboli2}, we depict the eigenvalues curves for such a problem; again, the change of variables seems to simplify the interpretation of the pictures.
\begin{figure}[ht]
\begin{center}
\includegraphics[height=80mm, width=86mm]{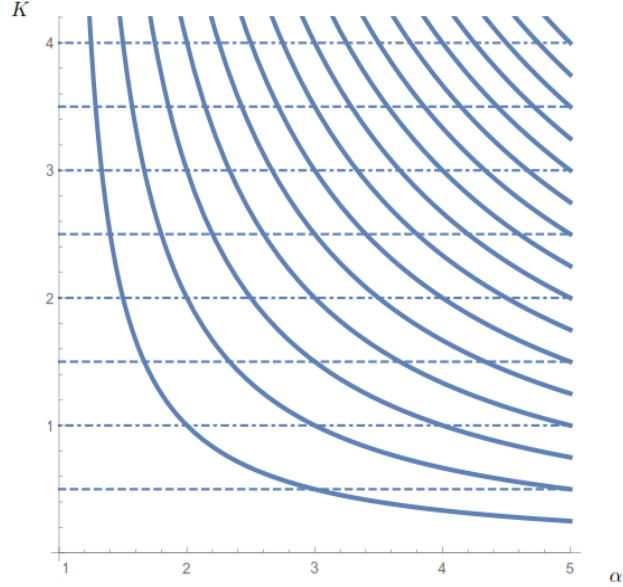}

\end{center}
\caption{A pictorial description of the eigenvalues curves for \eqref{autov2}, in the $(\alpha, K)$-plane.}\label{leiperboli2}
\end{figure}

\subsection{Proof of Theorem \ref{sogliacontinua}}\label{pfsogliacontinua}

Since we consider variable $a\in(0,1)$, throughout this proof we emphasize the dependence of the eigenfunctions and eigenvalues
of \eq{autovsym} on $a$ by denoting them, respectively, by $e_n^a$ and $\lambda_n^a$, for $n=0,...,11$.\par
Assume that $\mathbb{E}_{12}(\bar{a}) < +\infty$ for a certain $\bar{a}\in(0,1)$. By Definition \ref{threshold}, for the choice $a=\bar{a}$
there exist a prevailing mode $j$, a residual mode $k \neq j$, and a time instant $\tau \in (2T_W, T)$ such that, writing
$$U^A_{\bar{a}}(x,t):=\sum_{n=0}^{11}\varphi_n(t)e_n^{\bar{a}}(x),$$
the conditions in Definitions \ref{prevalente} and \ref{unstable} are fulfilled. This means that
\begin{equation}\label{inst1}
\sum_{n=0 \atop n \neq j}^{11}\big[\varphi_n(0)^2 + \dot{\varphi}_n(0)^2\big]<\eta^4(\varphi_j(0)^2 + \dot{\varphi}_j(0)^2),
\quad \Vert \varphi_k \Vert_{L^\infty(0, \tau)} > \eta \Vert \varphi_j \Vert_{L^\infty(0, \tau)},
\quad \frac{\Vert \varphi_k \Vert_{L^\infty(0, \tau)}}{\Vert \varphi_k \Vert_{L^\infty(0, \tau/2)}} > \frac{1}{\eta}.
\end{equation}
The components $\varphi_j$ and $\varphi_k$ solve the two ODEs
\begin{eqnarray*}
&& \!\!\!\!\!\!\!\!\!\! \ddot{\varphi_j}(t) + \mu_j \varphi_j(t)+\gamma_2\Big(\sum_{n\modd 2 \atop = m\modd 2} \varphi_n(t) \varphi_m(t)
\int_I (e_n^{\bar{a}})' (e_m^{\bar{a}})' \Big) \Big(\sum_{l \modd 2 \atop = j \modd 2} \varphi_l(t) \int_I (e_l^{\bar{a}})' (e_j^{\bar{a}})' \Big)\\
+ && \!\!\!\!\!\!\!\!\!\!\gamma_1\Big(\sum_n \mu_n \varphi_n(t)^2\Big) \mu_j \varphi_j(t)+
\gamma_3 \Big(\sum_n \varphi_n(t)^2 \Big) \varphi_j(t) + \int_I f\Big(\sum_n \varphi_n(t) e_n^{\bar{a}} \Big) e_j^{\bar{a}} = 0,
\end{eqnarray*}
and
\begin{eqnarray*}
&& \!\!\!\!\!\!\!\!\!\! \ddot{\varphi_k}(t) + \mu_k \varphi_k(t) + \gamma_2\Big(\sum_{n\modd 2 \atop = m\modd 2} \varphi_n(t) \varphi_m(t)
\int_I (e_n^{\bar{a}})' (e_m^{\bar{a}})' \Big) \Big(\sum_{l \modd 2 \atop = k \modd 2} \varphi_l(t) \int_I (e_l^{\bar{a}})' (e_k^{\bar{a}})' \Big) \\
+ && \!\!\!\!\!\!\!\!\!\! \gamma_1\Big(\sum_n \mu_n \varphi_n(t)^2\Big) \mu_k \varphi_k(t)+
\gamma_3 \Big(\sum_n \varphi_n(t)^2 \Big) \varphi_k(t) + \int_I f\Big(\sum_n \varphi_n(t) e_n^{\bar{a}} \Big) e_k^{\bar{a}} = 0.
\end{eqnarray*}

Consider a sequence $\{a_i\}_i$ such that $a_i\to\bar{a}$ for $i \to +\infty$; then,
\neweq{convergence}
e_n^{a_i}\to e_n^{\bar{a}}\quad\mbox{in }H^2(I)\quad\mbox{for all }n=0,...,11.
\endeq
Indeed, from \eqref{autovsym} and Theorem \ref{Michelle}, it follows that
$$
\Vert e_n^{a_i} \Vert_{V}^2=(\lambda_n^{a_i})^4\to(\lambda_n^{\bar{a}})^4=\Vert e_n^{\bar{a}} \Vert_{V},
$$
after recalling that all the eigenfunctions are $L^2$-normalized. This fact shows that there exists $\bar{e}\in V(I)$ such that
$e_n^{a_i}\rightharpoonup \bar{e}$ in $H^2(I)$, up to a subsequence. Together with the convergence $e_n^{a_i}\to e_n^{\bar{a}}$ in $L^2(I)$
and the convergence of the norms, this proves \eq{convergence}.\par
For every $i \in \mathbb{N}$, consider now system \eq{finitodim} for $a=a_i$ with initial conditions
$$
u^A_i(x,0)=\sum_{n=0}^{11}\varphi_n^i(0) e_n^{a_i}(x), \quad (u^A_i)_t(x, 0)=\sum_{n=0}^{11}\dot{\varphi}_n^i(0) e_n^{a_i}(x),
$$
where $\varphi_n^i(0)$ and $\dot{\varphi}_n^i(0)$ are taken so that the associated total energy is equal to $\mathbb{E}_{12}(\bar{a})$ and so that they fulfill the first condition in \eqref{inst1} (this is possible up to taking $i$ sufficiently large, thanks to the strict inequalities in \eqref{inst1}). Denoting the solution by
$$
u^A_i(x,t)= \sum_{n=0}^{11}\varphi_n^i(t) e_n^{a_i}(x),
$$
the components $\varphi_j^i$ and $\varphi_k^i$ solve the system
\begin{eqnarray*}
&& \!\!\!\!\!\!\!\!\!\!\!\! \ddot{\varphi_j^i}(t) + \mu_j^i \varphi_j^i(t) +\gamma_2\Big(\sum_{n\modd 2 \atop = m\modd 2} \varphi_n^i(t) \varphi_m^i(t) \int_I (e_n^{a_i})' (e_m^{a_i})' \Big) \Big(\sum_{l \modd 2 \atop = j \modd 2} \varphi_l^i(t) \int_I (e_l^{a_i})' (e_j^{a_i})' \Big) \\
+ && \!\!\!\!\!\!\!\!\!\!   \gamma_1\Big(\sum_n \mu_n^i \varphi_n^i(t)^2\Big) \mu_j^i \varphi_j^i(t) +
\gamma_3 \Big(\sum_n \varphi_n^i(t)^2 \Big) \varphi_j^i(t) + \int_I f\Big(\sum_n \varphi_n^i(t) e_n^{a_i} \Big) e_j^{a_i} = 0,
\end{eqnarray*}
and
\begin{eqnarray*}
&&\!\!\!\!\!\!\!\!\!\!\!\! \ddot{\varphi_k^i}(t) + \mu_k^i \varphi_k^i(t) + \gamma_2\Big(\sum_{n\modd 2 \atop = m\modd 2} \varphi_n^i(t) \varphi_m^i(t) \int_I (e_n^{a_i})' (e_m^{a_i})' \Big) \Big(\sum_{l \modd 2 \atop = k \modd 2} \varphi_l^i(t) \int_I (e_l^{a_i})' (e_k^{a_i})' \Big) \\
+ &&\!\!\!\!\!\!\!\!\!\!  \gamma_1\Big(\sum_n \mu_n^i \varphi_n^i(t)^2\Big) \mu_k^i \varphi_k^i(t) +\gamma_3 \Big(\sum_n \varphi_n^i(t)^2 \Big)
\varphi_k^i(t) + \int_I f\Big(\sum_n \varphi_n^i(t) e_n^{a_i} \Big) e_k^{a_i} = 0.
\end{eqnarray*}

From \eq{convergence} and the embedding $H^2(I)\subset C^1(\overline{I})$ we infer the $C^1$-convergence of $e_n^{a_i}$ to $e_n^{\bar{a}}$, so that also all the integral terms converge.
Then, from the Lipschitz continuity of $f$ and the continuous dependence, we infer that $\varphi_n^i \to \varphi_n$ pointwise for every $n$.
Moreover, since all the sequences $\{\varphi_n^i\}_i$ ($n=0,...,11$) are bounded in $H^2$, we infer that $\varphi_n^i \to \varphi_n$ in $C^1$.
Hence, there exists $i_0$ such that, for every $i > i_0$, the components $\varphi_j^i$ and $\varphi_k^i$ satisfy \eqref{inst1}.
Since the energy is preserved, we thus have that
$$
\mathbb{E}_{12}(\bar{a}) \geq \limsup_{i \to +\infty} \mathbb{E}_{12}(a_i),
$$
namely $a\mapsto\mathbb{E}_{12}(a)$ is upper-semicontinuous in $a=\bar{a}$.\par
Assume now by contradiction that
$\liminf_{i \to +\infty}\mathbb{E}_{12}(a_i) < \mathbb{E}_{12}(\bar{a})$ and let $j_i, k_i$ be the prevailing and the residual modes fulfilling
Definition \ref{unstable} for $a=a_i$. Let us restrict to a subsequence $\{a_{i_l}\}_l$ such that $\lim_{l \to +\infty} \mathbb{E}_{12}(a_{i_l})= \liminf_{i \to +\infty}\mathbb{E}_{12}(a_i)<\mathbb{E}_{12}(\bar{a})<+\infty$, so that the sequence $\{\mathbb{E}_{12}(a_{i_l})\}_l$ is bounded.
Since the number of considered modes is finite, passing to a further subsequence $i_{l_m}$ it is possible to extract, from the sequence of couples $(j_{i_l}, k_{i_l})$, a couple of modes $(j, k)$ fulfilling the conditions for instability for every $a=a_{i_{l_m}}$. By continuous dependence, such a couple converges to a couple $(\varphi_j, \varphi_k)$ fulfilling the same requirements for $a=\bar{a}$ and the contradiction is reached. This proves lower semicontinuity and completes the proof.

\subsection{Proof of Theorem \ref{toytheo}}\label{pftoytheo}

Since the proof may be obtained by combining several arguments from \cite{gaga}, we only briefly sketch it. We set
$$\Phi_\lambda(t):=\lambda^2W_\lambda\left(\frac{t}{\lambda^2}\right)$$
and we notice that if $W_\lambda$ satisfies \eq{ODED}, then $\Phi_\lambda$ solves
\neweq{duffingPhi}
\ddot{\Phi}_\lambda(t)+\Phi_\lambda(t)+\Phi_\lambda(t)^3=0,\qquad\Phi_\lambda(0)=\lambda^2\delta,\qquad\dot{\Phi}_\lambda(0)=0.
\endeq
With these transformations, also \eq{hill3} changes and we see that the $\lambda$-nonlinear-mode of \eqref{toybeam} of amplitude $\delta$ is linearly stable with respect to the
$\rho$-nonlinear-mode $W_\rho$ if and only if $\xi\equiv0$ is a stable solution of
\neweq{hillscalata}
\ddot{\xi}(t)+\gamma^2\Big(1+\Phi_\lambda(t)^2\Big)\xi(t)=0.
\endeq

Then we recall a criterion due to Burdina \cite{burdina} (see also \cite[Test 3, \S 3, Chapter VIII]{yakubovich}),
which yields a sufficient condition for the stability of some Hill equations.

\begin{lemma}\label{BurdCrit}
Let $T>0$ and let $p$ be a continuous, $T$-periodic and strictly positive function having a unique maximum point and a unique minimum point
in $[0,T)$. If there exists $k\in\N$ such that
\begin{equation}\label{condizioneBurdina}
k\pi+\frac12\log\frac{\max p}{\min p}<\int_0^T\sqrt{p(t)}\, dt<(k+1)\pi-\frac12\log\frac{\max p}{\min p},
\end{equation}
then the trivial solution of the Hill equation $\ddot{\xi}(t)+p(t)\xi(t)=0$ is stable.
\end{lemma}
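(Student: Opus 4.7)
The plan is to carry out a Prüfer-type transformation of the Hill equation, obtaining a decoupled scalar ODE for a phase angle $\theta$, and then to use \eqref{condizioneBurdina} together with the one-maximum-one-minimum structure of $p$ to show that the phase increment $\theta(T)-\theta(0)$ lies strictly inside $(k\pi,(k+1)\pi)$, irrespective of the initial phase. Standard Floquet theory will then yield $|\mathrm{tr}\,M|<2$ for the monodromy matrix $M$, and hence stability of the trivial solution.

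Since $p>0$ on $[0,T]$, I would first set $\xi=\rho\sin\theta$ and $\dot\xi=\sqrt{p(t)}\,\rho\cos\theta$. Substituting these into $\ddot\xi+p\xi=0$ and solving the resulting linear system in $\dot\theta,\dot\rho$ yields
$$\dot\theta=\sqrt{p(t)}+\frac{\dot p(t)}{4p(t)}\sin(2\theta),\qquad \frac{\dot\rho}{\rho}=-\frac{\dot p(t)}{4p(t)}\bigl(1+\cos(2\theta)\bigr).$$
The crucial feature is that the equation for $\theta$ does not involve $\rho$, so the flow descends to a well-defined scalar equation on the cylinder $\R/2\pi\Z\times\R$ and $\theta(\cdot;\theta_0)$ is globally defined on $[0,T]$ for every initial phase $\theta_0$.

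Integrating the $\theta$-equation over $[0,T]$ gives
$$\theta(T)-\theta(0)=\int_0^T\!\sqrt{p(t)}\,dt+\int_0^T\!\frac{\dot p(t)}{4p(t)}\sin(2\theta(t))\,dt.$$
Because $p$ is $T$-periodic with a unique maximum and a unique minimum in $[0,T)$, the function $\log p$ is monotone on each of two complementary subintervals of $[0,T]$, whence
$$\int_0^T\frac{|\dot p(t)|}{p(t)}\,dt=2\log\frac{\max p}{\min p};$$
combining this with the trivial estimate $|\sin(2\theta)|\le 1$ and the hypothesis \eqref{condizioneBurdina} gives
$$k\pi<\theta(T;\theta_0)-\theta_0<(k+1)\pi\qquad\text{uniformly in }\theta_0.$$

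To conclude, the monodromy matrix $M$ has $\det M=1$ and, in the polar coordinates above, acts on the circle of directions $\theta\in\R/2\pi\Z$ as an orientation-preserving homeomorphism whose lift to $\R$ is precisely the monotone map $\theta_0\mapsto\theta(T;\theta_0)$. The uniform estimate above forces its rotation number to lie strictly in $(k\pi/T,(k+1)\pi/T)$, hence not to be of the form $m\pi/T$ for any integer $m$; so $M$ admits no real eigenvector and, being area-preserving, must have complex-conjugate eigenvalues on the unit circle, i.e.\ $|\mathrm{tr}\,M|<2$, which is the classical Floquet condition for stability. The main obstacle will be this last conceptual step: translating the uniform phase-advance bound into strict stability of $M$ requires justifying that $\theta_0\mapsto\theta(T;\theta_0)$ is a well-defined orientation-preserving circle homeomorphism even when $\dot\theta$ is not pointwise positive, which rests on uniqueness and continuous dependence applied to the autonomous-in-$\rho$ structure of the $\theta$-equation together with the area-preserving character of $M$.
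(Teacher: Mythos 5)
The paper does not prove Lemma \ref{BurdCrit}: it is quoted as a known criterion of Burdina, with references to \cite{burdina} and \cite[Test 3, \S 3, Chapter VIII]{yakubovich}, and invoked as a black box in the proof of Theorem \ref{toytheo}. Your modified Pr\"ufer derivation is therefore an independent, self-contained proof, and it is in the spirit of how such phase-advance criteria are established in the literature. The argument is correct, but two points deserve to be made explicit.

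First, the identity $\int_0^T |\dot p|/p\,dt = 2\log\frac{\max p}{\min p}$ is the total variation of $\log p$ over a period, and it equals the right-hand side only if $p$ is \emph{monotone} on the two arcs joining the minimizer to the maximizer and back. A continuous periodic function can have a unique global maximum and a unique global minimum yet oscillate in between, in which case the total variation strictly exceeds $2\log(\max p/\min p)$ and your bound on the oscillatory contribution to $\theta(T)-\theta(0)$ fails. The hypothesis must therefore be read, consistently with Burdina's original formulation, as ``$p$ has exactly two critical points per period.'' Relatedly, the substitution $\dot\xi=\sqrt{p}\,\rho\cos\theta$ and the resulting ODE $\dot\theta=\sqrt{p}+\frac{\dot p}{4p}\sin 2\theta$ tacitly require $p$ to be (piecewise) $C^1$, more than the stated ``continuous''; this holds in the paper's applications, where $p$ is built from Jacobi elliptic functions, but should be acknowledged.

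Second, the conclusion is sound but over-engineered, and the obstacle you flag at the end is easily avoided. For a nontrivial solution, $\rho$ solves a linear first-order ODE with $\rho(0)\ne0$, so $\rho$ never vanishes and the phase $\theta(t)$ is globally defined without further ado. The uniform bound $k\pi<\theta(T;\theta_0)-\theta_0<(k+1)\pi$ then suffices directly: if the monodromy matrix $M$ had a real eigenvector with eigenvalue $\nu\in\R\setminus\{0\}$, the corresponding solution would satisfy $(\xi(T),\dot\xi(T))=\nu(\xi(0),\dot\xi(0))$; using $p(T)=p(0)$ this forces $\theta(T;\theta_0)-\theta_0\in\pi\Z$ for that solution's initial phase $\theta_0$, contradicting the strict phase bound. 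Hence $M$ has no real eigenvalue; since $\det M=1$ (constancy of the Wronskian), the eigenvalues form a non-real complex-conjugate pair on the unit circle distinct from $\pm1$, so $|\operatorname{tr} M|<2$ and the trivial solution is stable. No rotation-number formalism, and no monotonicity of $\theta_0\mapsto\theta(T;\theta_0)$, is needed.
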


We apply Lemma \ref{BurdCrit} to the Hill equation \eq{hillscalata}, that is, with $p(t)=\gamma^2\big(1+\Phi_\lambda(t)^2\big)$ so that
$$
\log\frac{\max p}{\min p}=\log\big(1+\delta^2\lambda^4\big).
$$
Indeed, from Burgreen~\cite{burg} we know that
$$
\Phi_\lambda(t)=\delta\lambda^2\, {\rm cn}\Bigg[t\, \sqrt{1+\delta^2\lambda^4},\frac{\delta\lambda^2}{\sqrt{2(1+\delta^2\lambda^4)}}\Bigg],
$$
where cn is the Jacobi cosine. The period of $\Phi_\lambda$ can be computed via \eqref{TW}, so that the period of $\Phi_\lambda^2$ is given by
\begin{equation}\label{Tomega}
T_\lambda(\delta)=2\sqrt2 \int_0^{\pi/2}\frac{d\phi}{\sqrt{2+\delta^2\lambda^4(1+\sin^2\phi)}}
\end{equation}
and $\lim_{\delta\to 0}T_\lambda(\delta)=\pi$.
\par
Moreover, from the energy conservation in \eq{duffingPhi} we see that
$$2\dot\Phi_\lambda(t)^2=\Big(\delta^2\lambda^4-\Phi_\lambda(t)^2\Big)\Big(2+\delta^2\lambda^4+\Phi_\lambda(t)^2\Big)\, ;$$
therefore, thanks to symmetries and with the change of variables $\Phi_\lambda(t)=\delta\lambda^2\sin\theta$, we obtain
$$
\int_0^{T_\lambda(\delta)}\sqrt{1+\Phi_\lambda(t)^2}\, dt=2\int_0^{T_\lambda(\delta)/2}\sqrt{1+\Phi_\lambda(t)^2}\, dt=\Lambda_\delta,
$$
see \cite{gaga} for the details; here, $\Lambda_\delta$ is as in \eq{gammaLambda}.
Item $(i)_4$ then follows by applying Lemma \ref{BurdCrit} to \eq{hillscalata}.\par
By using the very same method as in \cite[Theorem 3.1]{cazw} (see also \cite[Theorem 8]{babefega}), we may obtain the following statement.

\begin{lemma}\label{known}
Let $I_S$ and $I_U$ as in \eqref{strisceinfinito}. For every $\gamma>0$ there exists $\bar{\delta}_\gamma>0$ such that,
for all $\delta>\bar{\delta}_\gamma$:\par\noindent
$(a)$ if $\gamma^2\in I_U$, then the trivial solution of equation~\eqref{hillscalata} is unstable;\par\noindent
$(b)$ if $\gamma^2\in I_S$, then the trivial solution of equation~\eqref{hillscalata} is stable.
\end{lemma}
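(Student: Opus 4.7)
\textbf{Proof plan for Lemma \ref{known}.} My plan is to reduce \eqref{hillscalata} to a limit Hill equation whose stability diagram can be identified explicitly, and then to invoke Floquet continuity to transfer the conclusion. First, I would set $M:=\delta\lambda^2$ and introduce the rescaled time and amplitude
\begin{equation*}
s:=Mt,\qquad \phi(s):=\Phi_\lambda(s/M)/M,
\end{equation*}
so that a direct computation (using \eqref{duffingPhi}) gives
\begin{equation*}
\ddot\phi(s)+\frac{1}{M^2}\phi(s)+\phi(s)^3=0,\qquad \phi(0)=1,\quad \dot\phi(0)=0.
\end{equation*}
As $M\to+\infty$, classical continuous dependence for ODEs shows that $\phi$ converges, locally uniformly together with its derivative, to the unique periodic solution $\phi_0$ of
\begin{equation*}
\ddot{\phi}_0(s)+\phi_0(s)^3=0,\qquad \phi_0(0)=1,\quad \dot{\phi}_0(0)=0,
\end{equation*}
which can be written explicitly in terms of a Jacobi cn elliptic function. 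Rescaling the Hill variable as $\eta(s):=\xi(s/M)$, equation \eqref{hillscalata} is transformed into
\begin{equation*}
\ddot\eta(s)+\Bigl(\frac{\gamma^2}{M^2}+\gamma^2\phi(s)^2\Bigr)\eta(s)=0,
\end{equation*}
whose periodic coefficient converges uniformly (over one period) to $\gamma^2\phi_0(s)^2$.

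Next, I would exploit the continuous dependence of the trace of the monodromy matrix of a Hill equation on its periodic coefficient, with respect to uniform convergence over a period. This dependence is classical and implies that the characteristic multipliers of the rescaled Hill equation converge to those of the limit equation
\begin{equation}\label{hillimit}
\ddot{\eta}_\infty(s)+\gamma^2\phi_0(s)^2\eta_\infty(s)=0.
\end{equation}
Consequently, every value of $\gamma$ for which \eqref{hillimit} is strictly stable (trace of the monodromy strictly inside $(-2,2)$) or strictly unstable (trace strictly outside $[-2,2]$) produces the same behavior in \eqref{hillscalata} as soon as $M=\delta\lambda^2$ is sufficiently large, which furnishes the threshold $\bar{\delta}_\gamma$ and reduces the whole lemma to a statement about \eqref{hillimit}.

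The hard part will be the explicit identification of the stability and instability bands of \eqref{hillimit} as a function of $\gamma$. Since $\phi_0$ is a Jacobi cn-function of elliptic modulus $1/\sqrt{2}$, the coefficient $\phi_0^2$ can be rewritten (up to an affine change of the independent variable) in Weierstrass $\wp$-form, so that \eqref{hillimit} is conjugate to a classical Lamé equation; its finite-gap spectral structure then yields the alternation of stability and instability intervals. The delicate point is to match the endpoints of these intervals with the explicit sequences $k(2k+1)$ and $(k+1)(2k+1)$ defining $I_S$ and $I_U$ in \eqref{strisceinfinito}: this requires a precise computation of the Lamé spectrum at the relevant amplitude. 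Rather than reproducing this lengthy analysis, my plan is to quote \cite[Theorem 3.1]{cazw} and \cite[Theorem 8]{babefega}, where the corresponding reduction and spectral identification have already been carried out in a closely related setting; once \eqref{hillimit} is recognised as the limit equation, the argument from those references applies verbatim and Lemma \ref{known} follows by combining it with the Floquet continuity step described above.
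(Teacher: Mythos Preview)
Your proposal is correct and follows exactly the approach the paper indicates: the paper does not give a self-contained proof of Lemma~\ref{known} but simply states that it is obtained ``by using the very same method as in \cite[Theorem 3.1]{cazw} (see also \cite[Theorem 8]{babefega}).'' Your rescaling $s=Mt$, $\phi=\Phi_\lambda/M$ with $M=\delta\lambda^2$, the convergence to the cubic limit equation, and the Floquet continuity argument are precisely the content of that method, so your writeup is in fact more detailed than the paper's own treatment while remaining faithful to it.
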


Lemma \ref{known} proves Items $(i)_3$ and $(ii)_2$ of Theorem \ref{toytheo}.\par
When $\gamma=1$ we see that $\xi=\Phi_\lambda$ solves \eq{hillscalata}; even if the case $\lambda=\rho$ has no meaning for \eq{cw3},
this tells us that $\gamma=1$ is part of the boundary of the resonance tongue $U_1$ emanating from the point $(\delta\lambda^2,\gamma)=(0,1)$:
this proves Item $(i)_1$. Moreover, combined with Lemma \ref{known} and with the asymptotic behavior
$$
\Lambda_\delta = \pi-\frac\pi8 \, \delta^2\lambda^4+O(\delta^4)\qquad\mbox{as }\delta\to0,
$$
Items $(i)_1$ and $(i)_4$ show that the resonant tongue $U_1$ of \eqref{hill3} emanating from $(\delta\lambda^2,\gamma)=(0,1)$ is given by
$$
U_1 =\big\{(\delta\lambda^2,\gamma)\in\mathbb{R}_+^2:\, 1<\gamma<\psi_\lambda(\delta)\big\}
$$
where $\psi_\lambda$ satisfies the properties stated in Item $(ii)_1$.\par
Finally, note that Item $(i)_4$ implies that if $k\in\N$ $(k\ge2)$ and $U_k$ is the resonant tongue of \eqref{hill3} emanating
from $(\delta\lambda^2,\gamma)=(0,k)$ and if $(\delta\lambda^2,\gamma)\in U_k$, then necessarily
\eqref{asintoticalingua} holds. This proves Item $(i)_2$
and completes the proof of Theorem \ref{toytheo}.

\subsection{Proof of Theorem \ref{energiacontinua}}\label{pfenergiacontinua}

To prove the first part of the statement, it suffices to show that, for every $a\in(0,1)$, at least one of the admissible ratios $\rho^4(a)/\lambda^4(a)$ for
the computation of $\mathbb{E}_{12}^\ell(a)$ belongs to the first instability interval $(1,3)$ of $I_U$.
If $\lambda_{11}^2(a)/\lambda_{10}^2(a) < \sqrt{3}$, we are done. Otherwise, $\lambda_{10}^2(a)\le\lambda_{11}^2(a)/\sqrt{3}$ and from \eqref{distribeigen}
we know that
$$5 \leq \lambda_9(a) \leq 6 \leq \lambda_{11}(a) \leq 7,
$$
implying $\lambda_{10}^2(a)\leq49/\sqrt3 $ and, in turn,
$$\frac{\lambda_{10}^2(a)}{\lambda_9^2(a)}\le\frac{49/\sqrt3 }{25}<\sqrt3 .$$
Therefore, $0<\mathbb{E}_{12}^\ell(a)<+\infty$ for every $a\in(0,1)$.\par
For the proof of the lower semicontinuity, we start by noticing that the couples of eigenvalues of \eq{autovsym} candidates to minimize $E$, defined in \eqref{ecritica}, have
to be sought among consecutive eigenvalues. So, fix some $\overline{a}\in(0,1)$ and a couple of consecutive eigenvalues $\lambda_n(a)$ and $\lambda_{n+1}(a)$
with $a\to\overline{a}$. Three cases may occur.\par
$\bullet$ First case: $\lambda_{n+1}^4(\overline{a})/\lambda_n^4(\overline{a})\in I_S$ (open interval).\par
Then $\lambda_{n+1}^4(a)/\lambda_n^4(a)\in I_S$ for $a$ sufficiently close to $\overline{a}$ in view of Theorem \ref{Michelle}. Next, we need to use tools from
the classical Floquet theory, see e.g.\ \cite{yakubovich}. Following Definition \ref{defstabb}, the values of $D(\lambda_n(a),\lambda_{n+1}(a))$ and
$E(\lambda_n(a),\lambda_{n+1}(a))$ depend on the stability of \eq{hillscalata}. So, consider the two solutions $\xi_1$ and $\xi_2$ of \eq{hillscalata} satisfying,
respectively, the initial conditions
$$
\xi_1(0)=1,\quad\dot{\xi}_1(0)=0,\qquad\xi_2(0)=0,\quad\dot{\xi}_2(0)=1.
$$

From \eq{Tomega} we know that the period of $\Phi_\lambda^2$ is equal to $T_\lambda(\delta)$,
so that $\delta\mapsto T_\lambda(\delta)$ is a continuous function. Then, the trace of the monodromy matrix associated with \eq{hillscalata} is given by
$$\xi_1\big(T_\lambda(\delta)\big)+\dot{\xi}_2\big(T_\lambda(\delta)\big),$$
hence $D(\lambda_n(a),\lambda_{n+1}(a))$ in \eqref{D} has the following equivalent characterization:
$$D(\lambda_n(a),\lambda_{n+1}(a))=\inf\big\{d>0;\, \big|\xi_1\big(T_\lambda(\delta)\big)+\dot{\xi}_2\big(T_\lambda(\delta)\big)\big|>2\mbox{ whenever }W_{\lambda_n}(0)=\delta>d\big\}.$$

Since the resonant lines are continuous \cite{yakubovich}, we have that
$D(\lambda_n(a),\lambda_{n+1}(a))\to D(\lambda_n(\overline{a}),\lambda_{n+1}(\overline{a}))$. By definition of $E$, this shows that
the map $a\mapsto E(\lambda_n(a),\lambda_{n+1}(a))$ is continuous in all the values of $a$ for which $\lambda_{n+1}^4(a)/\lambda_n^4(a)\in I_S$.\par
$\bullet$ Second case: $\lambda_{n+1}^4(\overline{a})/\lambda_n^4(\overline{a})\in I_U$ (open interval).\par
This case is simple, since $E(\lambda_n(a),\lambda_{n+1}(a))\equiv\infty$ in a neighborhood of $\overline{a}$, which means that the couple
$(\lambda_n(a),\lambda_{n+1}(a))$ does not compete to achieve the minimum in $E$.\par
$\bullet$ Third case: $\lambda_{n+1}^4(\overline{a})/\lambda_n^4(\overline{a})\not\in I_S\cup I_U$.\par
From \eq{strisceinfinito} we know that there exists an integer $k\ge2$ such that
$\lambda_{n+1}^4(\overline{a})/\lambda_n^4(\overline{a})=k(k+1)/2$. By Theorem \ref{Michelle} we then infer that
$$\lim_{a\to\overline{a}}\ \frac{\lambda_{n+1}^4(a)}{\lambda_n^4(a)}\ =\ \frac{k(k+1)}{2}.$$
Whence, from the continuity of the resonant lines \cite{yakubovich} and using \eq{ipotesi}, we have that
$$\liminf_{a\to\overline{a}}E(\lambda_n(a),\lambda_{n+1}(a))=E(\lambda_n(\overline{a}),\lambda_{n+1}(\overline{a})).$$
This shows that the map $a\mapsto E(\lambda_n(a),\lambda_{n+1}(a))$ is lower semicontinuous in this third case. Incidentally, we observe that \eq{ipotesi}
is needed precisely for the third case.\par
By combining the analysis for the three cases, we infer that
$$\mbox{all the maps }a\mapsto E(\lambda_n(a),\lambda_{n+1}(a))\mbox{ are lower semicontinuous in every point }\overline{a}\in(0,1).$$
Since $\mathbb{E}_{12}^\ell(a)$ is the minimum of a finite number of lower semicontinuous functions, it is itself lower semicontinuous.

\subsection{Proof of Theorem \ref{theotors}}\label{pftheotors}

The linear instability for large $\delta$ is a consequence of the following result, essentially due to Cazenave-Weissler \cite{cazw}.

\begin{lemma}\label{asymptotics}
Assume that $A_{\lambda,\kappa}\neq 0$. Then the $\lambda$-longitudinal-mode of \eqref{nohang} of amplitude $\delta$ is linearly unstable with respect to
the $\kappa$-torsional-mode provided that $\delta$ is sufficiently large.
\end{lemma}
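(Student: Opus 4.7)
The plan is to implement a rescaling argument analogous to the one of Cazenave--Weissler, producing an asymptotic Hill equation whose stability can be read off explicitly. Set $\tau = \delta t$ and write $W_\lambda(t) = \delta\, \Psi_\delta(\tau)$. Then \eqref{ODED2} (with $\gamma_3=1$, since in \eqref{nohang} the Duffing coefficient is $1$) becomes
\begin{equation*}
\ddot\Psi_\delta(\tau) + \frac{\lambda^4}{\delta^2}\Psi_\delta(\tau) + \Psi_\delta(\tau)^3 = 0,\qquad \Psi_\delta(0)=1,\quad\dot\Psi_\delta(0)=0,
\end{equation*}
so that $\Psi_\delta \to \Psi_\infty$ uniformly on compact sets as $\delta\to\infty$, where $\Psi_\infty$ is the periodic solution of $\ddot\Psi + \Psi^3 = 0$ with $\Psi(0)=1,\,\dot\Psi(0)=0$. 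Similarly, setting $\xi(t)=\eta(\tau)$, the Hill equation \eqref{hilltors} is transformed into
\begin{equation*}
\ddot\eta(\tau) + \Big(\frac{\kappa^2}{\delta^2} + (1+2A_{\lambda,\kappa}^2)\Psi_\delta(\tau)^2\Big)\eta(\tau) = 0,
\end{equation*}
which converges, as $\delta\to\infty$, to the limiting Hill equation
\begin{equation}\label{limiteta}
\ddot\eta + (1+2A_{\lambda,\kappa}^2)\Psi_\infty^2\, \eta = 0.
\end{equation}

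The second step is to show that the trivial solution of \eqref{limiteta} is strictly unstable. This is precisely the content of the analysis developed for equation \eqref{toybeam2} in Section \ref{nonmischia}: the quartic-potential equation $\ddot\eta + c\,\Psi_\infty^2\eta = 0$ admits bounded solutions exactly at the boundary values $c\in\{n(n+1)/2 : n\in\N\}$ and the first open instability tongue in Figure \ref{parabole} is described by $c\in(1,3)$. Indeed, $c=1$ is a boundary since $\eta=\Psi_\infty$ solves the equation, while $c=3$ is a boundary since $\eta=\dot\Psi_\infty$ solves it (this follows by differentiating $\ddot\Psi_\infty+\Psi_\infty^3=0$). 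Now the assumption $A_{\lambda,\kappa}\neq 0$, together with \eqref{Alkmin1}, yields
\begin{equation*}
1 < 1 + 2A_{\lambda,\kappa}^2 < 3,
\end{equation*}
so the coefficient in \eqref{limiteta} lies strictly inside the first instability tongue, and the absolute value of the trace of the monodromy matrix associated with \eqref{limiteta} exceeds $2$.

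For the final step, I will invoke continuous dependence of Floquet multipliers on the coefficients of a Hill equation, see \cite[Chapter VIII]{yakubovich}. Since the period of $\Psi_\delta^2$ is given by a continuous function of $\delta$ (an elliptic integral of the form \eqref{Tld}) and $\Psi_\delta\to\Psi_\infty$ uniformly on the period interval together with a continuous dependence on $\delta^{-2}$ of the low-order term, the trace of the monodromy matrix of the rescaled Hill equation is continuous in $\delta^{-1}$ up to $\delta^{-1}=0$. Consequently, for $\delta$ sufficiently large the absolute value of such trace remains strictly greater than $2$, and this gives linear instability of the $\lambda$-longitudinal mode with respect to the $\kappa$-torsional mode.

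The main obstacle in this plan is verifying carefully that $1+2A_{\lambda,\kappa}^2$ falls strictly inside the open first instability tongue and that no degeneracy occurs (i.e., checking that the endpoints $c=1$ and $c=3$ are genuine crossings of stability type); this is guaranteed by the explicit identification of $\Psi_\infty$ and $\dot\Psi_\infty$ as the only $T_\infty$-periodic, respectively $T_\infty$-antiperiodic, solutions at those values of $c$, a fact which follows from the degenerate (double) nature of the higher resonant tongues documented in Section \ref{nonmischia} and the KdV hierarchy argument recalled there.
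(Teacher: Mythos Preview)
Your proof is correct and follows essentially the same route as the paper: a Cazenave--Weissler rescaling that sends the Duffing equation to $\ddot\Psi+\Psi^3=0$ and the Hill equation to $\ddot\eta+(1+2A_{\lambda,\kappa}^2)\Psi_\infty^2\eta=0$, followed by the observation that $1+2A_{\lambda,\kappa}^2\in(1,3)$ lies in the first instability interval of $I_U$ and continuous dependence of the monodromy trace. The paper's version uses an auxiliary parameter $\eps=\lambda^4/\delta^2\to 0$ in place of your direct scaling by $\delta$, and simply cites \cite[Theorem~3.1]{cazw} for the instability of the limit equation rather than identifying the boundary solutions $\Psi_\infty$ and $\dot\Psi_\infty$ explicitly, but the arguments are equivalent.
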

\begin{proof} For all $\eps>0$ we define $Z_\eps$ and $\xi_\eps$ by
$$
W_\lambda(t)=\frac{\lambda^2}{\sqrt\eps }\, Z_\eps\left(\frac{\lambda^2}{\sqrt\eps }\, t\right),\qquad
\xi(t)=\xi_\eps\left(\frac{\lambda^2}{\sqrt\eps }\, t\right).
$$
If $W_\lambda$ satisfies \eq{ODED2}, then $Z_\eps$ solves the equation
\neweq{duffingeps}
\ddot{Z}_\eps(t)+\eps Z_\eps(t)+Z_\eps(t)^3=0
\endeq
and, according to Definition \ref{defstabtors}, the linear stability of \eq{systemtorsional} depends on the stability of the following
Hill equation:
\neweq{hilleps}
\ddot{\xi}_\eps(t)+(1+2A_{\lambda,\kappa}^2)\Big(\frac{\kappa^2\, \eps}{(1+2A_{\lambda,\kappa}^2)\lambda^4}+Z_\eps(t)^2\Big)\xi_\eps(t)=0.
\endeq
By letting $\eps\to0$, we see that \eq{duffingeps} and \eq{hilleps} ``converge'' respectively to the limit problems
\neweq{limitproblems}
\ddot{Z}(t)+Z(t)^3=0,\qquad \ddot{\xi}(t)+(1+2A_{\lambda,\kappa}^2)Z(t)^2\, \xi(t)=0.
\endeq
These limit equations are precisely (3.1) and (3.2) in \cite{cazw} with $\gamma=1+2A_{\lambda,\kappa}^2 \in (1, 3)$ in view of \eqref{Alkmin1}.
Therefore, \cite[Theorem 3.1]{cazw} applies and the statement follows.\end{proof}

Then we turn to the delicate part of Theorem \ref{theotors}, namely the stability result for small $\delta$.
We apply Lemma \ref{BurdCrit} to the Hill equation \eq{hilltors}: we take $p(t)=\kappa^2+(1+2A_{\lambda,\kappa}^2)W_\lambda(t)^2$ so that
$$
\log\frac{\max p}{\min p}=\log\left(1+\frac{(1+2A_{\lambda,\kappa}^2)\delta^2}{\kappa^2}\right).
$$
From \eq{explicitsol}-\eq{TW} (with $\gamma_1=0$ and $\gamma_3=1$) we know that
$$
W_\lambda(t)=\delta\, {\rm cn}\Bigg[t\, \sqrt{\lambda^4+\delta^2},\frac{\delta}{\sqrt{2(\lambda^4+\delta^2)}}\Bigg]
$$
and that the period of $W_\lambda^2$ is given by \eq{Tld}. Moreover, from the energy conservation in \eq{ODED2} we see that
$$2\dot{W}_\lambda(t)^2=\Big(\delta^2-W_\lambda(t)^2\Big)\Big(2\lambda^4+\delta^2+W_\lambda(t)^2\Big)\, ;$$
therefore, thanks to symmetries and with the change of variables $W_\lambda(t)=\delta\sin\phi$, we obtain
\begin{eqnarray*}
\int_0^{T_\lambda(\delta)}\sqrt{\kappa^2+(1+2A_{\lambda,\kappa}^2)W_\lambda(t)^2}\, dt &=& 2\int_0^{T_\lambda(\delta)/2}\sqrt{\kappa^2+(1+2A_{\lambda,\kappa}^2)W_\lambda(t)^2}\, dt\\
&=& 2\sqrt{2}\int_0^{\pi/2}\sqrt{\frac{\kappa^2+(1+2A_{\lambda,\kappa}^2)\delta^2\sin^2\phi}{2\lambda^4+\delta^2+\delta^2\sin^2\phi}}\, d\phi.
\end{eqnarray*}

Lemma \ref{BurdCrit} then states that the trivial solution of \eq{hilltors} is stable provided that there exists $m\in\N$ such that
\neweq{burdina22}
m\pi+\frac12\log\left(\!1+\tfrac{(1+2\Alk^2)\delta^2}{\kappa^2}\!\right)<
2\sqrt{2}\int_0^{\pi/2}\!\!\sqrt{\tfrac{\kappa^2+(1+2\Alk^2)\delta^2\sin^2\phi}{2\lambda^4+\delta^2+\delta^2\sin^2\phi}}\, d\phi
<(m+1)\pi-\frac12\log\left(\!1+\tfrac{(1+2\Alk^2)\delta^2}{\kappa^2}\!\right).
\endeq

Condition \eq{burdina22} is by far less clear than the corresponding condition in Theorem \ref{toytheo}. As $\delta\to0$ it becomes
$$
m\pi+(1+2A_{\lambda,\kappa}^2)\frac{\delta^2}{2\kappa^2}<\pi\, \frac{\kappa}{\lambda^2}+\frac{\delta^2}{2\kappa^2}
\left(\frac{\pi(1+2A_{\lambda,\kappa}^2)}{2}\, \frac{\kappa}{\lambda^2}-\frac{3\pi}{4}\left(\frac{\kappa}{\lambda^2}\right)^3\right)
<(m+1)\pi-(1+2A_{\lambda,\kappa}^2)\frac{\delta^2}{2\kappa^2}.
$$
This is a sufficient condition for linear stability for small $\delta$ and enables us to obtain the following necessary condition for
linear instability for small $\delta$: there exists $m\in\N$ such that
$$
\frac{\delta^2}{2\kappa^2}\, F_-\left(\frac{\kappa}{\lambda^2}\right)<
\pi\left(\frac{\kappa}{\lambda^2}-m\right)<
\frac{\delta^2}{2\kappa^2}\, F_+\left(\frac{\kappa}{\lambda^2}\right),
$$
where
$$
F_\pm(s):=\frac{3\pi}{4}\, s^3-\frac{\pi(1+2\Alk^2)}{2}\, s\pm(1+2\Alk^2)\qquad(s\ge0).
$$
Simple calculus arguments show that:\par
-- there exists a unique $\bar{s}>0$ such that $F_-(s)<0$ if $s\in[0,\bar{s})$, $F_-(\bar{s})=0$, $F_-(s)>0$ if $s\in(\bar{s},\infty)$; we notice that from \eq{Alkmin1} and since $\Alk\neq0$, we have
$$
\frac{3\pi}{4}\, s^3-\frac{3\pi}{2}\, s-3=:G^1(s)<F_-(s)<G^0(s):=\frac{3\pi}{4}\, s^3-\frac{\pi}{2}\, s-1\qquad\forall s>0.
$$
Both $G^0$ and $G^1$ admit a unique zero. Since $G^1(2)=3\pi-3>0$ and $G^0(1)=-1+\pi/4<0$, we infer that $\bar{s}\in(1,2)$ for any value of $\Alk$. \par
-- since $\Alk^2<1<\frac12 (\frac{81}{2\pi^2}-1)$, we have that $F_+(s)>0$ for all $s\ge0$.
\begin{figure}[h!]
\begin{center}
\includegraphics[scale=0.5]{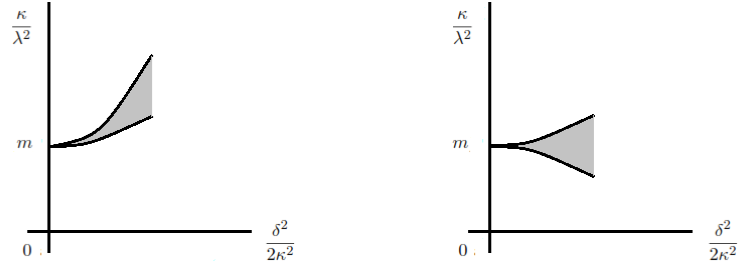}
\caption{Local bounds for the resonance tongue $U_m$ (gray) emanating from $(0,m)$.}\label{resline}
\end{center}
\end{figure}

Therefore, the resonant tongue $U_m$ emanating from the point $(\frac{\delta^2}{2\kappa^2},\frac{\kappa}{\lambda^2})=(0,m)$
necessarily lies (as $\delta\to0$) in one of the gray regions depicted in Figure \ref{resline}, which occur if $\kappa/\lambda^2\in\mathbb{N}$ and, respectively,
\neweq{verybad}
\frac{\kappa}{\lambda^2} >\bar{s} \quad \textrm{ or } \quad \frac{\kappa}{\lambda^2}<\bar{s}.
\endeq
Since $\bar{s} \in (1, 2)$, the only case
where the second situation in \eq{verybad} may occur is when $\kappa=\lambda^2$, which is excluded by the assumptions. Thus, only the case of the left picture in Figure \ref{resline} is possible, proving the second statement of Theorem \ref{theotors}.
We have so proved that the $\lambda$-longitudinal-mode of \eqref{nohang} of amplitude $\delta$ is linearly stable with respect to the $\kappa$-torsional-mode
whenever $\delta$ is sufficiently small. We now make quantitative this smallness requirement: we get rid of the ``mysterious term'' $\Alk$ and we obtain uniform
bounds for $\delta$ not depending on it. We first notice that, since $1+2\Alk^2<3$ in view of \eq{Alkmin1}, the right inequality in \eq{burdina22} is
certainly satisfied provided that \eq{burdinam} holds, where we have emphasized the relevant variables $\kappa/\lambda^2$ and $\delta/\kappa$. For the left
inequality in \eq{burdina22} we use both the bounds $1<1+2\Alk^2<3$ and we see that it is certainly satisfied provided that \eq{burdinam2} holds.

\subsection{Proof of Theorem \ref{Balphanot0}}\label{pfBalphanot0}

If $e_\lambda$ is odd, the statement follows easily from the fact that, if $\eta_\kappa$ is an eigenfunction of \eqref{autov2} as in Theorem \ref{autofz2},
then $\eta_\kappa^2$ is even and the integrand defining $B_\varsigma$ is odd whenever $e_\lambda$ is odd.\par
The difficult part is the second item, which we now prove.
According to Theorem \ref{symmetriceigen}, if $e_\lambda$ is even we need to distinguish two cases.\par\smallskip
$\bullet$ {\bf Case (I):} $e_\lambda(x)=\mathbf{E}_\lambda(x)=\cos(\lambda x)$.\par
In this case, the following calculus formula turns out to be useful:
$$
\lambda\neq2\kappa\ \Longrightarrow\ \int_0^{a\pi}\cos(\lambda x)\cos(2\kappa x)\, dx=
\frac{\lambda\sin(\lambda a\pi)\cos(2\kappa a\pi)-2\kappa\cos(\lambda a\pi)\sin(2\kappa a\pi)}{\lambda^2-4\kappa^2}.
$$
In particular, since $\lambda a - 1/2 \in \mathbb{N}$ (see Theorem \ref{symmetriceigen}), this yields $\cos(\lambda a\pi)=0$ and hence
\neweq{bypartslk}
\lambda\neq2\kappa\ \Longrightarrow\ \int_0^{a\pi}\cos(\lambda x)\cos(2\kappa x)\, dx=
\frac{\lambda\sin(\lambda a\pi)\cos(2\kappa a\pi)}{\lambda^2-4\kappa^2}.
\endeq

Since $B_\varsigma(w)=B_1(\varsigma w)/\varsigma$, it suffices to prove that $B_1(w)\not\equiv0$ in any neighborhood of $w=0$.
And since $B_1(w)=w\int_Ie_\lambda\eta_\kappa^2+o(w^2)$ as $w\to0$, in order to prove this fact it suffices to show that
\neweq{suffclaim}
J:=\int_Ie_\lambda(x)\eta_\kappa(x)^2\, dx=2\int_0^{a\pi}e_\lambda(x)\eta_\kappa(x)^2\, dx\neq0.
\endeq

Theorem \ref{autofz2} suggests to distinguish two further subcases:
$$
\textnormal{(Ia)}\quad \eta_\kappa(x)=\chi_0(x)\sin(\kappa x)\qquad \textnormal{(Ib)}\quad \eta_\kappa(x)=\chi_0(x)\cos(\kappa x).
$$
In Case (Ia) we have $\sin(\kappa a\pi)=0$ and therefore
\neweq{tech1}
\kappa a\in\mathbb{N},\qquad\cos(2\kappa a\pi)=1.
\endeq
From Theorem \ref{symmetriceigen} and \eq{tech1} we know that there exist two integers $m$ and $n$ such that $\lambda=(2m+1)/2a$ and $\kappa=n/a$. Therefore,
$$
4\kappa^2-\lambda^2=\frac{16n^2-4m^2-4m-1}{4a^2}\neq0
$$
since the first three terms in the numerator are multiples of $4$ while -1 is not. This shows that \eq{bypartslk} applies and, by \eq{bypartslk} and
using again \eq{tech1}, we obtain
$$
J\!=\!2\int_0^{a\pi}\!\!\cos(\lambda x)\sin^2(\kappa x)dx\!=\!\int_0^{a\pi}\!\!\cos(\lambda x)[1-\cos(2\kappa x)]dx\!=\!\frac{\sin(\lambda a\pi)}{\lambda}
-\frac{\lambda\sin(\lambda a\pi)}{\lambda^2-4\kappa^2}\!=\!\frac{4\kappa^2\sin(\lambda a\pi)}{\lambda(4\kappa^2-\lambda^2)}.
$$
Since $\sin(\lambda a\pi)=\pm1$, this proves \eq{suffclaim} in Case (Ia).\par
In Case (Ib) we have $\cos(\kappa a\pi)=0$ and therefore
\neweq{tech2}
\kappa a-\frac12 \in\mathbb{N},\qquad\cos(2\kappa a\pi)=-1.
\endeq
From Theorem \ref{symmetriceigen} and \eq{tech2} we know that there exist two integers $m$ and $n$ such that $\lambda=(2m+1)/2a$ and $\kappa=(2n+1)/2a$.
Therefore,
$$
4\kappa^2-\lambda^2=\frac{16n^2+16n-4m^2-4m+3}{4a^2}\neq0
$$
since the first four terms in the numerator are multiples of $4$ while 3 is not. This shows that \eq{bypartslk} applies and, by \eq{bypartslk} and
using again \eq{tech2}, we obtain
$$
J\!=\!2\int_0^{a\pi}\!\!\cos(\lambda x)\cos^2(\kappa x)dx\!=\!\int_0^{a\pi}\!\!\cos(\lambda x)[1+\cos(2\kappa x)]dx\!=\!\frac{\sin(\lambda a\pi)}{\lambda}
-\frac{\lambda\sin(\lambda a\pi)}{\lambda^2-4\kappa^2}\!=\!\frac{4\kappa^2\sin(\lambda a\pi)}{\lambda(4\kappa^2-\lambda^2)}.
$$
Since $\sin(\lambda a\pi)=\pm1$, this proves \eq{suffclaim} also in Case (Ib).\par
This completes the proof of Theorem \ref{Balphanot0} for $e_\lambda$ even in Case (I).\par\smallskip
$\bullet$ {\bf Case (II):} $e_\lambda(x)=\mathscr{E}_\lambda(x)=C[\cos(\lambda x)-\frac{\cos(\lambda a\pi)}{\cosh(\lambda a\pi)}\cosh(\lambda x)]$
if $x\in[0,a\pi]$.\par
In this case, we start with a technical statement that probably holds in a more general form, but which is strong enough for our purposes.

\begin{lemma}\label{calculus1}
Assume that $0<|A|<1$ and consider the function $f(t)=\cos t+A\cosh t$. Then, $f$ admits a finite number of critical points (local extrema)
in $[0,\infty)$, say $\{t_1,...,t_m\}$ for some integer $m\ge1$. Moreover, $f$ cannot have a maximum $t_i$ at positive level and a minimum $t_j$ at negative level such that $|f(t_j)|=f(t_i)$.
%
% there exists $j\in\{1,...,m\}$ such that
%$$|f(t_j)|>|f(t_i)|\quad\mbox{for all }i\neq j\mbox{ such that }f(t_j)f(t_i)<0.$$
\end{lemma}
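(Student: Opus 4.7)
\emph{Proof plan for Lemma \ref{calculus1}.}

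The plan is to proceed in three steps, first disposing of the finiteness of critical points and then tackling the main obstruction equality by combining the critical point relation with the second-order information at extrema.

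\textbf{Step 1 (finiteness).} Differentiating gives $f'(t)=-\sin t+A\sinh t$, so at any critical point $t_*$ one has $\sin t_*=A\sinh t_*$, which forces $|\sinh t_*|\le 1/|A|$. Hence the critical points all lie in a bounded subset of $[0,\infty)$, and since $f'$ is real-analytic with $f'\not\equiv 0$, its zeros are isolated, yielding finitely many of them.

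\textbf{Step 2 (key algebraic identity at critical points).} Squaring the critical point relation and using $\sin^2t_*+\cos^2 t_*=1$ and $\cosh^2 t_*-\sinh^2 t_*=1$, I obtain the pivotal identity
\begin{equation*}
\cos^2 t_*+A^2\cosh^2 t_*=1+A^2
\end{equation*}
valid at every critical point. Introducing $u_k=\cos t_k$ and $v_k=A\cosh t_k$ for $k\in\{i,j\}$, this says $u_k^2+v_k^2=1+A^2$, while $f(t_k)=u_k+v_k$. Assuming toward contradiction that $f(t_i)=-f(t_j)$, the parallelogram identity
\begin{equation*}
(u_k+v_k)^2+(u_k-v_k)^2=2(u_k^2+v_k^2)=2(1+A^2)
\end{equation*}
yields $(u_i-v_i)^2=(u_j-v_j)^2$. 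The sign alternative $u_i-v_i=-(u_j-v_j)$ combined with $u_i+v_i=-(u_j+v_j)$ would force $v_i=v_j$, hence $t_i=t_j$ since $t\mapsto A\cosh t$ is strictly monotone on $[0,\infty)$; this is excluded. The remaining case forces
\begin{equation*}
\cos t_i=-A\cosh t_j,\qquad \cos t_j=A\cosh t_i.
\end{equation*}

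\textbf{Step 3 (second-order information produces the contradiction).} Since $f''(t)=-\cos t+A\cosh t$, at the maximum $t_i$ I have $f(t_i)>0$ and $f''(t_i)<0$, i.e.\ $-\cos t_i<A\cosh t_i<\cos t_i$, so $\cos t_i>0$ and
\begin{equation*}
|A\cosh t_i|<\cos t_i.
\end{equation*}
Analogously at the minimum $t_j$ with $f(t_j)<0$ and $f''(t_j)>0$ one gets $\cos t_j<A\cosh t_j<-\cos t_j$, so $\cos t_j<0$ and
\begin{equation*}
|A\cosh t_j|<|\cos t_j|.
\end{equation*}
Now I substitute the Step 2 relations: the first inequality becomes $|\cos t_j|<|{-A\cosh t_j}|=|A\cosh t_j|$, directly contradicting the second inequality. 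This rules out $|f(t_j)|=f(t_i)$ and concludes the proof.

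The only delicate point I anticipate is checking that in Step 2 no genuine sign ambiguity survives; the argument above is self-contained, but one must be a bit careful to rule out the degenerate case $t_i=t_j$ and to verify that $A\cosh$ is one-to-one on $[0,\infty)$ (which is immediate as $A\ne 0$). Apart from this bookkeeping, the proof is purely algebraic once the identity $\cos^2 t_*+A^2\cosh^2 t_*=1+A^2$ is in hand.
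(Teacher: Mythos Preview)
Your argument is correct, modulo two harmless sign slips in Step~2 (the first case actually gives $v_i=-v_j$, and the remaining case gives $\cos t_j=-A\cosh t_i$; since Step~3 only uses the absolute values $|\cos t_k|=|A\cosh t_{k'}|$, the contradiction survives) and the implicit use of \emph{strict} inequalities $f''(t_i)<0$, $f''(t_j)>0$, which you should justify: if $f'(t_*)=f''(t_*)=0$ then $f'''(t_*)=2A\sinh t_*\ne0$ for $t_*>0$, so such a point is an inflection rather than an extremum (and $f''(0)=A-1\ne 0$). The paper exploits the same identity $\cos^2 t_*+A^2\cosh^2 t_*=1+A^2$ but follows a different algebraic route: from $f(t_i)^2=f(t_j)^2$ it extracts the single relation $\cos t_i\cosh t_i=\cos t_j\cosh t_j$, hence $\cos t_i\cos t_j>0$, and then expands the two products $f(t_i)f(t_j)<0$ and $f''(t_i)f''(t_j)\le 0$ directly, using this sign information to obtain contradictory signs for $A(\cos t_i\cosh t_j+\cos t_j\cosh t_i)$. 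Your swap relations make the final contradiction especially transparent once they are in hand, whereas the paper's product argument avoids the case split on $\pm(u_j-v_j)$ and works with the non-strict second-order conditions throughout.
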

\begin{proof}
The first statement follows by noticing that $f'(t)=-\sin t+A\sinh t$ so that $f'(0)=0$ (proving $m\ge1$) and $f'$ has eventually the same sign as $A$.
As for the second statement, we first claim that
\begin{equation}\label{primocl}
\textnormal{if } |f(t_i)|=|f(t_j)| \textnormal{ for some } i\neq j, \textnormal{ then}\,\cos t_i\cos t_j>0.
\end{equation}
To see this, put together the three facts
$$
f(t_i)^2=f(t_j)^2,\quad f'(t_i)=0\Leftrightarrow \sin t_i=A\sinh t_i,\quad f'(t_j)=0\Leftrightarrow \sin t_j=A\sinh t_j,
$$
to obtain $\cos t_i\cosh t_i=\cos t_j\cosh t_j$, which shows that either $\cos t_i=\cos t_j=0$ or $\cos t_i\cos t_j>0$. The claim follows if we
exclude the first possibility; by contradiction, if it were true then $|\sin t_i|=|\sin t_j|=1$ and, since $f'(t_i)=f'(t_j)=0$,
this would imply that $\sinh t_i=\sinh t_j$, contradicting $t_i\neq t_j$.\par
Assume now by contradiction that $f$ has a maximum $t_i$ at positive level and a minimum $t_j$ at negative level satisfying $|f(t_j)|=f(t_i)$. Then, from $f(t_i)f(t_j)<0$ we infer that
\neweq{primastretta}
0>\cos t_i\cos t_j+A^2\cosh t_i\cosh t_j+A\cos t_i\cosh t_j+A\cos t_j\cosh t_i>A(\cos t_i\cosh t_j+\cos t_j\cosh t_i),
\endeq
where we used \eqref{primocl}. On the other hand, from $f''(t_i)f''(t_j)\le0$ and using again \eqref{primocl}, we infer that
$$
0\ge\cos t_i\cos t_j+A^2\cosh t_i\cosh t_j-A\cos t_i\cosh t_j-A\cos t_j\cosh t_i>-A(\cos t_i\cosh t_j+\cos t_j\cosh t_i),
$$
which contradicts \eq{primastretta}, concluding the proof.
%{\em Step 3}. Conclusion.\par
%Consider both the maximum $M_+$ of all the $f(t_i)$ of the first category (maximum $t_i$ at positive level) and the maximum $M_-$ of all the $|f(t_i)|$ of
%the second category (minimum $t_i$ at negative level). Then Step 2 states that $M_+\neq M_-$. If $M_+>M_-$ take any $t_j$ such that $f(t_j)=M_+$ to prove
%the statement, otherwise take any $t_j$ such that $f(t_j)=-M_-$.
\end{proof}
We now continue the analysis of Case (II). Consider the function
$$
A(w):=\frac{B_1(w)}{w}=\int_0^{a\pi}\frac{e_\lambda(x)\eta_\kappa(x)^2}{\sqrt{1+w^2e_\lambda(x)^2}}\, dx\quad\forall w\neq0,\qquad
A(0)=\int_0^{a\pi} e_\lambda(x)\eta_\kappa(x)^2\, dx,
$$
so that Theorem \ref{Balphanot0} will be proved if we show that $A(w)\not\equiv0$ in any neighborhood of $w=0$.
Assume by contradiction this to be false, namely
\neweq{notclaimAn}
A(w)\equiv0\mbox{ in some neighborhood $U$ of $w=0$}\quad\Longrightarrow\quad A^{(k)}(w)\equiv0\mbox{ in }U\ \forall k\in\mathbb{N},
\endeq
where $A^{(k)}(w)$ denotes the $k$-th derivative of $A$ with respect to $w$. We have
$$
A'(w)=-w\int_0^{a\pi}\frac{e_\lambda(x)^3\eta_\kappa(x)^2}{[1+w^2e_\lambda(x)^2]^{3/2}}\, dx
$$
and by \eq{notclaimAn} we deduce that
$$
A_1(w):=\int_0^{a\pi}\frac{e_\lambda(x)^3\eta_\kappa(x)^2}{[1+w^2e_\lambda(x)^2]^{3/2}}\, dx\equiv0\mbox{ in }U,\qquad
A_1(0)=\int_0^{a\pi}e_\lambda(x)^3\eta_\kappa(x)^2\, dx=0.
$$
In turn, by differentiating $A_1(w)$, we deduce that
$$
A_2(w):=\int_0^{a\pi}\frac{e_\lambda(x)^5\eta_\kappa(x)^2}{[1+w^2e_\lambda(x)^2]^{5/2}}\, dx\equiv0\mbox{ in }U,\qquad
A_2(0)=\int_0^{a\pi}e_\lambda(x)^5\eta_\kappa(x)^2\, dx=0.
$$
By iterating this procedure, we obtain that
\neweq{tuttenulle}
\int_0^{a\pi}e_\lambda(x)^{2k+1}\eta_\kappa(x)^2\, dx=0\qquad\forall k\in\mathbb{N}.
\endeq

By Lemma \ref{calculus1} we may find $K\neq0$ such that $g_\lambda(x):=Ke_\lambda(x)$ satisfies
$$
\exists\, \overline{x}\in[0,a\pi)\quad\mbox{s.t.}\quad g_\lambda(\overline{x})=\|g\|_{L^\infty(0,a\pi)}>1\ ,\qquad g_\lambda(x)\ge-1\ \forall x\in[0,a\pi).
$$
Lemma \ref{calculus1} does not specify if such $\overline{x}$ is unique, nevertheless we may consider the nonempty open set $H$ where $g_\lambda(x)>1$:
this may be an interval (in case of uniqueness of $\overline{x}$) or the union of a finite number of intervals (one for each $\overline{x}$).
From \eq{tuttenulle} we infer that
$$
\int_0^{a\pi}g_\lambda(x)^{2k+1}\eta_\kappa(x)^2\, dx=0\qquad\forall k\in\mathbb{N}.
$$
But, denoting $H_0=(0,a\pi)\setminus H$, we have that
$$
\int_0^{a\pi}g_\lambda(x)^{2k+1}\eta_\kappa(x)^2\, dx=\int_{H_0}g_\lambda(x)^{2k+1}\eta_\kappa(x)^2\, dx
+\int_{H}g_\lambda(x)^{2k+1}\eta_\kappa(x)^2\, dx=0\qquad\forall k\in\mathbb{N}.
$$
By letting $k\to\infty$, the first term converges to 0 by the Lebesgue Theorem (recall that $|g_\lambda(x)|<1$ a.e.\ in $H_0$) while the second term
diverges to $+\infty$ (recall that $g_\lambda(x)>1$ in $H$). This gives a contradiction.\par
Therefore, \eq{notclaimAn} does not hold and this proves Theorem \ref{Balphanot0} for even eigenfunctions $e_\lambda$ also in Case (II).

\subsection{Proof of Theorem \ref{theotors2}}\label{pftheotors2}

In order to prove instability for large $\delta$, we proceed as in Lemma \ref{asymptotics}, with a few changes.
For all $\eps>0$ we define $Z_\eps$ and $\xi_\eps$ by
$$
W_\lambda(t)=\sqrt{\frac{\lambda^4+2\varsigma}{\eps}}\, Z_\eps\left(\sqrt{\frac{\lambda^4+2\varsigma}{\eps}}\, t\right),\qquad
\xi(t)=\xi_\eps\left(\sqrt{\frac{\lambda^4+2\varsigma}{\eps}}\, t\right).
$$
If $W_\lambda$ satisfies \eq{superduffing}, then $Z_\eps$ solves the equation
\neweq{duffingeps2}
\ddot{Z}_\eps(t)+\eps Z_\eps(t)+Z_\eps(t)^3+\frac{2\eps}{\lambda^4+2\varsigma}\int_I\left(\sqrt{\frac{\eps}{\lambda^4+2\varsigma}+\varsigma^2Z_\eps(t)^2e_\lambda(x)^2}-
\sqrt{\frac{\eps}{\lambda^4+2\varsigma}}\right)
e_\lambda(x)dx=0
\endeq
and, according to Definition \ref{defstabb3}, the linear stability of \eq{mostgeneral} depends on the stability of the following Hill equation:
\neweq{hilleps2}
\ddot{\xi}_\eps(t)+\left[\frac{(\kappa^2+2\varsigma)\eps}{\lambda^4+2\varsigma}+(1+2A_{\lambda,\kappa}^2)Z_\eps(t)^2
+\frac{2\eps\varsigma^2Z_\eps(t)}{\sqrt{\lambda^4+2\varsigma}}\int_I\frac{e_\lambda(x)\eta_\kappa(x)^2\ dx}{\sqrt{\eps+\varsigma^2(\lambda^4+2\varsigma)Z_\eps(t)^2e_\lambda(x)^2}}
\right]\xi_\eps(t)=0.
\endeq
By letting $\eps\to0$ we see that \eq{duffingeps2} and \eq{hilleps2} ``converge'' again to the limit problems \eq{limitproblems}
and we conclude as for Lemma \ref{asymptotics}, by invoking \cite[Theorem 3.1]{cazw}.
\par
To finish the proof of Theorem \ref{theotors2}, we apply the Burdina criterion (Lemma \ref{BurdCrit}) to the periodic coefficient of the Hill equation \eqref{hill2}, given by
$p(t)=\kappa^2+ 2\varsigma+(1+2A_{\lambda, \kappa}^2) W_\lambda^2(t) + 2 \varsigma^2 B_\varsigma(W_\lambda(t))$. By Theorem \ref{Balphanot0}, we know that $B_\varsigma (W_\lambda(t))\not\equiv 0$ if and only if $e_\lambda$ is even. Thus, $p(t)$ has period equal to $\tau_\lambda/2$ if $e_\lambda$ is odd, and $\tau_\lambda$ if $e_\lambda$ is even (note that such expressions depend on $\delta$). For $\delta \to 0$ these periods converge, respectively, to $2\pi/\sqrt{\lambda^4+2\varsigma}$ and to $\pi/\sqrt{\lambda^4+2\varsigma}$ while $p(t)$ converges to $\kappa^2 + 2\varsigma$. Hence, recalling \eqref{condizioneBurdina}, for $\delta \to 0$ we are in a stability region if there exists $k \in \mathbb{N}$ such that
$$
k\pi < \frac{2\pi}{\sqrt{\lambda^4+2\varsigma}} \sqrt{\kappa^2+2\varsigma} < (k+1)\pi \qquad \textrm{ if } e_\lambda \textrm{ is even },
$$
and
$$
k\pi < \frac{\pi}{\sqrt{\lambda^4+2\varsigma}} \sqrt{\kappa^2+2\varsigma} < (k+1)\pi \qquad \textrm{ if } e_\lambda \textrm{ is odd },
$$
from which the statement for small $\delta$ follows.

\addtocontents{toc}{\setcounter{tocdepth}{4}}

\section{Final comments and perspectives}\label{finalsect}

\subsection{Conclusions}

In the first part of the present paper, Section \ref{functional}, we have introduced all the basic tools for the analysis of hinged beams
with two piers. The functional and variational setting, as well as the spectral analysis, highlighted some curious phenomena such as the
explicit form of the underlying functional space and the lack of regularity of weak solutions of the related equations. We showed that {\em each
pier reduces by one the dimension of the functional space and inserts a Dirac delta within the equation}. Of particular
relevance for the rest of the paper and for future developments is the behavior of the eigenvalues of \eq{autovsym} as the position of the
piers varies. We gave a full picture of the behavior of the nodes of the eigenfunctions of \eq{autovsym}, a feature that is essential also for
engineers in order to study the oscillations of a bridge, see once again Figure \ref{zeroTNB}. Moreover, we showed that the functional space
(of codimension two) does not allow to view the fourth order eigenvalue problem \eq{autovsym} as the ``squared'' second order
eigenvalue problem \eq{autov2}, a fact that highlights how {\em the stretching energy propagates in a disordered way across the piers}.\par
In Section \ref{evolutionbeam} we reached two main conclusions: the best stability performances are obtained when the piers are placed in the
physical range \eq{physicalrange} and the nonlinear term in \eq{prototipo} that better describes the behavior of actual structures is
$\Vert u\Vert_{L^2}^2 u$. These conclusions were reached after a lengthy and delicate analysis of each nonlinearity and after studying the
stability of the prevailing modes (Definition \ref{prevalente}) from several points of view. At each stage of our analysis, we kept an eye on
what was observed by the witnesses of the TNB collapse \cite{ammvkwoo}. This led us to use a suitable notion of stability (Definition \ref{unstable})
and to focus our attention on the particular class of solutions having a prevailing mode. Also the conclusion on the ``best'' nonlinearity
takes into account the witnesses of the TNB collapse, in particular the fact that {\em the system should have small physiological transfers of
energy between modes}. The nonlinearity $\Vert u\Vert_{L^2}^2 u$ satisfies this feature and has a nonlocal behavior as in real structures.\par
These results for nonlinear beams turned out to be of crucial importance for the analysis of a more realistic and involved model, the
so-called fish-bone model, studied in Section \ref{suspbridge}, which better describes the behavior of a suspension bridge. Indeed, for this model
the theoretical tools have limited strength and, if one wishes to have an idea of the phenomena governing its stability, then one should have
in mind what happens for simpler models where the results are much more precise. We were able to show the impact of the two main nonlinear forces
acting on the deck of a bridge, the restoring forces due to the sustaining cables and to the hangers. We saw that odd and even longitudinal modes respond differently to the slackening of the hangers and that an increment of the elastic constant $\varsigma$ plays against linear torsional stability. \par
Overall, our results enable us to give some suggestions for the designers of future bridges:\par
$\bullet$ design bridges with piers in the physical range \eq{physicalrange};\par
$\bullet$ avoid the use of ``slipping'' decks or cables across the piers since we saw that the stretching energy mixes all the modes and
makes much more difficult the stability analysis;\par
$\bullet$ avoid the use of too elastic hangers.\par
In the next subsections we list some problems left open in the present paper and some possible future developments towards a better understanding
of the instability in suspension bridges.

\subsection{Some open questions}\label{probopen}

\hspace{6pt}
$\bullet$ Definition \ref{prevalente} leaves some arbitrariness in the choice of $\alpha_n$ ($n \neq j$) in \eqref{prevalentec}. In all our experiments, we took $\alpha_n = 0.01$ for $n \neq j$, regardless of the value of $\alpha_j$. It would be interesting to analyze how the energy threshold depends on the choice of the $\alpha_n$'s; our conjecture is that if we increase the initial value of the residual components, then the energy threshold of nonlinear instability will also increase (recall that condition $(ii)$ in \eqref{grande} needs to be satisfied) but the hierarchy between modes will be maintained. Clearly, this \emph{does not} mean that one should have larger residual modes to improve stability! The important feature is instead that the ``weakest'' couple of modes would not depend on how the $\alpha_n$'s are chosen, this being the most important response for the stability analysis; in fact, the detection of this couple may help a designer improve the performances of the structure.\par\smallskip
$\bullet$ It would be interesting to study \eqref{prototipo2} with $\gamma_1 \gamma_3 > 0$. How does the instability diagram look like? Maybe something in between Figures \ref{monodromymnd1} and \ref{parabole}?
\par\smallskip
$\bullet$ The stability analysis performed in Section \ref{optfish} should be completed with the remaining cases. What happens in presence of
multiple torsional
eigenfunctions and what happens for $a<1/2$? Also, torsional modes other than the second should be considered. Finally, let us also point out that a full analysis
of the Hill equation \eq{hilltors} appears mathematically quite challenging. Assume that $W_\lambda$ solves \eq{ODED2}, see \eq{explicitsol}, and consider the equation
$$
\ddot{\xi}(t)+\Big(\kappa^2+\gamma W_\lambda(t)^2\Big)\xi(t)=0
$$
for some $\gamma>0$. The case $\gamma=1$ is fully described by Proposition \ref{toytheo2}, while the case $\gamma\in(1,3)$ is partially described by Theorem
\ref{theotors}. What happens for other values of $\gamma$? In the proof of Theorem \ref{theotors} we have seen that the behavior at infinity of the resonant
tongues is governed by the Cazenave-Weissler intervals \eq{strisceinfinito}, but what about the behavior for $\delta$ small?
\par
\smallskip
$\bullet$ The stability analysis performed in Section \ref{posottimale} leaves many unanswered questions. A complete description of the behavior of
the optimal energy threshold on varying of $\varsigma$ is a challenging task. From what we saw in our numerical experiments, the general trend seems
to be that, for $\varsigma$ increasing, the amplitude threshold of linear stability decreases, while much more investigation is needed to clarify the behavior of nonlinear instability. A full quantitative description of these phenomena is still missing.
\par
\smallskip
$\bullet$ Theorem \ref{Balphanot0} leaves several questions open. First, we remark that if the eigenfunctions $\eta_\kappa$ are different from those in Theorem \ref{autofz2} it may be that $B_\varsigma(w)\not\equiv0$ also if $e_\lambda$ is odd: indeed, consider the second eigenfunction $e_1$ of \eq{autovsym} (which is odd) and an eigenfunction of the kind $\mathcal{D}_\kappa+\mathcal{P}_\kappa$ of \eq{autov2} (which is neither odd nor even). Second, Theorem \ref{Balphanot0} {\em does not} hold if
$\eta_\kappa=\mathcal{D}_\kappa$ or $\eta_\kappa=\mathcal{P}_\kappa$, see Theorem \ref{autofz2}. To see this, take $a=1/5$ and
$e_\lambda(x)=\eta_\kappa(x)=\cos(5x/2)$ so that $\lambda=\kappa=5/2$. With this choice, we have $B_\varsigma(w)\equiv0$. We have here exploited the fact
that $e_\lambda$ has ``an integer number of periods'' (one in this case) on the side span. In fact, if $a\not\in\mathbb{Q}$ this cannot happen and we
believe that Theorem \ref{Balphanot0} holds also for $\eta_\kappa$ not vanishing on the side spans. We leave these as open problems.

\subsection{Future developments}

\hskip10pt $\bullet$ In this paper we have only focused on the structural aspect of the instability problem. The next step is to take into account
the fluid-structure interaction \cite{aero,lasiecka} and to insert into the model suitable damping effects.
We do not expect the aerodynamics to modify the response on the optimal position of the piers but it will probably
modify the {\em quantitative response} in terms of energy thresholds.\par\smallskip

$\bullet$ Motivated by the design of most bridges, we have mainly considered the case of symmetric side spans ($b=a$). The main advantage
of this restriction is that one can deal with even and odd eigenfunctions, see the discussion in Section \ref{finitod}. But some suspension bridges, such as
the three Kurushima Bridges (see \cite[Figure 2.4.6]{jurado}) have asymmetric side spans. In this respect, let us also recall a forgotten suggestion from the 19th
century: while commenting the collapse of the Brighton Chain Pier (1836), Russell \cite{russell} claims that {\em the remedies I have proposed, are those by which
such destructive vibrations would have been rendered impossible}. His remedies were to alter the place of the cross bars and to put stays below the bridge which
should be put {\em at distances not perfectly equal}. His scope was to break symmetry in the longitudinal oscillating modes of the deck. Therefore,
the optimal position of the piers should also be discussed in the asymmetric framework, it is not even clear if symmetry yields better stability performances:
a full analysis for the case $b\neq a$ and the comparison with symmetric side spans appears quite important and challenging.\par\smallskip

$\bullet$ Several suspension bridges, such as the San Francisco-Oakland Bay Bridge (see \cite[Fig. 15.10]{podolny}), have more than two intermediate piers.
Some of the results in the present paper may be extended to the case of multiple intermediate piers. With the same proof as for Theorem \ref{VI}, one can show
that the subspace of $H^2\cap H^1_0(I)$ with $n$ interior vanishing constraints has codimension $n$. Also Theorem \ref{regular} continues to hold with some
obvious changes. However, a different and general procedure seems necessary to prove smoothness of a solution, since problem \eq{wrong3} has exactly the same number of
constraints (four) as the order of the ODE.
Furthermore, the spectral analysis carried out in Section \ref{functional}, including the discussion about the possible existence of positive eigenfunctions (see Figure \ref{positiva}) appears much longer and possibly more difficult than in the case of only two piers. In this respect, anyway, taking into account the definition of $i(e_\lambda)$ in Section \ref{functional}, in presence of $n$ piers it would be reasonable that the only positive eigenfunction is the $(n+1)$-th. But the main difficulty is certainly to determine the optimal length of the secondary spans in order
to minimize the dangerous energy exchanges within the main span.\par\smallskip

$\bullet$ The opportunity to introduce full plate models should be evaluated. From Ventsel-Krauthammer \cite[$\S$ 1.1]{ventsel} we learn that plates may be
classified according to the ratio between width and thickness. By taking these values from the Report \cite{ammvkwoo}, we deduce that {\em the collapsed TNB may
be considered as a thin plate}, see \cite[$\S$ 5.2.1]{bookgaz} for a detailed discussion. The behavior of rectangular thin plates subject to a variety of
boundary conditions is studied in \cite{braess,grunau,gruswe,sweers}. The role and the optimal position of the intermediate piers could be studied
also for thin plate models since they appear more appropriate for decks with a large width.
\par
\smallskip
$\bullet$ The nonlocal behavior of structures such as suspension bridges may intervene also in the differential operators and in other terms, for instance dampings. The stability analysis, both for beams and degenerate plates, appears quite challenging in general frameworks. A possibility would be to compare the results in the present paper with results obtained for different beam and plate models with Woinowsky-Krieger-type nonlinearity, such as the ones considered in \cite{autpucsal,pucsal}. For these equations, a full analysis of the nonlinear stability appears extremely difficult and this is a further reason why one should always keep an eye on linear stability.
\par\smallskip
$\bullet$ In plates without piers, the stretching effects can be controlled, see \cite{bonedegaz}. As already mentioned, in presence of the piers the stability analysis is much more complicated, see Section \ref{nonlinevol}. Therefore, a possible suggestion is to design the entire structure in such a way that the
stretching effects in a point remain confined to the span to which it belongs. This means that, instead of \eq{beamstretch}, one should
study the stability for the nonlinear nonlocal equation
$$
u_{tt}+u_{xxxx}-\Big(\|u_x\|^2_{L^2(I_-)}\chi_-+\|u_x\|^2_{L^2(I_0)}\chi_0+\|u_x\|^2_{L^2(I_+)}\chi_+\Big)u_{xx}=0\qquad x \in I,\quad
t>0,
$$
that is, an equation where the stretching effects act separately on each span. Here, $\chi_-$, $\chi_0$, $\chi_+$ denote the characteristic
functions of $I_-$, $I_0$, $I_+$.

\bigskip
\textbf{Acknowledgement}. The Authors are grateful to Alberto Farina for an important remark about Theorem \ref{regular} and to Clelia Marchionna for
raising their attention on the papers \cite{gesz,goldberg} which led to an improvement of Proposition \ref{toytheo2}.\par
Both the authors are supported by the Gruppo Nazionale per l'Analisi Matematica, la Probabilit\`a e le loro Applicazioni (GNAMPA) of the
Istituto Nazionale di Alta Matematica (INdAM).
The second Author is also partially supported by the PRIN project {\em Equazioni alle derivate parziali di tipo ellittico e parabolico: aspetti geometrici, disuguaglianze collegate, e applicazioni.}

\bigskip
\small


\begin{thebibliography}{99}
\bibitem{abdel} A.M. Abdel-Ghaffar, {\em Suspension bridge vibration: continuum formulation}, J. Eng. Mech. 108, 1215-1232 (1982)
\bibitem{akesson} B. Akesson, {\em Understanding bridges collapses}, CRC Press, Taylor \& Francis Group, London (2008)
\bibitem{ammvkwoo} O.H. Ammann, T. von K\'{a}rm\'{a}n, G.B. Woodruff, {\em The failure of the Tacoma Narrows Bridge}, Federal Works Agency (1941)
\bibitem{arigaz} G. Arioli, F. Gazzola, {\em Torsional instability in suspension bridges: the Tacoma Narrows Bridge case}, Commun. Nonlinear Sci. Numer. Simul. 42, 342-357 (2017)
\bibitem{arioli} G. Arioli, H. Koch, {\em Families of periodic solutions for some Hamiltonian PDEs}, SIAM J. Appl. Dyn. Syst. 16, 1-15 (2017)
%\bibitem{ball} J.M. Ball, {\em Initial-boundary value problems for an extensible beam}, J. Math. Anal. Appl. 42, 61-90 (1973)
\bibitem{sepe1} G. Augusti, V. Sepe, {\em A ``deformable section'' model for the dynamics of suspension bridges. Part I: Model and linear response},
Wind and Structures 4, 1-18 (2001)
\bibitem{autpucsal} G. Autuori, P. Pucci, M.C. Salvatori, {\em Asymptotic stability for nonlinear Kirchhoff systems}, Nonlinear Anal. Real World Appl. 10, 889-909 (2009)
\bibitem{bartoli} G. Bartoli, P. Spinelli, {\em The stochastic differential calculus for the determination of structural response under wind},
J. Wind Engineering and Industrial Aerodynamics 48, 175-188 (1993)
\bibitem{babefega} U. Battisti, E. Berchio, A. Ferrero, F. Gazzola, {\em Periodic solutions and energy transfer between modes in a nonlinear beam
equation}, J. Math. Pures Appl. 108, 885-917 (2017)
\bibitem{benforgaz} V. Benci, D. Fortunato, F. Gazzola, {\em Existence of torsional solitons in a beam model of suspension bridge},
Arch. Ration. Mech. Anal. 226, 559-585 (2017)
\bibitem {bergaz} E. Berchio, F. Gazzola, \emph{A qualitative explanation of the origin of torsional instability in suspension bridges},
Nonlin. Anal. TMA 121, 54-72 (2015)
%\bibitem{bergazzan} E. Berchio, F. Gazzola, C. Zanini, {\em Which residual mode captures the energy of the dominating mode in second order Hamiltonian systems?},
%SIAM J. Appl. Dyn. Syst. 15, 338-355 (2016)
\bibitem{berti} M. Berti, {\em Nonlinear oscillations of Hamiltonian PDEs}, Progress in Nonlinear Differential Equations and their Applications 74,
Birkh\"auser, Boston, 2007
\bibitem{bleich} F. Bleich, C.B. McCullough, R. Rosecrans, G.S. Vincent, {\em The mathematical theory of vibration in
suspension bridges}, U.S. Dept. of Commerce, Bureau of Public Roads, Washington D.C. (1950)
\bibitem{bonedegaz} D. Bonheure, F. Gazzola, E. Moreira dos Santos, {\em Periodic solutions and torsional instability in a nonlinear nonlocal plate equation}, preprint
\bibitem{braess}
D. Braess, S. Sauter, C. Schwab, {\em On the justification of plate models}, J. Elasticity 103, 53-71 (2011)
\bibitem{brown} J.M.W. Brownjohn, {\em Observations on non-linear dynamic characteristics of suspension bridges},
Earthquake Engineering \& Structural Dynamics 23, 1351-1367 (1994)
\bibitem{burdina} V.I. Burdina, {\em Boundedness of solutions of a system of differential equations}, Dokl. Akad. Nauk. SSSR 92, 603-606 (1953)
\bibitem{burg} D. Burgreen, {\em Free vibrations of a pin-ended column with constant distance between pin ends}, J. Appl. Mech. 18, 135-139 (1951)
%\bibitem{capsoni} A. Capsoni, R. Ardito, A. Guerrieri, {\em Stability of dynamic response of suspension bridges}, J. Sound Vibration 393, 285-307 (2017)
\bibitem{haraux} T. Cazenave, A. Haraux, F.B. Weissler, {\em A class of nonlinear, completely integrable abstract wave equations}, J. Dynam. Differential
Equations 5, 129-154 (1993)
\bibitem{cazwei} T. Cazenave, F.B. Weissler, {\em Asymptotically periodic solutions for a class of nonlinear coupled oscillators}, Portugal. Math.
52, 109-123 (1995)
\bibitem{cazw} T. Cazenave, F.B. Weissler, {\em Unstable simple modes of the nonlinear string}, Quart. Appl. Math. 54, 287-305 (1996)
\bibitem{aero} I. Chueshov, E.H. Dowell, I. Lasiecka, J.T. Webster, {\em Nonlinear elastic plate in a flow
of gas: recent results and conjectures}, Appl. Math. Optim. 73, 475-500 (2016)
\bibitem{karmanbook} I. Chueshov, I. Lasiecka, {\em Von K\'arm\'an evolution equations. Well-posedness and long-time dynamics}, Springer
Monographs in Mathematics, Springer, New York (2010)
\bibitem{ciarletvk} P.G. Ciarlet, P. Rabier, \emph{Les \'equations de von K\'arm\'an},
Studies in Mathematics and its Applications 27, Springer, Berlin (1980)
\bibitem{mdemiranda} M. de Miranda, M. Petrequin, {\em Storebaelt East Bridge: aspetti del montaggio e della realizzazione}, Costruzioni Metalliche
6, 27-42 (1998)
\bibitem{dickey0} R.W. Dickey, {\em Free vibrations and dynamic buckling of the extensible beam}, J. Math. Anal. Appl. 29, 443-454 (1970)
\bibitem{duffing} G. Duffing, {\em Erzwungene schwingungen bei ver\"anderlicher eigenfrequenz}, F. Vieweg u. Sohn, Braunschweig (1918)
\bibitem{ederson} V. Ferreira, F. Gazzola, E. Moreira dos Santos, {\em Instability of modes in a partially hinged rectangular plate}, J. Differential  Equations
261, 6302-6340 (2016)
%\bibitem{eurocode} Eurocode 1, {\em Actions on structures. Parts 1-4: General actions - Wind actions},
%The European Union Per Regulation 305/2011, Directive 98/34/EC \& 2004/18/EC. http://www.phd.eng.br/wp-content/uploads/2015/12/en.1991.1.4.2005.pdf
%\bibitem{fergaz} A. Ferrero, F. Gazzola, {\em A partially hinged rectangular plate as a model for suspension bridges},
%Disc. Cont. Dynam. Syst. A 35, 5879-5908 (2015)
\bibitem{GarGaz} M. Garrione, F. Gazzola, \emph{Loss of energy concentration in nonlinear evolution beam equations}, J. Nonlinear Sci. 27,
1789-1827 (2017)
\bibitem{gaga} C. Gasparetto, F. Gazzola, {\em Resonance tongues for the Hill equation with Duffing coefficients and instabilities in a nonlinear
beam equation}, Comm. Contemp. Math. 20, 1750022 (22 pp.) (2018)
\bibitem{bookgaz}F. Gazzola, {\em Mathematical models for suspension bridges}, MS\&A Vol.\,15, Springer (2015)
\bibitem{gazwang} F. Gazzola, Y. Wang, {\em Modeling suspension bridges through the von {K}\'arm\'an quasilinear plate equations},
Progress in Nonlinear Differential Equations and Their Applications, In: Contributions to Nonlinear Differential Equations and Systems,
a tribute to Djairo Guedes de Figueiredo on occasion of his 80th birthday, 269-297 (2015)
\bibitem{gentile} G. Gentile, V. Mastropietro, M. Procesi, {\em Periodic solutions for completely resonant nonlinear wave equations with Dirichlet boundary
conditions}, Commun. Math. Phys. 256, 437-490 (2005)
%\bibitem{giovuk} C. Giorgi, E. Vuk, {\em Steady-state solutions for a suspension bridge with intermediate supports}, Bound. Value Probl. 2013, 2013:204, 17 pp.
\bibitem{gesz} F. Gesztesy, R. Weikard, {\em Picard potentials and Hill's equation on a torus}, Acta Math. 176, 73-107 (1996)
\bibitem{ghg} M. Ghisi, M. Gobbino, {\em Stability of simple modes of the Kirchhoff equation}, Nonlinearity 14, 1197-1220 (2001)
\bibitem{ggh} M. Ghisi, M. Gobbino, A. Haraux, {\em An infinite dimensional Duffing-like evolution equation with linear dissipation and an asymptotically small source term}, Nonlinear Anal. Real World Appl. 43, 167-191 (2018)
\bibitem{goldberg} W. Goldberg, H. Hochstadt, {\em On a Hill's equation with two gaps in its spectrum}, SIAM J. Math. Anal. 10, 1069-1076 (1979)
\bibitem{grunau} H.-Ch. Grunau, {\em Nonlinear questions in clamped plate models}, Milan J. Math. 77, 171-204 (2009)
\bibitem{gruswe} H.-Ch. Grunau, G. Sweers, {\em A clamped plate with a uniform weight may change sign},
Discrete Contin. Dyn. Syst. Ser. S 7, 761-766 (2014)
\bibitem{barbara} B. Hartman Mather, https://a.msn.com/r/2/BBOvA55?m=nl-nl\&referrerID=InAppShare\&ocid=Nieuws (via Viral Hog, video)
\bibitem{holubova} G. Holubov\'{a}, A. Matas, \emph{Initial-boundary value problem for nonlinear string-beam system}, J. Math. Anal. Appl.
288, 784-802 (2003)
\bibitem{HolNec10} G. Holubov\'a, P. Ne\v{c}esal, \emph{The Fu\v{c}ik spectra for multi-point boundary-value problems}, Electronic J. Diff. Eq.
Conf. 18, 33-44 (2010)
\bibitem{imhof} D. Imhof, {\em Risk assessment of existing bridge structure}, PhD Dissertation, University of Cambridge (2004).
See also http://www.bridgeforum.org/dir/collapse/type/ for the update of the Bridge failure database
\bibitem{irvine} H.M. Irvine, {\em Cable Structures}, MIT Press Series in Structural Mechanics, Massachusetts (1981)
\bibitem{jurado} J.A. Jurado, S. Hern\'andez, F. Nieto, A. Mosquera, {\em Bridge aeroelasticity, sensitivity analysis and optimal design}, WIT Press, Southampton (2011)
\bibitem{Kuk} S. Kuksin, {\em Hamiltonian perturbations of infinite-dimensional linear systems with imaginary spectrum}, Funktsional. Anal. i Prilozhen 21, 22-37 (1987)
\bibitem{larsen} A. Larsen, {\em Aerodynamics of the {T}acoma {N}arrows {B}ridge - 60 years later}, Struct. Eng. Internat. 4, 2000, 243-248
\bibitem{lasiecka} I. Lasiecka, J.T. Webster, {\em Feedback stabilization of a fluttering panel in an
inviscid subsonic potential flow}, SIAM J. Math. Anal. 48, 1848-1891 (2016)
\bibitem{lam} A.C. Lazer, P.J. McKenna, {\em Large scale oscillating behaviour in loaded asymmetric systems},
Ann. Inst. H. Poincar\'{e}, Analyse Nonlin. 4, 244-274 (1987)
\bibitem{lambis} A.C. Lazer, P.J. McKenna, {\em Large amplitude periodic oscillations in suspension bridge:
some new connections with nonlinear analysis}, SIAM Rev. 32, 537-578 (1990)
\bibitem{lee} C. Lee, {\em Periodic solutions of beam equations with symmetry}, Nonlin. Anal. TMA 42, 631-650 (2000)
\bibitem{liu1} J.Q. Liu, {\em Free vibrations for an asymmetric beam equation}, Nonlin. Anal. TMA. 51, 487-497 (2002)
\bibitem{liu2} J.Q. Liu, {\em Free vibrations for an asymmetric beam equation, II}, Nonlin. Anal. TMA 56, 415-432 (2004)
\bibitem{Loc73} J. Locker, \emph{Self-adjointness for multi-point differential operators}, Pacific J. Math. 45, 561-570 (1973)
\bibitem{luco} J.L. Luco, J. Turmo, {\em Effect of hanger flexibility on dynamic response of suspension bridges},
J. Engineering Mechanics 136, 1444-1459 (2010)
\bibitem{March} C. Marchionna, S. Panizzi, {\em An instability result in the theory of suspension bridges}, Nonlin. Anal. TMA 140, 12-28 (2016)
%\bibitem{mkw87} P.J. McKenna, W. Walter, {\em Nonlinear oscillations in a suspension bridge}, Arch. Rat. Mech. Anal. 98, 167-177 (1987)
\bibitem{mckO} P.J. McKenna, C.\'O Tuama, \emph{Large torsional oscillations in suspension bridges visited again:
vertical forcing creates torsional response}, Amer. Math. Monthly 108, 738-745 (2001)
\bibitem{sweers}
 S.A. Nazarov, A. Stylianou, G. Sweers, {\em Hinged and supported plates with corners}, Zeit. Angew. Math. Physik 63, 929-960 (2012)
\bibitem{pittel} B.G. Pittel, V.A. Yakubovich, \emph{A mathematical analysis of the stability of suspension bridges
based on the example of the Tacoma bridge (Russian)}, Vestnik Leningrad Univ. 24, 80-91 (1969)
\bibitem{plautdavis} R.H. Plaut, F.M. Davis, {\em Sudden lateral asymmetry and torsional oscillations of section models of suspension bridges},
J. Sound and Vibration 307, 894-905 (2007)
\bibitem{podolny} W. Podolny, {\em Cable-suspended bridges}, In: Structural Steel Designer뭩 Handbook: AISC, AASHTO, AISI, ASTM, AREMA, and ASCE-07
Design Standards. By R.L. Brockenbrough and F.S. Merritt, $5^{th}$ Edition, McGraw-Hill, New York (2011)
\bibitem{pucsal} P. Pucci, S. Saldi, {\em Asymptotic stability for nonlinear
damped Kirchhoff systems involving the fractional $p$-Laplacian operator}, J. Differential Equations 263, 2375-2418 (2017)
\bibitem{rabin} P.H. Rabinowitz, {\em Free vibrations for a semilinear wave equation}, Comm. Pure Appl. Math. 31, 31-68 (1978)
\bibitem{russell} J.S. Russell, {\em On the vibration of suspension bridges and other structures; and the means of preventing injury from this cause},
Transactions of the Royal Scottish Society of Arts, Vol.1 (1841)
\bibitem{scott} R. Scott, {\em In the wake of Tacoma. Suspension bridges and the quest for aerodynamic stability}, ASCE, Reston 2001
\bibitem{tac2} F.C. Smith, G.S. Vincent, {\em Aerodynamic stability of suspension bridges: with special reference to the Tacoma Narrows Bridge,
Part II: Mathematical analysis}, Investigation conducted by the Structural Research Laboratory, University of Washington -
Seattle: University of Washington Press (1950)
\bibitem{stoker} J.J. Stoker, {\em Nonlinear vibrations in mechanical and electrical systems}, John Wiley \& Sons, New York (1992)
\bibitem{strogatz} S.H. Strogatz, D.M. Abrams, B. Eckhardt, A. McRobie, E. Ott,
{\em Theoretical mechanics: Crowd synchrony on the Millennium Bridge}, Nature, Brief Communications 438, 43-44 (3 November 2005)
\bibitem{ventsel} E. Ventsel, T. Krauthammer, {\em Thin plates and shells: theory: analysis, and applications}, CRC press, 2001
\bibitem{karman} T. von K\'{a}rm\'{a}n, \emph{Festigkeitsprobleme im maschinenbau}, Encycl. der Mathema\-ti\-schen Wis\-sen\-schaf\-ten,
{L}eipzig, IV/4 C, 348-352 (1910)
\bibitem{Wag} H. Wagner, {\em \"Uber die entstehung des dynamischen auftriebes von tragfl\"ugeln}, Zeit. Angew. Mathematik und Mechanik 5, 17-35 (1925)
\bibitem{Way} C.E. Wayne, {\em Periodic and quasi-periodic solutions of nonlinear wave equations via KAM theory}, Comm. Math. Phys. 127, 479-528 (1990)
\bibitem{wilder} C.E. Wilder, {\em Problems in the theory of ordinary linear differential equations with auxiliary conditions at more than two points}, Trans. Amer. Math. Soc. 19, 157-166 (1918)
\bibitem{woinowsky} S. Woinowsky-Krieger, {\em The effect of an axial force on the vibration of hinged bars}, J. Appl. Mech. 17, 35-36 (1950)
\bibitem{yakubovich} V.A. Yakubovich, V.M. Starzhinskii, \emph{Linear differential equations with periodic coefficients},
J. Wiley \& Sons, New York (1975) (Russian original in Izdat. Nauka, Moscow, 1972)
\bibitem{tacoma} YouTube, {\em Tacoma Narrows Bridge collapse}, {\tt http://www.youtube.com/watch?v=3mclp9QmCGs} (1940)
\bibitem{zhk} N.E. Zhukovskii, {\em Finiteness conditions for integrals of the equation $d^2y/dx^2+py=0$ (Russian)}, Math. Sb. 16,  582-591 (1892)
\end{thebibliography}
\end{document}